\documentclass[a4paper,10pt]{article}
\setlength{\textwidth}{6.5in}\setlength{\textheight}{9in}
\setlength{\topmargin}{0.0in}\setlength{\oddsidemargin}{0in}
\usepackage[english]{babel}
\usepackage{amsthm,amsmath,amssymb,bbm}
\usepackage{stackengine}
\usepackage[round]{natbib}

%\parindent=0in                

% Packages and TikZ libraries
\usepackage{tikz}
\usetikzlibrary{positioning, calc}

\newtheorem{thm}{Theorem}[section]

\newtheorem{claim}[thm]{Claim}
\newtheorem{lemma}[thm]{Lemma}
\newtheorem{ex}[thm]{Example}
\newtheorem{prop}[thm]{Proposition}
\newtheorem{defin}[thm]{Definition}
\newtheorem{rema}[thm]{Remark}
\newtheorem{corollary}[thm]{Corollary}
\usepackage{xcolor}

\def \n{\Vert}

\def\E{{\mathbb{E}}}

\def\P{{\mathbb{P}}}
\def\R{{\mathbb{R}}}
\def\N{{\mathbb{N}}}

\def\F{{\cal{F}}}

\def\limt{\lim_{t\to\infty}}
\def\limt0{\lim_{t\to 0}}

\def\|{\,|\,}

\def\bn#1\en{\begin{align*}#1\end{align*}}
\def\bnn#1\enn{\begin{align}#1\end{align}}

\allowdisplaybreaks

%\title{Functional worst risk minimization}
\title{Functional worst risk minimization}

\author{Philip Kennerberg${}^*$ and Ernst C. Wit\footnote{philip.kennerberg@usi.ch, wite@usi.ch}} 
\date{\today}

\begin{document}
	\maketitle

    \begin{abstract}
        Statistical learning methods typically assume that the training and test data originate from the same distribution, enabling effective risk minimization. However, real-world applications frequently involve distributional shifts, leading to poor model generalization. To address this, recent advances in causal inference and robust learning have introduced strategies such as invariant causal prediction and anchor regression. While these approaches have been explored for traditional structural equation models (SEMs), their extension to functional systems remains limited. This paper develops a risk minimization framework for functional SEMs using linear, potentially unbounded operators. We introduce a functional worst-risk minimization approach, ensuring robust predictive performance across shifted environments. Our key contribution is a novel worst-risk decomposition theorem, which expresses the maximum out-of-sample risk in terms of observed environments. We establish conditions for the existence and uniqueness of the worst-risk minimizer and provide consistent estimation procedures. Empirical results on functional systems illustrate the advantages of our method in mitigating distributional shifts. These findings contribute to the growing literature on robust functional regression and causal learning, offering practical guarantees for out-of-sample generalization in dynamic environments.
    \end{abstract}
	\section{Introduction}
	
	Traditionally statistical learning methods operate under the key assumption that the distribution of data used during model estimation will match the distribution encountered at test time. This assumption enables effective minimization of future expected risk, allowing models to generalize well within a fixed distribution. Techniques such as cross-validation \citep{laan2006cross, van2007super}, and information criteria like AIC \citep{akaike1974new} or Mallow's Cp \citep{gilmour1996interpretation}, rely on this principle. However, in real-world applications, data often experience shifts due to evolving conditions, unseen scenarios, or changing environments. Such shifts may arise from sampling biases, where the training data reflects only a subpopulation of the target population, or temporal separations punctuated by external shocks, a scenario common in fields like economics and finance \citep{kremer2018risk}. When distribution shifts occur, traditional prediction methods can fail, resulting in degraded performance. Addressing and mitigating the impact of these shifts is essential for developing robust models that maintain their reliability and predictive power even in dynamic contexts.
	
	In recent years, the field of statistics has made significant strides in tackling distributional shifts through innovative approaches to risk minimization. \cite{icp} introduced a method for identifying causal relationships that remain valid across diverse environments, called \emph{invariant causal prediction}. By focusing on robust and invariant causal links, this approach ensures the generalizability of predictions under changing conditions. Similarly, \cite{causaldantzig} proposed the \emph{Causal Dantzig}, which leverages inner product invariance under additive interventions to estimate causal effects while accounting for confounders. Expanding on these ideas, \cite{arjovsky2019invariant} developed \emph{invariant risk minimization}, which seeks to identify data representations yielding consistent classifiers across multiple environments. However, as shown by \cite{rosenfeld2020risks} and \cite{kamath2021does}, these strict causal methods often struggle when the training and test data differ significantly, undermining their effectiveness in addressing the very problem they aim to solve.
	
	To overcome these challenges, alternative strategies have been proposed. \cite{Rot} introduced \emph{anchor regression}, a technique that mitigates confounding bias by incorporating anchor variables—covariates strongly linked to both treatment and outcome—in the regression model. This method improves the accuracy of causal effect estimation. Building on this concept, \cite{kania2022causal} proposed \emph{causal regularization}, which does not require explicit auxiliary variable information and offers strong out-of-sample risk guarantees. %Another recent advancement is the \emph{distributional robustness via invariant gradients} (DRIG) approach by \cite{shen2023causality}. DRIG leverages additive interventions in training data to enhance robustness against unseen interventions, with theoretical guarantees demonstrated under linear structural causal models. 
	Together, these methods represent a growing arsenal of tools for addressing distributional shifts and achieving robust statistical learning.
	
	Thus far, much of the research addressing distributional shifts and causal inference has centered around traditional structural equation models (SEMs). These models emphasize explicit relationships between variables through linear or nonlinear equations, offering a framework to study causal pathways and account for confounders under various assumptions. While SEMs have been instrumental in advancing the understanding of causality and robustness, they are often limited in their ability to capture complex, high-dimensional functional dependencies that arise in real-world data. Some research has been done for functional SEMs, but so far these SEMs have been defined in ``score space", i.e., the structural relationship is given in terms of how the scores of the target and covariates relate to each other rather than having a direct relationship between the target and covariate processes \citep{muller2008functional}. This strategy makes proving results easier but it comes at the cost of being more cumbersome and it is also much harder to interpret such models. Another strategy that has been employed is to work in the reproducing Hilbert space (RKHS) framework \citep{lee2022functional}. This again gives many tools for proving results, but it also excludes the canonical $L^2$ setting for functional-regression and the SEMs are based on  Hilbert-Schmidt, i.e., compact operators. In this manuscript, we overcome these restrictions and instead assume SEMs that are expressed directly in terms of the target and covariate processes, based on linear (not necessarily bounded) operators. We provide out-of-sample guarantees, which provide for robustness in this setting. Importantly, the out-of-sample shift space is defined as the most natural analogue to the non-functional setting, again avoiding a definition in ``score space". 
    
    %processes we shift focus to structural functional regression, an approach that extends beyond traditional SEMs by modeling functional relationships between variables. 
	
The aim of this paper is to extend worst risk minimization, also called worst average loss minimization, to the functional realm. This means finding a functional regression representation that will be robust to future distribution shifts on the basis of data from two environments. In the classical non-functional realm, linear structural equations are based on a transfer matrix $B$. In section~\ref{sec:sfr}, we generalize this to consider a linear operator $\mathcal{T}$ on square integrable processes that plays the the part of $B$. By requiring that $(I-\mathcal{T})^{-1}$ is bounded --- as opposed to $\mathcal{T}$ --- this will allow for a large class of unbounded operators to be considered. In Section~\ref{sec:worstrisk} we provide the central result of this paper, the functional worst-risk decomposition. This result considers the worst risk among all the shifted environments corresponding to shifts in the out-of-sample shift set. This shift set is defined, given $\gamma>0$, as all shifts in some pre-defined subset $\mathcal{A}$ of square-integrable processes such that $\sqrt{\gamma}A\in\overline{\mathcal{A}}$ (the closure of $\mathcal{A}$), where $A$ is the shift corresponding to the observed in-sample shifted environment. The worst risk is then expressed as a linear combination of the risk for the observational (or reference) environment and the risk of the observed shifted environment. Interestingly, the actual decomposition of the worst risk has the same structure as in the non-functional case. In order to provide such a short formulation of the model and this decomposition, the proof of this result is quite massive. 
In section~\ref{sec:minimizer}.1, we prove a necessary and sufficient condition for existence of a unique minimizer to this worst risk in the space of square integrable kernels. As a special case of this we get a multivariate and robust generalization of the ``basic theorem for functional linear models" \cite[Theorem 2.3]{He2010}. In section~\ref{sec:minimizer}.2 we provide sufficient conditions for finding a, not necessarily unique, minimizer in any arbitrary ON-basis. This removes the necessity of estimating eigenfunctions, when these conditions are fulfilled. In section~\ref{sec:estimation}.1 we provide a family of estimators, corresponding to the minimizer of section \ref{sec:minimizer}.1, that are consistent, while in section \ref{sec:estimation}.2 we provide consistent estimators corresponding to section \ref{sec:minimizer}.2.

\section{Functional structural equation models}
	\label{sec:sfr}
	In this section we will define a functional structural system that may be subject to distributional shifts, which are referred to as \emph{environments}. Both the target function $Y$ and the other functions $X$ are defined on some compact interval $[T_1,T_2]$, with left and right endpoints $T_1$ and $T_2$ respectively. We will denote $T=T_2-T_1$.

    \subsection{General definition of functional structural systems}
    Given some probability space $(\Omega,\F,\P)$, the random sources that form the building blocks of the functional structural model are the noise, i.e., the underlying stochasticity of the system, 
    and the shifts. These are random elements in the separable Hilbert space $L^2([T_1,T_2])^{p+1}$ that are $\F-\mathbb{B}\left(L^2([T_1,T_2])^{p+1}\right)$-measurable, where $\mathbb{B}\left(L^2([T_1,T_2])^{p+1}\right)$ denotes the Borel sigma algebra on $L^2([T_1,T_2])^{p+1}$. Here we use the notation $L^2([T_1,T_2])$ for the set of square integrable functions on $[T_1,T_2]$ with respect to Lebesgue measure. In fact, all of our main results in Sections \ref{sec:worstrisk} and \ref{sec:minimizer} hold with respect to any other space $L^2(\mathbb{T},\sigma,\mu)$ where $\mathbb{T}$ is some arbitrary index set, replacing $[T_1,T_2]$, and $\mu$ is a finite measure on the sigma-algebra $\sigma$ such that $L^2(\mathbb{T},\sigma,\mu)$ is separable. In section \ref{sec:estimation} however we make the assumption that our processes are cadlag, so this implies that the index set must be some subset of the real line, but we may however consider other measures besides the Lebesgue measure in this section as well. Let 
	$$\mathcal{V}=\left\{U \textit{ is } \F-\mathbb{B}\left(L^2([T_1,T_2])^{p+1}\right) \mbox{ measurable: }\sum_{i=1}^{p+1}\int_{[T_1,T_2]}\E\left[U_t(i)^2\right]dt<\infty\right\}.$$ 
The expected value of an element in $\mathcal{V}$ exists in the sense of a Bochner integral. 

Let  $\mathcal{T}:D(\mathcal{T})\to L^2([T_1,T_2])^{p+1}$, where $D(\mathcal{T})\subseteq L^2([T_1,T_2])^{p+1}$, be any operator such that
\begin{itemize}
    \item $ \overline{\mathsf{Range}(I-\mathcal{T})}=L^2([T_1,T_2])^{p+1}$; note that this is trivially true if $\mathsf{Range}(I-\mathcal{T})$ is onto $L^2([T_1,T_2])^{p+1}$,
    \item $\mathcal{R}:=\mathsf{Range}(I-\mathcal{T})$ 
is a Lusin space; this is again true if $\mathsf{Range}(I-\mathcal{T})$ is onto $L^2([T_1,T_2])^{p+1}$, but also if $\mathcal{R}$ is a polish space. 
\item $\mathcal{S}:=(I-\mathcal{T})^{-1},$ is bounded and linear --- as well as injective by definition --- on $\mathsf{Range}(I-\mathcal{T})$.
\end{itemize}
As $\mathcal{S}$ is continuous and linear, this implies that for any $L^2([T_1,T_2])^{p+1}$-measurable random element $X$, $\mathcal{S}X$ is also a $L^2([T_1,T_2])^{p+1}$-measurable random element and $\mathcal{S}$ maps elements in $\mathcal{V}$ to elements in $\mathcal{V}$. 
Recall that for a Hilbert space $H$, if we endow the (Cartesian) product space $H^k$ with the inner product 
	$$\langle (x_1,\ldots,x_k),(y_1,\ldots,y_k)  \rangle_{H^k} =  \sum_{i=1}^k \langle x_i,y_i  \rangle_H,$$
	then this also becomes a Hilbert space. This is the inner product we will use on all product Hilbert spaces going forward. In particular we have that if $A,B\in \mathcal{V}$ then the inner product $\langle A,B  \rangle_{\mathcal{V}} =\sum_{i=1}^{p+1}\int_{[T_1,T_2]}\E\left[A_t(i)B_t(i)\right]dt$ makes $\mathcal{V}$ into a Hilbert space, with norm $\n A \n_{\mathcal{V}}=\sqrt{\sum_{i=1}^{p+1}\int_{[T_1,T_2]}\E\left[A_t(i)^2\right]dt}$. 
 Denote for $x=\left(x_1,\ldots,x_k \right)\in H^k$, by $\pi_i$, projection on the $i$:th coordinate, i.e. $\pi_i x=x_i$.
We shall also define the space
$$\tilde{\mathcal{V}}=\left\{U \in \mathcal{V}: \P\left(U\in \mathsf{Range}(I-\mathcal{T}) \right)=1\right\}, $$
note that since $\mathsf{Range}(I-\mathcal{T})$ is Lusin this implies that it is also $\mathbb{B}\left(L^2([T_1,T_2])^{p+1}\right)$-measurable (see for instance Theorem 5, Chapter 2 part 2 of \cite{schwartz1973radon}) and therefore the event $\{U\in \mathsf{Range}(I-\mathcal{T})\}\in \mathcal{F}$. We also have that $\overline{\tilde{\mathcal{V}}}=\mathcal{V}$, indeed this follows from the fact that the $\mathcal{R}$-simple elements in $\mathcal{V}$ are dense, which in turn follows from the fact that $\overline{\mathcal{R}}=L^2([T_1,T_2])^{p+1}$ and the $L^2([T_1,T_2])^{p+1}$-simple elements in $\mathcal{V}$ are dense. 

	\begin{ex}
		We will now give an example of an unbounded operator $\mathcal{T}$ such that $(I-\mathcal{T})^{-1}$ is injective, bounded, linear and defined on all of $L^2([T_1,T_2])^{p+1}$. Let $[T_1,T_2]=[0,1]$, $Q:D\to L^2([T_1,T_2])$ be the first derivative operator $Qf=f'$, where $D=\left\{f\in H^1(0,1): f(0)=0\right\}$. We then have that $(Q^{-1}f)(x)=\int_0^x f(y)dy$ for any $f\in L^2([T_1,T_2])$ and $\lVert Q^{-1}\rVert_{L^2([T_1,T_2])} \le 1$. Let $B$ be a full rank $(p+1)\times(p+1)$ matrix and set for $\textbf{f}=\left(f_1,\ldots,f_{p+1}\right)\in D^{p+1}$,
		\begin{equation}\label{T}
			\mathcal{T}\textbf{f}
			=
			B\begin{bmatrix}
				f_1'\\
				\vdots \\
				f_{p+1}'
			\end{bmatrix} 
			+
			\begin{bmatrix}
				f_1\\
				\vdots \\
				f_{p+1}
			\end{bmatrix} 	.
		\end{equation}
		Then for any $\textbf{f}\in L^2([T_1,T_2])^{p+1}$
		\begin{equation}\label{ResT}
			\mathcal{S}\textbf{f}
			=
			-B^{-1}\begin{bmatrix}
				Q^{-1}f_1\\
				\vdots \\
				Q^{-1}f_{p+1}
			\end{bmatrix} 
		\end{equation}
		and $\lVert \mathcal{S}\rVert_{L^2([T_1,T_2])^{p+1}} \le \lVert B \rVert_\infty (p+1)$, where $ \lVert B \rVert_\infty$ is the maximum absolute row sum of $B$.
	\end{ex}
	\begin{ex}
		An elementary example in the bounded case is if we take any  $(p+1)\times(p+1)$ matrix $B$ such that $I-B$ is full rank. Then if we let
		$\mathcal{T}\textbf{f}=B\textbf{f}$ for $\textbf{f}\in L^2([T_1,T_2])^{p+1}$ then $\left(I-\mathcal{T}\right)^{-1}\textbf{f}=\left(I-B\right)^{-1}\textbf{f}$ and we have the same bound as in the previous example, $\lVert \mathcal{S}\rVert_{L^2([T_1,T_2])^{p+1}} \le \lVert B \rVert_\infty (p+1)$.
	\end{ex}
	\begin{rema}
The above example is really the functional analogue of the classical linear SEM as it is path-wise linear. Our general model does not assume path-wise linearity, it is only based on linear operators. We illustrate this in the next example.
\end{rema}
\begin{ex}
Let $g_1,\ldots g_{p+1}$ be measurable functions on $[T_1,T_2]$ such that $\left|\frac{1}{1-g_i(t)}\right|$ are all bounded, for $1\le i\le p+1$ and let $\mathcal{T}\left(f_1,\ldots,f_{p+1}\right)=\left(g_1f_1,\ldots,g_{p+1}f_{p+1}\right)$. Then
$$\mathcal{S}f= \left(\frac{1}{1-g_1}f_1,\ldots,\frac{1}{1-g_{p+1}}f_{p+1}\right)$$
and $\lVert \mathcal{S} \rVert\le \max_{1\le i\le p+1} \sup_{t\in [T_1,T_2]}\left|\frac{1}{1-g_i(t)}\right|$.
\end{ex}
Sometimes we may wish to start in the other end, with a bounded, linear map $\mathcal{S}$ as illustrated in the next example, which is related to functional regression. 
\begin{ex}\label{regrEX}
Let $\beta_1,\ldots,\beta_p\in L^2([T_1,T_2]^2)$ and for $f=\left(f_1,\ldots,f_{p+1}\right)\in L^2([T_1,T_2])^{p+1}$ define, for any bounded and linear $S:L^2([T_1,T_2])^p\to L^2([T_1,T_2])$,
$$\mathcal{S}f= \left(S\left(f_2,\ldots,f_{p+1}\right) +f_1,f_2,\ldots,f_{p+1}\right).$$ 
Then $\mathcal{S}$ is clearly bounded and solving $\mathcal{S}f=0$ yields the only solution $f=0$, i.e. $Ker(\mathcal{S}f)=\{0\}$, implying that $\mathcal{S}$ is injective. Of particular interest is when $S\left(f_2,\ldots,f_{p+1}\right)=\int_{[T_1,T_2]}\sum_{i=2}^{p+1}(\beta(.,\tau))(i-1)f_i(\tau) d\tau$, i.e. multivariate classical functional regression with a functional response
\end{ex}

	\begin{figure}[tb]
	\begin{center}
		\begin{tikzpicture}[node distance=3cm, auto]
			% Nodes
			\node[draw, circle, minimum size=1cm, align=center] (X1) {$X_t(1)$};
			\node[draw, circle, minimum size=1cm, align=center, right of=X1] (Y) {$Y_t$};
			\node[draw, circle, minimum size=1cm, align=center, right of=Y] (X2) {$X_t(2)$};
			
			% Arrows
			\draw[->, thick] (X1) -- (Y) node[midway, above] {$\beta_{x_1y}$};
			\draw[->, thick] (Y) -- (X2) node[midway, above] {$\beta_{yx_2}$};
			
			% Annotations
			\node[below=0.5cm of X1] (note1) {};
			\node[below=0.5cm of X2] (note2) {};
		\end{tikzpicture}
	\end{center}
	\caption{Observational environment: a functional system that serves as an illustration of a structural system throughout the manuscript, in which $X(1)$ is the cause of $Y$ and $Y$ is the cause of $X(2)$. Our aim is to minimize the out-of-distribution prediction error of $Y$ using both $X(1)$ and $X(2)$. \label{fig:structural}}
	\end{figure}

\subsection{Functional system observed in two environments}
We now fix some noise $\epsilon\in\tilde{\mathcal{V}}$ such that  $\E\left[\epsilon\right]=0$, which is to be interpreted as the zero function in $L^2([T_1,T_2])^{p+1}$, or more formally the corresponding equivalence class. 
For any $A\in \mathcal{V}$ we may extend $(\Omega,\F,\P)$ to $(\Omega_A,\F_A,\P_A)$, to contain a copy of $\epsilon$, which we denote $\epsilon_A$, such that $\epsilon_A$ and $A$ are independent as $\F-\mathbb{B}\left(L^2([T_1,T_2])^{p+1}\right)$-measurable random elements.
If $A\in\tilde{\mathcal{V}}$ then on $(\Omega_A,\F_A,\P_A)$ we may consider equation systems, which we refer to as environments (just as in the non-functional setting), of the following form
	\begin{align}\label{SEMA1}
		(Y^A,X^A)=\mathcal{T}(Y^A,X^A) +A+\epsilon_A
	\end{align}
	and denote the special case of the observational environment
	$(Y^O,X^O)=\mathcal{T}(Y^O,X^O) +\epsilon_O.$
    	\begin{rema}
	Note that this implies that we assume that the target and the covariates are centralized in the observational environment. In the empirical setting, if the observed samples are not centralized we must first centralize them (more on this in Section \ref{sec:estimation}). 
	\end{rema}
	This implies that we have the unique solutions
	\begin{align}\label{SEM}
	(Y^A,X^A)=\mathcal{S}\left(A+\epsilon_A\right),
	\end{align}
implying that $Y^A,X^A$ are both $L^2([T_1,T_2])^{p+1}$-measurable. Since $\mathcal{S}$ is linear and bounded on $\mathsf{Range}(I-\mathcal{T})$ we may extend it to a linear and bounded operator on all of $L^2([T_1,T_2])^{p+1}$ and therefore we may then define $(Y^A,X^A)$ for any $\epsilon, A\in\mathcal{V}\setminus \tilde{\mathcal{V}}$ directly through \eqref{SEM}.  Throughout the rest of this paper we will assume that we work with the extended version of $\mathcal{S}$. In principle one may also start with \eqref{SEM} in order to define $(Y^A,X^A)$, for any shift $A$, without requiring that $\mathcal{S}$ is injective, although in that case, \eqref{SEMA1} will not be fulfilled. 

	\subsection{Example: a functional structural system}
	\label{sec:illus1}

        	\begin{figure}[tb]
		\begin{center}
			\begin{tikzpicture}[node distance=3cm, auto]
			% Main Nodes
			\node[draw, circle, minimum size=1cm, align=center] (X1) {$X_t(1)$};
			\node[draw, circle, minimum size=1cm, align=center, right of=X1] (Y) {$Y_t$};
			\node[draw, circle, minimum size=1cm, align=center, right of=Y] (X2) {$X_t(2)$};
			
			% Additional Nodes (A_1 and A_2)
			\node[draw, circle, minimum size=1cm, align=center, above of=X1] (A1) {$A_t(1)$};
			\node[draw, circle, minimum size=1cm, align=center, above of=X2] (A2) {$A_t(2)$};
			
			% Arrows
			\draw[->, thick] (X1) -- (Y) node[midway, above] {$\beta_{x_1y}$};
			\draw[->, thick] (Y) -- (X2) node[midway, above] {$\beta_{yx_2}$};
			\draw[->, thick] (A1) -- (X1) node[midway, right] {};
			\draw[->, thick] (A2) -- (X2) node[midway, right] {};			
			\end{tikzpicture}
		\end{center}
		\caption{Interventional environment: the structural functional system is also observed under a slightly intervened conditions. In particular, the scores $\xi_1$ of $X_t(1)$ and $\xi_2$ of $X_t(2)$ are affected by shifts $A_1$ and $A_2$, respectively. \label{fig:shift}}
	\end{figure}

	We consider a structural functional system with three functional variables,  $X_t(1),X_t(2)$ and $Y_t$, on the interval $[0,1]$. The variables are structurally related as shown in Figure~\ref{fig:structural}. In particular, we have that 
	\[ 			
    \left\{    \begin{array}{rcl}
    Y_t &=&  \int_{[0,1]} \beta_{x_1y}(t,\tau) X_\tau(1)~d\tau + \epsilon_{t}(1) \\
		X_t(1) &=& \epsilon_{t}(2) \\
		X_t(2) &=&  \int_{[0,1]} \beta_{yx_2}(t,\tau) Y(\tau)~d\tau + \epsilon_{t}(3)		 
	\end{array}
	\right.\]
	In particular, we have that the structural effect $\beta_{x_2y}$ of $X(2)$ on $Y$ is zero. The effect $\beta_{x_1y}$ of $X(1)$ on $Y$ can be defined relative to an ON-basis. We consider the ON-basis given by
	\[  \mathcal{B} = \{ \phi_k(t)=\sqrt{2} \sin(2k\pi t)~|~k=1,\ldots, n=10; t\in[0,1] \}.   \] 
	We assume that $(Y,X(1),X(2))$ is a random function system that can be expressed in terms of $\mathcal{B}$, such that for $\phi=(\phi_1,\ldots,\phi_{10})$
	\[ Y = \zeta^t\phi,~~ X(1) =  \xi_1^t \phi, ~~ X(2)=\xi_2^t\phi, \] 
	whereby the random scores $(\zeta,\xi_1,\xi_2)\in \mathbb{R}^{30}$ in the observational environment $O$ are related according to
	\begin{equation} \begin{bmatrix}
		\zeta^O \\ \xi_1^O\\ \xi_2^O
	\end{bmatrix}  =
	B \begin{bmatrix}
		\zeta^O \\ \xi_1^O\\ \xi_2^O
	\end{bmatrix} + \epsilon^O, \label{eq:obsexample} \end{equation}
	where $\epsilon^O\sim N(0,\Sigma)$ with $\Sigma=I_{30\times 30}$. In our case, we assume homogeneous effects across all the basis functions and choose,
	\[    B = \begin{bmatrix}
		0 & b_{x_1y} & 0 \\
		0 & 0 & 0 \\
		b_{yx_2} & 0 & 0
	\end{bmatrix} \bigotimes I_{10 \times 10}, \]
	where the $3\times 3$ matrix describes the structural relatedness of $X(1)$ to $Y$ and of $Y$ to $X(2)$. We choose $b_{x_1y}=b_{yx_2}=1$. As \cite{He2010} showed, the functional coefficients $\beta$ are intrinsically related to structural matrix $B$ for the scores, besides the first and second moments of the scores, which in our case are all zeroes and ones. In fact, the causal functional coefficient $\beta_{x_1y}$ can be shown to be given as  
	\[ \beta_{x_1y}(t,\tau) =  b_{x_1y} \sum_{k=1}^{10}\phi_k(t)\phi_k(\tau),  \]
	whereas the causal functional coefficient for $X(2)$ is the zero functional $\beta_{x_2y}=0$. 
	Furthermore, we consider the structural functional system under interventions $A_1=\alpha_1^t\phi$ and $A_2=\alpha_2^t\phi$, that affect $X(1)$ and $X(2)$ through their scores $\xi_1$ and $\xi_2$. In particular, given a minor score shift $\alpha_{jk}\sim N(\mu^A_j,\Sigma^A_{jj})$ with $\mu^A_j=\sqrt{\Sigma^A_{jj}} =0.1$ for each covariate function $j=1,2$ and each basis dimension $k=1,\ldots,10$, the scores in the intervened environment are given by
		\begin{equation} \begin{bmatrix}
		\zeta^A \\ \xi_1^A\\ \xi_2^A
	\end{bmatrix}  =
	B \begin{bmatrix}
		\zeta^A \\ \xi_1^A\\ \xi_2^A
	\end{bmatrix} 
	+ \begin{bmatrix}
		0 \\ \alpha_1\\ \alpha_2
	\end{bmatrix} + \epsilon^A, \label{eq:shiftexample} \end{equation}
	where $\epsilon^A \stackrel{D}{=} \epsilon^O$. Figure~\ref{fig:yxasample} shows a sample of the system $(Y^A, X^A(1),X^A(2))$ from the shifted environment.

	\begin{figure}[tb!]
		\centering
		\includegraphics[width=0.8\textwidth]{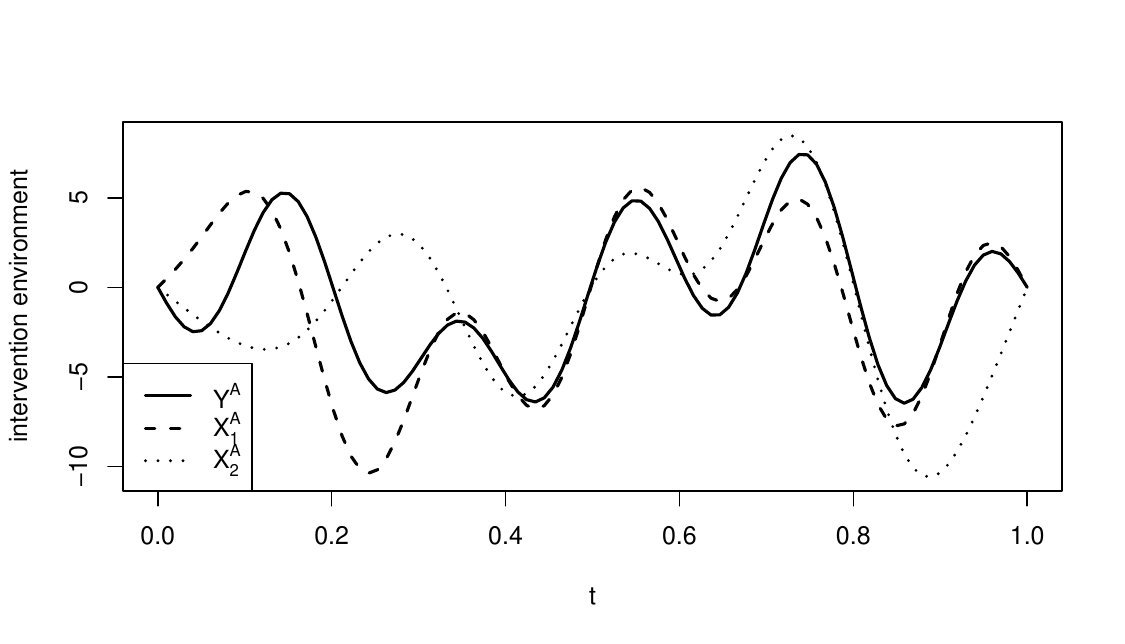} % Replace with your graphic file
		\caption{Sample $(Y^A,X^A(1),X^A(2))$ from the shifted environment. Note that both $X(1)$ and $X(2)$ seem quite predictive for $Y$, but only $X(1)$ is causal --- and therefore $X(1)$ has the most robust out-of-sample risk behaviour, if $Y$ is not intervened, as in this example.}
		\label{fig:yxasample}
	\end{figure}

	\section{Functional worst risk decomposition}
	\label{sec:worstrisk}
For $A\in\mathcal{V}$, define the joint shift covariance function matrix,
	\begin{equation}\label{Kernel}
		K_A(s,t)=\begin{bmatrix}
			K_{A(1),A(1)}(s,t) & K_{A(1),A(2)}(s,t) & \hdots & K_{A(1),A(p+1)}(s,t)\\
			\vdots & \ddots\\
			K_{A(1),A(p+1)}(s,t) & K_{A(2),A(p+1)}(s,t) & \hdots & K_{A(p+1),A(p+1)}(s,t)
		\end{bmatrix} 	,
	\end{equation}
where $K_{A(i),A(j)}(s,t)=\E\left[A_s(i)A_t(j)\right]$, which is a well-defined element in $L^2([T_1,T_2]^2)$ (since $A\in\mathcal{V}$ and the Cauchy-Schwarz inequality).
	%The following definition of the shift set is based on an multivariate analogue of the Mercer condition.
	%\begin{align}\label{ShiftSet}
	%C^\gamma(A)=&\left\{A'\in L^2([T_1,T_2])^{p+1}: \int\int \left(g_1(s),....,g_{p+1}(s)\right)K_{A'}(s,t)\left(g_1(t),....,g_{p+1}(t)\right)^Tdsdt
	%\right.\nonumber
	%\\
	%&\left.\le \gamma\int\int \left(g_1(s),....,g_{p+1}(s)\right)K_{A}(s,t)\left(g_1(t),....,g_{p+1}(t)\right)^Tdsdt,\forall g_1,....,g_{p+1}\in L^2([T_1,T_2])
	%\right\}.
	%\end{align}
	For $A\in \mathcal{V}$ and $\beta\in(L^2([T_1,T_2]^2))^p$ we define the risk-function associated with $A$,
	\begin{align}
		R_A(\beta)=\E_A\left[\int_{[T_1,T_2]}\left(Y^{A}_t-\int_{[T_1,T_2]}\sum_{i=1}^p(\beta(i))(t,\tau)X^A_\tau(i)d\tau\right)^2dt\right]
	\end{align}
	In order for the risk concept to be meaningful in the functional context we want some level of regularity in terms of the shifts. To be more precise, we want small perturbations in the shifts to cause small perturbations in the risk for the corresponding environments. This is exactly the content of the following Lemma (which is used to prove Theorem \ref{WR1}).
	\begin{lemma}\label{PropEnvCont}
		If $A'_n\xrightarrow{\mathcal{V}}A'$ then $R_{A'_n}(\beta)\to R_{A'}(\beta)$ for any $\beta\in (L^2([T_1,T_2]^2))^p$.
	\end{lemma}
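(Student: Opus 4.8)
The plan is to show that $R_A(\beta)$ depends on the shift $A$ only through a bounded quadratic functional of $A$, plus an additive constant that does not involve $A$ at all; continuity in $\mathcal{V}$ then reduces to continuity of a norm under a bounded linear map. First I would introduce a \emph{residual operator}. For the fixed $\beta\in(L^2([T_1,T_2]^2))^p$, define $\Phi:L^2([T_1,T_2])^{p+1}\to L^2([T_1,T_2])$ by
$$(\Phi w)_t=\pi_1 w(t)-\int_{[T_1,T_2]}\sum_{i=1}^p(\beta(i))(t,\tau)\,\pi_{i+1}w(\tau)\,d\tau.$$
Each summand is an integral operator with kernel $\beta(i)\in L^2([T_1,T_2]^2)$, hence Hilbert--Schmidt and bounded, so $\Phi$ is bounded and linear. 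Composing with the bounded operator $\mathcal{S}$ gives $\Psi:=\Phi\circ\mathcal{S}$, again bounded and linear, and by the solution \eqref{SEM} the residual process is $Z^A:=\Psi(A+\epsilon_A)$, so that $R_A(\beta)=\E_A[\lVert Z^A\rVert_{L^2([T_1,T_2])}^2]$.

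Second, I would decompose the square. Expanding $\lVert\Psi(A+\epsilon_A)\rVert^2=\lVert\Psi A\rVert^2+2\langle\Psi A,\Psi\epsilon_A\rangle+\lVert\Psi\epsilon_A\rVert^2$ and taking expectations, the cross term vanishes: since $\epsilon_A$ is independent of $A$ with $\E[\epsilon_A]=0$ and $\Psi$ is deterministic and linear, Fubini yields $\E_A[\langle\Psi A,\Psi\epsilon_A\rangle]=\int_{[T_1,T_2]}\E[(\Psi A)_t]\,\E[(\Psi\epsilon_A)_t]\,dt=0$, because $\E[\Psi\epsilon_A]=\Psi\E[\epsilon_A]=0$. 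As $\epsilon_A\eqD\epsilon$, the last term $\E_A[\lVert\Psi\epsilon_A\rVert^2]=\E[\lVert\Psi\epsilon\rVert^2]=:C$ is a finite constant free of $A$. Hence $R_A(\beta)=\E[\lVert\Psi A\rVert^2]+C$, and it suffices to prove that $A\mapsto\E[\lVert\Psi A\rVert^2]$ is continuous on $\mathcal{V}$.

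Third, I would lift $\Psi$ to the random-element level. Applying $\Psi$ pathwise defines $\tilde\Psi:\mathcal{V}\to L^2(\Omega;L^2([T_1,T_2]))$, which is bounded and linear with $\lVert\tilde\Psi A\rVert^2=\E[\lVert\Psi A\rVert^2]\le\lVert\Psi\rVert^2\lVert A\rVert_{\mathcal{V}}^2$. Thus $A\mapsto\E[\lVert\Psi A\rVert^2]=\lVert\tilde\Psi A\rVert^2$ is a bounded linear map followed by the squared norm, hence continuous: for $A'_n\xrightarrow{\mathcal{V}}A'$,
$$\bigl|\lVert\tilde\Psi A'_n\rVert^2-\lVert\tilde\Psi A'\rVert^2\bigr|\le\lVert\tilde\Psi(A'_n-A')\rVert\bigl(\lVert\tilde\Psi A'_n\rVert+\lVert\tilde\Psi A'\rVert\bigr)\to 0,$$
since the first factor is bounded by $\lVert\Psi\rVert\lVert A'_n-A'\rVert_{\mathcal{V}}\to 0$ and the bracket stays bounded along the convergent sequence. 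Adding the constant $C$ gives $R_{A'_n}(\beta)\to R_{A'}(\beta)$.

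The only delicate points are bookkeeping rather than conceptual. One is justifying the Fubini interchange splitting the cross term, which requires joint integrability of $(\Psi A)_t(\Psi\epsilon_A)_t$; this follows from Cauchy--Schwarz in $t$ together with $\E_A[\lVert\Psi A\rVert\,\lVert\Psi\epsilon_A\rVert]\le\sqrt{\E[\lVert\Psi A\rVert^2]}\sqrt{\E[\lVert\Psi\epsilon\rVert^2]}<\infty$, both finite because $A,\epsilon\in\mathcal{V}$ and $\Psi$ is bounded. The other is that each environment formally lives on its own extension $(\Omega_A,\F_A,\P_A)$; the decomposition resolves this cleanly, since $\E[\lVert\Psi A\rVert^2]$ depends only on the law of $A$ on the base space and $C$ only on the law of $\epsilon$, so no coupling of $\epsilon_{A'_n}$ with $\epsilon_{A'}$ across $n$ is ever required.
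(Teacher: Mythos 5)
Your proposal is correct, but it takes a genuinely different and considerably more streamlined route than the paper. You package the residual map as a single bounded linear operator $\Psi=\Phi\circ\mathcal{S}$ and isolate the identity $R_A(\beta)=\E\left[\lVert\Psi A\rVert^2\right]+C$, where $C=\E\left[\lVert\Psi\epsilon\rVert^2\right]$ is shift-free; continuity in $\mathcal{V}$ is then just continuity of a quadratic form under a bounded map. The paper instead expands $\left|R_{A'}(\beta)-R_{A''}(\beta)\right|$ term by term (squared target terms, target--covariate cross terms, squared covariate terms) and, for each, proves the vanishing of shift--noise cross terms and the environment-independence of the pure noise terms by an explicit simple-function approximation scheme: partitioning $L^2([T_1,T_2])^{p+1}$ into small-diameter Borel sets and passing to limits (the same machinery later formalized as Claims~\ref{claimepsA} and~\ref{claimeps}), ending in the Lipschitz estimate \eqref{Rineq1}. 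What your route buys is brevity and conceptual clarity: the observation that both $\E\left[\lVert\Psi A\rVert^2\right]$ and $C$ depend only on marginal laws dissolves the different-probability-space issue in one line, where the paper labors over it. What the paper's route buys is reusable infrastructure --- its approximation arguments are invoked again in the proofs of Theorems~\ref{WR1} and~\ref{MinimizerThm}, and it delivers an explicit Lipschitz constant rather than bare sequential continuity (though your bound $\left|\E\lVert\Psi A'\rVert^2-\E\lVert\Psi A''\rVert^2\right|\le\lVert\Psi\rVert^2\left(\lVert A'\rVert_{\mathcal{V}}+\lVert A''\rVert_{\mathcal{V}}\right)\lVert A'-A''\rVert_{\mathcal{V}}$ recovers this too). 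One piece of bookkeeping you should tighten: the cross-term argument as written factors $\E\left[(\Psi A)_t(\Psi\epsilon_A)_t\right]$ pointwise in $t$, but pointwise evaluation of $L^2$-equivalence classes is not well defined; the clean fix is to avoid Fubini altogether and use the standard Hilbert-space fact that for independent, Bochner square-integrable random elements $U,V$ one has $\E\left[\langle U,V\rangle\right]=\langle\E U,\E V\rangle$ (proved by expanding in an ON basis, or by conditioning on $U$), together with $\E\left[\Psi\epsilon_A\right]=\Psi\E\left[\epsilon_A\right]=0$.
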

We now define our shift-space as an analogue to the non-functional case. Indeed, in the non-functional setting the shift set of level $\gamma>0$ is defined as
$ \left\{A'\in \mathcal{A}: \E\left[ A'^TA'\right] \preceq \gamma \E\left[ A^TA\right]\right\}, $
	for some $\mathcal{A}$ is some set of random vectors that contain $\sqrt{\gamma}A$ and whose components are square integrable. $\E\left[ A'^TA'\right] \preceq \gamma \E\left[ A^TA\right]$ denotes $\gamma \E\left[ A^TA\right]- \E\left[ A'^TA'\right]$ being positive semi-definite. A concept that is often touted as the continuous analogue of this is the so-called Mercer condition, i.e., if $A'$ and $A$ are square integrable (univariate) processes then the corresponding criteria would be 
	$$\int_{[T_1,T_2]^2} g(s) (\gamma K_A(s,t)-K_{A'}(s,t)) g(t)dsdt\le 0 $$
	for every $g\in L^2([T_1,T_2])$ . Since we have a multivariate functional model, the natural generalization is therefore the following definition. 
\begin{defin}\label{ShiftDef} {\bf Collection of out-of-sample environments.}
For $A\in\mathcal{V}$, $\mathcal{A}\subseteq\mathcal{V}$ and $\gamma\in\R^+$ let the collection of future out-of-sample environments be defined as, 
		\begin{align}\label{ShiftSet}
			C^\gamma_{\mathcal{A}}(A)=&\left\{A'\in \mathcal{A}: \int_{[T_1,T_2]^2} g(s)K_{A'}(s,t)g(t)^Tdsdt
\le \gamma\int_{[T_1,T_2]^2} g(s)K_{A}(s,t)g(t)^Tdsdt,\forall g\in L^2([T_1,T_2])^{p+1}
			\right\}.
		\end{align}
	\end{defin}
	The above definition is in fact equivalent to the following one, which seemingly imposes a weaker condition on the shifts. Let $\mathcal{G}\subseteq L^2([T_1,T_2])$ be such that $\overline{\mathcal{G}}=L^2([T_1,T_2])$, where the closure is with respect to the subspace topology. Then
	\begin{prop}\label{Gprop}
		\begin{align*}
			C^\gamma_{\mathcal{A}}(A)=&\left\{A'\in \mathcal{A}: \int_{[T_1,T_2]^2} \left(g_1(s),....,g_{p+1}(s)\right)K_{A'}(s,t)\left(g_1(t),....,g_{p+1}(t)\right)^Tdsdt
			\right.\nonumber
			\\
			&\left.\le \gamma\int_{[T_1,T_2]^2} \left(g_1(s),....,g_{p+1}(s)\right)K_{A}(s,t)\left(g_1(t),....,g_{p+1}(t)\right)^Tdsdt,\forall g_1,....,g_{p+1}\in \mathcal{G}
			\right\}.
		\end{align*}
	\end{prop}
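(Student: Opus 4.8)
The plan is to prove the set-equality by a density-and-continuity argument. Write $C_1$ for the set defined in Definition~\ref{ShiftDef} (the inequality required for every $g\in L^2([T_1,T_2])^{p+1}$) and $C_2$ for the set in the statement (the inequality required only for tuples $(g_1,\dots,g_{p+1})$ with each $g_i\in\mathcal{G}$). The inclusion $C_1\subseteq C_2$ is immediate: any tuple with components in $\mathcal{G}\subseteq L^2([T_1,T_2])$ is a particular element of $L^2([T_1,T_2])^{p+1}$, so the defining inequality for $C_1$ specializes to the one for $C_2$. All the content is in the reverse inclusion $C_2\subseteq C_1$.

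For the reverse inclusion, fix $A'\in C_2$ and introduce, for $B\in\mathcal{V}$, the quadratic form
\[
Q_B(g)=\int_{[T_1,T_2]^2} g(s)K_B(s,t)g(t)^T\,ds\,dt,\qquad g=(g_1,\dots,g_{p+1})\in L^2([T_1,T_2])^{p+1}.
\]
The key observation I would exploit is the probabilistic representation
\[
Q_B(g)=\E\!\left[\Big(\sum_{i=1}^{p+1}\int_{[T_1,T_2]}g_i(s)B_s(i)\,ds\Big)^{2}\right]=\E\big[L_B(g)^2\big],
\]
which follows by expanding the square and using $K_{B(i),B(j)}(s,t)=\E[B_s(i)B_t(j)]$ together with Fubini (legitimate since $B\in\mathcal{V}$). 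Here $L_B(g)=\sum_i\int g_i(s)B_s(i)\,ds$ is, for fixed $B$, a bounded linear map from $L^2([T_1,T_2])^{p+1}$ into $L^2(\Omega,\P)$: pathwise Cauchy--Schwarz gives $|L_B(g)|\le \n g\n\,\n B(\omega)\n$, hence $\n L_B(g)\n_{L^2(\Omega)}\le \n g\n\,\n B\n_{\mathcal{V}}$. Consequently $Q_B(g)=\n L_B(g)\n_{L^2(\Omega)}^2$ is a continuous (indeed locally Lipschitz) function of $g$ on $L^2([T_1,T_2])^{p+1}$, and this holds for both $B=A$ and $B=A'$.

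With continuity in hand, I would conclude by approximation. Given an arbitrary $g=(g_1,\dots,g_{p+1})\in L^2([T_1,T_2])^{p+1}$, density of $\mathcal{G}$ in $L^2([T_1,T_2])$ supplies $g_i^{(n)}\in\mathcal{G}$ with $g_i^{(n)}\to g_i$, so that $g^{(n)}=(g_1^{(n)},\dots,g_{p+1}^{(n)})\to g$ in the product space. Since $A'\in C_2$, each approximant satisfies $Q_{A'}(g^{(n)})\le\gamma Q_A(g^{(n)})$; letting $n\to\infty$ and invoking the continuity of $Q_{A'}$ and $Q_A$ yields $Q_{A'}(g)\le\gamma Q_A(g)$. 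As $g$ was arbitrary, $A'\in C_1$, completing the proof.

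I do not expect a serious obstacle here; the argument is a standard extension-by-density of an inequality between continuous quadratic forms. The only point needing genuine care is the continuity step, and specifically the justification that the integral kernels $K_{A(i),A(j)}$ really do induce bounded forms on $L^2$. The representation $Q_B(g)=\E[L_B(g)^2]$ makes this transparent and avoids any appeal to Hilbert--Schmidt or Mercer machinery; the only hypothesis it consumes is $A,A'\in\mathcal{V}$, which guarantees the finiteness $\n B\n_{\mathcal{V}}<\infty$ used in the Cauchy--Schwarz bound.
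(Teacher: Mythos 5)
Your proof is correct, and its skeleton coincides with the paper's: the inclusion $C_1\subseteq C_2$ (in your notation) is immediate, and the reverse inclusion follows by approximating an arbitrary $g\in L^2([T_1,T_2])^{p+1}$ coordinatewise from $\mathcal{G}$ and passing the inequality to the limit. The genuine difference lies in how continuity of the quadratic forms is established. The paper argues at the kernel level (display \eqref{Aprg}): it decomposes the difference of forms into bilinear pieces and bounds them via Cauchy--Schwarz in $\R^{p+1}$ plus Jensen's inequality, using that the entries of $\gamma K_A-K_{A'}$ lie in $L^2([T_1,T_2]^2)$ (itself a Cauchy--Schwarz consequence of $A,A'\in\mathcal{V}$). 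You instead write $Q_B(g)=\E\left[\langle g,B\rangle_{L^2([T_1,T_2])^{p+1}}^2\right]$, exhibiting each form as the squared $L^2(\Omega,\P)$-norm of the image of $g$ under a linear map of operator norm at most $\lVert B\rVert_{\mathcal{V}}$, from which (locally Lipschitz) continuity is immediate. Your route buys several things: it dispenses with the kernel manipulations, makes the positive semidefiniteness of the forms manifest, and is immune to a small slip in \eqref{Aprg}, whose bilinear decomposition pairs the increment $\textbf{g}_n-\textbf{g}$ with $\textbf{g}$ in \emph{both} terms and so silently drops the (harmless, second-order) term in which the increment appears twice. The price is the Fubini interchange needed to justify $Q_B(g)=\E[L_B(g)^2]$; the pathwise bound $|L_B(g)|\le\lVert g\rVert\,\lVert B(\omega)\rVert$ that you state supplies exactly the integrability required, and it consumes the same hypothesis ($A,A'\in\mathcal{V}$) that the paper spends on square-integrability of the kernels. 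For a final write-up, add one line making that Fubini step explicit, and one line noting that $\mathcal{G}^{p+1}$ is dense in $L^2([T_1,T_2])^{p+1}$ because the product norm is the $\ell^2$-sum of the coordinate norms.
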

The above proposition implies that we can restrict our attention to say, for instance, polynomial functions or step functions on $[T_1,T_2]$ when verifying if a shift belongs to $C^\gamma_{\mathcal{A}}(A)$. 
	\\
	A consequence of the following proposition is that for $A'\in \mathcal{V}$, a sufficient condition for $A'\in C^\gamma_{\mathcal{A}}(A)$ is that for any ON-basis of $L^2([T_1,T_2])$, $\{\phi_n\}_{n\in\N}$ ,the partial sums of the form 
	$$S_n(A')=\left(\sum_{k=1}^n \alpha_k(1)\phi_k,\ldots,\sum_{k=1}^n \alpha_k(p+1)\phi_k \right),$$
	where $\alpha_k(i)=\int_{[T_1,T_2]}A'_t(i)\phi_k(t)dt$, all lie in $C^\gamma(A)$ for sufficiently large $n$. Or, we may instead take an approximating sequence $A^n$ of $A'$ (so that $A^n\xrightarrow{\mathcal{V}}A'$), consisting of simple step function processes described as follows.
	Consider the set of $\left(L^2([T_1,T_2])\right)^{p+1}$-simple processes, i.e. processes of the form $\sum_{i=1}^n b_i1_{B_i}$, where $B_i\in\mathcal{F}$ and $b_i\in\left(L^2([T_1,T_2])\right)^{p+1}$, which are dense in $\mathcal{V}$ (see for instance Lemma 1.4 in \cite{bosq}). In fact, since the step functions are dense in $\left(L^2([T_1,T_2])\right)^{p+1}$ it means that the set of processes of the form $\sum_{i=1}^n b_i1_{B_i}$, where now instead the $b_i$'s a are step functions, are also dense in $\mathcal{V}$.
	\begin{prop}\label{closed}
		$C^\gamma(A)$ is closed in $\mathcal{V}$ if $\mathcal{A}$ is closed in $\mathcal{V}$.
	\end{prop}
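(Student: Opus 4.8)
The plan is to exhibit $C^\gamma(A)=C^\gamma_{\mathcal{A}}(A)$ (the set of Definition~\ref{ShiftDef}) as an intersection of $\mathcal{A}$ with a family of sublevel sets of continuous functionals, and then invoke the elementary fact that an arbitrary intersection of closed sets is closed. For fixed $g=(g_1,\ldots,g_{p+1})\in L^2([T_1,T_2])^{p+1}$ define $L_g:\mathcal{V}\to\R$ by
$$L_g(A')=\int_{[T_1,T_2]^2}g(s)K_{A'}(s,t)g(t)^T\,ds\,dt=\sum_{i,j=1}^{p+1}\int_{[T_1,T_2]^2}g_i(s)\,\E\left[A'_s(i)A'_t(j)\right]g_j(t)\,ds\,dt.$$
With this notation the defining inequalities read $L_g(A')\le\gamma L_g(A)$, and since $L_g(A)$ is a fixed real number,
$$C^\gamma_{\mathcal{A}}(A)=\mathcal{A}\cap\bigcap_{g\in L^2([T_1,T_2])^{p+1}}\left\{A'\in\mathcal{V}:L_g(A')\le\gamma L_g(A)\right\}.$$
As $\mathcal{A}$ is assumed closed, it suffices to show each set $\{A'\in\mathcal{V}:L_g(A')\le\gamma L_g(A)\}$ is closed, and for this it is enough to prove that $L_g$ is continuous on $\mathcal{V}$: the set is then the preimage under $L_g$ of the closed half-line $(-\infty,\gamma L_g(A)]$.

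The key step is to rewrite $L_g$ as a squared $L^2(\Omega)$-norm of a bounded linear image of $A'$. First I would introduce the scalar random variable $Z_{A'}=\sum_{i=1}^{p+1}\int_{[T_1,T_2]}g_i(s)A'_s(i)\,ds$. A Fubini interchange of $\E$ with the spatial integrals --- justified because $A'\in\mathcal{V}$ and Cauchy--Schwarz make the joint integrand absolutely integrable --- gives $L_g(A')=\E\left[Z_{A'}^2\right]=\lVert Z_{A'}\rVert_{L^2(\Omega)}^2$. The map $A'\mapsto Z_{A'}$ is linear, and two applications of Cauchy--Schwarz (first in the spatial variable $s$, then over the finite index $i$) yield
$$\E\left[Z_{A'}^2\right]\le\left(\sum_{i=1}^{p+1}\lVert g_i\rVert_{L^2([T_1,T_2])}^2\right)\lVert A'\rVert_{\mathcal{V}}^2,$$
so $A'\mapsto Z_{A'}$ is bounded from $\mathcal{V}$ into $L^2(\Omega)$. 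Consequently, if $A'_n\xrightarrow{\mathcal{V}}A'$ then $Z_{A'_n}\to Z_{A'}$ in $L^2(\Omega)$, and continuity of the norm gives $L_g(A'_n)=\lVert Z_{A'_n}\rVert_{L^2(\Omega)}^2\to\lVert Z_{A'}\rVert_{L^2(\Omega)}^2=L_g(A')$. Thus $L_g$ is continuous for every $g$, each half-space-type set above is closed, and their intersection with the closed set $\mathcal{A}$ is closed.

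Equivalently, one can phrase this sequentially (since $\mathcal{V}$ is metric): given $A'_n\in C^\gamma(A)$ with $A'_n\xrightarrow{\mathcal{V}}A'$, closedness of $\mathcal{A}$ gives $A'\in\mathcal{A}$, while for each fixed $g$ the continuity just established lets one pass to the limit in $L_g(A'_n)\le\gamma L_g(A)$ to obtain $L_g(A')\le\gamma L_g(A)$, i.e.\ $A'\in C^\gamma(A)$. I do not anticipate a serious obstacle here: the only points requiring care are the Fubini interchange and the boundedness estimate for $A'\mapsto Z_{A'}$, both of which rest on the same Cauchy--Schwarz bounds already used to make $K_{A'}$ a well-defined element of $L^2([T_1,T_2]^2)$; the reasoning closely parallels the continuity argument underlying Lemma~\ref{PropEnvCont}.
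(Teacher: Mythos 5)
Your proof is correct, and it reaches the conclusion by a genuinely different route than the paper. The paper argues sequentially: given $\{A_n\}\subset C^\gamma_{\mathcal{A}}(A)$ with $A_n\xrightarrow{\mathcal{V}}A'$, it invokes estimates analogous to \eqref{ApproxKg} to get convergence of the entries of $K_{A_n}$ to those of $K_{A'}$ in $L^2([T_1,T_2]^2)$, bounds the difference of the two quadratic forms by a constant times $\sum_{i}\lVert g_i\rVert_{L^2([T_1,T_2])}^2$ times the aggregated kernel distance, and then passes to the limit in the defining inequality. You never touch the kernels: you write the quadratic form as $L_g(A')=\E\left[Z_{A'}^2\right]$ with $Z_{A'}=\sum_{i=1}^{p+1}\int_{[T_1,T_2]}g_i(s)A'_s(i)\,ds$, note that $A'\mapsto Z_{A'}$ is linear and bounded from $\mathcal{V}$ into $L^2(\P)$ (your Fubini interchange and Cauchy--Schwarz bound are justified exactly as you indicate, by the same moment bounds that make $K_{A'}$ well defined), and conclude that $L_g$ is continuous as a bounded linear map followed by the squared norm; $C^\gamma_{\mathcal{A}}(A)$ is then $\mathcal{A}$ intersected with preimages of closed half-lines, hence closed. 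Your representation is more elementary and yields something extra for free: since $\gamma L_g(A)\ge 0$, each set $\left\{A'\in\mathcal{V}:L_g(A')\le\gamma L_g(A)\right\}$ is a closed ball for the seminorm $A'\mapsto\lVert Z_{A'}\rVert_{L^2(\P)}$, hence convex, so your argument shows $C^\gamma_{\mathcal{A}}(A)$ is closed \emph{and convex} whenever $\mathcal{A}$ is closed and convex. What the paper's kernel-level estimate buys is economy across the manuscript: the bound \eqref{ApproxKg} is the workhorse reused in the proofs of Proposition \ref{Gprop} and Theorem \ref{WR1}, so the sequential kernel argument for closedness comes there at no extra cost.
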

We now provide some examples where we give an explicit characterization of the shift set.
	\begin{ex}
		If $\{\phi_1,\ldots,\phi_n\}$ are orthonormal and $\mathcal{A}=\mathsf{span}\{\phi_1,\ldots,\phi_n\}$, $A(i)=\sum_{k=1}^na_{i,k}\phi_k$, where each $a_{i,k}\in L^2(\P)$, then $C^\gamma_{\mathcal{A}}(A)$ will consist of the set of shifts of the form $A'(i)=\sum_{k=1}^na'_{i,k}\phi_k$, $1\le i\le p+1$, with $a'_{i,k}\in L^2(\P)$ when $\E\left[\textbf{a'}^T\textbf{a'}\right]\preceq \gamma\E\left[\textbf{a}^T\textbf{a}\right]$, where $\textbf{a}=\left(a_{1,1},\ldots,a_{1,n},a_{2,1},\ldots,a_{p+1,n} \right) $ and $\textbf{a'}=\left(a'_{1,1},\ldots,a'_{1,n},a'_{2,1},\ldots,a'_{p+1,n} \right) $. 
	\end{ex}
	
	%\begin{ex}
	%Consider the case when the components of the shift $A$ are uncorrelated in the sense that $\E\left[A_s(i)A_t(j)\right]=0$ for all $s,t\in [T_1,T_2]$ and $i\not=j$. We then readily see that in this case the matrix $K_A$ will be diagonal. For any other shift $A'\in\mathcal{V}$ with uncorrelated components we have $A'\in C^\gamma_{\mathcal{A}}(A)$ if and only if
	%\begin{align*}
	%\sum_{i=1}^{p+1}\int_{[T_1,T_2]^2} g_i(s)K_{A'(i)}(s,t)g_i(t)dsdt
	%\le \gamma \sum_{i=1}^{p+1}\int_{[T_1,T_2]^2} g_i(s)K_{A(i)}(s,t)g_i(t)dsdt,\forall g_1,....,g_{p+1}\in L^2([T_1,T_2]).
	%\end{align*}
	%An obviously sufficient criteria is that
	%\begin{align}\label{suffkernel}
	%\int_{[T_1,T_2]^2} g_i(s)\left(\gamma K_{A(i)}(s,t) -K_{A'(i)}\right)(s,t)g_i(t)dsdt\ge 0, \hspace{1mm} \forall g_1,....,g_{p+1}\in L^2([T_1,T_2]).
	%\end{align}
	%Clearly $\gamma K_{A(i)} -K_{A'(i)}$ is symmetric kernel, so if it is also positive definite and continuous, then \eqref{suffkernel} follows. 
	%\end{ex}
	\begin{prop}\label{WSS}
		Suppose $\mathcal{A}$ is some set of wide-sense stationary processes in $\mathcal{V}$ and let $A\in \mathcal{V}$ also be wide-sense stationary. Writing, as customary $K_{A'}(s,t)=K_{A'}(s-t,0)=K_{A'}(s-t)$ for any process in $\mathcal{A}$. Then
		$$C^\gamma_{\mathcal{A}}(A)= \left\{A'\in \mathcal{A}: \gamma \hat{K}_{A}(\omega) -\hat{K}_{A'}(\omega) \textit{ is positive semidefinite a.e.}(\omega) \right\}, $$
		where we use the hat symbol to denote the Fourier transform.
	\end{prop}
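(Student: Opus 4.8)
The plan is to move the entire problem to the frequency domain. Since $A$ and every $A'\in\mathcal A$ are wide-sense stationary, their matrix covariance functions depend only on $s-t$ and, by Bochner's (Cramér's) theorem, admit spectral densities, so that $K_{A'}(\tau)=\int_\R e^{i\omega\tau}\hat K_{A'}(\omega)\,d\omega$ with each $\hat K_{A'}(\omega)$ Hermitian and $\hat K_{A'}(-\omega)=\overline{\hat K_{A'}(\omega)}$ (the latter because $K_{A'}$ is real). First I would substitute this representation into the defining quadratic form of Definition~\ref{ShiftDef} and apply Fubini together with Plancherel: writing $G_i(\omega)=\int_{[T_1,T_2]}g_i(t)e^{-i\omega t}\,dt$ and $G=(G_1,\dots,G_{p+1})^T$, a direct computation gives
\begin{equation*}
\int_{[T_1,T_2]^2} g(s)K_{A'}(s-t)g(t)^T\,ds\,dt=\int_\R G(\omega)^*\hat K_{A'}(\omega)G(\omega)\,d\omega .
\end{equation*}
Hence, with $M:=\gamma\hat K_A-\hat K_{A'}$, membership $A'\in C^\gamma_{\mathcal A}(A)$ is equivalent to $\int_\R G(\omega)^*M(\omega)G(\omega)\,d\omega\ge 0$ for every admissible $G$, and the proposition reduces to the claim that this holds iff $M(\omega)\succeq 0$ for almost every $\omega$.

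The easy direction is immediate: if $M(\omega)\succeq 0$ a.e.\ then the integrand is nonnegative a.e., so the integral is nonnegative for every $g$. For the converse I would reduce to a scalar statement. Fixing $v\in\C^{p+1}$ and feeding test functions built from (the real and imaginary parts of) $v\,h$ with $h$ scalar, while exploiting the symmetry $M(-\omega)=\overline{M(\omega)}$ to recombine the contributions at $\pm\omega$, the inequality yields
\begin{equation*}
\int_\R |H(\omega)|^2\,m_v(\omega)\,d\omega\ge 0,\qquad m_v(\omega):=v^*M(\omega)v\in\R .
\end{equation*}
Running $v$ over a countable dense subset of $\C^{p+1}$, it then suffices to show, for each such $v$, that $m_v(\omega)\ge 0$ for a.e.\ $\omega$; intersecting the resulting full-measure sets gives $M(\omega)\succeq 0$ a.e.

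For the scalar claim I would argue by contradiction, using localisation in frequency. Suppose $m_v<-\delta$ on a set $E$ of positive measure; pick a Lebesgue density point $\omega_0$ of $E$ and build test functions whose transforms concentrate near $\omega_0$, so that $\int |H|^2 m_v\to m_v(\omega_0)\lVert H\rVert^2<0$, contradicting nonnegativity. The admissible $H$ lie in a Paley--Wiener class (they are transforms of functions supported on the fixed interval $[T_1,T_2]$), so the delicate point is to produce approximate identities in frequency \emph{inside} this class: I would take a fixed nonnegative kernel $\rho=|H_0|^2$, use modulation of $h$ (which translates $H_0$) to recenter it at an arbitrary $\omega_0$, and invoke Proposition~\ref{Gprop} to restrict the test family to a convenient dense set, together with a Fejér/Krein-type factorisation guaranteeing enough admissible nonnegative symbols.

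The main obstacle is exactly this construction: reconciling the sharp frequency concentration demanded by the Lebesgue-point argument with the band-limited nature of the admissible test functions, and carrying it out while choosing the worst eigendirection $v=v(\omega)$ measurably in $\omega$ (handled by a measurable-selection theorem applied to the smallest eigenvalue of the Hermitian symbol $M$). Once the localisation is secured, the continuity of the risk under $\mathcal V$-convergence from Lemma~\ref{PropEnvCont} guarantees that the limiting test functions remain controlled, closing the equivalence and hence the characterisation of $C^\gamma_{\mathcal A}(A)$.
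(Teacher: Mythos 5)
Your forward implication (a.e.\ positive semidefiniteness of $\gamma\hat K_A-\hat K_{A'}$ implies membership in $C^\gamma_{\mathcal A}(A)$) is the same Plancherel computation as in the paper and is fine. The genuine gap is in the converse, and it is exactly the step you yourself flag as the ``main obstacle'': the Lebesgue-density-point argument cannot be completed inside your class of test functions. Every admissible $H$ is the transform of an $h$ supported on the fixed interval $[T_1,T_2]$, so $\sup_\omega|H(\omega)|^2\le T\lVert h\rVert_{L^2}^2=\tfrac{T}{2\pi}\lVert H\rVert_{L^2(\R)}^2$, and hence for any interval $I$ one has $\int_I|H|^2\,d\omega\le |I|\,\tfrac{T}{2\pi}\lVert H\rVert_{L^2(\R)}^2$. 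Thus only a fraction of order $|I|\,T$ of the mass of $|H|^2$ can ever sit inside $I$: no sequence of admissible $|H|^2$ can concentrate at a prescribed frequency $\omega_0$ at scales finer than $1/T$, so the limit $\int|H|^2 m_v\to m_v(\omega_0)\lVert H\rVert^2$ that your contradiction relies on is unattainable when the bad set $E$ is small. This is an uncertainty-principle obstruction, not a technicality, and neither the Fej\'er/Krein factorisation nor a measurable-selection theorem repairs it; the selection device is in any case redundant in your own scheme, since testing along a countable dense set of fixed directions $v$ already suffices, and the closing appeal to Lemma~\ref{PropEnvCont} is irrelevant here (that lemma concerns continuity of the risk in the shift, not the geometry of the shift set).

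The missing idea — and the one the paper's proof turns on — is regularity rather than measure theory: the entries of $\gamma\hat K_A-\hat K_{A'}$ are Fourier transforms of integrable kernels supported in the compact set $[-T,T]$, hence \emph{continuous} (indeed restrictions of entire functions of exponential type). Consequently, if the smallest eigenvalue is negative on a set of positive measure, it is negative at some point $\omega'$, and for a single \emph{fixed} eigenvector $x$ the scalar function $x\bigl(\gamma\hat K_A(\omega)-\hat K_{A'}(\omega)\bigr)x^T$ is negative on a whole neighbourhood of $\omega'$. The paper then takes a mollifier $\psi$ compactly supported \emph{in frequency} inside that neighbourhood, so that $\int\psi^2\,x\bigl(\gamma\hat K_A-\hat K_{A'}\bigr)x^T\,d\omega<0$ holds trivially, with no tail estimates, no density points and no selection, and transfers back to the time domain via Plancherel, restricting $\check\psi$ to $[T_1,T_2]$ to produce the violating test function. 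If you replace ``density point of $E$'' by ``point of negativity of a continuous function'' and use such a frequency-supported bump, your outline collapses to essentially the paper's argument; as written, however, the converse direction of your proposal does not go through.
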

We now state the main result of the paper, the worst-risk decomposition, which shows that the worst future out-of-sample risk can be written in terms of risks related to the two actually observed environments. Let $R_\Delta(\beta)=R_A(\beta)-R_O(\beta)$ be the risk difference and $R_+(\beta)=R_A(\beta)+R_O(\beta)$ the pooled risk, the following results holds.
	\begin{thm}\label{WR1} {\bf Worst Risk Decomposition.}
		Suppose $A \in \mathcal{V}$, $\sqrt{\gamma}A\in\bar{\mathcal{A}}$ and $\gamma>0$ then
		\begin{align*}
			\sup_{A'\in C^\gamma_{\mathcal{A}}(A)} R_{A'}(\beta)=\frac12 R_+(\beta)+\left(\gamma -\frac12\right)R_\Delta(\beta),
		\end{align*}
		for every $\beta\in (L^2([T_1,T_2]))^p$.
\end{thm}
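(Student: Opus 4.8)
The plan is to collapse the whole prediction residual into a single bounded operator, reduce the risk to one quadratic functional of the shift plus a shift-independent noise term, and then solve a quadratic-form maximization under the covariance-domination constraint defining $C^\gamma_{\mathcal{A}}(A)$.

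First I would absorb the residual into one operator. Writing $L_\beta(f)(t)=f_1(t)-\int_{[T_1,T_2]}\sum_{i=1}^p(\beta(i))(t,\tau)f_{i+1}(\tau)\rmd\tau$, the integral part is Hilbert--Schmidt, so $L_\beta:L^2([T_1,T_2])^{p+1}\to L^2([T_1,T_2])$ is bounded; composing with the extended bounded resolvent gives $\Phi:=L_\beta\mathcal{S}$, and by \eqref{SEM} the residual process equals $\Phi(A'+\epsilon_{A'})$. Since $\epsilon_{A'}$ is a centred copy of $\epsilon$ independent of $A'$, expanding $\E\n\Phi(A'+\epsilon_{A'})\n^2$ annihilates the cross term (the expected inner product of two independent elements, one of mean zero), leaving
\[
R_{A'}(\beta)=\E\big[\n\Phi A'\n_{L^2}^2\big]+\E\big[\n\Phi\epsilon\n_{L^2}^2\big].
\]
Taking $A'=0$ identifies the second summand with $R_O(\beta)$, hence $R_\Delta(\beta)=\E[\n\Phi A\n^2]$, and the stated right-hand side $\tfrac12 R_+(\beta)+(\gamma-\tfrac12)R_\Delta(\beta)$ simplifies to $R_O(\beta)+\gamma\,\E[\n\Phi A\n^2]$. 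Everything thus reduces to proving $\sup_{A'\in C^\gamma_{\mathcal{A}}(A)}\E[\n\Phi A'\n^2]=\gamma\,\E[\n\Phi A\n^2]$.

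Second, the upper bound. The quadratic form in Definition \ref{ShiftDef} is exactly $\int_{[T_1,T_2]^2} g(s)K_{A'}(s,t)g(t)^T\rmd s\,\rmd t=\E[\langle g,A'\rangle^2]$, so $A'\in C^\gamma_{\mathcal{A}}(A)$ means $\E[\langle g,A'\rangle^2]\le\gamma\,\E[\langle g,A\rangle^2]$ for every $g\in L^2([T_1,T_2])^{p+1}$. Fixing an ON-basis $\{\psi_m\}$ of the output space and setting $h_m=\Phi^*\psi_m$, Parseval gives $\n\Phi A'\n^2=\sum_m\langle A',h_m\rangle^2$, whence by monotone convergence $\E[\n\Phi A'\n^2]=\sum_m\E[\langle A',h_m\rangle^2]\le\gamma\sum_m\E[\langle A,h_m\rangle^2]=\gamma\,\E[\n\Phi A\n^2]$, applying the constraint termwise with $g=h_m$. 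This yields $\sup\le\gamma\,\E[\n\Phi A\n^2]$, with equality formally attained by $\sqrt\gamma A$, for which the domination holds as an equality.

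Third, attainment, which I expect to be the main obstacle. The natural maximizer $\sqrt\gamma A$ lies only in $\overline{\mathcal{A}}$, so I would take $A_n\in\mathcal{A}$ with $A_n\xrightarrow{\mathcal{V}}\sqrt\gamma A$; Lemma \ref{PropEnvCont} then gives $\E[\n\Phi A_n\n^2]\to\gamma\,\E[\n\Phi A\n^2]$. The difficulty is that such $A_n$ need not be feasible: pointwise convergence of the forms $\E[\langle g,A_n\rangle^2]\to\gamma\E[\langle g,A\rangle^2]$ does not force $\Sigma_{A_n}\preceq\gamma\Sigma_A$, and a naive rescaling $t_nA_n$ fails both because $\Sigma_A$ need not be boundedly invertible and because $\mathcal{A}$ is not assumed star-shaped. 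Here I would exploit the finite-dimensional reductions already prepared: by Proposition \ref{Gprop} the constraint may be tested on a countable dense family, and the remark on partial sums $S_n(\cdot)$ and on dense simple step processes lets me replace each $A_n$ by a truncation supported in a finite coordinate block, on which the domination becomes a genuine matrix positive-semidefiniteness condition that can be enforced with vanishing loss of risk. A diagonal argument over the truncation level, combined with the continuity Lemma and the closedness of $C^\gamma(A)$ from Proposition \ref{closed}, then produces a feasible maximizing sequence, giving $\sup\ge\gamma\,\E[\n\Phi A\n^2]$ and closing the proof.
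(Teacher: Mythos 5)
Your first two steps are correct, and they take a genuinely different and far more economical route than the paper. The paper reaches the same two facts — that $R_{A'}(\beta)$ splits as a shift quadratic form plus an environment-independent noise term, and that this makes $\sqrt{\gamma}A$ the extremal covariance level — through eigen-score expansions, finite-dimensional SEM approximations with matrices $B^n$ and error terms $\delta_n(A')$, and two separate claims killing the shift-noise cross terms and identifying the law of the noise (its Steps 1--6), before optimizing in Step 7. Your operator reduction $\Phi=L_\beta\mathcal{S}$ with $R_{A'}(\beta)=\E\bigl[\n\Phi A'\n^2\bigr]+R_O(\beta)$, followed by Parseval and termwise application of the constraint with $g=\Phi^*\psi_m$, gives $\sup_{A'\in C^\gamma_{\mathcal{A}}(A)}R_{A'}(\beta)\le R_O(\beta)+\gamma\E\bigl[\n\Phi A\n^2\bigr]$ in a few lines, and your identification of the theorem's right-hand side with $R_O(\beta)+\gamma\E\bigl[\n\Phi A\n^2\bigr]=R_{\sqrt{\gamma}A}(\beta)$ is also correct.

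The gap is in your third step, and it is not repairable along the lines you sketch. The feasible set is $C^\gamma_{\mathcal{A}}(A)=\mathcal{A}\cap\{A'\colon K_{A'}\preceq\gamma K_{A}\}$, and $\mathcal{A}$ is an arbitrary subset of $\mathcal{V}$, closed under no operation whatsoever. When you replace $A_n$ by a finite-block truncation and then ``enforce'' the matrix inequality on that block, the modified process will in general no longer belong to $\mathcal{A}$, hence not to $C^\gamma_{\mathcal{A}}(A)$, and so contributes nothing to the supremum; Propositions \ref{Gprop} and \ref{closed} concern how the constraint is tested and that the feasible set is closed, not how to manufacture feasible points inside $\mathcal{A}$. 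In fact no argument can close this gap from the hypothesis read literally: if $K_A$ is degenerate, say $\langle g_0,A\rangle=0$ a.s.\ for some $g_0\neq 0$, take $\mathcal{A}=\{\sqrt{\gamma}A+n^{-1}g_0:n\in\N\}\cup\{0\}$. Then $\sqrt{\gamma}A\in\bar{\mathcal{A}}$, yet every nonzero element of $\mathcal{A}$ violates the constraint in the direction $g_0$, so $C^\gamma_{\mathcal{A}}(A)=\{0\}$, the left side equals $R_O(\beta)$, and the identity fails whenever $\E\bigl[\n\Phi A\n^2\bigr]>0$ (e.g.\ $\beta=0$, $\mathcal{T}=0$).

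What the paper's Step 7 actually does at this point is select $\tilde{A}_\Delta\in C^\gamma_{\mathcal{A}}(A)$ — feasible, not merely in $\mathcal{A}$ — with $\n\tilde{A}_\Delta-\sqrt{\gamma}A\n_{\mathcal{V}}<\eta$; in other words, it implicitly uses the stronger reading $\sqrt{\gamma}A\in\overline{C^\gamma_{\mathcal{A}}(A)}$ of the hypothesis. Under that reading your missing lower bound is a one-liner: Lemma \ref{PropEnvCont} gives $R_{\tilde{A}_\Delta}(\beta)\ge R_{\sqrt{\gamma}A}(\beta)-\Delta$, and $R_{\sqrt{\gamma}A}(\beta)$ is exactly the claimed right-hand side. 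So the correct completion of your proof is not truncation surgery but the observation that attainment requires — and then follows immediately from — approximability of $\sqrt{\gamma}A$ by shifts that are themselves feasible; your counterexample instinct about what can go wrong otherwise was exactly right.
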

The theorem states that if the out-of-sample looks like what has been observed previously, then its risk is described by $\gamma \in (0,1]$. The more extreme the out-of-sample environment is, the risk scaled proportionally by a term given by the risk difference. This risk difference also appears in the non-functional Causal Dantzig approach \citep{causaldantzig}. Also, the above decomposition is an exact analogue of the non-functional case \citep{kania2022causal}. We wish to stress the case $\mathcal{A}=\tilde{\mathcal{V}}$ which restricts us to shifts for which the SEMs \eqref{SEMA1}, in case the range of $I-\mathcal{T}$ is only dense (in this case $\mathcal{T}$ cannot be closed) and \eqref{SEMA1} is fulfilled, with the following result.
	\begin{corollary}
Suppose $A \in \mathcal{V}$ and $\gamma>0$ then
		\begin{align*}
			\sup_{A'\in C^\gamma_{\tilde{\mathcal{V}}}(A)} R_{A'}(\beta)=\frac12 R_+(\beta)+\left(\gamma -\frac12\right) R_\Delta(\beta),
		\end{align*}
		for every $\beta\in (L^2([T_1,T_2]))^p$.
	\end{corollary}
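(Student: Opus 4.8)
The plan is to derive this corollary as the special case $\mathcal{A}=\tilde{\mathcal{V}}$ of Theorem \ref{WR1}, so that no new argument is needed beyond checking that the hypotheses of that theorem are met. Theorem \ref{WR1} requires three things: $A\in\mathcal{V}$, $\gamma>0$, and the closure condition $\sqrt{\gamma}A\in\overline{\mathcal{A}}$. The first two are exactly the assumptions of the corollary, so the entire task reduces to verifying the third with $\mathcal{A}=\tilde{\mathcal{V}}$.

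First I would observe that $\mathcal{V}$ is a Hilbert space and is therefore closed under scalar multiplication, so $\sqrt{\gamma}A\in\mathcal{V}$ whenever $A\in\mathcal{V}$ and $\gamma>0$. Next I would invoke the density statement already recorded in Section \ref{sec:sfr}, namely $\overline{\tilde{\mathcal{V}}}=\mathcal{V}$, which follows there from the density of the $\mathcal{R}$-simple processes in $\mathcal{V}$ (a consequence of $\overline{\mathcal{R}}=L^2([T_1,T_2])^{p+1}$). Combining these two facts gives
\[
\sqrt{\gamma}A\in\mathcal{V}=\overline{\tilde{\mathcal{V}}}=\overline{\mathcal{A}},
\]
which is precisely the required closure condition for the choice $\mathcal{A}=\tilde{\mathcal{V}}$. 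With all three hypotheses of Theorem \ref{WR1} now in force, its conclusion applies verbatim and yields the stated decomposition
\[
\sup_{A'\in C^\gamma_{\tilde{\mathcal{V}}}(A)} R_{A'}(\beta)=\tfrac12 R_+(\beta)+\left(\gamma-\tfrac12\right)R_\Delta(\beta)
\]
for every $\beta\in (L^2([T_1,T_2]))^p$.

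I do not expect any genuine obstacle here: the whole content of the corollary is the remark that, for the particular collection $\mathcal{A}=\tilde{\mathcal{V}}$, the closure restriction $\sqrt{\gamma}A\in\overline{\mathcal{A}}$ imposed in the general theorem becomes vacuous, and hence the worst-risk decomposition holds for \emph{every} $A\in\mathcal{V}$ and $\gamma>0$ without any side condition. The single fact the argument leans on, $\overline{\tilde{\mathcal{V}}}=\mathcal{V}$, is already established in the setup, so nothing further must be proved.
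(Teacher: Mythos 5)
Your proposal is correct and is exactly the argument the paper intends: the corollary is the special case $\mathcal{A}=\tilde{\mathcal{V}}$ of Theorem \ref{WR1}, where the hypothesis $\sqrt{\gamma}A\in\overline{\mathcal{A}}$ becomes vacuous because $\sqrt{\gamma}A\in\mathcal{V}$ (as $\mathcal{V}$ is a vector space) and $\overline{\tilde{\mathcal{V}}}=\mathcal{V}$ was already established in Section \ref{sec:sfr}. Nothing is missing.
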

An immediate consequence of Theorem \ref{WR1} is also that the worst risk is determined by dense subsets.
	\begin{corollary}
		If $A \in \mathcal{V}$, $\gamma>0$ and $\sqrt{\gamma}A\in \bar{\mathcal{A}}$ then
		$$ \sup_{A'\in C^\gamma_{\mathcal{A}}(A)} R_{A'}(\beta)=\sup_{A'\in C^\gamma_{\bar{\mathcal{A}}}(A)} R_{A'}(\beta).$$
	\end{corollary}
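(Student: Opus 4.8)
The plan is to invoke Theorem~\ref{WR1} twice and to exploit the fact that the right-hand side of the worst-risk decomposition does not depend on the shift collection over which the supremum is taken. First I would apply Theorem~\ref{WR1} directly with the given collection $\mathcal{A}$. The hypotheses $A \in \mathcal{V}$, $\gamma > 0$, and $\sqrt{\gamma}A \in \bar{\mathcal{A}}$ are exactly those assumed in the corollary, so the theorem yields
$$\sup_{A'\in C^\gamma_{\mathcal{A}}(A)} R_{A'}(\beta)=\frac12 R_+(\beta)+\left(\gamma -\frac12\right)R_\Delta(\beta).$$

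Next I would apply the same theorem with $\bar{\mathcal{A}}$ now playing the role of the shift collection. The only hypothesis that requires checking is the membership condition, which in this instance reads $\sqrt{\gamma}A \in \overline{\bar{\mathcal{A}}}$. Since the closure operator is idempotent, $\overline{\bar{\mathcal{A}}} = \bar{\mathcal{A}}$, and the standing assumption $\sqrt{\gamma}A \in \bar{\mathcal{A}}$ supplies exactly what is needed. Hence Theorem~\ref{WR1} applies and gives
$$\sup_{A'\in C^\gamma_{\bar{\mathcal{A}}}(A)} R_{A'}(\beta)=\frac12 R_+(\beta)+\left(\gamma -\frac12\right)R_\Delta(\beta).$$

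The key observation is that the two right-hand sides are identical: each is built only from $R_+(\beta)$ and $R_\Delta(\beta)$, which in turn depend on the observational environment $O$, the single observed shift $A$, the parameter $\gamma$, and $\beta$, but in no way on the collection $\mathcal{A}$ or $\bar{\mathcal{A}}$ indexing the supremum. Equating the two displayed expressions therefore yields the claimed identity.

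The main, and essentially the only, subtlety is confirming that the membership hypothesis of Theorem~\ref{WR1} transfers to the closure $\bar{\mathcal{A}}$; once the idempotence $\overline{\bar{\mathcal{A}}}=\bar{\mathcal{A}}$ is noted, the conclusion is immediate. I do not anticipate a genuine obstacle here: the corollary is a direct bookkeeping consequence of the $\mathcal{A}$-independence of the decomposition's right-hand side, which is precisely the structural feature that makes Theorem~\ref{WR1} so useful.
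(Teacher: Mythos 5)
Your proposal is correct and matches the paper's intent exactly: the paper presents this corollary as an immediate consequence of Theorem \ref{WR1}, obtained by applying the decomposition once with $\mathcal{A}$ and once with $\bar{\mathcal{A}}$ (the latter legitimized by idempotence of closure, $\overline{\bar{\mathcal{A}}}=\bar{\mathcal{A}}$), and observing that the right-hand side $\frac12 R_+(\beta)+\left(\gamma-\frac12\right)R_\Delta(\beta)$ is independent of the shift collection.
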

This result is useful when the shift set $C^\gamma_{\mathcal{A}}(A)$ is more easily characterized than the set $C^\gamma_{\bar{\mathcal{A}}}(A)$, as discussed in the paragraph after Proposition \ref{closed}. 

	\section{Functional worst-risk minimization}
	\label{sec:minimizer}

In this section, we derive results in the population setting for worst-risk minimization in functional structural equation models (SEMs). We formulate the worst-risk minimization problem in terms of functional regression and provide conditions for the existence of minimizers. Section~\ref{sec:uniqueness} focuses on establishing necessary and sufficient conditions for the uniqueness of the worst-risk minimizer within the space of square-integrable kernels. This result generalizes classical worst-risk minimization principles to the functional domain, ensuring robustness to distributional shifts.
In contrast, Section~\ref{sec:ON} considers minimizers in an arbitrary orthonormal (ON) basis. This approach removes the need for estimating eigenfunctions explicitly, allowing for a more flexible implementation in practical scenarios.

	\subsection{The uniqueness condition}
    \label{sec:uniqueness}
In this section we provide necessary and sufficient conditions for the existence of a (unique) minimizer $\beta$ in $L^2([T_1,T_2]^2)^p$. The solution is provided in terms of a certain eigenbasis for the covariate process and an arbitrary ON-basis (chosen by the user) corresponding to the target. Fix $\gamma\ge 0$. Consider $\mathcal{K} :L^2([T_1,T_2])^p\to L^2([T_1,T_2])^p$, defined by 
$$(\mathcal{K} f)(t)=\gamma\int_{[T_1,T_2]}K_{X^A}(s,t)f(s)ds+(1-\gamma)\int_{[T_1,T_2]}K_{X^O}(s,t)f(s)ds,$$
for $f\in L^2([T_1,T_2])^p$. This is a compact, self-adjoint operator and we will denote its eigenfunctions by $\{\psi_n\}_{n\in\N}$, which are orthonormal in $L^2([T_1,T_2])^p$. If $\mathsf{Ker}\left(\mathcal{K}\right)\not=\{0\}$ then there exists an ON-basis, $\{\eta_l\}_{l\in\N}$ for $\mathsf{Ker}\left(\mathcal{K}\right)$. Let $\{\phi_n\}_{n\in\N}$ be an arbitrary ON-basis for $L^2([T_1,T_2])$ and
$$W=\left\{\sum_{k=1}^\infty\sum_{l=1}^\infty  \alpha_{k,l} \phi_k\otimes \eta_l: \sum_{k=1}^\infty\sum_{l=1}^\infty  \alpha_{k,l}^2<\infty\right\}. $$
Define
	\begin{itemize}
		\item[] $\chi_k^A=\langle X^{A},\psi_k \rangle_{L^2([T_1,T_2])^p}$
		\item[] $\chi^O_k=\langle X^O,\psi_k \rangle_{L^2([T_1,T_2])^p}$
		\item[] $Z^A_k=\langle Y^A,\phi_k \rangle_{L^2([T_1,T_2])}$
		\item[] $Z^O_k=\langle Y^O,\phi_k \rangle_{L^2([T_1,T_2])}$.
	\end{itemize}
	\begin{thm}\label{MinimizerThm}
Under the conditions of Theorem \ref{WR1}, we have that there is a unique solution 
		\begin{align*}
			&\arg\min_{\beta\in L^2([T_1,T_2]^2)^p}\sup_{A'\in C^\gamma_{\mathcal{A}}(A)} R_{A'}(\beta)
			= \sum_{k=1}^\infty\sum_{l=1}^\infty  \frac{\gamma\E\left[ Z_k^A\chi_l^A\right]+(1-\gamma)\E\left[ Z_k^O\chi_l^O\right] }{\gamma\E\left[ (\chi_l^A)^2\right] +(1-\gamma)\E\left[ (\chi_l^O)^2\right]}\phi_k\otimes \psi_l
		\end{align*}
if and only if
\begin{align}\label{summationkrit}
\sum_{k=1}^\infty\sum_{l=1}^\infty  \frac{\left(\gamma\E\left[ Z_k^A\chi_l^A\right]+(1-\gamma)\E\left[ Z_k^O\chi_l^O\right] \right)^2}{\left(\gamma\E\left[ (\chi_l^A)^2\right] +(1-\gamma)\E\left[ (\chi_l^O)^2\right]\right)^2}<\infty
\end{align}
and the operator $\mathcal{K} $ is injective. If on the other hand, we have \eqref{summationkrit} but $\mathcal{K}_X$ is not injective then 
\begin{align}\label{argminformula}
			&\arg\min_{\beta\in L^2([T_1,T_2]^2)^p}\sup_{A'\in C^\gamma_{\mathcal{A}}(A)} R_{A'}(\beta)
			= \left\{\sum_{k=1}^\infty\sum_{l=1}^\infty  \frac{\gamma\E\left[ Z_k^A\chi_l^A\right]+(1-\gamma)\E\left[ Z_k^O\chi_l^O\right] }{\gamma\E\left[ (\chi_l^A)^2\right] +(1-\gamma)\E\left[ (\chi_l^O)^2\right]}\phi_k\otimes \psi_l\right\}+W.
		\end{align}
	\end{thm}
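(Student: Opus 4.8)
The plan is to begin by collapsing the inner supremum with Theorem~\ref{WR1}. Substituting $R_+(\beta)=R_A(\beta)+R_O(\beta)$ and $R_\Delta(\beta)=R_A(\beta)-R_O(\beta)$ into $\frac12 R_+(\beta)+(\gamma-\frac12)R_\Delta(\beta)$ and simplifying, the worst-risk objective reduces to the convex combination $\gamma R_A(\beta)+(1-\gamma)R_O(\beta)$. Everything downstream is then about minimizing this single quadratic functional over $\beta\in L^2([T_1,T_2]^2)^p$.

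Next I would expand $\beta$ in the tensor ON-basis of $L^2([T_1,T_2]^2)^p\cong L^2([T_1,T_2])\otimes L^2([T_1,T_2])^p$ given by $\{\phi_k\otimes\psi_l\}\cup\{\phi_k\otimes\eta_l\}$, writing $\beta=\sum_{k,l}\alpha_{k,l}\phi_k\otimes\psi_l+\sum_{k,l}\theta_{k,l}\phi_k\otimes\eta_l$ with $\sum\alpha_{k,l}^2+\sum\theta_{k,l}^2=\n\beta\n^2<\infty$. Identifying $\beta$ with the operator $f\mapsto\int\sum_i\beta(i)(\cdot,\tau)f_i(\tau)\rmd\tau$, one checks $(\phi_k\otimes\psi_l)X=\phi_k\langle\psi_l,X\rangle=\phi_k\chi_l$. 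Pathwise Parseval in the target variable then gives $\n Y^A-\beta X^A\n^2=\sum_k\left(Z_k^A-\sum_l\alpha_{k,l}\chi_l^A-\sum_l\theta_{k,l}\langle\eta_l,X^A\rangle\right)^2$, and likewise for $O$; taking expectations (justified by Tonelli on nonnegative integrands, together with $\sum_l\E[(\chi_l^A)^2]=\E\n X^A\n^2<\infty$, which gives $L^2(\P)$-convergence of the score series) yields explicit series expressions for $R_A(\beta)$ and $R_O(\beta)$.

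The crux is the diagonalization supplied by the eigenstructure of $\mathcal{K}$. Since the covariance operators of $X^A$ and $X^O$ have kernels $K_{X^A},K_{X^O}$, the weighted form satisfies $\gamma\E[\chi_l^A\chi_m^A]+(1-\gamma)\E[\chi_l^O\chi_m^O]=\langle\mathcal{K}\psi_l,\psi_m\rangle=\lambda_l\delta_{lm}$, where $\lambda_l$ is the eigenvalue of $\psi_l$, while the mixed terms involving the $\eta_l$ vanish because $\mathcal{K}\eta_l=0$. Substituting, all cross terms collapse and the objective becomes the fully separable sum
$$\gamma R_A(\beta)+(1-\gamma)R_O(\beta)=\mathrm{const}+\sum_{k,l}\left(\lambda_l\alpha_{k,l}^2-2c_{k,l}\alpha_{k,l}\right)+\sum_{k,l}\left(\mu_l\theta_{k,l}^2-2d_{k,l}\theta_{k,l}\right),$$
with $c_{k,l}=\gamma\E[Z_k^A\chi_l^A]+(1-\gamma)\E[Z_k^O\chi_l^O]$, $\mu_l=\langle\mathcal{K}\eta_l,\eta_l\rangle=0$, and $d_{k,l}$ the analogous kernel-direction coefficient. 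Because Theorem~\ref{WR1} exhibits the left side as a supremum of nonnegative risks, the functional is bounded below by $0$; boundedness along each affine direction $\theta_{k,l}$ forces $d_{k,l}=0$, and boundedness along each $\alpha_{k,l}$ forces $\lambda_l\ge0$, so the $\psi_l$ (eigenfunctions for nonzero eigenvalues) have $\lambda_l>0$. Hence the $\theta$-coordinates are free, and each $\alpha_{k,l}$-term is a strictly convex parabola minimized uniquely at $\alpha_{k,l}^*=c_{k,l}/\lambda_l$, which is precisely the stated coefficient.

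Finally I would assemble the existence and uniqueness claims: completing the square gives $\gamma R_A(\beta)+(1-\gamma)R_O(\beta)=\mathrm{const}'+\sum_{k,l}\lambda_l(\alpha_{k,l}-\alpha_{k,l}^*)^2$. Any minimizer must match $\alpha_{k,l}^*$ in every $\psi$-coordinate, since perturbing a single coordinate toward $\alpha_{k,l}^*$ strictly lowers the objective; thus a minimizer in $L^2$ exists if and only if $\{\alpha_{k,l}^*\}\in\ell^2$, which is exactly \eqref{summationkrit}, covering both the sufficiency and the necessity directions. When $\mathcal{K}$ is injective there are no $\eta$-directions and $\sum_{k,l}\alpha_{k,l}^*\phi_k\otimes\psi_l$ is the unique minimizer; when $\mathcal{K}$ is not injective the free $\theta$-coordinates contribute precisely the square-summable span $W$, giving the affine solution set of \eqref{argminformula}. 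I expect the main obstacle to be the rigorous passage to the separable series form --- justifying the interchange of expectation, integration, and the two infinite summations, together with the $L^2(\P)$-convergence of the score expansions --- since $\mathcal{T}$, and hence the underlying processes, need not be bounded; once that is in place, the diagonalization and the per-coordinate optimization are routine.
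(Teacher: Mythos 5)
Your proposal is correct, and it reaches the theorem by a genuinely different route than the paper's own proof. Both arguments start from Theorem \ref{WR1}, but the paper immediately identifies the weighted objective $\gamma R_A(\beta)+(1-\gamma)R_O(\beta)$ with the risk $R_{\sqrt{\gamma}A}(\beta)$ of a single artificial environment; it then proves, via two probabilistic claims (vanishing of shift--noise cross terms, and environment-invariance of the noise covariance), that $\mathcal{K}$ is precisely the covariance operator of $X^{\sqrt{\gamma}A}$, so that the scores $\chi_l^{\sqrt{\gamma}A}$ are orthogonal and the kernel-direction scores vanish almost surely, and it concludes with the Hilbert-space projection theorem in $L^2(\P_{\sqrt{\gamma}A}\times dt)$, projecting $Y^{\sqrt{\gamma}A}$ onto $\overline{\mathsf{span}}\left\{\phi_k\chi_l^{\sqrt{\gamma}A}/\n\chi_l^{\sqrt{\gamma}A}\n_{L^2(\P)}\right\}$, followed by a translation step (\eqref{numerat1}--\eqref{divis1}) converting $\sqrt{\gamma}A$-moments into the observed quantities of the statement. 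You never form the artificial environment: you coordinatize in the basis $\{\phi_k\otimes\psi_l\}\cup\{\phi_k\otimes\eta_l\}$ and diagonalize the weighted objective through the purely algebraic identities $\gamma\E\left[\chi_l^A\chi_m^A\right]+(1-\gamma)\E\left[\chi_l^O\chi_m^O\right]=\langle\mathcal{K}\psi_l,\psi_m\rangle=\lambda_l\delta_{l,m}$ and $\mathcal{K}\eta_l=0$, replacing the paper's probabilistic facts with boundedness-below arguments; these arguments are genuinely needed, since for $\gamma>1$ the weight $1-\gamma$ is negative (so your phrase ``convex combination'' is inaccurate there, though harmless, as convexity is never used), and neither $\lambda_l\ge 0$ nor the vanishing of the kernel-direction linear coefficients $d_{k,l}$ is otherwise evident. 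What each approach buys: the paper obtains existence and uniqueness of the abstract minimizer for free from the projection theorem and gets the kernel-direction degeneracy pathwise, at the cost of the two claims and the final translation step; your per-coordinate completing-the-square is more elementary, produces the coefficients directly in observed moments (no translation needed), and makes the roles of \eqref{summationkrit} (square-summability of the optimal coordinates) and of $W$ in \eqref{argminformula} completely transparent. The place where the real work remains is exactly the one you flag: justifying $R_{A'}(\beta)=\sum_k\E\left[\left(Z_k^{A'}-\sum_l\alpha_{k,l}\chi_l^{A'}-\sum_l\theta_{k,l}\langle\eta_l,X^{A'}\rangle\right)^2\right]$ with the attendant interchanges of sum, integral and expectation; this is what the paper spends most of its effort on (the derivations culminating in \eqref{RAlim} and \eqref{series2}), and your pathwise-Parseval-plus-Tonelli plan, supplemented by the $L^2(\P)$-Cauchy estimates you mention, is an adequate way to carry it out.
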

\begin{ex}
If we apply the above theorem to Example \ref{regrEX} we get a sort of multivariate \textit{and} robust generalization of Theorem 2.3 in \cite{He2010} (the ``Basic theorem for functional linear models"). To be precise we have the SEM,
$$Y^{A'}_t=S\left( X^{A'}(1),\ldots,X^{A'}(p)\right)_t+A'_t(1)+\epsilon_t^{A'}(1),$$
with $X^{A'}_t(i)=A'_t(i+1)+\epsilon_t^{A'}(i+1)$, for $1\le i\le p$ and $A'\in\mathcal{V}$, so that $X^{A'}-A'\sim X^{O}$. In particular with
$$Y^{A'}_t=\int_{[T_1,T_2]}\sum_{i=1}^{p}(\beta(t,\tau))(i)X^{A'}_\tau(i) d\tau+A'_t(1)+\epsilon_t^{A'}(1),$$
we have classical functional regression with a functional response considered over our shift-space. We may then apply Theorem \ref{MinimizerThm} to find our robust minimizer. 
\end{ex}
We now move on to the situation where we consider a minimizer in an arbitrary ON-basis, where the minimizer might not be unique.

	\subsection{Minimizer(s) in an arbitrary ON-basis}
    \label{sec:ON}
Let $V$ either be $L^2([T_1,T_2])$ or a finite dimensional subspace of $L^2([T_1,T_2])$. Keeping in mind that $L^2([T_1,T_2])\otimes L^2([T_1,T_2])=L^2([T_1,T_2]^2)$ we let
$$S=\arg\min_{\beta\in \left(V\otimes V\right)^p}\sup_{A'\in C^\gamma_{\mathcal{A}}(A)} R_{A'}(\beta),$$
i.e., the set of $\arg\min$-solutions of the worst-risk minimization problem for either $L^2([T_1,T_2]^2)$ or a finite dimensional subspace. A-priori we could have that this set is empty, contains a unique element or contains several solutions. Let $N=dim(V)$ and $\{\phi_n\}_{n=1}^N$ be any ON-system that spans $V$. If $N=\infty$ this means that this is a basis for $L^2([T_1,T_2])$, if $N\in\N$ then this system spans a finite dimensional subspace thereof. Denote for $W\in L^2([T_1,T_2])^p$ and $n\in \N$,
\begin{align*}
F_{1:n}(W)=&\left(\int_{[T_1,T_2]}W_t(1)\phi_{1}(t)dt,\ldots,\int_{[T_1,T_2]}W_t(1)\phi_{n}(t)dt,\ldots,\int_{[T_1,T_2]}W_t(p)\phi_{1}(t)dt,\ldots,\int_{[T_1,T_2]}W_t(p)\phi_{n}(t)dt\right).
\end{align*}
Define for $n< N+1$ (rather than $\le N$, to handle the case $N=\infty$),
$$G_n=\gamma \E_{A}\left[F_{1:n}\left(X^{A}\right)^TF_{1:n}\left(X^{A}\right)\right]+(1-\gamma)\E_{O}\left[F_{1:n}\left(X^{O}\right)^TF_{1:n}\left(X^{O}\right)\right], $$
\begin{align*}
\left(\lambda_{1,k,1}(n),\ldots,\lambda_{1,k,n}(n),\ldots,\lambda_{p,k,n}(n) \right)
&=G_n^{-1}\left(\gamma\E_{A}\left[ Z_k^{A}F_{1:n}\left(X^{A}\right)\right]
+
(1-\gamma)\E_{O}\left[ Z_k^{O}F_{1:n}\left(X^{O}\right)\right]
\right)1_{\det\left(G_n\right)\not=0}
\\&+
\left(n,\ldots,n\right) 1_{\det\left(G_n\right)=0},
\end{align*}
for $1\le k\le n$ and
\begin{align*}
\beta_n=\left( \sum_{k=1}^n\sum_{l=1}^n\lambda_{1,k,l}(n)\phi_k\otimes \phi_l,\ldots, \sum_{k=1}^n\sum_{l=1}^n\lambda_{p,k,l}(n)\phi_k\otimes \phi_l\right).
\end{align*}
For notational convenience we also let
$$\lambda(n):= \left(\lambda_{1,1,1}(n),\ldots,\lambda_{p,n,n}(n) \right).$$
To allow for a concise statement of the next result, let us use the convention that if $N<\infty$ then we set $G_m=G_N$, $\lambda(m)=\lambda(N)$ and $\beta_m=\beta_N$, for $m>N$. 
	\begin{thm}\label{Minimizerpg1}
Assume the conditions of Theorem \ref{WR1}. 
\begin{itemize}
\item[1)] If $\{ \lambda(n) \}_{n\in\N}$ contains a subsequence, $\{ \lambda(n_k) \}_{k\in\N}$ that converges in $\mathit{l}^2$ then $S\not=\emptyset$.
\item[2)] If every subsequence $\{ \lambda(n_k) \}_{k\in\N}$ of $\{ \lambda(n) \}_{n\in\N}$ contains a further subsequence $\{ \lambda\left( n_{k_l}\right)  \}_{l\in\N}$ that converges in $\mathit{l}^2$ then
$$\mathsf{dist}\left(\beta_n,S\right)=\inf_{s\in S}\lVert \beta_n -s\rVert_{L^2([T_1,T_2]^2)^p}\to 0.$$
\end{itemize}
\begin{rema}
If $N<\infty$ both the conditions in 1) and 2) above are fulfilled if and only if $G_N$ is full rank (since we are now considering bounded sequences in $\R^N$).
\end{rema}	\end{thm}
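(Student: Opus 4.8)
The plan is to recognise $\beta_n$ as the exact minimiser of the worst risk over the finite-dimensional subspace $(V_n\otimes V_n)^p$, where $V_n=\mathsf{span}\{\phi_1,\ldots,\phi_n\}$, and then to run a minimising-sequence argument over the increasing, dense family of subspaces $V_n\otimes V_n$. First I would invoke Theorem~\ref{WR1} to replace the worst risk by the explicit quadratic functional
$$J(\beta):=\sup_{A'\in C^\gamma_{\mathcal{A}}(A)}R_{A'}(\beta)=\tfrac12 R_+(\beta)+\left(\gamma-\tfrac12\right)R_\Delta(\beta)=\gamma R_A(\beta)+(1-\gamma)R_O(\beta).$$
Expanding $\beta(i)=\sum_{k,l}\lambda_{i,k,l}\,\phi_k\otimes\phi_l$ and using that the predictor $\int_{[T_1,T_2]}\sum_i\beta(i)(t,\tau)X^e_\tau(i)\,d\tau=\sum_k\phi_k(t)\big(\lambda_k\cdot F_{1:n}(X^e)\big)$ (with $\lambda_k=(\lambda_{i,k,l})_{i,l}$ and $e\in\{A,O\}$) lies in $V_n$ in its $t$-argument, Parseval in the $t$-variable gives $J(\beta)=\sum_{k=1}^n Q_k(\lambda_k)+c_n$, where $Q_k(\lambda_k)=\gamma\E_A[(Z^A_k-\lambda_k\cdot F_{1:n}(X^A))^2]+(1-\gamma)\E_O[(Z^O_k-\lambda_k\cdot F_{1:n}(X^O))^2]$ and the constant $c_n=\gamma\sum_{k>n}\E_A[(Z^A_k)^2]+(1-\gamma)\sum_{k>n}\E_O[(Z^O_k)^2]$ is independent of $\beta$. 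The minimisation thus decouples across the target index $k$, each $Q_k$ being a quadratic in $\lambda_k$ with common Hessian $2G_n$ and linear part $b_k=\gamma\E_A[Z^A_kF_{1:n}(X^A)]+(1-\gamma)\E_O[Z^O_kF_{1:n}(X^O)]$. Since $J(\beta)\ge0$ for all $\beta$ (it is a supremum of risks), $G_n$ is positive semidefinite, and when $\det G_n\ne0$ the normal equations $G_n\lambda_k=b_k$ have the unique solution $\lambda_k=G_n^{-1}b_k=\lambda_k(n)$; hence $\beta_n=\arg\min_{(V_n\otimes V_n)^p}J$.

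Next I would dispose of the singular indices. Because $G_{n-1}$ is a principal submatrix of $G_n$ and principal submatrices of a positive definite matrix are positive definite, $\{n:\det G_n\ne0\}$ is an initial segment $\{1,\ldots,n^*\}$. If $n^*<\infty$, the placeholder entries $(n,\ldots,n)$ make $\{\lambda(n)\}$ unbounded, so neither the hypothesis of 1) nor of 2) can hold and the statement is vacuous; I may therefore assume $G_n$ is nonsingular for every $n$. In that case the $V_n\otimes V_n$ are nested with $\overline{\bigcup_n(V_n\otimes V_n)^p}=(L^2([T_1,T_2]^2))^p$ (as $\{\phi_k\otimes\phi_l\}$ is an ON-basis of $L^2([T_1,T_2]^2)$), and $J$ is continuous on $(L^2([T_1,T_2]^2))^p$: the predictor map $\beta\mapsto\int\sum_i\beta(i)(\cdot,\tau)X^e_\tau(i)\,d\tau$ is bounded from $(L^2([T_1,T_2]^2))^p$ into $L^2$ by Cauchy--Schwarz and $X^e\in\mathcal{V}$, so $R_A,R_O$ are continuous quadratics. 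Monotonicity of the minima over the nested subspaces together with density and continuity yields $J(\beta_n)\downarrow J^*:=\inf_{\beta\in(L^2([T_1,T_2]^2))^p}J(\beta)$. Finally, by Parseval, $\lambda(n)\to\lambda^*$ in $\ell^2$ if and only if $\beta_n\to\beta^*:=\sum_{i,k,l}\lambda^*_{i,k,l}\phi_k\otimes\phi_l$ in $(L^2([T_1,T_2]^2))^p$.

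For part 1), if $\lambda(n_k)\to\lambda^*$ in $\ell^2$ then $\beta_{n_k}\to\beta^*$ in $L^2$, and continuity gives $J(\beta^*)=\lim_k J(\beta_{n_k})=J^*$, so $\beta^*\in S$ and $S\ne\emptyset$. For part 2) I would argue by contradiction: if $\mathsf{dist}(\beta_n,S)\not\to0$, choose a subsequence with $\mathsf{dist}(\beta_{n_k},S)\ge\delta>0$; by hypothesis it has a further subsequence with $\lambda(n_{k_l})$ convergent in $\ell^2$, hence $\beta_{n_{k_l}}$ convergent in $L^2$ to some $\beta^*$, which by the part-1) argument lies in $S$, contradicting $\mathsf{dist}(\beta_{n_{k_l}},S)\ge\delta$.

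The remark follows because in finite dimension ($N<\infty$) an $\ell^2$-convergent subsequence exists iff $\{\lambda(n)\}$ is bounded (Bolzano--Weierstrass): if $G_N$ is nonsingular then $\lambda(n)$ is eventually the fixed vector $\lambda(N)$, hence bounded, whereas if $G_N$ is singular the initial-segment property forces $\det G_n=0$ for all large $n$ and the placeholder entries $(n,\ldots,n)$ grow without bound. I expect the main obstacle to lie in the first step --- correctly reducing the worst risk via Theorem~\ref{WR1} to the decoupled quadratic $\sum_k Q_k$, verifying $G_n\succeq0$, and identifying $\beta_n$ as the genuine finite-dimensional minimiser (together with the singular-index/placeholder bookkeeping); the cluster-point arguments for 1) and 2) are then routine.
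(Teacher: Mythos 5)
Your proposal is correct and follows essentially the same route as the paper's proof: you identify $\beta_n$ as the exact minimizer of the worst risk (written via Theorem \ref{WR1} as the quadratic $\gamma R_A+(1-\gamma)R_O$) over the nested finite-dimensional subspaces through the normal equations with Hessian $G_n$ (the paper's $h_n$ and $\nabla h_n=0$), establish continuity of the risk in $\beta$ (the paper's Claim \ref{betacont}), deduce $\limsup_n J(\beta_n)\le\inf J$ from nestedness and density, and then run the same subsequential-limit argument for 1) and the same contradiction argument for 2). Your additional bookkeeping --- positive semidefiniteness of $G_n$ forced by $J\ge 0$ and the initial-segment structure of the nonsingular indices, so that the placeholder values rule out convergent subsequences when singularity persists --- is a more careful justification of the step the paper simply asserts, namely that under the hypotheses $\det(G_n)\not=0$ for all large $n$.
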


\subsection{Illustration of population risk minimization}
\label{sec:illus3}	
Here we illustrate how the procedure works in the population setting. We consider the system described in section~\ref{sec:illus1} in two settings. The first consists of an observational environment and the second of an interventional environment. We consider a range of values for the tuning parameter $\gamma$. The value $\gamma=1/2$ corresponds to the pooled risk setting, where we combine data from both settings to predict $Y$, whereas larger values of $\gamma$ correspond to increased amount of regularization.

With respect to the orthonormal basis $\mathcal{B}$, we can calculate the scores associated with the observational environment $({X}^O,Y^O)$,
\[   \left\{  \begin{array}{rcl}
	\zeta^{O}_k &=& \int_{[0,1]} Y^O(t)\phi_k(t) ~dt \\
	\xi^{O}_{jk} &=& \int_{[0,1]} {X}^O_j(t)\phi_k(t) ~dt, ~~~~j=1,2,
\end{array}   \right.\]
and similarly for the interventional system $({X}^A,Y^A)$. As the scores are observable from the process, so are their moments. 
\begin{eqnarray*} 
	M^O & =&  E ({\xi}^{O},\zeta^{O})({\xi}^{O},\zeta^{O})^t \\
	&=& V(({\xi}^{O},\zeta^{O})^t) + E ({\xi}^{O},\zeta^{O})E({\xi}^{O},\zeta^{O})^t\\
	&=&B \Sigma B^t \\
	M^A &=& E ({\xi}^{A},\zeta^{A})({\xi}^{A},\zeta^{A})^t \\
	&=& V(({\xi}^{A},\zeta^{A})^t) + E ({\xi}^{A},\zeta^{A})E({\xi}^{A},\zeta^{A})^t\\
	&=&B(\Sigma+\Sigma^A)B^t + \mu^A\mu^{At}
\end{eqnarray*}
With these second moments, we can then define the score Grammians and rotated responses for each of the two environments $e\in \{O,A\}$,
\begin{eqnarray*}
	G^e &=& M^e_{\xi\xi} \\
	Z^e &=& M^e_{\xi\zeta},
\end{eqnarray*}
whereby the subscripts indicate the submatrices of the second moment matrix. For each $\gamma\in [1/2,\infty)$ we can now define	the regularized covariance operator for each of the covariates $X(1)$ and $X(2)$, through the two $10\times 10$ submatrices $C_1^\gamma$ and $C_2^\gamma$,
\[ \begin{bmatrix}
	C_1^\gamma \\ C_2^\gamma
\end{bmatrix}
  = \left[\gamma G^A + (1-\gamma)G^O\right]^{-1} \left[\gamma Z^A + (1-\gamma)Z^O\right]. \] 
With these matrices, we can define the solution the worst risk minimizer,
\[   \beta^\gamma_{x_j y}(t,\tau) = \phi^t(t)C^\gamma_j\phi(\tau).\]
In Figures~\ref{fig:beta}b and \ref{fig:beta}c, we present the solutions for two values of the regularization parameter, namely $\gamma=1/2$ and $\gamma=500$. The former corresponds to the solution that minimizes the pooled risk. It is clear that the solution both identifies $X(1)$ and $X(2)$ as predictors of $Y$. This is clearly not a bad assumption, if future data will come environments similar to the ones that we have already seen. However, if we want to be robust to heavily perturbed out-of-sample data scenarios, then Figure~\ref{fig:beta}c show that the near-causal solutions $\beta^{500}_{x_1y}$ and $\beta^{500}_{x_2y}$ offer robust alternatives.   

\begin{figure}[tb]
	\begin{center}
		\begin{tabular}{cccc}
			\includegraphics[width=0.23\textwidth]{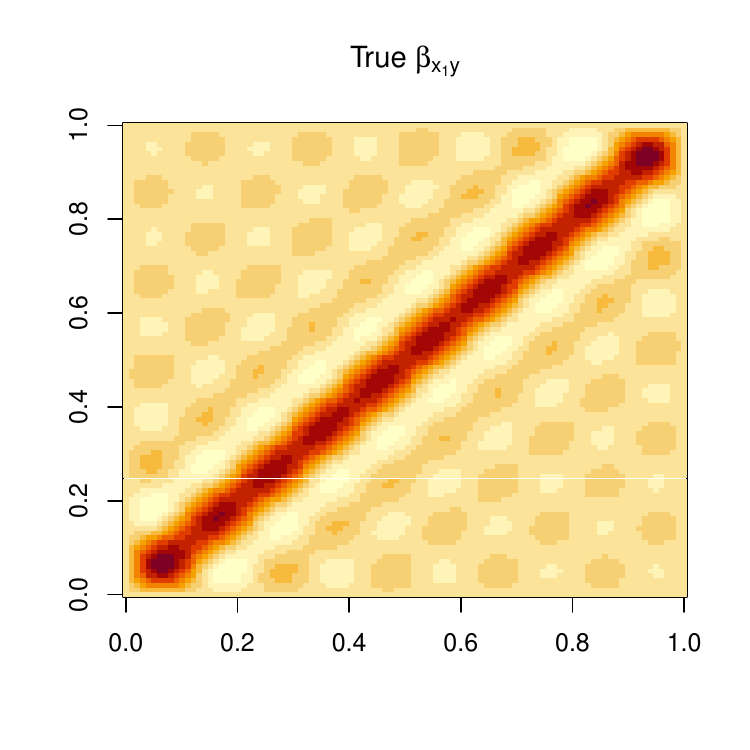}
			& 
			\includegraphics[width=0.23\textwidth]{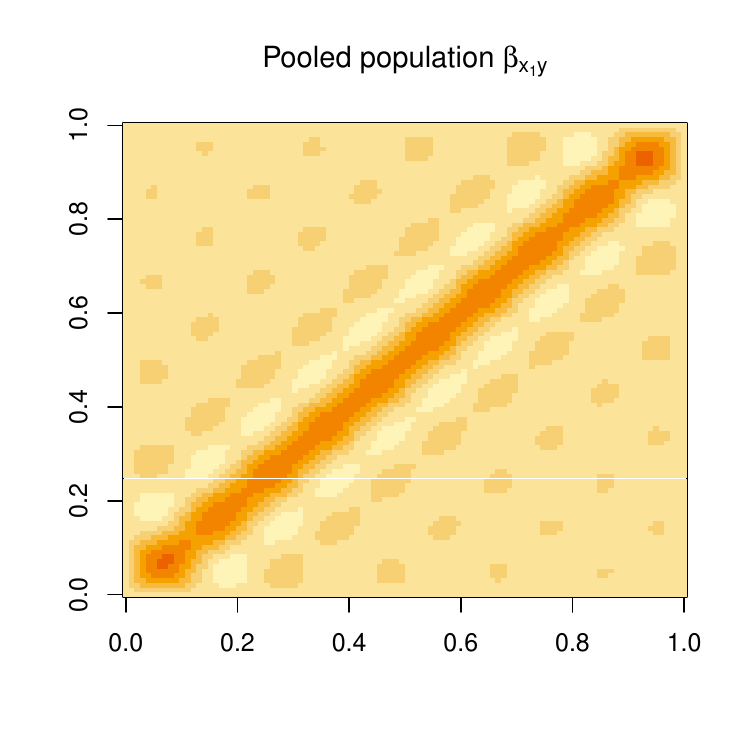}
			& 
			\includegraphics[width=0.23\textwidth]{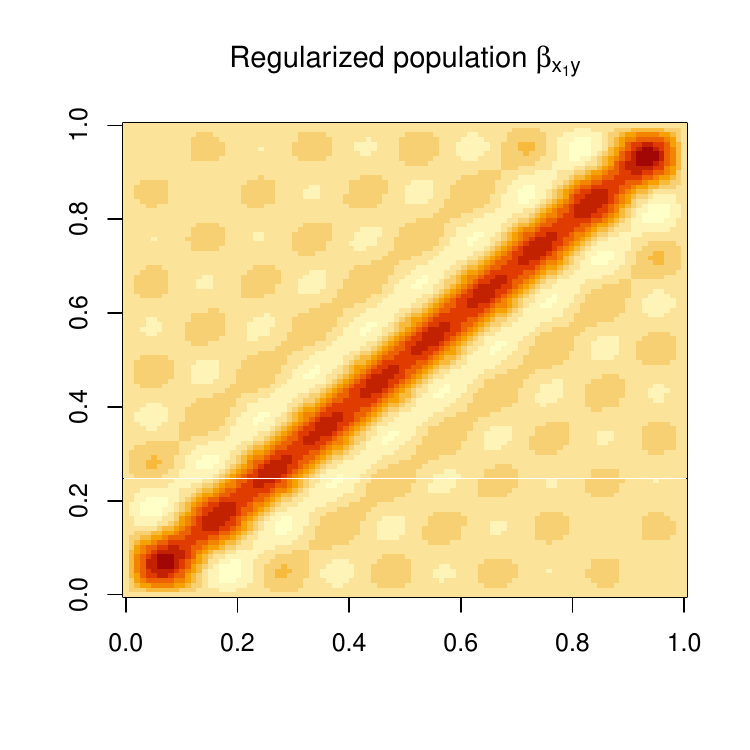}
			& 
			\includegraphics[width=0.23\textwidth]{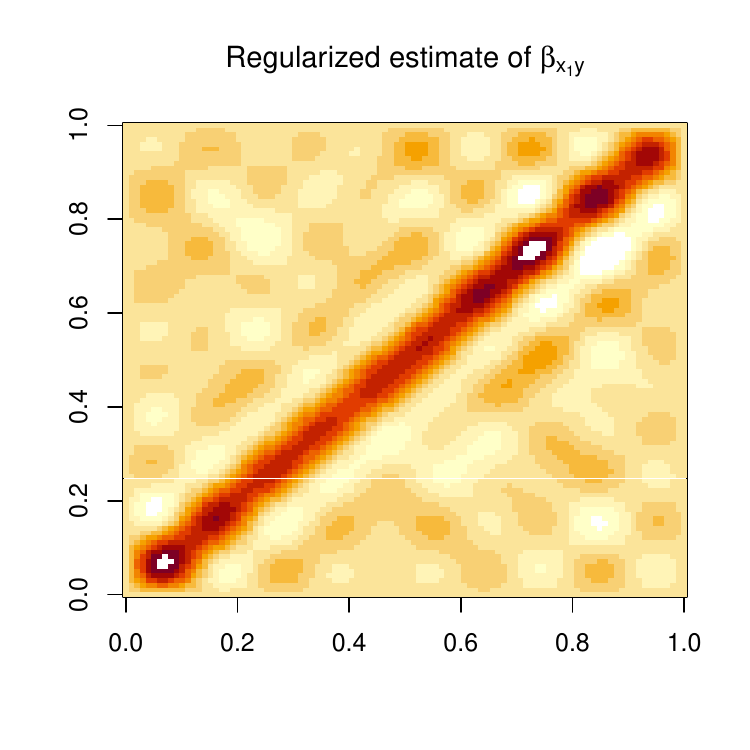}
			\\ 
			\includegraphics[width=0.23\textwidth]{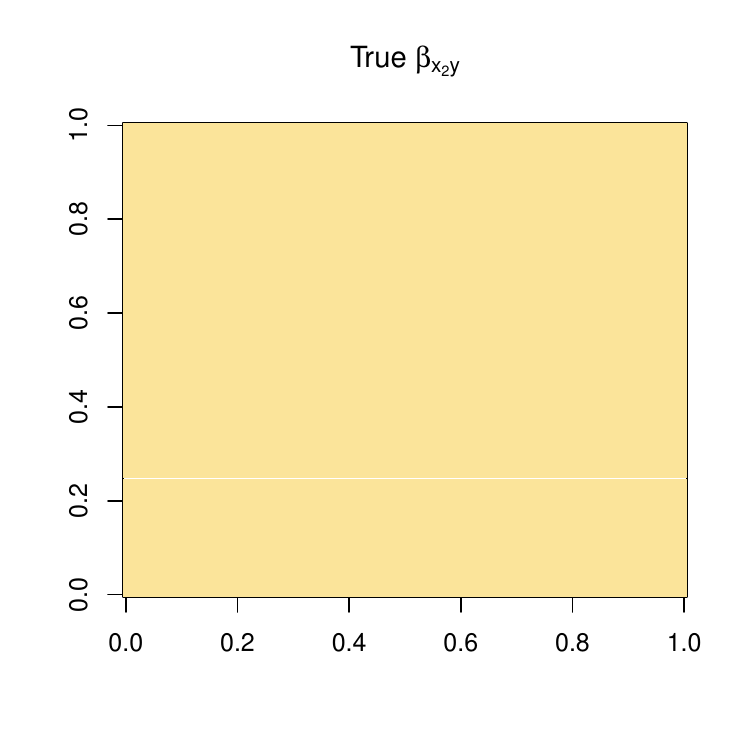}
			& 
			\includegraphics[width=0.23\textwidth]{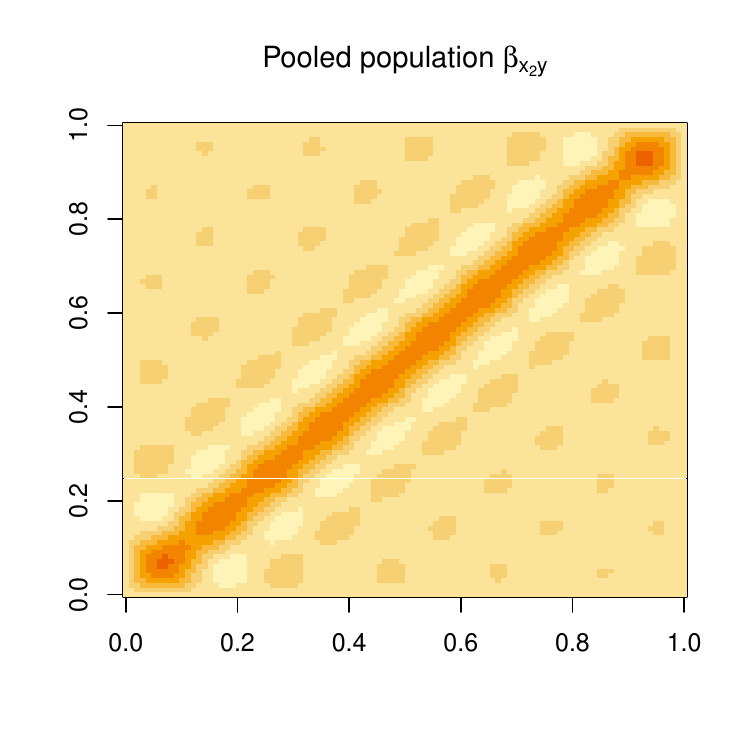}
			& 
			\includegraphics[width=0.23\textwidth]{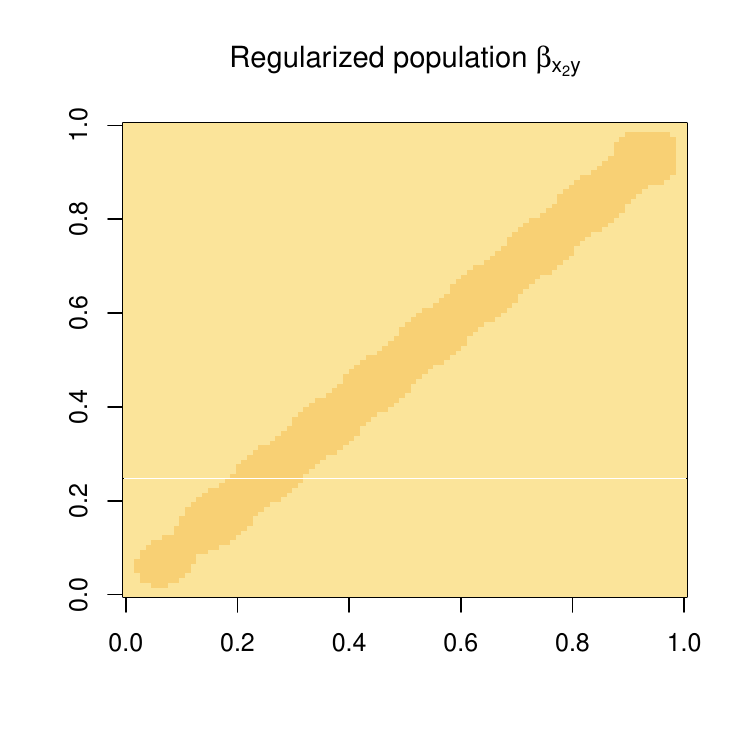}
			& 
			\includegraphics[width=0.23\textwidth]{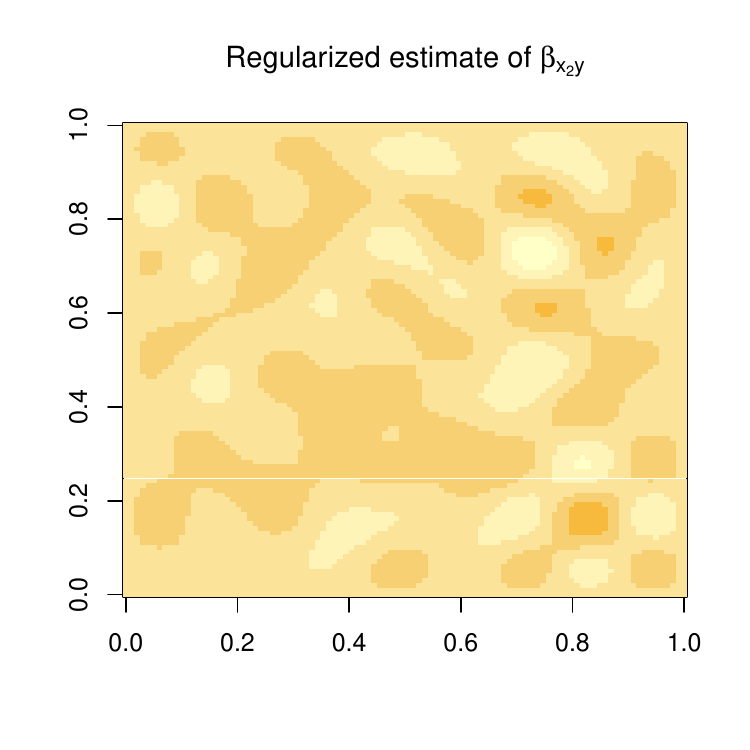}
			\\
			(a) & (b) & (c) & (d)
		\end{tabular}
	\end{center}
	\caption{Functional regression coefficients $\beta_{x_1y}$ and $\beta_{x_2y}$ shown on the same x-y-z scale: (a) True causal parameters; (b) population values, pooling the two data-environments, ``mistakenly'' finds that $X(2)$ affects $Y$; (c) population values minimizing the out-of-sample risk in $\mathcal{C}_{\gamma=500}$ recovering largely the causal parameters; (d) empirical estimates in a data-setting with $n=1000$ samples in an observational and a slightly perturbed environment, using regularization parameter $\gamma=10$. 
	\label{fig:beta}}
\end{figure}

	\section{Estimation of the optimal future worst risk minimizer}
	\label{sec:estimation}

In this section, we derive empirical estimation results for worst-risk minimization in functional structural equation models. We present estimation procedures that allow for robust out-of-sample predictions using observed data from multiple environments.
Sections~\ref{sec:estimation1} and \ref{sec:consistency} focuses on the estimation of worst-risk minimizers under the conditions established in section~\ref{sec:uniqueness}, ensuring consistency in the space of square-integrable kernels. This involves deriving consistent estimators for functional regression models subject to distributional shifts. 
Section~\ref{sec:estimation2} extends these results by providing sufficient conditions for the consistency of estimators when the minimizer is sought in an arbitrary orthonormal basis, as discussed in Section~\ref{sec:ON}. Section~\ref{sec:illus4} illustrates the practical implementation of these estimators with finite data, demonstrating their empirical performance in different environments.

	\subsection{Estimation in the setting of Theorem \ref{MinimizerThm}}
    \label{sec:estimation1}
In this setting we now consider a probability space $\left(\Omega,\F,\P\right)$, which might not coincide with the one from the previous sections. There will be no need to extend this space since we will only observe samples from two fixed environments. On this space we consider i.i.d. sequences $\left\{\left(X^{A,m},X^{O,m},Y^{A,m},Y^{O,m}\right)\right\}_{m\in\N}$, where $\left(X^{A,m},X^{O,m},Y^{A,m},Y^{O,m}\right)$ are distributed according to $\left(X^{A},X^{O},Y^{A},Y^{O}\right)$ (as defined in Section \ref{sec:sfr}) for all $m\in\N$. Whether or not $\left(X^{A,m},X^{O,m},Y^{A,m},Y^{O,m}\right)$ fulfils the same type of SEM as in the population case will not affect our estimator. In order to avoid imposing any extra moment conditions for our estimator we will do separate (independent) estimation for the denominator coefficients. To that end, let 
	$$\mathcal{F}_k=\sigma\left(X^{A,1},X^{O,1},Y^{A,1},Y^{O,1},\ldots,X^{A,k},X^{O,k},Y^{A,k},Y^{O,k}\right)$$ 
and assume $\left\{\left( X'^{A,m}, X'^{O,m}\right)\right\}_{m\in\N}$ is such that $\left( X'^{A,m}, X'^{O,m}\right)$ is independent of $\mathcal{F}_m$ and distributed according to $\left( X^{A}, X^{O}\right)$ (in practice this can always be achieved by splitting the samples from the covariates). We also assume that for every $l\in\N$ we have estimators $\{\hat{\psi}_{l,n}\}_{n\in\N}$ that are independent (again, this can be achieved by splitting) of both $\mathcal{F}_n$ and $\sigma\left(X'^{A,1},X'^{O,1},\ldots,X'^{A,k},X'^{O,k}\right)$, and also fulfils 
$$\lim_{n\to\infty}\P\left(\lVert \hat{\psi}_{l,n}-\psi_l\rVert_{L^2([T_1,T_2])}\ge \epsilon\right)=0,$$ 
for all $\epsilon>0$. For a partition $\Pi=\{t_1,...,t_N\}$ of $[T_1,T_2]$ and a continuous time process $X$ we let $P_\Pi(X,t)=\sum_{i=1}^{N-1}X_{t_i}1_{[t_i,t_{i+1})}(t)$. We also let $\left|\Pi\right|=\max_{1\le i\le N-1}(t_{i+1}-t_i)$. Let $\tau^{\delta}_{m,0}=0$ and for $k\in\N$
\begin{align*}
\tau^{\delta}_{m,k}&=\inf\left\{t>\tau^{\delta}_{m,k-1}:
\right.
\\
&\left. \max\left(\max_{1\le i\le p}\left|X^{A,m}_t(i)-X^{A,m}_{\tau^{\delta}_{m,k-1}}(i)\right|,\max_{1\le i\le p}\left|X^{O,m}_t(i)-X^{O,m}_{\tau^{\delta}_{m,k-1}}(i)\right|, \left|Y^{A,m}_t-Y^{A,m}_{\tau^{\delta}_{m,k-1}}\right|, \left|Y^{O,m}_t-Y^{O,m}_{\tau^{\delta}_{m,k-1}}\right|\right)
	\ge \delta\right\}\wedge T.
\end{align*} 
	We also let $\tau'^{\delta}_{m,0}=0$ and for $k\in\N$
	$$\tau'^{\delta}_{m,k}=\inf\left\{t>\tau^{\delta}_{m,k-1}: \max\left(\max_{1\le i\le p}\left|X'^{A,m}_t(i)-X'^{A,m}_{\tau'^{\delta}_{m,k-1}}(i)\right|,\max_{1\le i\le p} \left|X'^{O,m}_t(i)-X'^{O,m}_{\tau'^{\delta}_{m,k-1}}(i)\right|\right)\ge \delta\right\}\wedge T.$$
	Let $\{d_n\}_{n\in\N}$ be such that $d_n\to 0^+$. Let $\Pi_n^m=\{\tau^{d_n}_{m,k}\}_{k=1}^{J_m} $ and $\Pi_n'^m=\{\tau'^{d_n}_{m,k}\}_{k=1}^{J'_m} $, where $J_m=\inf\{k\in\N:\tau^{d_n}_{m,k}=T\}$ and $J'_m=\inf\{k\in\N:\tau'^{d_n}_{m,k}=T\}$. We finally define 
	\begin{itemize}
		\item[] $\tilde{C}_l^{A,m,n}=\langle P_{\Pi_n}(X^{A,m},.),\hat{\psi}_{l,n} \rangle_{L^2([T_1,T_2])^p},$
		\item[] $\tilde{C}_l^{O,m,n}=\langle P_{\Pi_n}(X^{O,m},.),\hat{\psi}_{l,n} \rangle_{L^2([T_1,T_2])^p},$
		\item[] $\tilde{C}_l'^{A,m,n}=\langle P_{\Pi_n'}(X'^{A,m},.)\hat{\psi}_{l,n} \rangle_{L^2([T_1,T_2])^p},$
		\item[] $\tilde{C}_l'^{O,m,n}=\langle P_{\Pi_n'}(X'^{O,m},.)\hat{\psi}_{l,n} \rangle_{L^2([T_1,T_2])^p},$
		\item[] $D_k^{A,m,n}=\langle P_{\Pi_n}(Y^{A,m},.)\phi_l \rangle_{L^2([T_1,T_2])}$ and
		\item[] $D_k^{O,m,n}=\langle P_{\Pi_n}(Y^{O,m},.)\phi_l \rangle_{L^2([T_1,T_2])}$.
	\end{itemize}
    
	\subsection{Consistency}
    \label{sec:consistency}
Define the truncation operator $T_M:L^2([T_1,T_2])^p\to L^2([T_1,T_2])^p$, 
$$T_M(\psi)=\left( \psi_1 1_{ \lVert \psi_1 \rVert_{L^2([T_1,T_2])}\le M}+ M1_{ \lVert \psi_1 \rVert_{L^2([T_1,T_2])}> M},\ldots,\psi_p 1_{ \lVert \psi_p1 \rVert_{L^2([T_1,T_2])}\le M}+ M1_{ \lVert \psi_p \rVert_{L^2([T_1,T_2])}> M} \right),$$
for $M\in\R^+$.
	\begin{thm}\label{ConsisThm}
		Assume $X^A,X^O$ and $Y^A,Y^O$ are processes in $\mathcal{V}$ that have paths that are a.s. cadlag on $[T_1,T_2]$. Then there exists $E(n)$ such that if $\{e(n)\}_n$ are such that $e(n)\le E(n)$ then all the estimators $\hat{\beta}_n$, of the form
		$$\hat{\beta}_n= \frac{1}{n} \sum_{m=1}^n \sum_{k=1}^{e(n)} \sum_{l=1}^{e(n)} \frac{\gamma\tilde{C}_l^{A,m,n}D_ k^{A,m,n}+(1-\gamma)\tilde{C}_l^{O,m,n}D_ k^{O,m,n}}{\frac{1}{n}\sum_{m=1}^n \gamma (\tilde{C}_l'^{A,m,n})^2+ (1-\gamma)(\tilde{C}_l'^{O,m,n})^2} \phi_k\otimes T_M\left(\hat{\psi}_{l,n}\right)  , $$
for $M>1$ are consistent estimators of the solution in Theorem \ref{MinimizerThm}.
	\end{thm}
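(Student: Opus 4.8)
The plan is to prove $\lVert \hat\beta_n-\beta\rVert_{L^2([T_1,T_2]^2)^p}\to 0$ in probability, where $\beta$ is the minimizer of Theorem \ref{MinimizerThm}. Write the population coefficients and their data-driven counterparts using
$$c_{k,l}=\frac{\gamma\E[Z_k^A\chi_l^A]+(1-\gamma)\E[Z_k^O\chi_l^O]}{\lambda_l},\qquad \lambda_l=\gamma\E[(\chi_l^A)^2]+(1-\gamma)\E[(\chi_l^O)^2],$$
and let $\hat c_{k,l,n}$ denote the ratio appearing in $\hat\beta_n$. The key structural fact is that $\lambda_l$ is exactly the $l$-th eigenvalue of $\mathcal K$, so injectivity of $\mathcal K$ (part of the hypotheses of Theorem \ref{MinimizerThm}) gives $\lambda_l>0$; moreover $\lVert\psi_l\rVert_{L^2([T_1,T_2])^p}=1$, so each coordinate has $L^2$-norm at most $1<M$ and hence $T_M(\psi_l)=\psi_l$, while $\lVert T_M(\hat\psi_{l,n})\rVert\le M\sqrt p$ always. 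Using orthonormality of $\{\phi_k\}$ in the first tensor factor I would split
$$\lVert\hat\beta_n-\beta\rVert\le \Bigl\lVert\sum_{k,l\le e(n)}(\hat c_{k,l,n}-c_{k,l})\,\phi_k\otimes T_M(\hat\psi_{l,n})\Bigr\rVert+\Bigl\lVert\sum_{k,l\le e(n)}c_{k,l}\,\phi_k\otimes\bigl(T_M(\hat\psi_{l,n})-\psi_l\bigr)\Bigr\rVert+\Bigl\lVert\sum_{\max(k,l)> e(n)}c_{k,l}\,\phi_k\otimes\psi_l\Bigr\rVert,$$
calling these $I_n$, $II_n$, $III_n$. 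The last term $III_n$ is the tail of a series that is square-summable by \eqref{summationkrit}, hence $III_n\to 0$ deterministically as soon as $e(n)\to\infty$.

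Next, for fixed $(k,l)$ I would establish $\hat c_{k,l,n}\to c_{k,l}$ in probability through three independent effects. \emph{Discretization} is controlled pathwise: by construction of the stopping-time partitions $\Pi_n^m,\Pi_n'^m$, each coordinate oscillates by at most $d_n$ on every cell, so $\lVert P_{\Pi_n}(X^{A,m},\cdot)-X^{A,m}\rVert_{L^2}^2\le p\,T d_n^2$ uniformly in $m$, giving $\lvert\tilde C_l^{A,m,n}-\langle X^{A,m},\hat\psi_{l,n}\rangle\rvert\le \sqrt{pT}\,d_n\lVert\hat\psi_{l,n}\rVert$ and similarly for the $D_k$'s. \emph{The law of large numbers} applies after conditioning on the eigenfunction estimates: since $\{(X'^{A,m},X'^{O,m})\}$ is independent of $\mathcal F_m$ and $\hat\psi_{l,n}$ is independent of both data streams (by the splitting construction), the numerator and the independently sampled denominator are genuine i.i.d.\ averages, so the weak LLN gives convergence to $\E[\langle X^{A},\hat\psi_{l,n}\rangle Z_k^{A}]$ and to $\gamma\E[\langle X^A,\hat\psi_{l,n}\rangle^2]+(1-\gamma)\E[\langle X^O,\hat\psi_{l,n}\rangle^2]$ conditionally on $\hat\psi_{l,n}$ — this is why the denominator is estimated on a separate, independent sample, namely to avoid extra moment conditions. \emph{Eigenfunction consistency} $\hat\psi_{l,n}\to\psi_l$ then replaces $\hat\psi_{l,n}$ by $\psi_l$ in these limits, and a Slutsky/continuous-mapping step (using $\lambda_l>0$) yields $\hat c_{k,l,n}\to c_{k,l}$. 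Together with $T_M(\hat\psi_{l,n})\to T_M(\psi_l)=\psi_l$ (continuity of $T_M$), this forces $II_n\to0$ and makes each summand of $I_n$ negligible.

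The main obstacle is upgrading this term-by-term convergence to convergence of $I_n$ while letting $e(n)\to\infty$: the number of estimated terms grows like $e(n)^2$, and the random basis elements $T_M(\hat\psi_{l,n})$ are \emph{not} orthonormal, so $\lVert I_n\rVert$ is not a sum of squares. The uniform bound $\lVert T_M(\hat\psi_{l,n})\rVert\le M\sqrt p$ is precisely what tames this: grouping by $k$ and using orthonormality of $\{\phi_k\}$,
$$\lVert I_n\rVert^2= \sum_{k\le e(n)}\Bigl\lVert\sum_{l\le e(n)}(\hat c_{k,l,n}-c_{k,l})\,T_M(\hat\psi_{l,n})\Bigr\rVert^2\le \sum_{k\le e(n)}\Bigl(\sum_{l\le e(n)}\lvert\hat c_{k,l,n}-c_{k,l}\rvert\,M\sqrt p\Bigr)^2,$$
which tends to $0$ in probability for any \emph{fixed} truncation level $e(n)\equiv K$. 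The existence of $E(n)$ is then obtained by diagonalization: for each fixed $K$ the level-$K$ estimator converges in probability to $\beta^{(K)}=\sum_{k,l\le K}c_{k,l}\,\phi_k\otimes\psi_l$, and $\beta^{(K)}\to\beta$ by \eqref{summationkrit}; choosing an increasing threshold sequence $\{n_K\}$ and setting $E(n)=K$ for $n_K\le n<n_{K+1}$ produces a rate $E(n)\uparrow\infty$ slow enough that whenever $e(n)\to\infty$ with $e(n)\le E(n)$ the accumulated estimation error still vanishes. The delicate bookkeeping — forcing the mesh $d_n$, the eigenfunction-estimation error, and the LLN fluctuations to shrink fast enough relative to the admissible growth $E(n)$ of the double truncation — is the crux of the argument, after which $I_n\to0$ and hence $\hat\beta_n\to\beta$ in probability.
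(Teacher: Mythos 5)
Your proposal is correct in substance, but it takes a genuinely different route from the paper's proof. The paper proceeds by a chain of seven intermediate estimators: it starts from the oracle estimator built from the full curves, the population denominators $\E\left[Q_l\right]$ and the true eigenfunctions, to which the Banach-space law of large numbers and Pratt's lemma apply (this is where the first adaptive truncation bound $E_1(n)$ appears); it then replaces one ingredient at a time --- population denominators by the split-sample empirical ones (working on a good event $A_{n,1}$ where the empirical denominators are comparable to $\E\left[Q_l\right]$, and using the independence of the denominator sample to factor expectations of numerator--denominator products, which is precisely what avoids fourth-moment assumptions), then the curves by their stopping-time discretizations, and finally the true eigenfunctions by $T_M\left(\hat{\psi}_{l,n}\right)$ --- each step contributing a recursively defined bound $E_j(n)$, an $L^1$-on-good-event estimate, and a Markov-inequality conversion to probability. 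You instead prove fixed-index consistency $\hat{c}_{k,l,n}\to c_{k,l}$ (pathwise mesh bound of order $\sqrt{T}d_n$ from the stopping-time partitions, scalar LLN, Slutsky since $\lambda_l>0$), deduce convergence of the fixed-level-$K$ estimator to $\beta^{(K)}$, control the tail $\beta^{(K)}\to\beta$ by \eqref{summationkrit}, and manufacture $E(n)$ abstractly by diagonalization over $K$. Your route is shorter and softer: all the bookkeeping the paper performs explicitly is absorbed into the diagonalization, at the price of a completely non-explicit $E(n)$, whereas the paper's recursive constructions are in principle trackable and closer to yielding rates.

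Three refinements you should make. First, your aside that the split denominator sample is ``why'' extra moment conditions are avoided is misplaced inside your own argument: in a Slutsky-based proof the independence is never exploited (separate LLNs for numerator and denominator suffice); it is in the paper's expectation-based bounds that independence is essential, since it allows $\E\left[|U_{l,k}^m| g(W_l)\right]=\E\left[|U_{l,k}^m|\right]\E\left[g(W_l)\right]$. Second, your LLN step concerns a triangular array (the summands depend on $n$ through $d_n$ and $\hat{\psi}_{l,n}$), so ``the weak LLN conditionally on $\hat{\psi}_{l,n}$'' needs justification; the clean fix is to write $\langle X^{A,m},\hat{\psi}_{l,n}\rangle=\langle X^{A,m},\psi_l\rangle+\langle X^{A,m},\hat{\psi}_{l,n}-\psi_l\rangle$, apply the genuine i.i.d.\ LLN to the first part (integrable by Cauchy--Schwarz and the $\mathcal{V}$-assumption) and bound the second by $\lVert\hat{\psi}_{l,n}-\psi_l\rVert_{L^2([T_1,T_2])^p}\cdot\frac1n\sum_m\lVert X^{A,m}\rVert\,|Z_k^{A,m}|$, which is $o_P(1)$ times an a.s.\ convergent average. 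Third, positivity of $\lambda_l$ follows from the convention that the $\psi_l$ are the eigenfunctions of the Hilbert--Schmidt decomposition of $\mathcal{K}$ (hence have nonzero eigenvalues), not from injectivity as such, though injectivity is anyway presupposed since you target the unique minimizer of Theorem \ref{MinimizerThm}. None of these affect the validity of your overall strategy.
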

	\begin{rema}
		Since $X$ and $Y$ are cadlag it follows from the modulus of continuity property (see for instance Lemma 1 of Chapter 3 in Billingsley) that $\Pi_n$ and $\Pi'_n$ only contains a finite (albeit random) number of points for each path.
	\end{rema}
	\begin{rema}
		If $\E\left[\int_{[T_1,T_2]}(Y^{A}_t)^4dt\right]<\infty$,$\E\left[\int_{[T_1,T_2]}(Y^O_t)^4dt\right]<\infty$ and similarly for the covariates then we do not need to do separate sampling for the denominators.
	\end{rema}
    Recall that we assumed that the target and the covariates are centralized. If the samples are not centralized, we must first centralize them. One can then prove that the following estimator is also consistent --- making a cumbersome proof even more cumbersome,
$$\hat{\beta}_n= \frac{1}{n} \sum_{m=1}^n \sum_{k=1}^{e(n)} \sum_{l=1}^{e(n)} \frac{\gamma\left(\tilde{C}_l^{A,m,n}-\mu_{l,n}^O\right)\left(D_ k^{A,m,n}-\nu_{k,n}^O\right)+(1-\gamma)\left(\tilde{C}_l^{O,m,n}-\mu_{l,n}^O\right)\left(D_ k^{O,m,n}-\nu_{k,n}^O\right)}{\frac{1}{n}\sum_{m=1}^n\left( \gamma (\tilde{C}_l'^{A,m,n}-\mu'^O_{l,n})^2+ (1-\gamma)(\tilde{C}_l'^{O,m,n}-\mu'^O_{l,n})^2\right)} \phi_k\otimes T_M\left(\hat{\psi}_{l,n}\right)  , $$
where $\mu_{l,n}^O=\frac{1}{n} \sum_{m=1}^n \tilde{C}_l^{O,m,n}$, $\nu_{k,n}^O=\frac{1}{n} \sum_{m=1}^n \tilde{D}_k^{O,m,n}$ and $\mu_{l,n}'^O=\frac{1}{n} \sum_{m=1}^n \tilde{C}_l'^{O,m,n}$. One may choose not to centralise the samples from the shifted environment but this changes the assumption on the underlying shift in this case.
	
		\subsection{Estimation in the setting of Theorem \ref{Minimizerpg1}}	
        \label{sec:estimation2}
As in the previous section we again consider a probability space $\left(\Omega,\F,\P\right)$. On this space we consider i.i.d. sequences $\left\{\left(X^{A,m}(1),\ldots,X^{A,m}(p),X^{O,m}(1),\ldots,X^{O,m}(p),Y^{A,m},Y^{O,m}\right)\right\}_{m\in\N}$, where \\$\left(X^{A,m}(1),\ldots,X^{A,m}(p),X^{O,m}(1),\ldots,X^{O,m}(p),Y^{A,m},Y^{O,m}\right)$ are distributed according to \\$\left(X^{A}(1),\ldots,X^{A}(p),X^{O}(1),\ldots,X^{O}(p),Y^{A},Y^{O}\right)$ for all $m\in\N$. In order to avoid imposing any extra moment conditions for our estimator we will do separate (independent) estimation for the denominator coefficients. To that end, let 
	$$\mathcal{F}_k=\sigma\left(X^{A,m}(1),\ldots,X^{A,m}(p),X^{O,m}(1),\ldots,X^{O,m}(p),Y^{A,m},Y^{O,m}:m\le k\right)$$ 
	and assume $\left\{\left( X'^{A,m}, X'^{O,m}\right)\right\}_{m\in\N}$ is such that $\left( X'^{A,m}, X'^{O,m}\right)$ is independent of $\mathcal{F}_m$ and distributed according to $\left( X^{A}, X^{O}\right)$ (in practice this can always be achieved by splitting the samples from the covariates). Let $\tau^{\delta}_{m,0}=0$ and for $k\in\N$
\begin{align*}
\tau^{\delta}_{m,k}&=\inf\left\{t>\tau^{\delta}_{m,k-1}: \left|X^{A,m}_t(i)-X^{A,m}_{\tau^{\delta}_{m,k-1}}(i)\right|\vee\left|X^{O,m}_t(i)-X^{O,m}_{\tau^{\delta}_{m,k-1}}(i)\right|\vee \left|Y^{A,m}_t-Y^{A,m}_{\tau^{\delta}_{m,k-1}}\right|\vee \left|Y^{O,m}_t-Y^{O,m}_{\tau^{\delta}_{m,k-1}}\right|\ge \delta 
\right.
\\
&\left.,1\le i\le p\right\}\wedge T.
\end{align*}
	We also let $\tau'^{\delta}_{m,0}=0$ and for $k\in\N$
	$$\tau'^{\delta}_{m,k}=\inf\left\{t>\tau^{\delta}_{m,k-1}: \left|X'^{A,m}_t-X'^{A,m}_{\tau'^{\delta}_{m,k-1}}\right|\vee \left|X'^{O,m}_t-X'^{O,m}_{\tau'^{\delta}_{m,k-1}}\right|\ge \delta\right\}\wedge T.$$
	Let $\{d_n\}_{n\in\N}$ be such that $d_n\to 0^+$, as in the previous section. Define
	\begin{itemize}
				\item[] $C_l^{A,m,n}(i)=\langle P_{\Pi_n}(X^{A,m}(i),.),\phi_l\rangle_{L^2([T_1,T_2])^p},$
		\item[] $C_l^{O,m,n}(i)=\langle P_{\Pi_n}(X^{O,m}(i),.),\phi_l\rangle_{L^2([T_1,T_2])^p},$
		\item[] $C_l'^{A,m,n}(i)=\langle P_{\Pi_n}(X'^{A,m}(i),.),\phi_l\rangle_{L^2([T_1,T_2])^p},$
		\item[] $C_l'^{O,m,n}(i)=\langle P_{\Pi_n}(X'^{O,m}(i),.),\phi_l\rangle_{L^2([T_1,T_2])^p},$
		\item[] $D_k^{A,m,n}=\langle P_{\Pi_n}(Y^{A,m},.),\phi_k\rangle_{L^2([T_1,T_2])}$,
		\item[] $D_k^{O,m,n}=\langle P_{\Pi_n}(Y^{O,m},.),\phi_k\rangle_{L^2([T_1,T_2])}$,
	\end{itemize}
\begin{align*}
\hat{G}_{n}(M)
&=\gamma\frac{1}{M}\sum_{m=1}^M\left(C_1^{A,m,M}(1),\ldots,C_n^{A,m,M}(p)\right)^T\left(C_1^{A,m,M}(1),\ldots,C_n^{A,m,M}(p)\right)
\\
&+(1-\gamma)\frac{1}{M}\sum_{m=1}^M\left(C_1^{O,m,M}(1),\ldots,C_n^{O,m,M}(p)\right)^T\left(C_1^{O,m,M}(1),\ldots,C_n^{O,m,M}(p)\right), 
\end{align*}
\begin{align*}
\left(\hat{\lambda}_{1,k,1}(n,M),\ldots,\hat{\lambda}_{1,k,n}(n,M),\ldots,\hat{\lambda}_{p,k,n}(n,M) \right)
&=
\hat{G}_{n,2}(M)^{-1}\left( \gamma \frac{1}{M}\sum_{m=1}^M D_k^{A,m,M}\left(C_1^{A,m,M}(1),\ldots,C_n^{A,m,M}(p)\right)
+
\right.
\\
&\left.
\frac{1-\gamma}{M}\sum_{m=1}^MD_k^{O,m,M}\left(C_1^{O,m,M}(1),\ldots,C_n^{O,m,M}(p)\right)
\right)1_{\det\left(\hat{G}_{n,2}(M) \right)\not=0}
,
\end{align*}
for $1\le k\le n$ and
\begin{align*}
\hat{\beta}_{n,M}=\left( \sum_{k=1}^n\sum_{l=1}^n\hat{\lambda}_{1,k,1}(n,M)\phi_k\otimes \phi_l,\ldots, \sum_{k=1}^n\sum_{l=1}^n\hat{\lambda}_{p,k,n}(n,M)\phi_k\otimes \phi_l\right).
\end{align*}
	\begin{thm}\label{ConsisThmpg1}
		Assume $X^A(i),X^O(i)$ and $Y^A,Y^O$, $1\le i\le p+1$ are processes in $\mathcal{V}$ that have paths that are a.s. cadlag on $[T_1,T_2]$. Then there exists $E(n)$ such that if $\{e(n)\}_n$ are such that $e(n)\le E(n)$ then all the estimators of the form $\hat{\beta}_{n,e(n)}$, described above fulfil 
$$\mathsf{dist}\left(\hat{\beta}_{n,e(n)},S\right)\xrightarrow{a.s.}0. $$		
	\end{thm}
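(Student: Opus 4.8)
The plan is to reduce the data-dependent, discretized, growing-dimension estimator to the population approximants $\beta_n$ of Theorem \ref{Minimizerpg1} and then invoke that theorem. Writing $\beta_{e(n)}$ for the population minimizer-approximant at truncation dimension $e(n)$, the backbone is the triangle inequality
$$\mathsf{dist}\left(\hat{\beta}_{n,e(n)},S\right)\le \lVert \hat{\beta}_{n,e(n)}-\beta_{e(n)}\rVert_{L^2([T_1,T_2]^2)^p}+\mathsf{dist}\left(\beta_{e(n)},S\right).$$
By part 2) of Theorem \ref{Minimizerpg1} the second (deterministic) term tends to $0$ as the dimension $e(n)\to\infty$, so the entire task is to drive the estimation error (the first term) to $0$ a.s.\ while letting the dimension grow with the sample size $n$.

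First I would fix the dimension $d$ and show that, as $n\to\infty$ (and hence the mesh $d_n\to 0$), the empirical building blocks converge a.s.\ to their population counterparts: $\hat{G}_d\to G_d$ and the empirical numerator vectors converge to $\gamma\E_{A}\left[ Z_k^{A}F_{1:d}(X^{A})\right]+(1-\gamma)\E_{O}\left[ Z_k^{O}F_{1:d}(X^{O})\right]$. This splits into two effects. The \emph{discretization} effect is pathwise: since the paths are a.s.\ cadlag, the stopping-time partition $\Pi_n$ has a.s.\ finitely many points (the remark following the theorem, via the modulus-of-continuity property for cadlag paths) and $P_{\Pi_n}(X,\cdot)\to X$ in $L^2([T_1,T_2])$, so each discretized score $C_l^{\cdot,m,n}(i)$ converges to the true score $\langle X^{\cdot,m}(i),\phi_l\rangle$. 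The \emph{sampling} effect is the strong law of large numbers for the i.i.d.\ samples. Because the summands themselves depend on the refinement level, I would treat the combination as a triangular array, bounding the gap between the discretized empirical average and the average of the true scores and showing it vanishes; the separate, independent, primed sampling feeding the denominator Gram matrix $\hat{G}_{n,2}$ is exactly what lets this go through under second-moment assumptions only, avoiding fourth moments.

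Given convergence of the blocks at each fixed dimension, the coefficient vector $\hat{\lambda}(d,\cdot)$ converges to $\lambda(d)$ by continuity of matrix inversion on $\{\det\neq 0\}$; on the event $\det(G_d)=0$ the indicator $1_{\det(\hat{G}_{n,2})\neq 0}$, together with the conventions $G_m=G_N$, $\lambda(m)=\lambda(N)$, must be tracked so that the empirical and population fallbacks agree in the limit. This yields $\lVert \hat{\beta}_{d,n}-\beta_d\rVert\xrightarrow{a.s.}0$ as $n\to\infty$ for each fixed $d$, reusing essentially the same discretization and law-of-large-numbers machinery as in Theorem \ref{ConsisThm}.

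Finally the dimension must be allowed to grow with $n$. Here I would use a diagonalization (``slowing down'') argument: from the fixed-dimension a.s.\ convergences one extracts a deterministic threshold sequence $E(n)\to\infty$ that grows slowly enough, controlling $\P\left(\sup_{d\le E(n)}\lVert \hat{\beta}_{d,n}-\beta_d\rVert>\epsilon\right)$ to be summable and applying Borel--Cantelli, so that $\lVert \hat{\beta}_{n,e(n)}-\beta_{e(n)}\rVert\to 0$ a.s.\ whenever $e(n)\le E(n)$. Combined with Theorem \ref{Minimizerpg1}.2 this closes the triangle inequality. I expect the main obstacle to be precisely this coupling of the three limits --- mesh, sample size and dimension --- because the Gram matrices $G_d$ may be singular or ill-conditioned as $d$ grows, so the inversion is not uniformly stable; the admissible growth rate of $E(n)$ is dictated by how fast the smallest relevant eigenvalue of $\hat{G}_{n,2}$ can be controlled, and making this quantitative under only second-moment assumptions is the delicate part.
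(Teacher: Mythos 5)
Your proposal is correct and follows essentially the same route as the paper: a triangle-inequality reduction to Theorem \ref{Minimizerpg1}(2), fixed-dimension a.s.\ consistency obtained by combining the pathwise discretization bound (cadlag paths, $\lVert P_{\Pi_n}(X,\cdot)-X\rVert_{L^2}\le \sqrt{T}d_n$) with the strong law of large numbers and continuity of matrix inversion on the nonsingular event, followed by a diagonalization that picks $E(n)\to\infty$ slowly enough. The only cosmetic difference is ordering --- the paper first proves the full-curve (undiscretized) LLN limit and then bounds the discretization gap, whereas you fold both effects into one triangular-array estimate --- and your closing worry about uniform control of the smallest eigenvalue of $\hat{G}_{n,2}$ is exactly what the slow-growth choice of $E(n)$ dispenses with, as your own diagonalization argument shows.
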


\subsection{Illustration of risk minimization with finite data}
\label{sec:illus4}	
	
We observe data from two almost identical environments of the system described in sections~\ref{sec:illus1} and \ref{sec:illus3}. Besides $n=1000$ observations from the observational enviroment, we also observe $n=1000$ observations from a mildly shifted environment. Each observation $i$ in environment $e\in\{O,A\}$ consists of 3 discretely sampled curves $x^e_{1i}(t_l), x^e_{2i}(t_l)$ and $y_i^e(t_l)$ at 100 points $t_l\in [0,1]$ ($l=1,\ldots,100$). From the sampled curves, we estimate the scores relative to the $k$th basis vector $\phi_k$ in $\mathcal{B}$, 
\[   \left\{  \begin{array}{rcl}
	\hat\zeta^{e}_{ki} &=& \sum_{l=1}^{99} (t_{l+1}-t_l) y_i^e(t_l)\phi_k(t_l)  \\
	\hat\xi^{e}_{jki} &=& \sum_{l=1}^{99} (t_{l+1}-t_l) x_{ji}^e(t_l)\phi_k(t_l), ~~~~j=1,2,
\end{array}   \right.\]
Even if the curves are measured with noise, consistent estimates for the scores are readily available \cite{wood2017generalized}. From the estimated scores, we obtain the second moment, plug-in, $30\times 30$ matrices  under the observational and interventional environments,
\[ \widehat{M}^e = \frac{1}{n} \sum_{i=1}^n (\hat{\zeta}^e_{\cdot i},\hat{\xi}^e_{\cdot\cdot i}) (\hat{\zeta}^e_{\cdot i},\hat{\xi}^e_{\cdot\cdot i})^t. \] 
Just as in the population setting, we can then define the empirical score Grammians $\widehat{G}^e$ and the empirical rotated score responses $\widehat{Z}^e$ for each of the two observed environments $e\in\{O,A\}$ as submatrices from the matrix of second moments, i.e., $\widehat{G}^e = \widehat{M}^e_{\xi\xi}$ and $\widehat{Z}^e = \widehat{M}^e_{\xi\zeta}$. For each level of regularization $\gamma\in [1/2,\infty)$ we can now define	the empirical regularized covariance operator for each of the covariates $X(1)$ and $X(2)$, through the two $10\times 10$ submatrices $\widehat{C}_1^\gamma$ and $\widehat{C}_2^\gamma$,
\[ \begin{bmatrix}
	\widehat{C}_1^\gamma \\ \widehat{C}_2^\gamma
\end{bmatrix}
= \left[\gamma \widehat{G}^A + (1-\gamma)\widehat{G}^O\right]^{-1} \left[\gamma \widehat{Z}^A + (1-\gamma)\widehat{Z}^O\right]. \] 
With these matrices, we can define the plug-in estimators of the worst risk minimizer,
\[   \hat\beta^\gamma_{x_j y}(t,\tau) = \phi^t(t)\widehat{C}^\gamma_j\phi(\tau).\]	
Figure~\ref{fig:beta}d shows the estimates for regularization parameter $\gamma=10$. It shows that even in the empirical setting with a minor distribution shift, the method is able to recover, at least approximately, the worst risk minimizer, which in this case corresponds to the causal solution.

\section{Conclusion}

In this paper, we introduced a novel framework for functional structural equation models (SEMs) that extends worst-risk minimization to the functional domain. By leveraging linear, potentially unbounded operators, we provided a formulation that circumvents limitations associated with score-space representations and reproducing kernel Hilbert space (RKHS) methods. Our key theoretical contribution is the functional worst-risk decomposition theorem, which characterizes the maximum out-of-sample risk in terms of observed environments. This decomposition enables robust predictive modeling in the presence of distributional shifts.

We established sufficient conditions for the existence and uniqueness of worst-risk minimizers and proposed consistent estimators for practical implementation. Through empirical illustrations, we demonstrated that our approach effectively mitigates the impact of distributional shifts and improves out-of-sample generalization in functional regression settings. Future work may explore extensions of this framework to nonlinear functional SEMs \citep{fan2015functional}. Additionally, applying our methodology to real-world applications in finance, healthcare, and climate modeling could further validate its practical utility.

	\bibliographystyle{plainnat}
	\bibliography{bibliopaper}  

	\appendix
	
	\section{Proofs}
    Before all of the proofs we remind the reader of the following fact. If $\{\phi_{i,n}\}_{n\in\N}$ are an ON-bases for a Hilbert space $H$, for $1\le i\le k$ then 
	$$ \mathcal{H}^k\left(\{\phi_{1,n}\}_{n\in\N},\ldots,\{\phi_{k,n}\}_{n\in\N}\right)=\left\{(\phi_{1,j_1},0,\ldots,0),(0,\phi_{2,j_2},0,\ldots,0),\ldots,(0,\ldots,0,\phi_{k,j_k})\right\}_{j_1,...,j_k\in\N}, $$
	always forms an ON-basis for $H^k$.
	\subsection{Proof of Proposition \ref{closed}}
	\begin{proof}
		Suppose $\{A_n\}_{n\in\N}\subset \mathcal{A}$ is such that $A_n\xrightarrow{\mathcal{V}}A'$. If $\mathcal{A}$ is closed then $A'\in\mathcal{A}$. By definition of $C^\gamma_{\mathcal{A}}(A)$ we have for any $g_1,\ldots, g_{p+1}\in L^2([T_1,T_2])$.
		\begin{align*}
			&\int_{[T_1,T_2]^2} \left(g_1(s),....,g_{p+1}(s)\right)K_{A_n}(s,t)\left(g_1(t),....,g_{p+1}(t)\right)^Tdsdt
			\\
			&\le \gamma\int_{[T_1,T_2]^2} \left(g_1(s),....,g_{p+1}(s)\right)K_{A}(s,t)\left(g_1(t),....,g_{p+1}(t)\right)^Tdsdt
		\end{align*}
		By estimates analogous to those in \eqref{ApproxKg},
		\begin{align*}
			&\lim_{n\to\infty}\left|\int_{[T_1,T_2]^2} \left(g_1(s),....,g_{p+1}(s)\right)K_{A_n}(s,t)\left(g_1(t),....,g_{p+1}(t)\right)^Tdsdt
			-
			\right.
			\\
			&\left. \int_{[T_1,T_2]^2} \left(g_1(s),....,g_{p+1}(s)\right)K_{A'}(s,t)\left(g_1(t),....,g_{p+1}(t)\right)^Tdsdt\right|
			\\
			&\le
			d\sum_{i=1}^{p+1} \lVert g_{i} \rVert_{L^2([T_1,T_2])}^2\left(\lim_{n\to\infty}\sum_{i=1}^{p+1}\sum_{j=1}^{p+1}\int_{[T_1,T_2]^2} \left| K_{i,j}(s,t)-K^n_{i,j}(s,t)\right|^2dsdt\right)^{\frac12}=0,
		\end{align*}
		for some constant $d$ that depends only on $p$ and where $K_{i,j}$ and $K^n_{i,j}$ are the elements on row $i$ and column $j$ of the matrices $K_{A}$ and $K_{A_n}$ respectively. Therefore
		\begin{align*}
			&\int_{[T_1,T_2]^2} \left(g_1(s),....,g_{p+1}(s)\right)K_{A'}(s,t)\left(g_1(t),....,g_{p+1}(t)\right)^Tdsdt
			\\
			&=
			\lim_{n\to\infty}\int_{[T_1,T_2]^2} \left(g_1(s),....,g_{p+1}(s)\right)K_{A_n}(s,t)\left(g_1(t),....,g_{p+1}(t)\right)^Tdsdt
			\\
			&\le \gamma\int_{[T_1,T_2]^2} \left(g_1(s),....,g_{p+1}(s)\right)K_{A}(s,t)\left(g_1(t),....,g_{p+1}(t)\right)^Tdsdt
		\end{align*}
		which implies $A'\in C^\gamma_{\mathcal{A}}(A)$.
	\end{proof}

	\subsection{Proof of Proposition \ref{Gprop}}
	\begin{proof}
		Let 
		\begin{align*}
			C=&\left\{A'\in \mathcal{A}: \int\int \left(g_1(s),....,g_{p+1}(s)\right)K_{A'}(s,t)\left(g_1(t),....,g_{p+1}(t)\right)^Tdsdt
			\right.\nonumber
			\\
			&\left.\le \gamma\int\int \left(g_1(s),....,g_{p+1}(s)\right)K_{A}(s,t)\left(g_1(t),....,g_{p+1}(t)\right)^Tdsdt,\forall g_1,....,g_{p+1}\in \mathcal{G}
			\right\}.
		\end{align*}
		Since $\mathcal{G}\subseteq L^2([T_1,T_2])$ we trivially have that $C^\gamma_{\mathcal{A}}(A)\subseteq C$. To show $C\subseteq C^\gamma_{\mathcal{A}}(A)$, let $\textbf{g}(s)=\left(g_{1}(s),....,g_{p+1}(s)\right)$, with $g_i\in L^2([T_1,T_2])$ and let $\textbf{g}_n(s)=\left(g_{1,n}(s),....,g_{p+1,n}(s)\right)$, with $g_i\in \mathcal{G}$ be such that $\lVert \textbf{g}_n-\textbf{g} \rVert_{\left(L^2([T_1,T_2])\right)^{p+1}}\to 0$. Taking $A'\in C$ we have (skipping some steps that are in common with \eqref{ApproxKg})
		\begin{align}\label{Aprg}
			&\left|\int_{[T_1,T_2]^2}  \textbf{g}(s)\left(\gamma K_{A}(s,t)-K_{A'}(s,t)\right)\textbf{g}(t)^Tdsdt
			-
			\int\int_{[T_1,T_2]^2}  \textbf{g}_n(s)\left(\gamma K_{A}(s,t)-K_{A'}(s,t)\right)\textbf{g}_n(t)^Tdsdt\right|\nonumber
			\\
			&\le
			\left|\int\int_{[T_1,T_2]^2}   \left(\textbf{g}_n(s)-\textbf{g}(s)\right)\left(\gamma K_{A}(s,t)-K_{A'}(s,t)\right)\textbf{g}(t)^Tdsdt\right|\nonumber
			\\
			&+
			\left|\int\int_{[T_1,T_2]^2}   \textbf{g}(s)\left(\gamma K_{A}(s,t)-K_{A'}(s,t)\right)\left(\textbf{g}_n(t)-\textbf{g}(t)\right)^Tdsdt\right|\nonumber
			\\
			&\le
			2\int\int_{[T_1,T_2]^2} \left(\sum_{i=1}^{p+1}\left(g_{i}(s)-g_{i,n}(s)\right)^2\right)^{\frac12}\left(\sum_{i=1}^{p+1}g_{i}(t)^2\right)^{\frac12} \left\lVert \gamma K_{A}(s,t)-K_{A'}(s,t)\right\rVert_{\mathit{l}^2}dsdt\nonumber
			\\
			&\le
			2T\left(\int_{[T_1,T_2]}\sum_{i=1}^{p+1}\left(g_{i}(s)-g_{i,n}(s)\right)^2ds\int_{[T_1,T_2]} \sum_{i=1}^{p+1}g_{i}(t)^2dt\right)^{\frac12} \left(\int_{[T_1,T_2]^2} \left\lVert K_{A'}(s,t)-K^m_{A'}(s,t)\right\rVert_{\mathit{l}^2}^2dsdt\right)^{\frac12}\nonumber
			\\
			&\le
			dT\lVert \textbf{g}_n-\textbf{g} \rVert_{\left(L^2([T_1,T_2])\right)^{p+1}}\lVert \textbf{g} \rVert_{\left(L^2([T_1,T_2])\right)^{p+1}}
			\left(\sum_{i=1}^{p+1}\sum_{j=1}^{p+1}\int_{[T_1,T_2]^2} \left| K_{i,j}(s,t)-K^m_{i,j}(s,t)\right|^2dsdt\right)^{\frac12},
		\end{align}
		which converges to zero. By assumption
		$$\int\int_{[T_1,T_2]^2}  \textbf{g}_n(s)\left(\gamma K_{A}(s,t)-K_{A'}(s,t)\right)\textbf{g}_n(t)^Tdsdt\ge 0, n\in\N$$
		and therefore
		$$0\le \int\int_{[T_1,T_2]^2}  \textbf{g}(s)\left(\gamma K_{A}(s,t)-K_{A'}(s,t)\right)\textbf{g}(t)^Tdsdt
		= \lim_{n\to\infty}\int\int_{[T_1,T_2]^2}  \textbf{g}_n(s)\left(\gamma K_{A}(s,t)-K_{A'}(s,t)\right)\textbf{g}_n(t)^Tdsdt,$$
		implying $A'\in C^\gamma_{\mathcal{A}}(A)$, i.e. $C\subseteq C^\gamma_{\mathcal{A}}(A)$, as was to be shown.
	\end{proof}
	
	\subsection{Proof of Proposition \ref{WSS}}
	\begin{proof}
		Denote 
		$$ C=\left\{A'\in \mathcal{A}: \gamma \hat{K}_{A}(\omega) -\hat{K}_{A'}(\omega) \textit{ is positive semidefinite a.e.}(\omega) \right\}.$$
		for $1\le i,j\le p+1$ let $K_{i,j}$ be the element on row $i$ and column $j$ of the matrix $\gamma K_{A} -K_{A'}$. By the Plancherel theorem, we have for any $f,g\in L^2([T_1,T_2])$
		\begin{align*}
			\int_{[T_1,T_2]^2} g(s) K_{i,j}(s-t) f(t)dsdt
			&=
			\int_{\R}\int_{\R} g(s) K_{i,j}(s-t) f(t)dsdt
			\\
			&=
			\int_{\R} g(s)(K_{i,j}*f)(s)dsdt
			\\
			&= 
			\frac{1}{2\pi}\int_{\R} \hat{g}(\omega) \widehat{K_{i,j}*f}(\omega)d\omega
			\\
			&=
			\frac{1}{2\pi}\int_{\R} \hat{g}(\omega) \hat{K}_{i,j}(\omega)\hat{f}(\omega)d\omega.
		\end{align*}
		Therefore
		\begin{align*}
			&\int_{[T_1,T_2]^2} \left(g_1(s),\ldots,g_{p+1}(s)\right) \left( \gamma K_{A}(s-t) -K_{A'}(s-t)\right) \left(g_1(t),\ldots,g_{p+1}(t)\right)^Tdsdt
			\\
			=
			&\frac{1}{2\pi}\int_{[T_1,T_2]^2} \left(g_1(s),\ldots,g_{p+1}(s)\right) \left( \gamma K_{A}(s-t) -K_{A'}(s-t)\right) \left(g_1(t),\ldots,g_{p+1}(t)\right)^Tdsdt
			\\
			=
			&\frac{1}{2\pi}\int_{\R}\left(\hat{g}_1(\omega),\ldots,\hat{g}_{p+1}(\omega)\right) \left( \gamma \hat{K}_{A}(\omega) -\hat{K}_{A'}(\omega)\right) \left(\hat{g}_1(\omega),\ldots,\hat{g}_{p+1}(\omega)\right)^Td\omega.
		\end{align*}
		Hence if $A'\in C$ then $A'\in C^\gamma_{\mathcal{A}}(A)$ (i.e. $C\subseteq C^\gamma_{\mathcal{A}}(A)$). In the other direction, suppose that $A'\in C^c$. Denote $\lambda_{p+1}(\omega)$, the smallest eigenvalue of $ \gamma \hat{K}_{A}(\omega) -\hat{K}_{A'}(\omega)$. By assumption there exists a set $D$ of positive Lebesgue measure where $\lambda_{p+1}(\omega)<0$. Take any point $\omega'\in D$ and let $x$ be an eigenvector corresponding to $\lambda_{p+1}(\omega')$. As the entries of $ \gamma \hat{K}_{A}(\omega) -\hat{K}_{A'}(\omega)$ are continuous, it follows that if we let $x$ be an eigenvector corresponding to $\lambda_{p+1}(\omega')$ then  
		$$ x\left(\gamma \hat{K}_{A}(\omega) -\hat{K}_{A'}(\omega)\right)x^T<0,$$
		in some neighbourhood $(\omega'-\epsilon,\omega'+\epsilon)$ for some $\epsilon>0$. For any $\delta>0$, we may now take some mollifier function $\psi(\omega)$ such that $0\le \psi(\omega)\le 1$, $\psi(\omega)=1$ on $[\omega'-\epsilon/2,\omega'+\epsilon/2]$ and $\psi(\omega)=0$ on the complement of $[\omega'-\epsilon/2-\delta,\omega'+\epsilon/2+\delta]$. By choosing $\delta$ sufficiently small we have that 
		$$\int_{\R}\psi(\omega)^2 x \left( \gamma \hat{K}_{A}(\omega) -\hat{K}_{A'}(\omega)\right) x^Td\omega<0. $$
		By the Plancherel theorem
		\begin{align*}
			&\int_{\R}\psi(\omega)^2 x \left( \gamma \hat{K}_{A}(\omega) -\hat{K}_{A'}(\omega)\right) x^Td\omega
			\\
			=2\pi &\int_{[T_1,T_2]^2} \check{\psi}(s) x\left( \gamma K_{A}(s-t) -K_{A'}(s-t)\right)x^T\check{\psi}(t)dsdt,
		\end{align*}
		where $\check{\psi}$ denotes the inverse Fourier transform of $\psi$ (note that $\check{\psi}$ does not have compact support by the uncertainty principle, but since $\gamma K_{A} -K_{A'}$ does it will be restricted to the support of this kernel). Setting $g_i(s)=x(i) \check{\psi}(s) ^{\frac{1}{p+1}}1_{[T_1,T_2]}(s)\in L^2([T_1,T_2])$ yields
		$$ \int_{[T_1,T_2]^2} \left(g_1(s),\ldots,g_{p+1}(s)\right) \left( \gamma K_{A}(s-t) -K_{A'}(s-t)\right) \left(g_1(t),\ldots,g_{p+1}(t)\right)^Tdsdt<0 $$
		and hence $A'\not\in C^\gamma_{\mathcal{A}}(A)$, implying $C^\gamma_{\mathcal{A}}(A)\subseteq C$. This concludes the proof.
	\end{proof}
	
	\subsection{Proof of Theorem \ref{WR1}}
    We begin with the proof of the supporting Lemma
    	\subsection{Proof of Lemma \ref{PropEnvCont}}
		\begin{proof}
		For two fixed $A',A''\in \mathcal{V}$ we have,
		\begin{align}\label{RDelta}
			\left| R_{A'}(\beta)-R_{A''}(\beta)\right|
			&\le
			\left|\E_{A'}\left[\int_{[T_1,T_2]}\left(Y^{A'}_t\right)^2dt\right]
			-
			\E_{A''}\left[\int_{[T_1,T_2]}\left(Y^{A''}_tdt\right)^2\right]\right|\nonumber
			\\
			&+
			2\sum_{i=1}^p\left|\E_{A'}\left[\int_{[T_1,T_2]}Y^{A'}_t \int_{[T_1,T_2]^2}(\beta(t,\tau))(i) X^{A'}_\tau(i) d\tau dt\right]
			\right.\nonumber
			\\
			&\left.-
			\E_{A''}\left[\int_{[T_1,T_2]}Y^{A''}_t \int_{[T_1,T_2]^2}(\beta(t,\tau))(i) X^{A''}_\tau d\tau dt\right]
			\right|\nonumber
			\\
			&+
			\sum_{i=1}^p\left|\E_{A'}\left[\int_{[T_1,T_2]}\left(\int_{[T_1,T_2]}(\beta(t,\tau))(i) X^{A'}_\tau(i) d\tau \right)^2dt\right]\nonumber
			\right.
			\\
			&\left.-
			\E_{A''}\left[\int_{[T_1,T_2]}\left(\int_{[T_1,T_2]}(\beta(t,\tau))(i) X^{A''}_\tau(i) d\tau \right)^2dt\right]
			\right|.
		\end{align}
		Let $\mathcal{S}_i=\pi_i \mathcal{S}$ be the projection of $\mathcal{S}$ on its i:th coordinate, i.e. if $g\in L^2([T_1,T_2])^{p+1}$ then $\mathcal{S}g=(\mathcal{S}_1g,\ldots,\mathcal{S}_{i}g,\ldots,\mathcal{S}_{p+1}g)$. We have that $\mathcal{S}_i:L^2([T_1,T_2])^{p+1}\to L^2([T_1,T_2])$ is linear and bounded, indeed, since
		$$\lVert \mathcal{S} \rVert= \sup_{\lVert f\rVert_{L^2([T_1,T_2])^{p+1}}}\sqrt{\sum_{i=1}^{p+1} \lVert \mathcal{S}_i f\rVert_{L^2([T_1,T_2])}^2}
		\ge \sup_{\lVert f\rVert_{L^2([T_1,T_2])^{p+1}}}\lVert \mathcal{S}_i f\rVert_{L^2([T_1,T_2])}, $$
		for every $1\le i\le p+1$, we have $\lVert \mathcal{S}_i \rVert\le \lVert \mathcal{S} \rVert$. 
For every $n\in\N$, let $\{Q^n_j\}_{j\in\N}$ be a partition of $L^2([T_1,T_2])^{p+1}$ into Borel sets of diameter less than $\frac{1}{2n}$, such a partition must exist because $L^2([T_1,T_2])^{p+1}$ is separable. Set $W_{a''}^n=\sum_{j=1}^\infty w_j^n1_{A''\in Q_j^n}$, $W_{a'}^n=\sum_{j=1}^\infty w_j^n1_{A'\in Q_j^n}$, $W_{e''}^n=\sum_{j=1}^\infty w_j^n1_{\epsilon_{ A''}\in Q_j^n}$ and $W_{e'}^n=\sum_{j=1}^\infty w_j^n1_{\epsilon_{ A'}\in Q_j^n}$ where $w_j^n\in Q_j^n$. Then by construction $\lVert W_{a''}^n-A'' \rVert<\frac{1}{2n}$, $\lVert W_{a'}^n-A' \rVert<\frac{1}{2n}$, $\lVert W_{e'}^n-\epsilon_{ A'} \rVert<\frac{1}{2n}$ and $\lVert W_{e''}^n-\epsilon_{ A''} \rVert<\frac{1}{2n}$, therefore 
			\begin{itemize}
				\item[] $\E\left[\lVert W_{a''}^n\rVert_{L^2([T_1,T_2])^{p+1}}^2\right]\le \E\left[\lVert A''\rVert_{L^2([T_1,T_2])^{p+1}}^2\right]+\frac{1}{4n^2}$,
				\item[] $\E\left[\lVert W_{a'}^n\rVert_{L^2([T_1,T_2])^{p+1}}^2\right]\le \E\left[\lVert A'\rVert_{L^2([T_1,T_2])^{p+1}}^2\right]+\frac{1}{4n^2}$,
				\item[] $\E_{A''}\left[\lVert W_{e''}^n\rVert_{L^2([T_1,T_2])^{p+1}}^2\right]\le \E_{A''}\left[\lVert \epsilon_{ A''}\rVert_{L^2([T_1,T_2])^{p+1}}^2\right]+\frac{1}{4n^2}$ and
				\item[] $\E_{A'}\left[\lVert W_{e'}^n\rVert_{L^2([T_1,T_2])^{p+1}}^2\right]\le \E_{A'}\left[\lVert \epsilon_{ A''}\rVert_{L^2([T_1,T_2])^{p+1}}^2\right]+\frac{1}{4n^2}$.
			\end{itemize}
			Since 
			\begin{itemize}
				\item[] $\E\left[ \lVert W_{a''}^n \rVert_{L^2([T_1,T_2])^{p+1}}^2 \right]=\sum_{j=1}^\infty \lVert w_j^n \rVert_{L^2([T_1,T_2])^{p+1}}^2\P\left( A''\in Q_j^n \right)$,
				\item[] $\E\left[ \lVert W_{a'}^n \rVert_{L^2([T_1,T_2])^{p+1}}^2 \right]=\sum_{j=1}^\infty \lVert w_j^n \rVert_{L^2([T_1,T_2])^{p+1}}^2\P\left( A'\in Q_j^n \right)$,
				\item[] $\E_{A''}\left[ \lVert W_{e''}^n \rVert_{L^2([T_1,T_2])^{p+1}}^2 \right]=\sum_{j=1}^\infty \lVert w_j^n \rVert_{L^2([T_1,T_2])^{p+1}}^2,\P_{A''}\left( \epsilon_{A''}\in Q_j^n \right)$ and
				\item[] $\E_{A'}\left[ \lVert W_{e'}^n \rVert_{L^2([T_1,T_2])^{p+1}}^2 \right]=\sum_{j=1}^\infty \lVert w_j^n \rVert_{L^2([T_1,T_2])^{p+1}}^2,\P_{A'}\left( \epsilon^{A'}\in Q_j^n \right)$ 
			\end{itemize}
			it then follows that there exists $N_n\in\N$ such that if we let $A''^n=\sum_{j=1}^{N_n}w_j^n1_{A''\in Q_j^n}$, $A'^n=\sum_{j=1}^{N_n}w_j^n1_{A'\in Q_j^n}$, $\epsilon_{ A'}^n=\sum_{j=1}^{N_n}w_j^n1_{\epsilon^{A'}\in Q_j^n}$ and $\epsilon_{ A''}^n=\sum_{j=1}^{N_n}w_j^n1_{\epsilon_{A''}\in Q_j^n}$, then 
			\begin{itemize}
				\item[] $\E\left[\lVert A''^n-W_{a''}^n \rVert_{L^2([T_1,T_2])^{p+1}}^2\right] =\sum_{j=N_n+1}^{\infty}\n w_j^n\n^2\P\left(A''\in Q_j^n\right)<\frac{1}{2n}$,
				\item[] $\E\left[\lVert A'^n-W_{a'}^n \rVert_{L^2([T_1,T_2])^{p+1}}^2\right] =\sum_{j=N_n+1}^{\infty}\n w_j^n\n^2\P\left(A'\in Q_j^n\right)<\frac{1}{2n}$,
				\item[] $\E_{A'}\left[\lVert \epsilon^{A',n}-W_{e'}^n \rVert_{L^2([T_1,T_2])^{p+1}}^2\right]=\sum_{j=N_n+1}^{\infty}\n w_j^n\n^2\P\left(\epsilon^{A'}\in Q_j^n\right) <\frac{1}{2n}$ and
				\item[] $\E_{A''}\left[\lVert \epsilon_{A''}^n-W_{e''}^n \rVert_{L^2([T_1,T_2])^{p+1}}^2\right]=\sum_{j=N_n+1}^{\infty}\n w_j^n\n^2\P\left(\epsilon_{A''}\in Q_j^n\right) <\frac{1}{2n}$.
			\end{itemize}
			Next,
			$$\E\left[\lVert A''^n-A'' \rVert_{L^2([T_1,T_2])^{p+1}}^2\right]\le 2\E\left[\lVert A''^n-W_{a''}^n \rVert_{L^2([T_1,T_2])^{p+1}}^2\right]+2\E\left[\lVert A''-W_{a''}^n \rVert_{L^2([T_1,T_2])^{p+1}}^2\right], $$
			which converges to zero and similarly
			$$\E\left[\lVert A'^n-A' \rVert_{L^2([T_1,T_2])^{p+1}}^2\right]\le 2\E\left[\lVert A'^n-W_{a'}^n \rVert_{L^2([T_1,T_2])^{p+1}}^2\right]+2\E\left[\lVert A'-W_{a'}^n \rVert_{L^2([T_1,T_2])^{p+1}}^2\right], $$
			$$\E_{A'}\left[\lVert \epsilon^{A',n}-\epsilon^{A'} \rVert_{L^2([T_1,T_2])^{p+1}}^2\right]\le 2\E_{A'}\left[\lVert \epsilon^{A',n}-W_{e'}^n \rVert_{L^2([T_1,T_2])^{p+1}}^2\right]+2\E_{A'}\left[\lVert \epsilon^{A'}-W_{e'}^n \rVert_{L^2([T_1,T_2])^{p+1}}^2\right] $$ and
			$$\E_{A''}\left[\lVert \epsilon_{A''}^n-\epsilon_{A''} \rVert_{L^2([T_1,T_2])^{p+1}}^2\right]\le 2\E_{A''}\left[\lVert \epsilon_{A''}^n-W_{e''}^n \rVert_{L^2([T_1,T_2])^{p+1}}^2\right]+2\E_{A''}\left[\lVert \epsilon_{A''}-W_{e''}^n \rVert_{L^2([T_1,T_2])^{p+1}}^2\right], $$
			which also converges to zero. 
We now tackle the three terms of the right-hand side of \eqref{RDelta}.
\\
\textbf{Term 1 of \eqref{RDelta}}
\\ 
Applying the Cauchy-Schwarz inequality twice gives us,
\begin{align}\label{integralbound}
\E_{A''}\left[\left|\int_{[T_1,T_2]}\mathcal{S}_1\left(A''-A''^n\right)_t\mathcal{S}_1\left(\epsilon_{A''}^n\right)_tdt\right|\right]
&\le 
\E_{A''}\left[\left(\int_{[T_1,T_2]}\mathcal{S}_1\left(A''-A''^n\right)_t^2dt\right)^\frac12\left(\int_{[T_1,T_2]}\mathcal{S}_1\left(\epsilon_{A''}^n\right)_t^2dt\right)^\frac12\right]\nonumber
\\
&\le 
\E\left[\int_{[T_1,T_2]}\mathcal{S}_1\left(A''-A''^n\right)_t^2dt\right]^\frac12\E_{A''}\left[\int_{[T_1,T_2]}\mathcal{S}_1\left(\epsilon_{A''}^n\right)_t^2dt\right]^\frac12\nonumber
\\
&=
\E\left[\lVert S_1(A''-A''^n) \rVert_{L^2([T_1,T_2])}^2\right]^\frac12\E_{A''}\left[\lVert S_1(\epsilon_{A''}^n) \rVert_{L^2([T_1,T_2])}^2\right]^\frac12\nonumber
\\
&\le
\lVert S \rVert^2\E\left[\lVert A''-A''^n \rVert_{L^2([T_1,T_2])}^2\right]^\frac12\E_{A''}\left[\lVert \epsilon_{A''}^n \rVert_{L^2([T_1,T_2])}^2\right]^\frac12,
\end{align}		
which converges to zero (the fact that $\E_{A''}\left[\lVert \epsilon_{A''}^n - \epsilon_{A''}\rVert_{L^2([T_1,T_2])}^2\right]$ converges to zero implies that $\E_{A''}\left[\lVert \epsilon_{A''}^n \rVert_{L^2([T_1,T_2])}^2\right]$ is bounded).			
Next,
\begin{align}\label{simple}
			\E_{A''}\left[\int_{[T_1,T_2]}\mathcal{S}_1\left(A''^n\right)_t\mathcal{S}_1\left(\epsilon_{A''}^n\right)_tdt\right]
			&=\sum_{j_1=1}^{N_n}\sum_{j_2=1}^{N_n}
			\E_{A''}\left[\int_{[T_1,T_2]}\mathcal{S}_1\left(w_{j_1}^n\right)_t1_{A''\in Q_{j_1}^n}\mathcal{S}_1\left( w_{j_2}^n\right)_t1_{ \epsilon_{A''}\in Q_{j_2}^n}dt\right]\nonumber
			\\
			&=
			\sum_{j_1=1}^{N_n}\sum_{j_2=1}^{N_n}
			\int_{[T_1,T_2]}\mathcal{S}_1\left(w_{j_1}^n\right)_t\mathcal{S}_1\left( w_{j_2}^n\right)_tdt\P_{A''}\left(\left\{A''\in Q_{j_1}^n\right\}\cap \left\{\epsilon_{A''}\in Q_{j_2}^n\right\}\right)
			\nonumber
			\\
			&=
			\sum_{j_1=1}^{N_n}\sum_{j_2=1}^{N_n}
			\int_{[T_1,T_2]}\mathcal{S}_1\left(w_{j_1}^n\right)_t\mathcal{S}_1\left( w_{j_2}^n\right)_tdt\P\left(A''\in Q_{j_1}^n\right)\P_{A''}\left(\epsilon_{A''}\in Q_{j_2}^n\right) 
						\nonumber
			\\
			&=
			\sum_{j_1=1}^{N_n}\sum_{j_2=1}^{N_n}
			\int_{[T_1,T_2]}\mathcal{S}_1\left(w_{j_1}^n\right)_t\mathcal{S}_1\left( w_{j_2}^n\right)_tdt\P\left(A''\in Q_{j_1}^n\right)\P_{A''}\left(\epsilon^{A'}\in Q_{j_2}^n\right) 
									\nonumber
			\\
			&=
						\E_{A'}\left[\int_{[T_1,T_2]}\mathcal{S}_1\left(A''^n\right)_t\mathcal{S}_1\left(\epsilon^{A',n}\right)_tdt\right].
		\end{align}	
An analogous argument as in \eqref{integralbound} shows also that 
$$\lim_{n\to\infty} \E_{A''}\left[\left|\int_{[T_1,T_2]}\mathcal{S}_1\left(A''\right)_t\mathcal{S}_1\left(\epsilon_{A''}^n-\epsilon_{A''}\right)_tdt\right|\right]=0. $$
Obviously,
\begin{align*}
&\left|\E_{A''}\left[\int_{[T_1,T_2]}\mathcal{S}_1\left(A''\right)_t\mathcal{S}_1\left(\epsilon_{A''}\right)_tdt\right] -\E_{A''}\left[\int_{[T_1,T_2]}\mathcal{S}_1\left(A''^n\right)_t\mathcal{S}_1\left(\epsilon_{A''}^n\right)_tdt\right] \right|
\\
&\le 
\E_{A''}\left[\left|\int_{[T_1,T_2]}\mathcal{S}_1\left(A''-A''^n\right)_t\mathcal{S}_1\left(\epsilon_{A''}^n\right)_tdt\right|\right]
+
\E_{A''}\left[\left|\int_{[T_1,T_2]}\mathcal{S}_1\left(A''\right)_t\mathcal{S}_1\left(\epsilon_{A''}^n-\epsilon_{A''}\right)_tdt\right|\right]
\end{align*}
and therefore we have that 
$$ \lim_{n\to\infty}\E_{A''}\left[\int_{[T_1,T_2]}\mathcal{S}_1\left(A''^n\right)_t\mathcal{S}_1\left(\epsilon_{A''}^n\right)_tdt\right]
=
\E_{A''}\left[\int_{[T_1,T_2]}\mathcal{S}_1\left(A''\right)_t\mathcal{S}_1\left(\epsilon_{A''}\right)_tdt\right]$$
and analogously
$$ \lim_{n\to\infty}\E_{A'}\left[\int_{[T_1,T_2]}\mathcal{S}_1\left(A'^n\right)_t\mathcal{S}_1\left(\epsilon^{A',n}\right)_tdt\right]
=
\E_{A'}\left[\int_{[T_1,T_2]}\mathcal{S}_1\left(A'\right)_t\mathcal{S}_1\left(\epsilon^{A'}\right)_tdt\right].$$
So if we now pass to the limit on both sides of \eqref{simple} it follows that 
\begin{align}\label{law1o}
\E_{A''}\left[\int_{[T_1,T_2]}\mathcal{S}_1\left(A''\right)_t\mathcal{S}_1\left(\epsilon_{A''}\right)_tdt\right]=\E_{A'}\left[\int_{[T_1,T_2]}\mathcal{S}_1\left(A''\right)_t\mathcal{S}_1\left(\epsilon^{A'}\right)_tdt\right].
\end{align}
Therefore 
		\begin{align}\label{law1}
		\E_{A'}\left[\int_{[T_1,T_2]}\mathcal{S}_1\left(A'\right)_t\mathcal{S}_1\left(\epsilon^{A'}\right)_tdt\right]
		&=
		\E_{A'}\left[\int_{[T_1,T_2]}\mathcal{S}_1\left(A'-A''\right)_t\mathcal{S}_1\left(\epsilon^{A'}\right)_tdt\right]
		+
		\E_{A'}\left[\int_{[T_1,T_2]}\mathcal{S}_1\left(A''\right)_t\mathcal{S}_1\left(\epsilon^{A'}\right)_tdt\right]\nonumber
		\\
		&=
		\E_{A'}\left[\int_{[T_1,T_2]}\mathcal{S}_1\left(A'-A''\right)_t\mathcal{S}_1\left(\epsilon^{A'}\right)_tdt\right]
		+
		\E_{A''}\left[\int_{[T_1,T_2]}\mathcal{S}_1\left(A''\right)_t\mathcal{S}_1\left(\epsilon_{A''}\right)_tdt\right].
		\end{align}
Similarly to \eqref{simple},
\begin{align}\label{simpleeps}
			\E_{A''}\left[\int_{[T_1,T_2]}\mathcal{S}_1\left(\epsilon^{A'',n}\right)_t^2dt\right]
			&=
			\sum_{j_1=1}^{N_n}\sum_{j_2=1}^{N_n}
			\int_{[T_1,T_2]}\mathcal{S}_1\left(w_{j_1}^n\right)_t\mathcal{S}_1\left( w_{j_2}^n\right)_tdt\P_{A''}\left(\left\{\epsilon_{A''}\in Q_{j_1}^n\right\}\cap \left\{\epsilon_{A''}\in Q_{j_2}^n\right\}\right)
			\nonumber
			\\
			&=
			\sum_{j_1=1}^{N_n}\sum_{j_2=1}^{N_n}
			\int_{[T_1,T_2]}\mathcal{S}_1\left(w_{j_1}^n\right)_t\mathcal{S}_1\left( w_{j_2}^n\right)_tdt\P_{A'}\left(\epsilon_{A'}\in Q_{j_1}^n\cap Q_{j_2}^n \right) 
						\nonumber
			\\
			&=
			\sum_{j_1=1}^{N_n}\sum_{j_2=1}^{N_n}
			\int_{[T_1,T_2]}\mathcal{S}_1\left(w_{j_1}^n\right)_t\mathcal{S}_1\left( w_{j_2}^n\right)_tdt\P_{A''}\left(\epsilon^{A'}\in Q_{j_1}^n\cap Q_{j_2}^n\right) 
									\nonumber
			\\
			&=
						\E_{A'}\left[\int_{[T_1,T_2]}\mathcal{S}_1\left(\epsilon^{A',n}\right)_t^2dt\right].
		\end{align}	
By passing to the limit we obtain
\begin{align}\label{law2}
\E_{A'}\left[\int_{[T_1,T_2]}\mathcal{S}_1\left(\epsilon^{A'}\right)_t^2dt\right]
=
\E_{A''}\left[\int_{[T_1,T_2]}\mathcal{S}_1\left(\epsilon^{A''}\right)_t^2dt\right].
\end{align}
Returning to the first term of \eqref{RDelta},
		\begin{align*}
			&\left|\E_{A'}\left[\int_{[T_1,T_2]}\left(Y^{A'}_t\right)^2dt\right]-\E_{A''}\left[\int_{[T_1,T_2]}\left(Y^{A''}_tdt\right)^2\right]\right|
			\\
			&=
			\left|\E_{A'}\left[\int_{[T_1,T_2]}\mathcal{S}_1\left(A'+\epsilon^{A'}\right)_t^2dt\right]
-
			\E_{A''}\left[\int_{[T_1,T_2]}\mathcal{S}_1\left(A''+\epsilon_{A''}\right)_t^2dt\right]\right|
			\\
			&=
			\left|\E\left[\int_{[T_1,T_2]}\left(\mathcal{S}_1\left(A'\right)_t^2-\mathcal{S}_1\left(A''\right)_t^2\right) dt\right]
			+
			2\E_{A'}\left[\int_{[T_1,T_2]}\mathcal{S}_1\left(A'\right)_t\mathcal{S}_1\left(\epsilon^{A'}\right)_tdt\right]-
			2\E_{A''}\left[\int_{[T_1,T_2]}\mathcal{S}_1\left(A''\right)_t\mathcal{S}_1\left(\epsilon_{A''}\right)_tdt\right]
			\right.
			\\
			&\left.
			+
			\E_{A'}\left[\int_{[T_1,T_2]}\mathcal{S}_1\left(\epsilon^{A'}\right)_t^2dt\right]
			-
			\E_{A''}\left[\int_{[T_1,T_2]}\mathcal{S}_1\left(\epsilon_{A''}\right)_t^2dt\right]\right|
			\\
			&\le
			\E\left[\left|\int_{[T_1,T_2]} \mathcal{S}_1\left(A'+A''\right)_t\mathcal{S}_1\left(A'-A''\right)_t dt\right|\right]
			+
			\E_{A'}\left[\left|\int_{[T_1,T_2]}\mathcal{S}_1\left(A'-A''\right)_t\mathcal{S}_1\left(\epsilon^{A'}\right)_tdt\right|\right]
			\\
			&\le
			\E\left[\left(\int_{[T_1,T_2]} \left(\mathcal{S}_1\left(A'+A''\right)_t\right)^2 dt\right)^{\frac12}\left(\int_{[T_1,T_2]} \left(\mathcal{S}_1\left(A'-A''\right)_t\right)^2dt\right)^{\frac12}\right]
			\\
			&+
			\E_{A'}\left[\left(\int_{[T_1,T_2]}\mathcal{S}_1\left(A'-A''\right)_t^2dt\right)^\frac12 \left(\int_{[T_1,T_2]}\mathcal{S}_1\left(\epsilon^{A'}\right)_t^2dt\right)^\frac12 \right]
			\\
			&=
			\E\left[\lVert \mathcal{S}_1(A'+A'') \rVert_{L^2([T_1,T_2])^{p+1}}\lVert \mathcal{S}_1(A'-A'') \rVert_{L^2([T_1,T_2])^{p+1}}\right]
			+
			\E_{A'}\left[\lVert \mathcal{S}_1(A'-A'') \rVert_{L^2([T_1,T_2])^{p+1}}\lVert \mathcal{S}_1( \epsilon^{A'}) \rVert_{L^2([T_1,T_2])^{p+1}}\right]
			\\
			&\le 
			\lVert \mathcal{S} \rVert^2 \E\left[\lVert A''-A'\rVert_{L^2([T_1,T_2])^{p+1}}^2\right]^{\frac12}\left(\E\left[\lVert A''+A'\rVert_{L^2([T_1,T_2])^{p+1}}^2\right]^{\frac12} +\E_{A'}\left[\lVert \mathcal{S}_1( \epsilon^{A'}) \rVert_{L^2([T_1,T_2])^{p+1}}^2\right]^\frac12\right)
			\\
			&\le 2\lVert \mathcal{S} \rVert^2\lVert A''-A'\rVert_{\mathcal{V}} \left(\lVert A'\rVert_{\mathcal{V}}+\lVert A''\rVert_{\mathcal{V}}+\lVert \epsilon\rVert_{\mathcal{V}} \right),
		\end{align*}
		where we applied the Cauchy-Schwarz inequality several times and utilized both \eqref{law1} and \eqref{law2} to cancel out terms.
\\
\textbf{Term 2 of} \eqref{RDelta}
\\
For the second term on the right-hand side of \eqref{RDelta} we first make the following expansion,
\begin{align}\label{TERM2}
&\E_{A'}\left[\int_{[T_1,T_2]}Y^{A'}_t \int_{[T_1,T_2]}(\beta(t,\tau))(i) X^{A'}_\tau(i) d\tau dt\right]
			-
			\E_{A''}\left[\int_{[T_1,T_2]}Y^{A''}_t \int_{[T_1,T_2]}(\beta(t,\tau))(i) X^{A''}_\tau(i) d\tau dt\right]\nonumber
			\\
			&=
			\E_{A'}\left[\int_{[T_1,T_2]}\mathcal{S}_1\left(A'+\epsilon^{A'}\right)_t \int_{[T_1,T_2]}(\beta(t,\tau))(i) \mathcal{S}_{i+1}\left(A'+\epsilon^{A'}\right)_\tau d\tau dt\right]\nonumber
			\\
			&-			\E_{A''}\left[\int_{[T_1,T_2]}\mathcal{S}_1\left(A''+\epsilon_{A''}\right)_t \int_{[T_1,T_2]}(\beta(t,\tau))(i) \mathcal{S}_{i+1}\left(A''+\epsilon_{A''}\right)_\tau d\tau dt\right]\nonumber
			\\
			&=
			\E\left[\int_{[T_1,T_2]}\mathcal{S}_1\left(A'-A''\right)_t \int_{[T_1,T_2]}(\beta(t,\tau))(i) \mathcal{S}_{i+1}\left(A'\right)_\tau d\tau dt\right]\nonumber
			\\
			&+
			\E\left[\int_{[T_1,T_2]}\mathcal{S}_1\left(A''\right)_t \int_{[T_1,T_2]}(\beta(t,\tau))(i) \mathcal{S}_{i+1}\left(A'-A''\right)_\tau d\tau dt\right]\nonumber
			\\
			&+
			\E_{A'}\left[\int_{[T_1,T_2]}\mathcal{S}_1\left(\epsilon^{A'}\right)_t \int_{[T_1,T_2]}(\beta(t,\tau))(i) \mathcal{S}_{i+1}\left(\epsilon^{A'}\right)_\tau d\tau dt\right]
			-
			\E_{A''}\left[\int_{[T_1,T_2]}\mathcal{S}_1\left(\epsilon_{A''}\right)_t \int_{[T_1,T_2]}(\beta(t,\tau))(i) \mathcal{S}_{i+1}\left(\epsilon_{A''}\right)_\tau d\tau dt\right]\nonumber
			\\
			&+
			\E_{A'}\left[\int_{[T_1,T_2]}\mathcal{S}_1\left(A'\right)_t \int_{[T_1,T_2]}(\beta(t,\tau))(i) \mathcal{S}_{i+1}\left(\epsilon^{A'}\right)_\tau d\tau dt\right]
			-
			\E_{A''}\left[\int_{[T_1,T_2]}\mathcal{S}_1\left(A''\right)_t \int_{[T_1,T_2]}(\beta(t,\tau))(i) \mathcal{S}_{i+1}\left(\epsilon_{A''}\right)_\tau d\tau dt\right]\nonumber
			\\
			&+
			\E_{A'}\left[\int_{[T_1,T_2]}\mathcal{S}_1\left(\epsilon^{A'}\right)_t \int_{[T_1,T_2]}(\beta(t,\tau))(i) \mathcal{S}_{i+1}\left(A'\right)_\tau d\tau dt\right]
			-
			\E_{A''}\left[\int_{[T_1,T_2]}\mathcal{S}_1\left(\epsilon_{A''}\right)_t \int_{[T_1,T_2]}(\beta(t,\tau))(i) \mathcal{S}_{i+1}\left(A''\right)_\tau d\tau dt\right]
\end{align}
We bound the first term on the right-most side above	
\begin{align}\label{Aprimbiss}
&\left|\E\left[\int_{[T_1,T_2]}\mathcal{S}_1\left(A'-A''\right)_t \int_{[T_1,T_2]}(\beta(t,\tau))(i) \mathcal{S}_{i+1}\left(A'\right)_\tau d\tau dt\right]\right|\nonumber
\\
&\le
\E\left[\left(\int_{[T_1,T_2]^2}\mathcal{S}_1\left(A'-A''\right)_t^2\mathcal{S}_{i+1}\left(A'\right)_\tau^2d\tau dt\right)^\frac12 \left(\int_{[T_1,T_2]}(\beta(t,\tau))^2(i)  d\tau dt\right)^\frac12\right]\nonumber
\\
&=
\E\left[\left(\int_{[T_1,T_2]}\mathcal{S}_1\left(A'-A''\right)_t^2dt \int_{[T_1,T_2]}\mathcal{S}_{i+1}\left(A'\right)_\tau^2d\tau\right)^\frac12 \left(\int_{[T_1,T_2]^2}(\beta(t,\tau))^2(i)  d\tau dt\right)^\frac12\right]\nonumber
\\
&=
\E\left[\lVert \mathcal{S}_1(A'-A'')  \rVert_{L^2([T_1,T_2]} \lVert \mathcal{S}_{i+1}\left(A'\right)\rVert_{L^2([T_1,T_2]} \right]\lVert \beta(i)  \rVert_{L^2([T_1,T_2]^2)}\nonumber
\\
&\le
\lVert \beta(i)  \rVert_{L^2([T_1,T_2]^2)} \lVert \mathcal{S} \rVert^2\lVert A'-A'' \rVert_{\mathcal{V}}\lVert A' \rVert_{\mathcal{V}}
\end{align}
analogously for the second term we have
\begin{align*}
\left|\E\left[\int_{[T_1,T_2]}\mathcal{S}_1\left(A''\right)_t \int_{[T_1,T_2]}(\beta(t,\tau))(i) \mathcal{S}_{i+1}\left(A'-A''\right)_\tau d\tau dt\right]\right|
\le
\lVert \beta(i)  \rVert_{L^2([T_1,T_2]^2)} \lVert \mathcal{S} \rVert^2\lVert A'-A'' \rVert_{\mathcal{V}}\lVert A'' \rVert_{\mathcal{V}}.
\end{align*}
For terms three and four of \eqref{TERM2}, analogously to \eqref{Aprimbiss},
\begin{align*}
\left|\E_{A'}\left[\int_{[T_1,T_2]}\mathcal{S}_1\left(\epsilon^{A',n}-\epsilon^{A'}\right)_t \int_{[T_1,T_2]}(\beta(t,\tau))(i) \mathcal{S}_{i+1}\left(\epsilon^{A',n}-\epsilon^{A'}\right)_\tau d\tau dt\right]\right|
\le \lVert \beta(i)  \rVert_{L^2([T_1,T_2]^2)} \lVert \mathcal{S} \rVert^2\lVert \epsilon^n-\epsilon\rVert_{\mathcal{V}}^2,
\end{align*}	
which converges to zero, implying
\begin{align*}
\lim_{n\to\infty}&\E_{A'}\left[\int_{[T_1,T_2]}\mathcal{S}_1\left(\epsilon^{A',n}-\epsilon^{A'}\right)_t \int_{[T_1,T_2]}(\beta(t,\tau))(i) \mathcal{S}_{i+1}\left(\epsilon^{A',n}-\epsilon^{A'}\right)_\tau d\tau dt\right]
\\
=
&\E_{A'}\left[\int_{[T_1,T_2]}\mathcal{S}_1\left(\epsilon^{A'}\right)_t \int_{[T_1,T_2]}(\beta(t,\tau))(i) \mathcal{S}_{i+1}\left(\epsilon^{A'}\right)_\tau d\tau dt\right].
\end{align*}
Meanwhile, arguing as in \eqref{simpleeps} shows that
\begin{align*}
\E_{A'}\left[\int_{[T_1,T_2]}\mathcal{S}_1\left(\epsilon^{A',n}\right)_t \int_{[T_1,T_2]}(\beta(t,\tau))(i) \mathcal{S}_{i+1}\left(\epsilon^{A',n}\right)_\tau d\tau dt\right]
=
\E_{A''}\left[\int_{[T_1,T_2]}\mathcal{S}_1\left(\epsilon_{A''}^n\right)_t \int_{[T_1,T_2]}(\beta(t,\tau))(i) \mathcal{S}_{i+1}\left(\epsilon_{A''}^n\right)_\tau d\tau dt\right].
\end{align*}	
Similarly to \eqref{law1o} this leads to
\begin{align*}
\E_{A'}\left[\int_{[T_1,T_2]}\mathcal{S}_1\left(\epsilon^{A'}\right)_t \int_{[T_1,T_2]}(\beta(t,\tau))(i) \mathcal{S}_{i+1}\left(\epsilon^{A'}\right)_\tau d\tau dt\right]
=
\E_{A''}\left[\int_{[T_1,T_2]}\mathcal{S}_1\left(\epsilon_{A''}\right)_t \int_{[T_1,T_2]}(\beta(t,\tau))(i) \mathcal{S}_{i+1}\left(\epsilon_{A''}\right)_\tau d\tau dt\right]
,
\end{align*}
hence terms three and four of \eqref{TERM2} cancel. Terms five and six of \eqref{TERM2} are handled with similar techniques,
\begin{align*}
\E_{A'}\left[\int_{[T_1,T_2]}\mathcal{S}_1\left(A'\right)_t \int_{[T_1,T_2]}(\beta(t,\tau))(i) \mathcal{S}_{i+1}\left(\epsilon^{A'}\right)_\tau d\tau dt\right]
=
\E_{A''}\left[\int_{[T_1,T_2]}\mathcal{S}_1\left(A'\right)_t \int_{[T_1,T_2]}(\beta(t,\tau))(i) \mathcal{S}_{i+1}\left(\epsilon_{A''}\right)_\tau d\tau dt\right]
\end{align*}
implying
\small\begin{align*}
\E_{A'}\left[\int_{[T_1,T_2]}\mathcal{S}_1\left(A'\right)_t \int_{[T_1,T_2]}(\beta(t,\tau))(i) \mathcal{S}_{i+1}\left(\epsilon^{A'}\right)_\tau d\tau dt\right]
&=
\E_{A'}\left[\int_{[T_1,T_2]}\mathcal{S}_1\left(A'-A''\right)_t \int_{[T_1,T_2]}(\beta(t,\tau))(i) \mathcal{S}_{i+1}\left(\epsilon^{A'}\right)_\tau d\tau dt\right]
\\
&+
\E_{A''}\left[\int_{[T_1,T_2]}\mathcal{S}_1\left(A''\right)_t \int_{[T_1,T_2]}(\beta(t,\tau))(i) \mathcal{S}_{i+1}\left(\epsilon_{A''}\right)_\tau d\tau dt\right].
\end{align*}\normalsize
Similarly for terms seven and eight of \eqref{TERM2},
\small\begin{align*}
\E_{A'}\left[\int_{[T_1,T_2]}\mathcal{S}_1\left(\epsilon^{A'}\right)_t \int_{[T_1,T_2]}(\beta(t,\tau))(i) \mathcal{S}_{i+1}\left(A'\right)_\tau d\tau dt\right]
&=
\E_{A'}\left[\int_{[T_1,T_2]}\mathcal{S}_1\left(\epsilon^{A'}\right)_t \int_{[T_1,T_2]}(\beta(t,\tau))(i) \mathcal{S}_{i+1}\left(A'-A''\right)_\tau d\tau dt\right]
\\
&+
\E_{A''}\left[\int_{[T_1,T_2]}\mathcal{S}_1\left(\epsilon_{A''}\right)_t \int_{[T_1,T_2]}(\beta(t,\tau))(i) \mathcal{S}_{i+1}\left(A''\right)_\tau d\tau dt\right]
\end{align*}\normalsize
This leads to
\begin{align*}
&\left|\E_{A'}\left[\int_{[T_1,T_2]}Y^{A'}_t \int_{[T_1,T_2]}(\beta(t,\tau))(i) X^{A'}_\tau(i) d\tau dt\right]
			-
			\E_{A''}\left[\int_{[T_1,T_2]}Y^{A''}_t \int_{[T_1,T_2]}(\beta(t,\tau))(i) X^{A''}_\tau(i) d\tau dt\right]\right|
			\\
			&\le \left|\E\left[\int_{[T_1,T_2]}\mathcal{S}_1\left(A'-A''\right)_t \int_{[T_1,T_2]}(\beta(t,\tau))(i) \mathcal{S}_{i+1}\left(A'\right)_\tau d\tau dt\right]\right|
						\\
			&+
			\left|\E\left[\int_{[T_1,T_2]}\mathcal{S}_1\left(A''\right)_t \int_{[T_1,T_2]}(\beta(t,\tau))(i) \mathcal{S}_{i+1}\left(A'-A''\right)_\tau d\tau dt\right]\right|
			\\
			&+
			\left|\E_{A'}\left[\int_{[T_1,T_2]}\mathcal{S}_1\left(\epsilon^{A'}\right)_t \int_{[T_1,T_2]}(\beta(t,\tau))(i) \mathcal{S}_{i+1}\left(A'-A''\right)_\tau d\tau dt\right]\right|
			\\
			&+
						\left|\E_{A'}\left[\int_{[T_1,T_2]}\mathcal{S}_1\left(A'-A''\right)_t \int_{[T_1,T_2]}(\beta(t,\tau))(i) \mathcal{S}_{i+1}\left(\epsilon^{A'}\right)_\tau d\tau dt\right]\right|
			\\
			&\le	
			\lVert \beta(i)  \rVert_{L^2([T_1,T_2]^2)} \lVert \mathcal{S} \rVert^2\lVert A'-A'' \rVert_{\mathcal{V}}\left(\lVert A' \rVert_{\mathcal{V}}+\lVert A'' \rVert_{\mathcal{V}}\right)	
+
2\lVert \beta(i)  \rVert_{L^2([T_1,T_2]^2)}\lVert \mathcal{S} \rVert^2\lVert A'-A'' \rVert_{\mathcal{V}}\lVert \epsilon \rVert_{\mathcal{V}}.
\end{align*}
\textbf{Term 3 of} \eqref{RDelta}
\\	
We expand the third term of \eqref{RDelta},
		\begin{align}\label{T3}
		&\E_{A'}\left[\int_{[T_1,T_2]}\left(\int_{[T_1,T_2]}(\beta(t,\tau))(i) X^{A'}_\tau(i) d\tau \right)^2dt\right]-
			\E_{A''}\left[\int_{[T_1,T_2]}\left(\int_{[T_1,T_2]}(\beta(t,\tau))(i) X^{A''}_\tau(i) d\tau \right)^2dt\right]\nonumber
			\\
			&=
			\E_{A'}\left[\int_{[T_1,T_2]}\left(\int_{[T_1,T_2]}(\beta(t,\tau))(i) S_{i+1}\left( A'+\epsilon^{A'}\right)_\tau(i) d\tau \right)^2dt\right]\nonumber
			\\
			&-
			\E_{A''}\left[\int_{[T_1,T_2]}\left(\int_{[T_1,T_2]}(\beta(t,\tau))(i) S_{i+1}\left( A''+\epsilon_{A''}\right)_\tau(i) d\tau \right)^2dt\right]\nonumber
			\\
			&=
			\E\left[\int_{[T_1,T_2]}\left(\int_{[T_1,T_2]} \mathcal{S}_{i+1}\left(A''\right)_\tau(\beta(t,\tau))(i) d\tau\right)^2dt\right]
-
\E\left[\int_{[T_1,T_2]}\left(\int_{[T_1,T_2]} \mathcal{S}_{i+1}\left(A'\right)_\tau(\beta(t,\tau))(i)d\tau \right)^2dt\right]\nonumber
\\
&+\E_{A'}\left[\int_{[T_1,T_2]}\left(\int_{[T_1,T_2]} \mathcal{S}_{i+1}\left(\epsilon^{A'}\right)_\tau(\beta(t,\tau))(i) d\tau\right)^2dt\right]
-\E_{A''}\left[\int_{[T_1,T_2]}\left(\int_{[T_1,T_2]} \mathcal{S}_{i+1}\left(\epsilon_{A''}\right)_\tau(\beta(t,\tau))(i) d\tau\right)^2dt\right]\nonumber
\\
&+\E_{A'}\left[\int_{[T_1,T_2]}\int_{[T_1,T_2]} \mathcal{S}_{i+1}\left(\epsilon^{A'}\right)_\tau(\beta(t,\tau))(i) d\tau \int_{[T_1,T_2]} \mathcal{S}_{i+1}\left(A'\right)_\tau(\beta(t,\tau))(i) d\tau dt\right]\nonumber
\\
&-\E_{A''}\left[\int_{[T_1,T_2]}\int_{[T_1,T_2]} \mathcal{S}_{i+1}\left(\epsilon_{A''}\right)_\tau(\beta(t,\tau))(i) d\tau \int_{[T_1,T_2]} \mathcal{S}_{i+1}\left(A''\right)_\tau(\beta(t,\tau))(i) d\tau dt\right].
		\end{align}
For the first term difference the right-hand side of \eqref{T3}, note that
\begin{align*}
&\left|\E\left[\int_{[T_1,T_2]}\left(\int_{[T_1,T_2]} \mathcal{S}_{i+1}\left(A''\right)_\tau(\beta(t,\tau))(i) d\tau\right)^2dt\right]
-
\E\left[\int_{[T_1,T_2]}\left(\int_{[T_1,T_2]} \mathcal{S}_{i+1}\left(A'\right)_\tau(\beta(t,\tau))(i)d\tau \right)^2dt\right]\right|
\\
=
&\left|\E\left[\int_{[T_1,T_2]}\left(\int_{[T_1,T_2]} \mathcal{S}_{i+1}\left(A''-A''\right)_\tau(\beta(t,\tau))(i)d\tau \right)\left(\int_{[T_1,T_2]} \mathcal{S}_{i+1}\left(A''+A''\right)_\tau(\beta(t,\tau))(i)d\tau \right)dt\right]
\right|
\\
\le
&\E\left[\left(\int_{[T_1,T_2]}\left(\int_{[T_1,T_2]} \mathcal{S}_{i+1}\left(A''-A''\right)_\tau(\beta(t,\tau))(i)d\tau \right)^2dt\right)^\frac12 
\right.
\\
&\left.\left(\int_{[T_1,T_2]}\left(\int_{[T_1,T_2]} \mathcal{S}_{i+1}\left(A''+A''\right)_\tau(\beta(t,\tau))(i)d\tau \right)^2dt\right)^\frac12\right]
\\
\le
&\E\left[\left(\int_{[T_1,T_2]}\int_{[T_1,T_2]} \mathcal{S}_{i+1}\left(A''-A''\right)_\tau^2d\tau \int_{[T_1,T_2]} (\beta(t,\tau))^2(i)d\tau dt\right)^\frac12 
\right.
\\
&\left.
\left(\int_{[T_1,T_2]}\int_{[T_1,T_2]} \mathcal{S}_{i+1}\left(A''-A''\right)_\tau^2d\tau  \int_{[T_1,T_2]} (\beta(t,\tau))^2(i)d\tau dt\right)^\frac12 \right]
\\
\le
&\E\left[\int_{[T_1,T_2]}\left(\int_{[T_1,T_2]} \mathcal{S}_{i+1}\left(A''-A''\right)_\tau^2d\tau \right)^\frac12 \left(\int_{[T_1,T_2]} (\beta(t,\tau))^2(i)d\tau \right)^\frac12 dt\right]^\frac12 
\\
&=
\lVert  \beta(i) \rVert_{L^2([T_1,T_2]^2)}^2\E\left[ \lVert \mathcal{S}_{i+1}\left(A''-A''\right) \rVert_{L^2([T_1,T_2])}\lVert \mathcal{S}_{i+1}\left(A''+A''\right) \rVert_{L^2([T_1,T_2])}\right] 
\\
&\le
\lVert  \beta(i) \rVert_{L^2([T_1,T_2]^2)}^2\E\left[ \lVert \mathcal{S}_{i+1}\left(A''-A''\right) \rVert_{L^2([T_1,T_2])}^2\right]^\frac12\E\left[ \lVert \mathcal{S}_{i+1}\left(A''+A''\right) \rVert_{L^2([T_1,T_2])}^2\right]^\frac12
\\
&\le
\lVert  \beta(i) \rVert_{L^2([T_1,T_2]^2)}^2 \lVert \mathcal{S} \rVert^2 \lVert A''-A' \rVert_{\mathcal{V}}\left(\lVert A''\rVert_{\mathcal{V}}+\lVert A'\rVert_{\mathcal{V}}\right).
\end{align*}	
For the second difference on the right-hand side of \eqref{T3}, 
\begin{align*}
\E_{A''}\left[\int_{[T_1,T_2]}\left(\int_{[T_1,T_2]} \mathcal{S}_{i+1}\left(\epsilon_{A''}^n-\epsilon_{A''}\right)_\tau(\beta(t,\tau))(i) d\tau\right)^2dt\right]
&\le
\lVert \beta(i) \rVert_{L^2([T_1,T_2])}^2\E_{A''}\left[\lVert \mathcal{S}_{i+1}\left(\epsilon_{A''}^n-\epsilon_{A''}\right) \rVert_{L^2([T_1,T_2])}^2\right]
\\
&\le
\lVert \beta(i) \rVert_{L^2([T_1,T_2])}^2 \E_{A''}\left[\lVert \mathcal{S}_{i+1}\left(\epsilon_{A''}^n-\epsilon_{A''}\right) \rVert_{L^2([T_1,T_2])}^2\right]
\\
&\le
\lVert  \beta(i) \rVert_{L^2([T_1,T_2]^2)}^2 \lVert \mathcal{S} \rVert^2 \lVert \epsilon_{A''}^n-\epsilon_{A''} \rVert_{\mathcal{V}}^2,
\end{align*}
which converges to zero, implying that
$$\lim_{n\to\infty}\E_{A''}\left[\int_{[T_1,T_2]}\left(\int_{[T_1,T_2]} \mathcal{S}_{i+1}\left(\epsilon_{A''}^n\right)_\tau(\beta(t,\tau))(i) d\tau\right)^2dt\right] 
=
\E_{A''}\left[\int_{[T_1,T_2]}\left(\int_{[T_1,T_2]} \mathcal{S}_{i+1}\left(\epsilon_{A''}\right)_\tau(\beta(t,\tau))(i) d\tau\right)^2dt\right]$$
and analogously
$$\lim_{n\to\infty}\E_{A''}\left[\int_{[T_1,T_2]}\left(\int_{[T_1,T_2]} \mathcal{S}_{i+1}\left(\epsilon_{A''}^n\right)_\tau(\beta(t,\tau))(i) d\tau\right)^2dt\right] 
=
\E_{A'}\left[\int_{[T_1,T_2]}\left(\int_{[T_1,T_2]} \mathcal{S}_{i+1}\left(\epsilon^{A'}\right)_\tau(\beta(t,\tau))(i) d\tau\right)^2dt\right].$$
Meanwhile, analogously to \eqref{simpleeps},
$$\E_{A''}\left[\int_{[T_1,T_2]}\left(\int_{[T_1,T_2]} \mathcal{S}_{i+1}\left(\epsilon_{A''}^n\right)_\tau(\beta(t,\tau))(i) d\tau\right)^2dt\right]
=
\E_{A'}\left[\int_{[T_1,T_2]}\left(\int_{[T_1,T_2]} \mathcal{S}_{i+1}\left(\epsilon^{A',n}\right)_\tau(\beta(t,\tau))(i) d\tau\right)^2dt\right], $$
which therefore leads to
$$\E_{A''}\left[\int_{[T_1,T_2]}\left(\int_{[T_1,T_2]} \mathcal{S}_{i+1}\left(\epsilon_{A''}\right)_\tau(\beta(t,\tau))(i) d\tau\right)^2dt\right]
=
\E_{A'}\left[\int_{[T_1,T_2]}\left(\int_{[T_1,T_2]} \mathcal{S}_{i+1}\left(\epsilon^{A'}\right)_\tau(\beta(t,\tau))(i) d\tau\right)^2dt\right]. $$
Hence the second difference in \eqref{T3} is cancelled.
For the third difference in \eqref{T3},
\begin{align*}
&\left|\E_{A''}\left[\int_{[T_1,T_2]}\int_{[T_1,T_2]} \mathcal{S}_{i+1}\left(\epsilon_{A''}-\epsilon_{A''}^n\right)_\tau(\beta(t,\tau))(i) d\tau \int_{[T_1,T_2]} \mathcal{S}_{i+1}\left(A''\right)_\tau(\beta(t,\tau))(i) d\tau dt\right]\right|
\\
&\le
\int_{[T_1,T_2]} \int_{[T_1,T_2]}(\beta(t,\tau))^2(i) d\tau  dt\E_{A''}\left[\left(\int_{[T_1,T_2]} \mathcal{S}_{i+1}\left(\epsilon_{A''}-\epsilon_{A''}^n\right)_\tau^2d\tau\right)^\frac12 \left(\int_{[T_1,T_2]} \mathcal{S}_{i+1}\left(A''\right)_\tau^2d\tau\right)^\frac12 \right]
\\
&=
\lVert  \beta(i) \rVert_{L^2([T_1,T_2]^2)}^2
\E_{A''}\left[\lVert  \mathcal{S}_{i+1}\left(\epsilon_{A''}-\epsilon_{A''}^n\right) \rVert_{L^2([T_1,T_2])} \lVert  \mathcal{S}_{i+1}\left(A''\right) \rVert_{L^2([T_1,T_2])} \right]
\\
&\le
\lVert  \beta(i) \rVert_{L^2([T_1,T_2]^2)}^2
\E_{A''}\left[\lVert  \mathcal{S}_{i+1}\left(\epsilon_{A''}-\epsilon_{A''}^n\right) \rVert_{L^2([T_1,T_2])}^2\right]^\frac12
\E_{A''}\left[\lVert  \mathcal{S}_{i+1}\left(A''\right) \rVert_{L^2([T_1,T_2])}^2\right]^\frac12
\\
&\le
\lVert  \beta(i) \rVert_{L^2([T_1,T_2]^2)}^2
\lVert  S \rVert^2 \lVert  \epsilon_{A''}-\epsilon_{A''}^n \rVert_{\mathcal{V}}\lVert  A'' \rVert_{\mathcal{V}},
\end{align*}
which converges to zero, implying that
\begin{align*}
\lim_{n\to\infty}&\E_{A''}\left[\int_{[T_1,T_2]}\int_{[T_1,T_2]} \mathcal{S}_{i+1}\left(\epsilon_{A''}^n\right)_\tau(\beta(t,\tau))(i) d\tau \int_{[T_1,T_2]} \mathcal{S}_{i+1}\left(A''\right)_\tau(\beta(t,\tau))(i) d\tau dt\right]
\\
=
&\E_{A''}\left[\int_{[T_1,T_2]}\int_{[T_1,T_2]} \mathcal{S}_{i+1}\left(\epsilon_{A''}\right)_\tau(\beta(t,\tau))(i) d\tau \int_{[T_1,T_2]} \mathcal{S}_{i+1}\left(A''\right)_\tau(\beta(t,\tau))(i) d\tau dt\right]
\end{align*}
and analogously
\begin{align*}
\lim_{n\to\infty}&\E_{A'}\left[\int_{[T_1,T_2]}\int_{[T_1,T_2]} \mathcal{S}_{i+1}\left(\epsilon^{A',n}\right)_\tau(\beta(t,\tau))(i) d\tau \int_{[T_1,T_2]} \mathcal{S}_{i+1}\left(A'\right)_\tau(\beta(t,\tau))(i) d\tau dt\right]
\\
=
&\E_{A'}\left[\int_{[T_1,T_2]}\int_{[T_1,T_2]} \mathcal{S}_{i+1}\left(\epsilon^{A'}\right)_\tau(\beta(t,\tau))(i) d\tau \int_{[T_1,T_2]} \mathcal{S}_{i+1}\left(A'\right)_\tau(\beta(t,\tau))(i) d\tau dt\right]
\end{align*}
Meanwhile
\begin{align*}
&\E_{A'}\left[\int_{[T_1,T_2]}\int_{[T_1,T_2]} \mathcal{S}_{i+1}\left(\epsilon^{A',n}\right)_\tau(\beta(t,\tau))(i) d\tau \int_{[T_1,T_2]} \mathcal{S}_{i+1}\left(A'\right)_\tau(\beta(t,\tau))(i) d\tau dt\right]
\\
=
&\E_{A''}\left[\int_{[T_1,T_2]}\int_{[T_1,T_2]} \mathcal{S}_{i+1}\left(\epsilon_{A''}^n\right)_\tau(\beta(t,\tau))(i) d\tau \int_{[T_1,T_2]} \mathcal{S}_{i+1}\left(A'\right)_\tau(\beta(t,\tau))(i) d\tau dt\right]
\end{align*}
which therefore leads to
\begin{align*}
&\E_{A'}\left[\int_{[T_1,T_2]}\int_{[T_1,T_2]} \mathcal{S}_{i+1}\left(\epsilon^{A'}\right)_\tau(\beta(t,\tau))(i) d\tau \int_{[T_1,T_2]} \mathcal{S}_{i+1}\left(A'\right)_\tau(\beta(t,\tau))(i) d\tau dt\right]
\\
=
&\E_{A''}\left[\int_{[T_1,T_2]}\int_{[T_1,T_2]} \mathcal{S}_{i+1}\left(\epsilon_{A''}\right)_\tau(\beta(t,\tau))(i) d\tau \int_{[T_1,T_2]} \mathcal{S}_{i+1}\left(A'\right)_\tau(\beta(t,\tau))(i) d\tau dt\right].
\end{align*}
This implies, 
		\begin{align}\label{lawX}
		&\E_{A'}\left[\int_{[T_1,T_2]}\int_{[T_1,T_2]} \mathcal{S}_{i+1}\left(\epsilon^{A'}\right)_\tau(\beta(t,\tau))(i) d\tau \int_{[T_1,T_2]} \mathcal{S}_{i+1}\left(A'\right)_\tau(\beta(t,\tau))(i) d\tau dt\right]\nonumber
		\\
		&=
		\E_{A'}\left[\int_{[T_1,T_2]}\int_{[T_1,T_2]} \mathcal{S}_{i+1}\left(\epsilon^{A'}\right)_\tau(\beta(t,\tau))(i) d\tau \int_{[T_1,T_2]} \mathcal{S}_{i+1}\left(A'-A''\right)_\tau(\beta(t,\tau))(i) d\tau dt\right]\nonumber
		\\
		&+
		\E_{A''}\left[\int_{[T_1,T_2]}\int_{[T_1,T_2]} \mathcal{S}_{i+1}\left(\epsilon^{A'}\right)_\tau(\beta(t,\tau))(i) d\tau \int_{[T_1,T_2]} \mathcal{S}_{i+1}\left(A''\right)_\tau(\beta(t,\tau))(i) d\tau dt\right],
		\end{align}
which allows us to compute the following bound for the third difference in \eqref{T3},
\begin{align}\label{lawXX}
		&\left|\E_{A'}\left[\int_{[T_1,T_2]}\int_{[T_1,T_2]} \mathcal{S}_{i+1}\left(\epsilon^{A'}\right)_\tau(\beta(t,\tau))(i) d\tau \int_{[T_1,T_2]} \mathcal{S}_{i+1}\left(A'\right)_\tau(\beta(t,\tau))(i) d\tau dt\right]\right.\nonumber
		\\
		&\left.-
				\E_{A''}\left[\int_{[T_1,T_2]}\int_{[T_1,T_2]} \mathcal{S}_{i+1}\left(\epsilon^{A'}\right)_\tau(\beta(t,\tau))(i) d\tau \int_{[T_1,T_2]} \mathcal{S}_{i+1}\left(A''\right)_\tau(\beta(t,\tau))(i) d\tau dt\right]\right|\nonumber
		\\
		&=\left| \E_{A'}\left[\int_{[T_1,T_2]}\int_{[T_1,T_2]} \mathcal{S}_{i+1}\left(\epsilon^{A'}\right)_\tau(\beta(t,\tau))(i) d\tau \int_{[T_1,T_2]} \mathcal{S}_{i+1}\left(A'-A''\right)_\tau(\beta(t,\tau))(i) d\tau dt\right]\right|\nonumber
		\\
		&\le \lVert  \beta(i) \rVert_{L^2([T_1,T_2]^2)}^2 \lVert \mathcal{S} \rVert^2 \lVert A''-A' \rVert_{\mathcal{V}}\lVert \epsilon\rVert_{\mathcal{V}}.
		\end{align}
We may now bound the third term of \eqref{RDelta},
\begin{align*}
&\left|\E_{A'}\left[\int_{[T_1,T_2]}\left(\int_{[T_1,T_2]}(\beta(t,\tau))(i) X^{A'}_\tau(i) d\tau \right)^2dt\right]-
			\E_{A''}\left[\int_{[T_1,T_2]}\left(\int_{[T_1,T_2]}(\beta(t,\tau))(i) X^{A''}_\tau(i) d\tau \right)^2dt\right] \right|
			\\
			&\le 
			\lVert  \beta(i) \rVert_{L^2([T_1,T_2]^2)}^2 \lVert \mathcal{S} \rVert^2 \lVert A''-A' \rVert_{\mathcal{V}}\left(\lVert A''\rVert_{\mathcal{V}}+\lVert A'\rVert_{\mathcal{V}}+\lVert \epsilon\rVert_{\mathcal{V}}\right)
\end{align*}
It now follows that there exists some $E\in \R^+$ (depending on $\lVert \beta\rVert^2_{L^2([T_1,T_2]^2)^p}$ and $\lVert \mathcal{S}\rVert^2$) such that
		\begin{align}\label{Rineq1}
			\left| R_{A'}(\beta)-R_{A''}(\beta)\right|\le E \left(\lVert A' \rVert_{\mathcal{V}}+\lVert A'' \rVert_{\mathcal{V}}+\lVert \epsilon\rVert_{\mathcal{V}}\right)\lVert A'-A'' \rVert_{\mathcal{V}}
		\end{align}
		from which the result follows.
	\end{proof}
    We now proceed with the proof of the main Theorem.
	\begin{proof}[Proof of Theorem \ref{WR1}]
		\textbf{Step 1: Establish regularity/summability properties for the target and the covariates and expand the risk function in terms of scores.}
		\\
		Take $A'\in\mathcal{V}$, clearly,
		$$(Y^{A'},X^{A'})=\mathcal{S}\left({A'}+\epsilon^{A'}\right) .$$
		Therefore, 
		\begin{align*}
			\E_{A'}\left[\int_{[T_1,T_2]} (Y_t^{A'})^2dt\right]
			&=
			\E_{A'}\left[\int_{[T_1,T_2]} (Y_t^{A'})^2dt\right]
			\\
			&=
			\E_{A'}\left[\int_{[T_1,T_2]} \pi_1\left(\mathcal{S}\left(A'+\epsilon^{A'}\right)\right)^2_tdt\right]
			\\
			&=
			\E_{A'}\left[\n \left(\pi_1\left(\mathcal{S}\left(A'+\epsilon^{A'}\right)\right),0\ldots,0\right)\n_{L^2([T_1,T_2])^{p+1}}^2\right]
			\\
			&\le
			\n \mathcal{S} \n^2 \E_{A'}\left[\n A'+\epsilon^{A'}\n_{L^2([T_1,T_2])^{p+1}}^2\right]
			\\
			&\le
			\n \mathcal{S} \n^2 \E_{A'}\left[2\n A'\n_{L^2([T_1,T_2])^{p+1}}^2+2\n\epsilon^{A'}\n_{L^2([T_1,T_2])^{p+1}}^2\right]<\infty,
		\end{align*}
		which also implies $Y^{A'}\in L^2([T_1,T_2])$ a.s.. Analogously we have that $\E\left[\int_{[T_1,T_2]} (X_t(i)^{A'})^2dt\right]<\infty$ and $X^{A'}(i)\in L^2([T_1,T_2])$ a.s., for $1\le i\le p$.  Consider $\mathcal{K}_{X^{A'}(i)} :L^2([T_1,T_2])\to L^2([T_1,T_2])$, defined by 
$$(\mathcal{K}_{X^{A'}(i)} f)(t)=\int_{[T_1,T_2]}K_{X^{A'}(i)}(s,t)f(s)ds,$$
for $f\in L^2([T_1,T_2])$. By the Cauchy-Schwarz inequality
\begin{align*}
\int_{[T_1,T_2]^2}K_{X^{A'}(i)}(s,t)^2dsdt
&=
\int_{[T_1,T_2]^2}\E_{A'}\left[X^{A'}_s(i)X^{A'}_t(i)\right]^2dsdt
\\
&\le
\int_{[T_1,T_2]^2}\E_{A'}\left[\left(X_s^{A'}(i)\right)^2\right]\E_{A'}\left[\left(X_t^{A'}(j)\right)^2\right]dsdt
\\
&=
\left(\int_{[T_1,T_2]}\E_{A'}\left[\left(X_t^{A'}(i)\right)^2\right]dt\right)^2,
\end{align*}
which is finite by the assumption on $A$ and $\epsilon$.  It follows that $\mathcal{K}_{X^{A'}(i)}$ induces a compact self-adjoint operator on $L^2([T_1,T_2])^p$ and therefore, by the Hilbert-Schmidt theorem, $\mathcal{K}_{X^{A'}(i)}$ has an eigendecomposition, $\mathcal{K}_{X^{A'}(i)}f=\sum_{k=1}^\infty\alpha_{i,k}\langle \psi_{i,k},f\rangle\psi_{i,k}$, for $f\in L^2([T_1,T_2])$, where its eigenfunctions, $\{\psi_{i,k}\}_{k\in\N}$, are orthonormal in $L^2([T_1,T_2])$ and the series converges in $L^2([T_1,T_2])$. If $\mathsf{Ker}\left(\mathcal{K}_{X^{A'}(i)}\right)\not=\{0\}$ then there exists an ON-basis, $\{\eta_{i,l}\}_{l\in\N}$ for $\mathsf{Ker}\left(\mathcal{K}_{X^{A'}}\right)$. Let $\mathcal{L}_1=\overline{\mathsf{span}}\left(\{\psi_{i,k}\}_{k\in\N}\right)$ and $\mathcal{L}_2:=\mathcal{L}_1^{\perp}=\mathsf{Ker}\left(\mathcal{K}_{X^{A'}}\right)$. Then $\mathcal{L}_2$ is a closed subspace of $L^2([T_1,T_2])$ and is therefore separable, which implies it has a countable ON-basis, $\{\eta_k\}_k$ and moreover $L^2([T_1,T_2])=\mathcal{L}_1\oplus \mathcal{L}_2$ as well as $L^2([T_1,T_2])=\overline{\mathsf{span}}\left(\{\psi_{i,k}\}_k\cup \{\eta_{i,k}\}_k\right)$. Let $\{\tilde{\psi}_{i,k}\}_k$ be an enumeration of the ON-system $\{\psi_{i,k}\}_{k\in\N}\cup \{\eta_{i,k}\}_{k\in\N}$, which is a basis for $L^2([T_1,T_2])$. Take some arbitrary ON-basis of $L^2([T_1,T_2])$, $\{\phi_{n}\}_{n\in\N}$ and define 
		\begin{itemize}
			\item $Z^{A'}_k=\int_{[T_1,T_2]}Y^{A'}_t\phi_k(t)dt$ and
			\item  $\chi^{A'}_k(i)=\int_{[T_1,T_2]}X^{A'}_t(i)\tilde{\psi}_{i,k}(t)dt$.
		\end{itemize}
Then it follows that $X^{A'}(i)\in L^2([T_1,T_2])$ a.s. and hence if we let $S_n^{i*}(t) = 
\sum_{k=1}^n\left(\chi^{(1),A'}_k(i)\psi_{i,k}(t)+\chi^{(2),A'}_k(i)\eta_{i,k}(t)\right)$ then $S_n^{i*}\xrightarrow{L^2([T_1,T_2])}X^{A'}(i)$ a.s. (since $\{\tilde{\psi}_{i,k}\}_k$ is an ON-basis for $L^2([T_1,T_2])$). Moreover, note that
\begin{align*}
\E_{A'}\left[\left(\chi^{(2),A'}_{l}\right)^2\right]
&=
\E_{A'}\left[\int_0^T \int_0^T \eta_{i,l}(s)X^{A'}_s(i)X^{A'}_t(i)\eta_{i,l}(t)dtds\right]
\\
&=
\int_0^T \int_0^T \eta_{i,l}(s)\E_{A'}\left[X_s^{A'}(i)X_t^{A'}(i)\right]\eta_{i,l}(t)dtds
\\
&=\int_0^T \int_0^T \eta_{i,l}(s)K_{X^{A'}(i)}(t,s)\eta_{i,l}(t)dtds
\\
&=\int_0^T (\mathcal{K}_{X^{A'}(i)}\eta_{i,l})(t)\eta_{i,l}(t)^Tdtds
\\
&=\lim_{N\to\infty}\int_0^T \sum_{n=1}^N \alpha_{i,n}\langle\psi_{i,n},\eta_{i,l}\rangle_{L^2([T_1,T_2])}\psi_{i,n}(t)\eta_{i,l}(t)dt
\\
&=\lim_{N\to\infty} \sum_{n=1}^N \alpha_{i,n}\langle\psi_{i,n},\eta_{i,l}\rangle_{L^2([T_1,T_2])} \int_0^T  \psi_{i,n}(t) \eta_{i,l}(t)dt
=0.
\end{align*}
This implies that if we let $S_n^{X^{A'}(i)}(t)=\sum_{k=1}^n\chi^{(1),A'}_k(i)\psi_{i,k}(t)$, then $S_n^{X^{A'}(i)}\xrightarrow{L^2([T_1,T_2])}X^{A'}(i)$ a.s.. We will therefore denote $\chi_l^{A'}(i)=\chi^{(1),A'}_l(i)$ from now on. Since we expand $X^{A'}(i)$ in the basis given by the eigenfunctions of $K_{X^{A'}(i)}$ we also have that the sequence $\{\chi_l^{A'}(i)\}_{l\in\N}$ is orthogonal,
\begin{align}\label{orthscores}
\E\left[\chi_{l_1}^{A'}(i)\chi_{l_2}^{A'}(i)\right]
&=
\E\left[\int_{[T_1,T_2]} X^{A'}_s(i) \psi_{i,l_1}(s) ds\int_{[T_1,T_2]} X^{A'}_t(i)\psi_{i,l_2}(t)dt\right]\nonumber
\\
&=
\int_{[T_1,T_2]} \int_{[T_1,T_2]} \psi_{l_1}(s)\E\left[X^{A'}_s(i)X^{A'}_t(i)\right]\psi_{l_2}(t)dtds\nonumber
\\
&=\int_{[T_1,T_2]} \int_{[T_1,T_2]} \psi_{i,l_1}(s)K_{X^{A'}(i)}(t,s)\psi_{i,l_2}(t)dtds\nonumber
\\
&=\lim_{N\to\infty} \int_{[T_1,T_2]} \int_{[T_1,T_2]} \sum_{n=1}^N \alpha_n\langle \psi_{i,n},\psi_{i,l_1}\rangle_{L^2([T_1,T_2])}\psi_{i,n}(t) \psi_{i,l_2}(t)dtds\nonumber
\\
&=\lim_{N\to\infty} \sum_{n=1}^N  \alpha_n\langle \psi_{i,n},\psi_{i,l_1}\rangle_{L^2([T_1,T_2])}\langle \psi_{i,n},\psi_{i,l_2}\rangle_{L^2([T_1,T_2])}\nonumber
\\
&=\lim_{N\to\infty} \sum_{n=1}^N  \alpha_n\delta_{n,l_1}\delta_{n,l_2}
=\alpha_{l_1}\delta_{l_1,l_2}.
\end{align}
If also let $S_n^{Y^{A'}}(t)=\sum_{k=1}^nZ^{A'}_k\phi_k(t)$ then, since $\{\phi_k\}_{k\in\N}$ is an ON-basis, $S_n^{Y^{A'}}\xrightarrow{L^2([T_1,T_2])}Y^{A'}$ and $S_n^{X^{A'}(i)}\xrightarrow{L^2([T_1,T_2])}X^{A'}(i)$. Next, by monotone convergence
		\begin{align*}
			\int_{[T_1,T_2]} \E_{A'}\left[(X^{A'}_t(i))^2\right]dt 
			&=
			\E_{A'}\left[ \int_{[T_1,T_2]} (X^{A'}_t(i))^2dt \right]
			\\
			&=
			\E_{A'}\left[ \lim_{n\to\infty}\int_{[T_1,T_2]} \left(\sum_{k=1}^n\chi^{A'}_k(i)\psi_{i,k}(t)\right)^2dt \right]
			\\
			&=
			\E_{A'}\left[ \lim_{n\to\infty} \sum_{k=1}^n\sum_{l=1}^n\chi^{A'}_k(i)\chi^{A'}_l(i)\int_{[T_1,T_2]} \psi_{i,l}(t)\psi_{i,k}(t)dt \right]
						\\
			&=
			\E_{A'}\left[ \lim_{n\to\infty} \sum_{k=1}^n\sum_{l=1}^n\chi^{A'}_k(i)\chi^{A'}_l(i)\delta_{l,k} \right]
			\\
			&=
			\E_{A'}\left[ \sum_{k=1}^\infty(\chi^{A'}_k(i))^2 \right]
			=\sum_{k=1}^\infty\E_{A'}\left[ (\chi^{A'}_k(i))^2 \right],
		\end{align*}
		which implies $\sum_{k=1}^\infty\E\left[ (\chi_k^{A'}(i))^2 \right]<\infty$. Similarly $\sum_{k=1}^\infty\E\left[ (Z^{A'}_k)^2 \right]<\infty$.  Also, $S_n^{X^{A'}(i)}\xrightarrow{L^2(dt\times d\P_{A'})}X^A(i)$ ,
\begin{align}\label{XL2}
			\lim_{n\to\infty}\E_{A'}\left[ \int_{[T_1,T_2]} (S_n^{X^{A'}(i)}-X^{A'}_t(i))^2dt \right]\nonumber
			&=
			\lim_{n\to\infty}\E_{A'}\left[ \lim_{N\to\infty}\int_{[T_1,T_2]} \left(\sum_{k=1}^n\chi^{A'}_k(i)\psi_{i,k}(t)-\sum_{k=1}^N\chi^{A'}_k(i)\psi_{i,k}(t)\right)^2dt \right]\nonumber
			\\
			&=
			\lim_{n\to\infty}\E_{A'}\left[ \lim_{N\to\infty}\int_{[T_1,T_2]} \left(\sum_{k=n+1}^N\chi^{A'}_k(i)\psi_{i,k}(t)\right)^2dt \right]\nonumber
			\\
			&=
			\lim_{n\to\infty}\E_{A'}\left[ \lim_{N\to\infty}\sum_{k=n+1}^N(\chi^{A'}_k(i))^2 \right]\nonumber
			\\
			&=\lim_{n\to\infty}\sum_{k=n+1}^\infty\E_{A'}\left[ (\chi^{A'}_k(i))^2 \right]=0,
		\end{align}
		by monotone convergence, and analogously $S_n^{Y^{A'}}\xrightarrow{L^2(dt\times d\P_{A'})}Y^{A'}$. As $L^2([T_1,T_2]^2)$ is separable and $\{\phi_k\}_{k\in\N}$ and $\{\tilde{\psi}_{i,k}\}_{k\in\N}$ are both ON-bases for $L^2([T_1,T_2])$, it follows that $\{\phi_k\otimes\tilde{\psi}_{i,l}\}_{k,l}$ is an ON-basis for $L^2([T_1,T_2]^2)$. Therefore
		$$ L^2([T_1,T_2]^2)=\left\{ \sum_{k=1}^\infty \sum_{l=1}^\infty  \lambda_{k,l} \phi_k\otimes\tilde{\psi}_{i,l}: \sum_{k=1}^\infty \sum_{l=1}^\infty  \lambda_{k,l}^2<\infty\right\}.$$ 
Let 
$$S_n^{\beta(i)}(t,\tau)=\sum_{k=1}^n\sum_{l=1}^n \lambda_{k,l}^{\beta(i),1}\psi_{i,l}(\tau)\phi_k(t)+\sum_{k=1}^n\sum_{l=1}^n \lambda_{k,l}^{\beta(i),2}\eta_{i,l}(\tau)\phi_k(t),$$
then $S_n^{\beta(i)}\xrightarrow{L^2([T_1,T_2]^2)}\beta(i)$. 
By the Cauchy-Schwarz inequality,
\begin{align}\label{Qbound}
\int_{[T_1,T_2]} \left( \int_{[T_1,T_2]}(\beta(t,\tau))(i)X^{A'}_\tau(i)d\tau \right)^2dt
&\le
\int_{[T_1,T_2]} \left( \int_{[T_1,T_2]}\left|(\beta(t,\tau))(i)X^{A'}_\tau(i)\right|d\tau \right)^2dt\nonumber
\\
&\le
\int_{[T_1,T_2]}  \left(\int_{[T_1,T_2]}\left|(\beta(t,\tau))(i)\right|^2d\tau \int_{[T_1,T_2]}\left|X^{A'}_\tau(i)\right|^2d\tau\right) dt\nonumber
\\
&=
\int_{[T_1,T_2]}\left|X^{A'}_\tau(i)\right|^2d\tau 
\int_{[T_1,T_2]}  \int_{[T_1,T_2]}\left|(\beta(t,\tau))(i)\right|^2d\tau dt,
\end{align}
which is a.s. finite. Therefore if we let $Q_i(t)=\int_{[T_1,T_2]}(\beta(t,\tau))(i)X^{A'}_\tau(i)d\tau$ and $$S_N^{\int_i}(t)=\sum_{n=1}^N \left\langle \int_{[T_1,T_2]}(\beta(t,.))(i)X^{A'}_.(i)d\tau,\phi_k\right\rangle_{L^2([T_1,T_2])} \phi_k(t)$$ 
then $S_N^{\int_i}\xrightarrow{L^2([T_1,T_2])}Q_i$ a.s.. Since
\begin{align*}
\left\langle \int_{[T_1,T_2]}(\beta(.,\tau))(i)X^{A'}_\tau(i)d\tau,\phi_k\right\rangle_{L^2([T_1,T_2])}
&=
\lim_{n\to\infty}\sum_{k'=1}^n \sum_{l=1}^n \sum_{m=1}^n \lambda_{k,l}^{\beta(i),1}\chi_m^{A'}(i) \int_{[T_1,T_2]}\int_{[T_1,T_2]} \psi_{i,l}(\tau)\psi_{i,m}(\tau)\phi_{k'}(t)\phi_k(t)d\tau dt
\\
&+
\lim_{n\to\infty}\sum_{k'=1}^n \sum_{l=1}^n \sum_{m=1}^n \lambda_{k,l}^{\beta(i),2}\chi_m^{A'}(i) \int_{[T_1,T_2]}\int_{[T_1,T_2]} \eta_{i,l}(\tau)\psi_{i,m}(\tau)\phi_{k'}(t)\phi_k(t)d\tau dt
\\
&=
\lim_{n\to\infty}\sum_{k'=1}^n \sum_{l=1}^n \sum_{m=1}^n \lambda_{k,l}^{\beta(i),1}\chi_m^{A'}(i)\delta_{l,m}\delta_{k',k}
\\
&= \sum_{l=1}^\infty \lambda_{k,l}^{\beta(i),1} \chi_l^{A'}(i)
\end{align*}
we get,
\begin{align*}
S_n^{\int_i}(t)
&=
\sum_{k=1}^n\sum_{l=1}^\infty \lambda_{k,l}^{\beta(i)} \chi_l^{A'}(i)  \phi_k(t),
\end{align*}
where we denote $\lambda_{k,l}^{\beta(i)}=\lambda_{k,l}^{\beta(i),1}$. 
%A straight-forward computation yields
%\begin{align*}
%\int_{[T_1,T_2]} S_n^{X^{A'}(i)}(t,\tau)S_n^{\beta(i)}(t,\tau)d\tau
%			&=
%			 \sum_{m=1}^n\sum_{k=1}^n\sum_{l=1}^n \chi_m^{A'}(i) \lambda^{\beta(i)}_{k,l}\int_{[T_1,T_2]} \psi_{i,m}(\tau)\psi_{i,l}(\tau)d\tau\phi_k(t)
%			 \\			 &=\sum_{k=1}^n\sum_{l=1}^n \chi_l^{A'}(i) \lambda^{\beta(i)}_{k,l}\phi_k(t)
			 %=S_n^{\int_i}(t)-\sum_{k=1}^n\sum_{l=n+1}^\infty \lambda_{k,l}^{\beta(i)} \chi_l^{A'}(i)  \phi_k(t).
%\end{align*}
Next, utilizing \eqref{orthscores},
\begin{align}\label{series1}
&\E_{A'}\left[\int_{[T_1,T_2]}\left(\sum_{i=1}^p\sum_{k=1}^n\sum_{l=1}^n \lambda^{\beta(i)}_{k,l}\chi^{A'}_l(i)\phi_k(t)- \sum_{i=1}^p\int_{[T_1,T_2]} (\beta(i))(t,\tau)X^{A'}_\tau(i) d\tau \right)^2dt \right]\nonumber
\\
\le
2^p\sum_{i=1}^p &\E_{A'}\left[\int_{[T_1,T_2]}\left(S_n^{\int_i}(t)-\sum_{k=1}^n\sum_{l=n+1}^\infty \lambda_{k,l}^{\beta(i)} \chi_l^{A'}(i)  \phi_k(t)- Q_i(t) \right)^2dt \right]\nonumber
\\
\le
2^{p+1}\sum_{i=1}^p &\E_{A'}\left[\int_{[T_1,T_2]}\left(S_n^{\int_i}(t)- Q_i(t) \right)^2dt \right]
+
2^{p+1}\sum_{i=1}^p \E_{A'}\left[\int_{[T_1,T_2]}\left(\sum_{k=1}^n\sum_{l=n+1}^\infty \lambda_{k,l}^{\beta(i)} \chi_l^{A'}(i)  \phi_k(t) \right)^2dt \right]
\nonumber
\\
\le
2^{p+1}\sum_{i=1}^p &\E_{A'}\left[\int_{[T_1,T_2]}\left(S_n^{\int_i}(t)- Q_i(t) \right)^2dt \right]
+
2^{p+1}\sum_{i=1}^p\sum_{k=1}^n\sum_{l=n+1}^\infty  \left( \lambda_{k,l}^{\beta(i)} \right)^2\E_{A'}\left[\left( \chi_l^{A'}(i)  \right)^2 \right]
\nonumber
\\
\le
2^{p+1}\sum_{i=1}^p &\E_{A'}\left[\int_{[T_1,T_2]}\left(S_n^{\int_i}(t)- Q_i(t) \right)^2dt \right]
+
2^{p+1}\sum_{i=1}^p\sup_{m\ge n+1}\E_{A'}\left[\left( \chi_m^{A'}(i)  \right)^2 \right]\lVert\beta(i) \rVert_{L^2([T_1,T_2]^2)}
\end{align}
where the second on the right-most side tends to zero since $\sum_{m=1}^\infty\E_{A'}\left[\left( \chi_m^{A'}(i)  \right)^2 \right]<\infty$. Fixing $1\le i\le p$, we now bound the integral terms inside the expectation on the right-most side above using \eqref{Qbound},

\begin{align}
M_n:=\int_{[T_1,T_2]}\left(S_n^{\int_i}(t)- Q_i(t) \right)^2dt\nonumber
&\le
2\int_{[T_1,T_2]} Q_i(t)^2dt
+
2\int_{[T_1,T_2]} \left(S_n^{\int_i}(t)\right)^2dt\nonumber
\\
&\le
2\int_{[T_1,T_2]}\left|X^{A'}_\tau(i)\right|^2d\tau 
\int_{[T_1,T_2]}  \int_{[T_1,T_2]}\left|(\beta(t,\tau))(i)\right|^2d\tau dt
\nonumber
\\
&+
2\int_{[T_1,T_2]}\left(\sum_{k=1}^n\sum_{l=1}^\infty \lambda_{k,l}^{\beta(i)} \chi_l^{A'}(i)  \phi_k(t) \right)^2dt\nonumber
\\
&=2\lVert X^{A'}(i)\rVert_{L^2([T_1,T_2])}^2\lVert \beta\rVert_{L^2([T_1,T_2])^p}^2
+
2\sum_{k=1}^n\left(\sum_{l=1}^\infty \lambda_{k,l}^{\beta(i)} \chi_l^{A'}(i)  \right)^2
\end{align}
and therefore
$$M_n\le 2\lVert X^{A'}(i)\rVert_{L^2([T_1,T_2])}^2\lVert \beta\rVert_{L^2([T_1,T_2])^p}^2
+
2\sum_{k=1}^\infty\left(\sum_{l=1}^\infty \lambda_{k,l}^{\beta(i)} \chi_l^{A'}(i)  \right)^2:=M.$$
Utilizing \eqref{orthscores}, we get
\begin{align*}
\E_{A'}\left[M\right]
&=
2\lVert X^{A'}\rVert_{\mathcal{V}}^2\lVert \beta\rVert_{L^2([T_1,T_2])^p}^2
+
2\sum_{k=1}^\infty\E_{A'}\left[\sum_{l=1}^\infty \left(\lambda_{k,l}^{\beta(i)} \chi_l^{A'}(i)\right)^2  \right]
\\
&=
2\lVert X^{A'}\rVert_{\mathcal{V}}^2\lVert \beta\rVert_{L^2([T_1,T_2])^p}^2
+
2\sum_{k=1}^\infty\sum_{l=1}^\infty \left(\lambda_{k,l}^{\beta(i)} \right)^2\E_{A'}\left[ \left( \chi_l^{A'}\right)^2  \right]
\\
&\le 4\lVert X^{A'}\rVert_{\mathcal{V}}^2\lVert \beta\rVert_{L^2([T_1,T_2])^p}^2<\infty
\end{align*}
where we utilized 
$$\sum_{k=1}^\infty\sum_{l=1}^\infty \left(\lambda_{k,l}^{\beta(i)} \right)^2\E_{A'}\left[ \left( \chi_l^{A'}(i)\right)^2  \right]
\le 
\sum_{l=1}^\infty \E_{A'}\left[ \left( \chi_l^{A'}(i)\right)^2  \right] \sum_{k=1}^\infty\sum_{l=1}^\infty \left(\lambda_{k,l}^{\beta(i)} \right)^2
\le
\lVert X^{A'}\rVert_{\mathcal{V}}^2\lVert \beta\rVert_{L^2([T_1,T_2])^p}^2.$$
Since $\{M_n\}_{n\in\N}$ converges to zero $\P_{A'}$-a.s. and $0\le M_n\le M$ it follows from the dominated convergence theorem that $$\lim_{n\to\infty}\E_{A'}\left[\int_{[T_1,T_2]}\left(S_n^{\int_i}(t)- Q_i(t) \right)^2dt \right]=0
$$ 
and therefore due to \eqref{series1} we get 
\begin{align}\label{series}
\lim_{n\to\infty}\E_{A'}\left[\int_{[T_1,T_2]}\left(\sum_{i=1}^p\sum_{k=1}^n\sum_{l=1}^n \lambda^{\beta(i)}_{k,l}\chi^{A'}_l(i)\phi_k(t)- \sum_{i=1}^p\int_{[T_1,T_2]} (\beta(i))(t,\tau)X^{A'}_\tau(i) d\tau \right)^2dt \right]
=0.
\end{align}
Now we expand the risk function, for $A'\in\mathcal{V}$,  utilizing the fact that $S_n^{Y^{A'}}\xrightarrow{L^2(dt\times d\P_{A'})}Y^{A'}$,
		\begin{align*}
			&R_{A'}(\beta)=\E_{A'}\left[\int_{[T_1,T_2]}\left( Y^{A'}_t-\int_{[T_1,T_2]}\sum_{i=1}^p(\beta(i))(t,\tau)X^{A'}(i)_\tau d\tau\right)^2dt \right]^\frac12
			\\
			&\le
			\lim_{n\to\infty}\E_{A'}\left[\int_{[T_1,T_2]}\left( S_n^{Y^{A'}}(t)-Y^{A'}_t \right)^2dt \right]^\frac12 
			+ \lim_{n\to\infty}\E_{A'}\left[\int_{[T_1,T_2]}\left( S_n^{Y^{A'}}(t) - \sum_{i=1}^p\sum_{k=1}^n\sum_{l=1}^n \lambda^{\beta(i)}_{k,l}\chi^{A'}_l(i)\phi_k(t) \right)^2dt \right]^\frac12  
			\\
			&+ \lim_{n\to\infty}\E_{A'}\left[\int_{[T_1,T_2]}\left(\sum_{i=1}^p\sum_{k=1}^n\sum_{l=1}^n \lambda^{\beta(i)}_{k,l}\chi^{A'}_l(i)\phi_k(t)- \sum_{i=1}^p\int_{[T_1,T_2]} (\beta(i))(t,\tau)X^{A'}_\tau(i) d\tau \right)^2dt \right]^\frac12 
		\end{align*}
and since the first and third term  (due to \eqref{series}) converges to zero on the right-hand side above, we get
		\begin{align}\label{RAlim}
			R_{A'}(\beta)
			&=
			\lim_{n\to\infty}\E_{A'}\left[\int_{[T_1,T_2]}\left(\sum_{k=1}^n Z_k^{A'}\phi_k(t)-\sum_{i=1}^p\sum_{k=1}^n\sum_{l=1}^n \lambda^{\beta(i)}_{k,l}\chi^{A'}_l(i)\phi_k(t)\right)^2dt\right]\nonumber
			\\
			&=
			\lim_{n\to\infty}\E_{A'}\left[\sum_{k=1}^n \left(Z_k^{A'}- \sum_{i=1}^p\sum_{l=1}^n \lambda_{k,l}^{\beta(i)}\chi_l^{A'}(i)\right)^2\right].
		\end{align}
From this point onwards we will fix our choice of ON-basis for $L^2([T_1,T_2])^{p+1}$, let 
\begin{itemize}
\item[]$\phi_{1,m}=\phi_m$ and 
\item[]$\phi_{i,m}=\tilde{\psi}_{i-1,m}$, for $2\le i\le p+1$ and $m\in\N$.
\end{itemize}	
\textbf{Step 2: Reformulate the the integrals appearing in \eqref{ShiftSet} for relevant subspaces}\\
Denote $F_{i,k}(W)=\int_{[T_1,T_2]}W_t\phi_{i,k}(t)dt$, for $W\in L^2([T_1,T_2])$, $1\le i\le p+1$ and for $A'\in\mathcal{V}$
\begin{align*}
F_{1:n}(A')=&\left(\int_{[T_1,T_2]}A'_t(1)\phi_{1,1}(t)dt,\ldots,\int_{[T_1,T_2]}A'(1)\phi_{1,n}(t)dt,\ldots,\right.
\\
&\left.\int_{[T_1,T_2]}A'_t(p+1)\phi_{p+1,1}(t)dt,\ldots,\int_{[T_1,T_2]}A'_t(p+1)\phi_{p+1,n}(t)dt\right).
\end{align*}
		Since $K_{A'(i),A'(j)},\in L^2([T_1,T_2]^2)$, for $1\le i,j \le p+1$ (with the notation $K_{A'(i),A'(i)}=K_{A'(i)}$), if we denote
		\scriptsize
		\begin{equation}\label{SEMAn}
			K_{A'}^n(s,t)
			=\begin{bmatrix}
				\sum_{k=1}^n\sum_{l=1}^nF_{1,k}(A'(1))F_{1,l}(A'(1))\phi_{1,k}(s)\phi_{1,l}(t) & \hdots & \sum_{k=1}^n\sum_{l=1}^nF_{1,k}(A'(1))F_{p+1,l}(A'(p+1))\phi_{1,k}(s)\phi_{p+1,l}(t)\\
				\vdots & \ddots & \vdots\\
				\sum_{k=1}^n\sum_{l=1}^nF_{1,k}(A'(1))F_{p+1,l}(A'(p+1))\phi_{1,k}(s)\phi_{p+1,l}(t) & \hdots & \sum_{k=1}^n\sum_{l=1}^nF_{p+1,k}(A'(p+1))F_{p+1,l}(A'(p+1))\phi_{p+1,k}(s)\phi_{p+1,l}(t)
			\end{bmatrix}, 	
		\end{equation}
		\normalsize
		then all the elements of the matrix $K_{A'}^n$ converges in $L^2([T_1,T_2]^2)$ to the corresponding elements of $K_{A'}$. With a bit of abuse of notation let $K_{i,j}$ and $K^n_{i,j}$ denote the element on row $i$ and column $j$ of the matrix $K_{A'}$ and $K_{A'}^n$ respectively. Fix $n\in\N$ and let $\textbf{g}(s)=\left(g_{1}(s),....,g_{p+1}(s)\right)$ where $g_i\in span\left\{\phi_{i,1},\ldots,\phi_{i,n}\right\}$, for $1\le i\le p+1$. Then
		\begin{align}\label{ApproxKg}
			&\lim_{m\to\infty}\left|\int_{[T_1,T_2]^2}  \textbf{g}(s)\left(K_{A'}(s,t)-K^m_{A'}(s,t)\right)\textbf{g}(t)^Tdsdt\right|\nonumber
			\\
			=&\lim_{m\to\infty}\left|\int_{[T_1,T_2]^2}  \langle \textbf{g}(s), \textbf{g}(t)\left(K_{A'}(s,t)-K^m_{A'}(s,t)\right) \rangle_{\R^{p+1}}dsdt\right|\nonumber
			\\
			\le
			&\lim_{m\to\infty}\int_{[T_1,T_2]^2} \left|\langle \textbf{g}(s), \textbf{g}(t)\left(K_{A'}(s,t)-K^m_{A'}(s,t)\right) \rangle_{\R^{p+1}}\right|dsdt\nonumber
			\\
			&\le
			\lim_{m\to\infty}\int_{[T_1,T_2]^2} \left(\sum_{i=1}^{p+1}g_{i}(s)^2\right)^{\frac12}\left(\sum_{i=1}^{p+1}g_{i}(t)^2\right)^{\frac12} \left\lVert K_{A'}(s,t)-K^m_{A'}(s,t)\right\rVert_{\mathit{l}^2}dsdt\nonumber
			\\
			&\le
			\lim_{m\to\infty}T\left(\int_{[T_1,T_2]^2} \sum_{i=1}^{p+1}g_{i}(s)^2\sum_{i=1}^{p+1}g_{i}(t)^2dsdt\right)^{\frac12} \left(\int_{[T_1,T_2]^2} \left\lVert K_{A'}(s,t)-K^m_{A'}(s,t)\right\rVert_{\mathit{l}^2}^2dsdt\right)^{\frac12}\nonumber
			\\
			&\le
			\lim_{m\to\infty}dT\sum_{i=1}^{p+1}\int g_{i}(t)^2dt
			\left(\sum_{i=1}^{p+1}\sum_{j=1}^{p+1}\int_{[T_1,T_2]^2} \left| K_{i,j}(s,t)-K^m_{i,j}(s,t)\right|^2dsdt\right)^{\frac12}\nonumber
			\\
			&=
			\lim_{m\to\infty}dT\sum_{i=1}^{p+1} \lVert g_{i} \rVert_{L^2([T_1,T_2])}^2\left(\lim_{m\to\infty}\sum_{i=1}^{p+1}\sum_{j=1}^{p+1}\int_{[T_1,T_2]^2} \left| K_{i,j}(s,t)-K^m_{i,j}(s,t)\right|^2dsdt\right)^{\frac12}=0,
		\end{align}
		for some constant $d$ that only depends on $p$ and where we utilized the Cauchy-Schwarz inequality (both on $\R^{p+1}$ as well for the product integral) and that
		$$ 
		\int_{[T_1,T_2]^2} \left(\sum_{i=1}^{p+1}g_{i}(s)^2\sum_{i=1}^{p+1}g_{i}(t)^2\right)^{\frac12} dsdt
		\le
		T\left(\int_{[T_1,T_2]^2} \sum_{i=1}^{p+1}g_{i}(s)^2\sum_{i=1}^{p+1}g_{i}(t)^2dsdt\right)^{\frac12},$$
		which follows from Jensen's inequality applied to the measure $\frac{1}{T^2}dsdt$ on $[T_1,T_2]^2$. Utilizing \eqref{ApproxKg} and the orthogonality,
		\begin{align*}
			&\int_{[T_1,T_2]^2} \textbf{g}(s)\left(K_{A'}(s,t)-K_{A'}^n(s,t)\right)\textbf{g}(t)^Tdsdt
			\\
			&=
			\lim_{m\to\infty} \int_{[T_1,T_2]^2}\left(\sum_{k=1}^nF_{1,k}(g_{1})\phi_{1,k}(s),...,\sum_{k=1}^nF_{p+1,k}(g_{p+1})\phi_{p+1,k}(s)\right)\left((K_{A'}^m(s,t)-K_{A'}^n(s,t))\right)
			\\
			&\times
			\left(\sum_{k=1}^nF_{1,k}(g_{1})\phi_{1,k}(t),...,\sum_{k=1}^nF_{p+1,k}(g_{p+1})\phi_{p+1,k}(t)\right)^Tdsdt
\\
&=	\lim_{m\to\infty}\sum_{i=1}^{p+1}\sum_{j=1}^{p+1}\sum_{k=1}^n\sum_{l=1}^nF_{i,k}(g_i)F_{j,l}(g_j)	\int_{[T_1,T_2]^2}	\left(\phi_{j,l}(s)\phi_{i,k}(t)\right.
\\
&\left.\left( \sum_{k'=1}^m\sum_{l'=1}^mF_{i,k'}(A'(i))F_{j,l'}(A'(j))\phi_{j,l'}(s)\phi_{i,k'}(t)-\sum_{k'=1}^n\sum_{l'=1}^nF_{i,k'}(A'(i))F_{j,l'}(A'(j))\phi_{j,l'}(s)\phi_{i,k'}(t)\right)\right)dsdt
			=0.
		\end{align*}
		Therefore
		\begin{align*}
			&\int_{[T_1,T_2]^2} \left(g_{1}(s),....,g_{p+1}(s)\right)K_{A'}(s,t)\left(g_{1}(t),....,g_{p+1}(t)\right)^Tdsdt
			\\
			&=
			\int_{[T_1,T_2]^2} \left(g_{1}(s),....,g_{p+1}(s)\right)K_{A'}^n(s,t)\left(g_{1}(t),....,g_{p+1}(t)\right)^Tdsdt.
		\end{align*}
\\	
		\textbf{Step 3: Compute a finite dimensional approximation of the target and the covariates and the corresponding error to this approximation}
		\\
		Let $V_n=\mathsf{span}\left(\left\{(\phi_{1,i_1},0,\ldots,0),(0,\phi_{2,i_2},0,\ldots,0),\ldots,(0,\ldots,0,\phi_{p+1,i_k})\right\}_{1\le i_1,...,i_k\le n}\right)$ and $P_n$ denote projection on the space $V_n$. For any $a\in L^2([T_1,T_2])^{p+1}$ we have 
		\begin{align}\label{PnS}
			\lVert P_n\mathcal{S}a-P_n\mathcal{S}P_n a \rVert
			&=
			\lVert P_n\mathcal{S}\rVert\lVert a-P_na\rVert\nonumber
			\\
			&\le 
			\lVert \mathcal{S}\rVert\lVert a-P_na\rVert, 
		\end{align}
		which converges to zero, since $\mathcal{H}^{p+1}\left(\{\phi_{i,n}\}_{n\in\N, 1\le i\le p+1}\right)$ is a basis for $L^2([T_1,T_2])^{p+1}$ and due to the definition of $V_n$ (this just comes down to convergence of the partial sums). Enumerate $\mathcal{H}^{p+1}\left(\{\phi_{i,n}\}_{n\in\N, 1\le i\le p+1}\right)$ so that $e_1=\left(\phi_{1,1},0,\ldots,0\right)$ and $e_{n(p+1)}=\left(0,\ldots,0,\phi_{p+1,n}\right)$. Let $x=\sum_{k=1}^{n(p+1)} a_ke_k\in V_n$, then
		\begin{align*}
			P_n\mathcal{S}x=P_n\mathcal{S}\left(\sum_{k=1}^{n(p+1)} a_ke_k\right)&= \sum_{k=1}^{n(p+1)}a_kP_n\mathcal{S}\left(e_k\right)
			\\
			&= \sum_{m=1}^{n(p+1)}\left(\sum_{k=1}^{n(p+1)}a_k\langle\mathcal{S}\left(e_k\right),e_m \rangle\right) e_m,
		\end{align*}
		so the coordinates of $P_n\mathcal{S}x$ in the basis $\mathcal{H}^{p+1}\left(\{\phi_{i,n}\}_{n\in\N, 1\le i\le p+1}\right)$ are given by 
		$$\left(\left(\sum_{k=1}^{n(p+1)}a_k\langle\mathcal{S}\left(e_k\right),e_1 \rangle\right),\ldots,\left(\sum_{k=1}^{n(p+1)}a_k\langle\mathcal{S}\left(e_k\right),e_{n(p+1)} \rangle\right)\right).$$
		So by defining the following $n(p+1)\times n(p+1)$ matrix 
		\begin{equation}
			B^n=\begin{bmatrix}
				\langle \mathcal{S}\left(\phi_{1,1},0,\ldots,0\right), \left(\phi_{1,1},0,\ldots,0\right) \rangle & \hdots & \langle \mathcal{S}\left(0,\ldots,0,\phi_{p+1,n}\right), \left(\phi_{1,1},0,\ldots,0\right) \rangle \\
				\vdots & \ddots & \vdots  \\
				\langle \mathcal{S}\left(\phi_{1,1},0,\ldots,0\right), \left(0,\ldots,0,\phi_{p+1,n}\right) \rangle & \hdots & \langle \mathcal{S}\left(0,\ldots,0,\phi_{p+1,n}\right), \left(0,\ldots,0,\phi_{p+1,n}\right) \rangle 
			\end{bmatrix},
		\end{equation}
we have that if $x\in V_n$ and we let $H(x)$ denote coordinates of $x$ in the basis $\mathcal{H}^{p+1}\left(\{\phi_{i,n}\}_{n\in\N, 1\le i\le p+1}\right)$, then $H\left(P_n\mathcal{S}x\right)=B^nH(x)$. For $A'\in\mathcal{V}$, let $\mathbf{\chi}_n=\left(\chi_1^{A'}(1),\ldots,\chi_n^{A'}(1),\ldots ,\chi_n^{A'}(p) \right)$ and $\mathbf{Z}^n=\left(Z_1^{A'},\ldots,Z_n^{A'} \right)$. We have $(\mathbf{Z}^n,\mathbf{\chi}^n)=H\left(P_n(Y^{A'},X^{A'})\right)$ and $B^n\left(F_{1:n}(A')+F_{1:n}(\epsilon^{A'})\right)=H(P_n\mathcal{S}P_n(A'+\epsilon^{A'}))$ (since $F_{1:n}(A')+F_{1:n}(\epsilon^{A'})= P_n(A'+\epsilon^{A'})$). Therefore,
		\begin{align*}
			\lVert (\mathbf{Z}^n,\mathbf{\chi}^n)-B^n(F_{1:n}(A')+F_{1:n}(\epsilon^{A'})) \rVert_{\left(\mathit{l^2}\right)^{p+1}}
			&=
			\lVert P_n(Y^{A'},X^{A'})-P_n\mathcal{S}P_n(A'+\epsilon^{A'}) \rVert_{L^2([T_1,T_2])^{p+1}}
			\\
			&=
			\lVert P_n\mathcal{S}(A'+\epsilon^{A'})-P_n\mathcal{S}P_n(A'+\epsilon^{A'}) \rVert_{L^2([T_1,T_2])^{p+1}}
			,
		\end{align*}
		which converges path-wise (per $\omega$) to zero as $n\to\infty$, due to \eqref{PnS}.
		This implies
		\begin{equation}\label{SEMA}
			\begin{bmatrix}
				Z_1^{A'}\\
				\vdots\\
				Z_n^{A'}\\
				\chi_1^{A'}(1)\\
				\vdots\\
				\chi_n^{A'}(1)\\
				\vdots\\
				\chi_n^{A'}(p)\
			\end{bmatrix} = 
			B^n\cdot \left( \begin{bmatrix}
				F_{1,}(A'(1))\\
				\vdots\\
				F_{1,n}(A'(1))\\
				F_{2,1}(A'(2))\\
				\vdots\\
				F_{2,n}(A'(2))\\
				\vdots\\
				F_{p+1,n}(A'(p+1))
			\end{bmatrix}	+
			\begin{bmatrix}
				F_{1,1}(\epsilon^{A'}(1))\\
				\vdots\\
				F_{1,n}(\epsilon^{A'}(1))\\
				F_{2,1}(\epsilon^{A'}(2))\\
				\vdots\\
				F_{2,n}(\epsilon^{A'}(2))\\
				\vdots\\
				F_{p+1,n}(\epsilon^{A'}(p+1))
			\end{bmatrix}	
			\right)+\delta_n(A'),
		\end{equation}
		where 
		\begin{align}\label{delta}
			\lVert \delta_n(A') \rVert_{\left(\mathit{l^2}\right)^{p+1}}
			&= 
			\lVert P_n\mathcal{S}(A'+\epsilon^{A'})-P_n\mathcal{S}P_n(A'+\epsilon^{A'}) \rVert_{L^2([T_1,T_2])^{p+1}}\nonumber
			\\
			&\le
			\lVert \mathcal{S}\rVert\left( \lVert A' - P_nA' \rVert_{L^2([T_1,T_2])^{p+1}}+\lVert \epsilon^{A'}-P_n\epsilon^{A'} \rVert_{L^2([T_1,T_2])^{p+1}} \right)
			\nonumber\\
			&\le
			2\lVert\mathcal{S}\rVert\left( \lVert A' \rVert_{L^2([T_1,T_2])^{p+1}}+\lVert \epsilon^{A'} \rVert_{L^2([T_1,T_2])^{p+1}} \right).
		\end{align}
		Therefore  
		\begin{align*}
			\lVert \delta_n(A') \rVert_{\left(\mathit{l^2}\right)^{p+1}}^2 \le 4\lVert\mathcal{S}\rVert^2\left( \lVert A' \rVert_{L^2([T_1,T_2])^{p+1}}^2 +\lVert \epsilon^{A'} \rVert_{L^2([T_1,T_2])^{p+1}}^2  \right).
		\end{align*}
		By dominated convergence, this implies $\E_{A'}\left[\lVert \delta_n(A') \rVert_{\left(\mathit{l^2}\right)^{p+1}}^2\right]\to 0$.
		\\
		\textbf{Step 4:  Approximate the risk using the finite dimensional approximation from the previous step}
		\\
		Let $\textbf{v}_n=\sum_{k=1}^nB^n_{k,.}-\sum_{k=1}^{n}\sum_{l=1}^{n}\sum_{i=1}^p\lambda_{k,l}^{\beta(i)}B^n_{in+l,.}$. From \eqref{RAlim} we have that	for any $A''\in \mathcal{V}$,
		\begin{align}\label{R_A}
			R_{A''}(\beta) 
			&=
			\lim_{n\to\infty}\sum_{k=1}^n\E_{A''}\left[ \left(Z_k^{A''}- \sum_{l=1}^n\sum_{i=1}^p \lambda_{k,l}^{\beta(i)}\chi_l^{A''}(i)\right)^2\right]\nonumber
			\\
			&=
			\lim_{n\to\infty}\sum_{k=1}^n\E_{A''}\left[ \left(B^n_{k,.}(F_{1:n}(A'')^T+F_{1:n}(\epsilon_{A''})^T)+(\delta_n(A''))(k)
			\right.\right.\nonumber
			\\
			&\left.\left.- \sum_{i=1}^p\sum_{l=1}^n\lambda_{k,l}^{\beta(i)}\left(B_{in+l,.}(F_{1:n}(A'')^T+F_{1:n}(\epsilon_{A''})^T) +(\delta_n(A''))(in+l)\right) \right)^2\right]\nonumber
			\\
			&=
			\lim_{n\to\infty} \left(\textbf{v}_{n}\E_{A''}\left[ \left(F_{1:n}(A'')+F_{1:n}(\epsilon_{A''})\right)^T\left(F_{1:n}(A'')+F_{1:n}(\epsilon_{A''})\right) \right] \textbf{v}_{n}^T\right.\nonumber
			\\
			&\left.+
			\sum_{k=1}^n\E_{A''}\left[ \left((\delta_n(A''))(k)+\sum_{l=1}^n\sum_{i=1}^p\lambda_{k,l}^{\beta(i)}(\delta_n(A''))(in+l)\right)^2 \right]\right.\nonumber
			\\
			&\left.+
			2\sum_{k=1}^n\E_{A''}\left[ \left(B^n_{k,.}(F_{1:n}(A'')^T+F_{1:n}(\epsilon_{A''})^T)- \sum_{i=1}^p\sum_{l=1}^n\lambda_{k,l}^{\beta(i)}\left(B_{in+l,.}(F_{1:n}(A'')^T+F_{1:n}(\epsilon_{A''})^T)\right) \right)\nonumber
			\right.\right.
			\\
			&\left.\left.\cdot\left((\delta_n(A''))(k)+\sum_{l=1}^n\sum_{i=1}^p\lambda_{k,l}^{\beta(i)}(\delta_n(A''))(in+l)\right)\right]\right).
		\end{align}
		The term
		\begin{align*}
			2\sum_{k=1}^n\E_{A''}&\left[ \left(B^n_{k,.}(F_{1:n}(A'')^T+F_{1:n}(\epsilon_{A''})^T)- \sum_{i=1}^p\sum_{l=1}^n\lambda_{k,l}^{\beta(i)}\left(B_{in+l,.}(F(A'')^T+F(\epsilon_{A''})^T)\right) \right)
			\right.
			\\
			&\left.\cdot \left((\delta_n(A''))(k)+\sum_{l=1}^n\sum_{i=1}^p\lambda_{k,l}^{\beta(i)}(\delta_n(A''))(in+l)\right)\right]
		\end{align*}
		is readily dominated by (using the Cauchy Schwarz-inequality, first for the expectation and then for the sum)
		\begin{align*}
			&2\sum_{k=1}^n\E_{A''}\left[ \left(B^n_{k,.}(F_{1:n}(A'')^T+F_{1:n}(\epsilon_{A''})^T)-\sum_{i=1}^p\sum_{l=1}^n\lambda_{k,l}^{\beta(i)}\left(B_{in+l,.}(F_{1:n}(A'')^T+F_{1:n}(\epsilon_{A''})^T)\right) \right)^2 \right]^{\frac12} 
			\\
			&\cdot\E_{A''}\left[ \left((\delta_n(A''))(k)+\sum_{l=1}^n\sum_{i=1}^p\lambda_{k,l}^{\beta(i)}(\delta_n(A''))(in+l)\right)^2 \right]^{\frac12}
			\\
			&\le
			2\left(\sum_{k=1}^n\E_{A''}\left[ \left(B^n_{k,.}(F(A'')^T+F(\epsilon_{A''})^T)- \sum_{i=1}^p\sum_{l=1}^n\lambda_{k,l}^{\beta(i)}\left(B_{in+l,.}(F_{1:n}(A'')^T+F_{1:n}(\epsilon_{A''})^T)\right) \right)^2 \right]\right)^{\frac12} 
			\\
			&\times\left(\sum_{k=1}^n\E_{A''}\left[ \left((\delta_n(A''))(k)+\sum_{l=1}^n\sum_{i=1}^p\lambda_{k,l}^{\beta(i)}(\delta_n(A''))(in+l)\right)^2 \right]\right)^{\frac12}.
		\end{align*}
		This term will converge to zero since, as we will see,  
		\begin{align}\label{deltatermen}
			\sum_{k=1}^n\E_{A''}\left[ \left((\delta_n(A''))(k)+\sum_{l=1}^n\sum_{i=1}^p\lambda_{k,l}^{\beta(i)}(\delta_n(A''))(in+l)\right)^2 \right]
		\end{align}
		converges to zero, while we will show that the term 
		\begin{align}\label{bdd}
			\sum_{k=1}^n\E_{A''}\left[ \left(B^n_{k,.}(F_{1:n}(A'')^T+F_{1:n}(\epsilon_{A''})^T)- \sum_{i=1}^p\sum_{l=1}^n\lambda_{k,l}^{\beta(i)}\left(B_{in+l,.}(F_{1:n}(A'')^T+F_{1:n}(\epsilon_{A''})^T)\right) \right)^2 \right],
		\end{align}
		is bounded. First, we will show that \eqref{deltatermen} converges to zero. Expanding the squares we find
		\begin{align}\label{smalldelta}
			&\sum_{k=1}^n\E_{A''}\left[ \left((\delta_n(A''))(k)+\sum_{l=1}^n\sum_{i=1}^p\lambda_{k,l}^{\beta(i)}(\delta_n(A''))(in+l)\right)^2 \right]\nonumber
			\\
			&=
			\E_{A''}\left[ \lVert (\delta_n(A''))\rVert_{\mathit{l}^2}^2 \right]\nonumber
			+
			2\sum_{k=1}^n\sum_{l=1}^n\sum_{i=1}^p\lambda_{k,l}^{\beta(i)}\E_{A''}\left[\delta_n(k)(\delta_n(A''))(in+l) \right]\nonumber
			\\
			&+\sum_{i_1=1}^p\sum_{i_2=1}^p\sum_{k=1}^n\sum_{l_1=1}^n\sum_{l_2=1}^n\lambda_{k,l_1}^{\beta(i_1)}\lambda_{k,l_2}^{\beta(i_1)}\lambda_{k,l_1}^{\beta(i_2)}\lambda_{k,l_2}^{\beta(i_2)}\E_{A''}\left[(\delta_n(A''))(in+l_1) (\delta_n(A''))(in+l_2) \right],
		\end{align}
		where we already know that the first term on the right-most side will vanish. We then bound the second term in \eqref{smalldelta},
		\begin{align*}
			&\left|\sum_{k=1}^n\sum_{l=1}^n\sum_{i=1}^p\lambda_{k,l}^{\beta(i)}\E_{A''}\left[(\delta_n(A''))(k)(\delta_n(A''))(in+l) \right]\right|
			\\
			\le
			&\left(\sum_{k=1}^n\sum_{l=1}^n\sum_{i=1}^p\left(\lambda_{k,l}^{\beta(i)}\right)^2\right)^{\frac12} \left(\sum_{k=1}^n\sum_{l=1}^n\sum_{i=1}^p \E_{A''}\left[(\delta_n(A''))(k)(\delta_n(A''))(in+l) \right]^2\right)^{\frac12}
			\\
			\le
			&\left(\sum_{i=1}^p\lVert \beta(i) \rVert_{L^2([T_1,T_2]^2)}^2\right)^{\frac12} \left(\sum_{k=1}^n\sum_{l=1}^n\sum_{i=1}^p \E_{A''}\left[(\delta_n(A''))(k)^2\right]\E_{A''}\left[(\delta_n(A''))(in+l)^2\right]\right)^{\frac12}
			\\
			=
			&\left(\sum_{k=1}^n\E_{A''}\left[(\delta_n(A''))(k)^2 \right]\right)^{\frac12}  \lVert \beta \rVert_{L^2([T_1,T_2]^2)^{p+1}}  \left(\sum_{i=1}^p\sum_{l=1}^n\E_{A''}\left[(\delta_n(A''))(in+l)^2 \right]\right)^{\frac12}
			\\
			\le &\E_{A''}\left[\lVert (\delta_n(A'')) \rVert_{\left(\mathit{l^2}\right)^{p+1}}^2\right]\lVert \beta \rVert_{\left(L^2([T_1,T_2]^2)\right)^p},
		\end{align*}
		which converges to zero. For the third term of \eqref{smalldelta}, for fixed $1\le i_1,i_2\le p$
		\begin{align*}
			&\left|\sum_{k=1}^n\sum_{l_1=1}^n\sum_{l_2=1}^n\lambda_{k,l_1}^{\beta(i_1)}\lambda_{k,l_2}^{\beta(i_1)}\lambda_{k,l_1}^{\beta(i_2)}\lambda_{k,l_2}^{\beta(i_2)}\E_{A''}\left[(\delta_n(A''))(in+l_1)(\delta_n(A''))(in+l_2) \right]\right|
			\\
			&\le 
			\sum_{k=1}^n\sum_{l_1=1}^n\sum_{l_2=1}^n\left|\lambda_{k,l_1}^{\beta(i_1)}\right|\left|\lambda_{k,l_2}^{\beta(i_1)}\right|\left|\lambda_{k,l_1}^{\beta(i_2)}\right|\left|\lambda_{k,l_2}^{\beta(i_2)}\right|\E_{A''}\left[\left|(\delta_n(A''))(in+l_1) (\delta_n(A''))(in+l_2) \right|\right]
			\\
			&\le
			\sum_{k=1}^n\sum_{l_1=1}^n\sum_{l_2=1}^n\left|\lambda_{k,l_1}^{\beta(i_1)}\right|\left|\lambda_{k,l_2}^{\beta(i_1)}\right|\left|\lambda_{k,l_1}^{\beta(i_2)}\right|\left|\lambda_{k,l_2}^{\beta(i_2)}\right|\E_{A'}\left[(\delta_n(A''))(in+l_1)^2 \right]^{\frac12}\E_{A''}\left[(\delta_n(A''))(in+l_2)^2 \right]^{\frac12}
			\\
			&=
			\sum_{k=1}^n\left( \sum_{l=1}^n\left|\lambda_{k,l}^{\beta(i_1)}\right|\left|\lambda_{k,l}^{\beta(i_2)}\right|\E_{A''}\left[(\delta_n(A''))(in+l)^2 \right]^{\frac12}\right)^2
			\\
			&\le
			\sum_{k=1}^n\sum_{l=1}^n\left|\lambda_{k,l}^{\beta(i_1)}\right|^2\left|\lambda_{k,l}^{\beta(i_2)}\right|^2\sum_{l=1}^n\E_{A''}\left[ (\delta_n(A''))(in+l)^2\right]
			\\
			&\le
			\E_{A''}\left[\lVert \delta_n(A'') \rVert_{\left(\mathit{l^2}\right)^{p+1}}^2\right]\left(\sum_{k=1}^n\sum_{l=1}^n\left|\lambda_{k,l}^{\beta(i_1)}\right|^4\right)^{\frac12}\left(\sum_{k=1}^n\sum_{l=1}^n\left|\lambda_{k,l}^{\beta(i_2)}\right|^4\right)^{\frac12}
			\\
			&\le
			\E_{A''}\left[\lVert \delta_n(A'') \rVert_{\left(\mathit{l^2}\right)^{p+1}}^2\right]\left(\sum_{k=1}^n\sum_{l=1}^n\left(\lambda_{k,l}^{\beta(i_1)}\right)^2\right)\left(\sum_{k=1}^n\sum_{l=1}^n\left(\lambda_{k,l}^{\beta(i_1)}\right)^2\right)
			\\
			&=
			\E_{A''}\left[\lVert \delta_n(A'') \rVert_{\left(\mathit{l^2}\right)^{p+1}}^2\right]\lVert \beta(i_1) \rVert_{\mathit{L}^2([T_1,T_2]^2)}^2\lVert \beta(i_2) \rVert_{\mathit{L}^2([T_1,T_2]^2)}^2,
		\end{align*}
		which also converges to zero and where we used the Cauchy-Schwarz inequality for expectations as well as sums and the fact that the $\mathit{l}^4$-norm is dominated by the $\mathit{l}^2$-norm. Summing over $i_1$ and $i_2$, this will still converge to zero. We shall now establish that \eqref{bdd} is indeed bounded. We have,
		\begin{align*}
			&\sum_{k=1}^n\E_{A''}\left[ \left(B^n_{k,.}(F_{1:n}(A'')+F_{1:n}(\epsilon_{A''}))- \sum_{i=1}^p\sum_{l=1}^n\lambda_{k,l}^{\beta(i)}\left(B_{in+l,.}(F_{1:n}(A'')^T+F_{1:n}(\epsilon_{A''})^T)\right) \right)^2 \right]
			\\
			&\le \sum_{k=1}^n \E_{A''}\left[ 2\left(B^n_{k,.}(F_{1:n}(A'')^T+F_{1:n}(\epsilon_{A''})^T) \right)^2 \right]
			+
			2\sum_{k=1}^n\E_{A''}\left[ \left(\sum_{i=1}^p\sum_{l=1}^n \lambda_{k,l}^{\beta(i)} \left\langle B_{in+l,.}, \left((F_{1:n}(A'')^T+F_{1:n}(\epsilon_{A''})^T)\right) \right\rangle  \right)^2 \right] 
			\\
			&\le 
			\sum_{k=1}^n4\E_{A''}\left[\left(Z_k^{A''}\right)^2+ (\delta_n(A''))(k)^2 \right]
			+\sum_{k=1}^n\sum_{l=1}^n\sum_{i=1}^p2\left(\lambda_{k,l}^{\beta(i)}\right)^2\E_{A''}\left[ \left\lVert B^n_{in+1:(i+1)n,.}(F_{1:n}(A'')^T+F_{1:n}(\epsilon_{A''})^T) \right\rVert_{\mathit{l}^2}^2 \right]
			\\
			&\le 
			\sum_{k=1}^n4\E_{A''}\left[\left(Z_k^{A''}\right)^2\right]+4\E_{A''}\left[\lVert \delta_n(A'') \rVert_{\left(\mathit{l^2}\right)^{p+1}}^2\right]
			+2\lVert \beta\rVert_{\left(L^2([T_1,T_2]^2)\right)^p}^2\sum_{i=1}^p\E_{A''}\left[ \sum_{l=1}^n\left(\chi^{A''}_l(i)-(\delta_n(A''))(in+l)\right)^2 \right],
		\end{align*}
		where we utilized \eqref{SEMA}. As 
		$$\E_{A''}\left[ \sum_{l=1}^n\left(\chi^{A''}_l(i)-(\delta_n(A''))(in+l)\right)^2 \right] 
		\le 
		2\sum_{l=1}^n\E_{A''}\left[\left(\chi^{A''}_l(i)\right)^2  \right]+2\E_{A''}\left[ \left\lVert \delta_n(A'') \right\rVert_{\left(\mathit{l^2}\right)^{p+1}}^2 \right]<\infty,
		$$
		we then readily see that \eqref{bdd} is indeed bounded. Returning to \eqref{R_A} we now have
		for any $A''\in \mathcal{V}$,
		\begin{align}\label{R_Afinal}
			R_{A''}(\beta) 
			&=
			\lim_{n\to\infty} \textbf{v}_{n}\E_{A''}\left[ \left(F_{1:n}(A'')+F_{1:n}(\epsilon_{A''})\right)\left(F_{1:n}(A'')+F_{1:n}(\epsilon_{A''})\right)^T \right] \textbf{v}_{n}^T.
		\end{align}
		\\
		\textbf{Step 5: Verify that the cross terms between the shifts and the noise vanish:}
		\\
				\begin{claim}\label{claimepsA}
			Let $G_1,G_2\in (L^2([T_1,T_2])^{p+1})^*$  then we have that
			$$\E_{A'}\left[G_1(A')G_2(\epsilon^{A'})\right]=\E_{A''}\left[G_1(A'')G_2(\epsilon_{A''})\right]=0 $$
		\end{claim}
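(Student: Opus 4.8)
The plan is to reduce the claim to two facts supplied by the construction in Section~\ref{sec:sfr}: that the noise copy is independent of the corresponding shift, and that the noise is centred. I would treat the two equalities by the same argument, so fix attention on $\E_{A'}\left[G_1(A')G_2(\epsilon^{A'})\right]$. By construction $\epsilon^{A'}$ is a copy of $\epsilon$ that is independent of $A'$ as an $L^2([T_1,T_2])^{p+1}$-valued random element, and $\epsilon^{A'}\eqD\epsilon$, so in the Bochner sense $\E_{A'}\left[\epsilon^{A'}\right]=\E\left[\epsilon\right]=0$. Since $G_1,G_2$ are bounded, hence Borel-measurable, the scalar variables $G_1(A')$ and $G_2(\epsilon^{A'})$ are $\sigma(A')$- and $\sigma(\epsilon^{A'})$-measurable respectively, and therefore inherit the independence of $A'$ and $\epsilon^{A'}$.

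Next I would check integrability so that the factorisation of the expectation over independent factors is legitimate. Using the bound $|G_i(x)|\le \lVert G_i\rVert\,\lVert x\rVert_{L^2([T_1,T_2])^{p+1}}$ together with $A',\epsilon^{A'}\in\mathcal{V}$ and the Cauchy--Schwarz inequality, one gets
\[
\E_{A'}\left[\left|G_1(A')G_2(\epsilon^{A'})\right|\right]\le \lVert G_1\rVert\,\lVert G_2\rVert\,\lVert A'\rVert_{\mathcal{V}}\,\lVert \epsilon\rVert_{\mathcal{V}}<\infty,
\]
and each factor $G_1(A'),G_2(\epsilon^{A'})$ separately lies in $L^1$. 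Independence then yields $\E_{A'}\left[G_1(A')G_2(\epsilon^{A'})\right]=\E_{A'}\left[G_1(A')\right]\E_{A'}\left[G_2(\epsilon^{A'})\right]$.

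Finally, since the Bochner integral commutes with continuous linear functionals, $\E_{A'}\left[G_2(\epsilon^{A'})\right]=G_2\left(\E_{A'}\left[\epsilon^{A'}\right]\right)=G_2(0)=0$, so the product vanishes. Replacing $(A',\epsilon^{A'},\P_{A'})$ by $(A'',\epsilon_{A''},\P_{A''})$ gives the second equality verbatim, using the independence of $\epsilon_{A''}$ from $A''$ and $\E_{A''}\left[\epsilon_{A''}\right]=0$. The only points needing care are the passage from independence of the Hilbert-space-valued elements to independence of their scalar images (immediate from measurability of bounded functionals) and the interchange of $\E$ with $G_2$ (the standard Bochner-integral property); neither is a genuine obstacle, so the claim follows directly from the construction.
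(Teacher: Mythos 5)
Your proof is correct, but it takes a genuinely more direct route than the paper's. The paper proves the claim by discretization: it approximates $A''$ and $\epsilon_{A''}$ by the simple random elements $A''^n=\sum_{j\le N_n}w_j^n 1_{A''\in Q_j^n}$ and $\epsilon_{A''}^n=\sum_{j\le N_n}w_j^n 1_{\epsilon_{A''}\in Q_j^n}$ constructed in the proof of Lemma \ref{PropEnvCont}; at that level, independence enters only through the elementary factorization $\P_{A''}\bigl(\{A''\in Q_{j}^n\}\cap\{\epsilon_{A''}\in Q_{i}^n\}\bigr)=\P\bigl(A''\in Q_{j}^n\bigr)\P_{A''}\bigl(\epsilon_{A''}\in Q_{i}^n\bigr)$, the interchange $\E_{A''}\bigl[G_2(\epsilon_{A''}^n)\bigr]=G_2\bigl(\E_{A''}[\epsilon_{A''}^n]\bigr)$ is mere linearity over a finite sum, and a Cauchy--Schwarz estimate transports the resulting identity to the limit $n\to\infty$. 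You instead apply three standard facts directly to the un-approximated elements: independence is preserved under Borel-measurable (in particular bounded linear) maps, expectations of products of independent integrable real random variables factorize, and the Bochner integral commutes with bounded linear functionals, combined with the observation that $\E_{A'}[\epsilon^{A'}]=\E[\epsilon]=0$ because the Bochner mean depends only on the law of the element. All of these steps are legitimate, and your integrability check (both factors lie in $L^2$ since $A',\epsilon\in\mathcal{V}$ and $|G_i(x)|\le\lVert G_i\rVert\,\lVert x\rVert_{L^2([T_1,T_2])^{p+1}}$) is exactly what is needed for the factorization to apply. What your route buys is brevity and transparency: it isolates the two hypotheses doing all the work, namely independence of $\epsilon_{A}$ from $A$ by construction and centredness of the noise. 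What the paper's route buys is self-containedness: it never has to cite the factorization theorem for functions of independent Hilbert-space-valued elements or a Hille-type commutation result, since both are re-derived from scratch on simple functions; moreover the same approximation scaffolding is reused in Lemma \ref{PropEnvCont} and Claim \ref{claimeps}, so its marginal cost inside the paper is low.
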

		\begin{proof}[Proof of Claim\ref{claimepsA}]
			We show $\E_{A''}\left[G_1(A'')G_2(\epsilon_{A''})\right]=0$, $\E_{A'}\left[G_1(A')G_2(\epsilon^{A'})\right]=0$ is analogous. Let $\epsilon_{ A'}^n$, $\epsilon_{ A''}^n$, $A'^n$ and $A''^n$ be defined as in the proof of Lemma \ref{PropEnvCont}.  Since
			\begin{align*}
				\left| G_1(A''^n)G_2(\epsilon_{A''}^n)-G_1(A'')G_2(\epsilon_{A''}) \right| 
				&\le 
				\left| G_1(A''^n)G_2(\epsilon_{A''}^n)-G_1(A''^n)G_2(\epsilon_{A''}) \right| 
				+
				\left| G_1(A''^n)G_2(\epsilon_{A''})-G_1(A'')G_2(\epsilon_{A''}) \right| 
				\\
				&\le
				\lVert G_1 \rVert\lVert G_2 \rVert \lVert A''^n \rVert_{L^2([T_1,T_2])^{p+1}}\lVert \epsilon_{A''}^n-\epsilon_{A''} \rVert_{L^2([T_1,T_2])^{p+1}}
				\\
				&+
				\lVert G_1 \rVert\lVert G_2 \rVert \lVert A''^n-A'' \rVert_{L^2([T_1,T_2])^{p+1}}\lVert \epsilon_{A''} \rVert_{L^2([T_1,T_2])^{p+1}}
			\end{align*}
			it follows from the Cauchy-Schwarz inequality that
			\begin{align*}
				\E_{A''}\left[\left| G_1(A''^n)G_2(\epsilon_{A''}^n)-G_1(A'')G_2(\epsilon_{A''}) \right| \right]
				&\le
				\lVert G_1 \rVert\lVert G_2 \rVert\E_{A''}\left[  \lVert A''^n \rVert_{L^2([T_1,T_2])^{p+1}}^2\right]^{\frac12}
				\E_{A''}\left[  \lVert \epsilon_{A''}^n-\epsilon_{A''}\rVert_{L^2([T_1,T_2])^{p+1}}^2\right]^{\frac12}
				\\
				&+
				\lVert G_1 \rVert\lVert G_2 \rVert\E_{A''}\left[  \lVert A''^n -A''\rVert_{L^2([T_1,T_2])^{p+1}}^2\right]^{\frac12}
				\E_{A''}\left[  \lVert \epsilon_{A''}\rVert_{L^2([T_1,T_2])^{p+1}}^2\right]^{\frac12},
			\end{align*}
			which converges to zero. Next,
			\begin{align*}
				\E_{A''}\left[G_1\left(A''^n\right)G_2\left(\epsilon_{A''}^n\right)\right]
				&=
				\sum_{j=1}^{N_n}\sum_{i=1}^{N_n}\P_{A''}\left(\left\{A''\in Q_{j}^n\right\}\cap\left\{\epsilon_{A''}\in Q_{j}^n\right\}\right)G_1\left(w^n_{j}\right)G_2\left(w^n_{j}\right)
				\\
				&=
				\sum_{j=1}^{N_n}\P\left(A''\in Q_{j}^n\right)G_1\left(w^n_{j}\right)\sum_{i=1}^{N_n}\P_{A''}\left(\epsilon_{A''}\in Q_{j}^n\right)G_2\left(w^n_{j}\right)
				\\
				&=
				\E\left[G_1\left(A''^n\right)\right]
				\E_{A''}\left[G_2\left(\epsilon_{A''}^n\right)\right]
				=
				\E\left[G_1\left(A''^n\right)\right]G_2\left(\E_{A''}\left[\epsilon_{A''}^n\right]\right),
			\end{align*}
			Therefore
			\begin{align*}
				\E_{A''}\left[G_1(A'')G_2(\epsilon_{A''})\right]
				&=
				\lim_{n\to\infty}\E_{A''}\left[G_1(A''^n)G_2(\epsilon_{A''}^n)\right]
				\\
				&=
				\lim_{n\to\infty}\E_{A''}\left[G_1(A''^n)\right]\E_{A''}\left[G_2(\epsilon_{A''}^n)\right]
				\\
				&=
				\E_{A''}\left[G_1(A'')\right]\lim_{n\to\infty}G_2\left(\E_{A''}\left[\epsilon_{A''}^n\right]\right)
				\\
				&=
				\E_{A''}\left[G_1(A'')\right]G_2\left(\lim_{n\to\infty}\E_{A''}\left[\epsilon_{A''}^n\right]\right)
				\\
				&=
				\E_{A''}\left[G_1(A'')\right]G_2\left(\E_{A''}\left[\epsilon_{A''}\right]\right)=0,
			\end{align*}
			where the fact that $G_2\left(\E_{A''}\left[\epsilon_{A''}^n\right]\right)=\E_{A''}\left[G_2\left(\epsilon_{A''}^n\right)\right]$ follows from the linearity of $G_2$.
		\end{proof}
We now claim that $F_k\circ S_i\in (L^2([T_1,T_2])^{p+1})^*$ for any $k\in\N, 1\le i\le p+1$. Taking $g=\left(g_1,\ldots,g_{p+1}\right)\in L^2([T_1,T_2])^{p+1}$ we have by the Cauchy-Schwarz inequality
		\begin{align*}
			\left|F_{i,k}\left( S_i g \right)\right|
			&=\left|\int_{[T_1,T_2]}g_i(t)\phi_{i,k}(t)dt \right|
			\\
			&\le
			\lVert g_i \rVert_{L^2([T_1,T_2])}
			\le
			\lVert g \rVert_{L^2([T_1,T_2])^{p+1}}.
		\end{align*}
		Therefore, by Claim \ref{claimepsA} with $G_1=F_{i,k}\circ S_i$ and $G_2=F_{j,l}\circ S_j$, $\E_{A''}\left[F_{i,k}(A''(i))F_{j,l}(\epsilon_{A''}(j))\right]=0$ for all $k,l\in\N$ and $1\le i,j,\le p+1$. This implies
		\begin{align}\label{Ae}
			\E_{A''}\left[F_{1:n}(A'')F_{1:n}(\epsilon_{A''})^T\right]=0, \hspace{2mm} \forall n\in\N.
		\end{align}
		\\
		\textbf{Step 6: Isolate the pure observational term}	
		\begin{claim}\label{claimeps}
			Let $G_1,G_2\in (L^2([T_1,T_2])^{p+1})^*$  (i.e. a bounded linear functional on $L^2([T_1,T_2])^{p+1}$). Then
			$$\E_{A''}\left[G_1(\epsilon_{ A''})G_2(\epsilon_{ A''})\right]=\E\left[ G_1(\epsilon)G_2(\epsilon)\right]=\E_{A'}\left[ G_1(\epsilon^{A'})G_2(\epsilon^{A'})\right]. $$
		\end{claim}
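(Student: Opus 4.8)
The plan is to exploit the single structural fact behind the claim: by the construction in Section~\ref{sec:sfr}, the noise elements $\epsilon_{A''}$, $\epsilon$ and $\epsilon^{A'}$ are mutually identically distributed copies of one fixed element of $\tilde{\mathcal{V}}$. Since the quantity $\E[G_1(\cdot)G_2(\cdot)]$ is the integral of the fixed Borel functional $x\mapsto G_1(x)G_2(x)$ against the law of the argument, and that law is common to all three copies, the three expectations must agree. The only thing to verify is that this integral is well defined and that the ``integral depends only on the law'' reduction can be carried out within the approximation machinery already built for the paper.

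First I would reduce to simple functions by reusing, verbatim, the partition $\{Q_j^n\}_{j}$ of $L^2([T_1,T_2])^{p+1}$ and the representatives $w_j^n$ from the proof of Lemma~\ref{PropEnvCont}: set $\epsilon^n=\sum_{j=1}^{N_n}w_j^n 1_{\epsilon\in Q_j^n}$ on the base space and, analogously, $\epsilon_{A''}^n$ and $\epsilon^{A',n}$ on the extended spaces, each converging to its target in $\mathcal{V}$ by the estimates established there. For the simple functions the claim is immediate from linearity of $G_1$ and $G_2$:
\begin{align*}
\E\left[G_1(\epsilon^n)G_2(\epsilon^n)\right]=\sum_{j_1=1}^{N_n}\sum_{j_2=1}^{N_n}G_1(w_{j_1}^n)G_2(w_{j_2}^n)\,\P\left(\epsilon\in Q_{j_1}^n\cap Q_{j_2}^n\right),
\end{align*}
and the analogous expressions for $\epsilon_{A''}^n$ and $\epsilon^{A',n}$ are the same finite sums with $\P$ replaced by $\P_{A''}$ and $\P_{A'}$. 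Because the three copies share the same law we have $\P(\epsilon\in Q_{j_1}^n\cap Q_{j_2}^n)=\P_{A''}(\epsilon_{A''}\in Q_{j_1}^n\cap Q_{j_2}^n)=\P_{A'}(\epsilon^{A'}\in Q_{j_1}^n\cap Q_{j_2}^n)$, so all three sums coincide; this is precisely the computation already performed in \eqref{simpleeps} for the particular functionals built from $\mathcal{S}_1$.

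It then remains to pass to the limit. Using $|G_1(x)G_2(x)-G_1(y)G_2(y)|\le \n G_1\n\,\n G_2\n\left(\n x\n\,\n x-y\n+\n x-y\n\,\n y\n\right)$ together with the Cauchy--Schwarz inequality in expectation, exactly as in the bound \eqref{integralbound}, and the $\mathcal{V}$-convergence of each approximant to its target, one obtains $\E_{A''}[G_1(\epsilon_{A''}^n)G_2(\epsilon_{A''}^n)]\to\E_{A''}[G_1(\epsilon_{A''})G_2(\epsilon_{A''})]$ and the corresponding statements on the base and $A'$ spaces. Combining these three limits with the equality of the simple-function expectations yields the claim.

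I do not expect a genuine obstacle: the argument is a direct transcription of the apparatus developed for Lemma~\ref{PropEnvCont} and used in \eqref{simpleeps}, the only additional bookkeeping being a product-rule limit passage for two functionals rather than one. The sole point needing a word of care is integrability of the limiting product, but this is guaranteed by boundedness of $G_1,G_2$ and by $\epsilon\in\mathcal{V}$, since $|G_1(\epsilon)G_2(\epsilon)|\le \n G_1\n\,\n G_2\n\,\n\epsilon\n^2$ with $\E[\n\epsilon\n^2]<\infty$.
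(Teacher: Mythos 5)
Your proposal is correct and follows essentially the same route as the paper's own proof: approximation of the noise by the simple elements $\epsilon^n$, $\epsilon_{A''}^n$ built from the partition $\{Q_j^n\}$ of Lemma \ref{PropEnvCont}, equality of the simple-function expectations via equality of the laws of the copies (your double sum over $Q_{j_1}^n\cap Q_{j_2}^n$ collapses to the paper's single sum by disjointness of the partition), and a limit passage using the product-rule bound plus Cauchy--Schwarz, which is exactly the paper's estimates \eqref{AG1}--\eqref{AG2}. No gaps.
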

		
		\begin{proof}[Proof of Claim \ref{claimeps}]
			We will show the first equality, the second one is analogous. Let $\epsilon_{A''}^n$ be as above and let $\epsilon^n=\sum_{j=1}^{N_n}w_j^n1_{\epsilon\in Q_j^n}$, where $W_{e}^n=\sum_{j=1}^\infty w_j^n1_{\epsilon\in Q_j^n}$ and we now choose $N_n$ as in the proof of Lemma \ref{PropEnvCont} but also large enough to assure that
			$$\E\left[\lVert \epsilon^n-W_{e}^n \rVert_{L^2([T_1,T_2])^{p+1}}^2\right] =\sum_{j=N_n+1}^{\infty}\n w_j^n\n^2\P\left(\epsilon\in Q_j^n\right)<\frac{1}{2n}.$$
			This will also imply $\E\left[\lVert \epsilon^n-\epsilon \rVert_{L^2([T_1,T_2])^{p+1}}^2\right]\to 0$. We note that
			\begin{align}\label{G}
				\E_{A''}\left[ G_1\left(\epsilon_{A''}^n\right)G_2\left(\epsilon_{A''}^n\right)\right]
				&=
				\sum_{i=1}^{N_n}\P_{A''}\left(\epsilon_{A''}\in Q_{i}^n\right) G_1\left(w^n_{i}\right)G_2\left(w^n_{i}\right)\nonumber
				\\
				&=
				\sum_{i=1}^{N_n}\P\left(\epsilon\in Q_{i}^n\right)\prod_{j=1}^2 G_1\left(w^n_{i}\right)G_2\left(w^n_{i}\right)
				=
				\E\left[G_1\left(\epsilon^n\right)G_2\left(\epsilon^n\right)\right].
			\end{align}
			Since
			\begin{align}\label{AG1}
				&\left| \E_{A''}\left[G_1\left(\epsilon_{A''}^n\right)G_2\left(\epsilon_{A''}^n\right)\right]- \E_{A''}\left[G_1\left(\epsilon_{A''}\right)G_2\left(\epsilon_{A''}\right)\right]\right|\nonumber
				\\
				&\le
				\E_{A''}\left[\left|G_2\left(\epsilon_{A''}^n\right)G_1\left(\epsilon_{A''}^n-\epsilon_{A''}\right)\right|\right]
				+ 
				\E_{A''}\left[\left|G_1\left(\epsilon_{A''}\right)G_2\left(\epsilon_{A''}^n-\epsilon_{A''}\right)\right|\right]\nonumber
				\\
				&\le
				\lVert G_2\rVert\lVert G_1\rVert \E_{A''}\left[\lVert \epsilon_{A''}^n\rVert_{\left(L^2([T_1,T_2])\right)^{p+1}}\lVert \epsilon_{A''}^n-\epsilon_{A''}\rVert_{\left(L^2([T_1,T_2])\right)^{p+1}}\right]\nonumber
				\\
				&+
				\lVert G_2\rVert\lVert G_1\rVert \E_{A''}\left[\lVert \epsilon_{A''}\rVert_{\left(L^2([T_1,T_2])\right)^{p+1}}\lVert \epsilon_{A''}^n-\epsilon_{A''}\rVert_{\left(L^2([T_1,T_2])\right)^{p+1}}\right]\nonumber
				\\
				&\le
				\lVert G_2\rVert\lVert G_1\rVert\left(\E_{A''}\left[\lVert \epsilon_{A''}\rVert_{\left(L^2([T_1,T_2])\right)^{p+1}}^2\right]^{\frac12}+\E_{A''}\left[\lVert \epsilon_{A''}^n\rVert_{\left(L^2([T_1,T_2])\right)^{p+1}}^2\right]^{\frac12}\right)\E_{A''}\left[\lVert \epsilon_{A''}^n-\epsilon_{A''}\rVert_{\left(L^2([T_1,T_2])\right)^{p+1}}^2\right]^{\frac12}
			\end{align}
			and analogously
			\begin{align}\label{AG2}
				&\left| \E\left[G_1\left(\epsilon^n\right)G_2\left(\epsilon^n\right)\right]- \E\left[G_1\left(\epsilon\right)G_2\left(\epsilon\right)\right]\right|\nonumber
				\\
				&\le
				\lVert G_2\rVert\lVert G_1\rVert\left(\E\left[\lVert \epsilon\rVert_{\left(L^2([T_1,T_2])\right)^{p+1}}^2\right]^{\frac12}
				+
				\E\left[\lVert \epsilon^n\rVert_{\left(L^2([T_1,T_2])\right)^{p+1}}^2\right]^{\frac12}\right)\E\left[\lVert \epsilon^n-\epsilon\rVert_{\left(L^2([T_1,T_2])\right)^{p+1}}^2\right]^{\frac12}.
			\end{align}
			Next,
			\begin{align*}
				\left| \E_{A''}\left[G_1\left(\epsilon_{A''}\right)G_2\left(\epsilon_{A''}\right)\right]- \E\left[G_1\left(\epsilon\right)G_2\left(\epsilon\right)\right]\right|
				&\le
				\left| \E_{A''}\left[G_1\left(\epsilon_{A''}^n\right)G_2\left(\epsilon_{A''}^n\right)\right]- \E_{A''}\left[G_1\left(\epsilon_{A''}\right)G_2\left(\epsilon_{A''}\right)\right]\right|
				\\
				&+ 
				\left| \E\left[G_1\left(\epsilon^n\right)G_2\left(\epsilon^n\right)\right]- \E\left[G_1\left(\epsilon\right)G_2\left(\epsilon\right)\right]\right|
				\\
				&+
				\left| \E_{A''}\left[G_1\left(\epsilon_{A''}^n\right)G_2\left(\epsilon_{A''}^n\right)\right]- \E\left[G_1\left(\epsilon^n\right)G_2\left(\epsilon^n\right)\right]\right|,
			\end{align*}
			which converges to zero due to \eqref{G}, \eqref{AG1} and \eqref{AG2}.
		\end{proof}
		Fix $1\le i,j\le p+1$ and $k,l\in\N$. Let $G_1(f)=F_{i,k}(\pi_i f)$ and $G_2(f)=F_{j,l}(\pi_j f)$ for $f\in\left(L^2([T_1,T_2])\right)^{p+1}$. Then $| G_1(f)|\le \lVert f\rVert_{\left(L^2([T_1,T_2])\right)^{p+1}}$ implying $\lVert G_1\rVert\le 1$ and similarly $\lVert G_2\rVert\le 1$. By Claim \ref{claimeps}, it follows that 
		\begin{align*}
			\E_{A''}\left[F_{i,k}(\epsilon_{A''}(i))F_{j,l}(\epsilon_{A''}(j))\right]
			=
			\E_O\left[F_{i,k}(\epsilon_{O}(i))F_{j,l}(\epsilon_{O}(j))\right],
		\end{align*}
		which in turn implies that
		$$\E_{O}\left[ F_{1:n}(\epsilon_O)F_{1:n}(\epsilon_O)^T \right]=\E_{A''}\left[ F_{1:n}(\epsilon_A'')F_{1:n}(\epsilon_A'')^T \right]. $$
		\\
		\textbf{Step 7: Optimize over the shifts}
		\\
				Recall the definition of $\textbf{v}_n$ from step 4. We utilize \eqref{Ae} when we now return to \eqref{R_Afinal},
		\begin{align*}
			R_{A''}(\beta)
			&=
			\textbf{v}_{n}\E_{A''}\left[ \left(F_{1:n}(A'') +F_{1:n}(\epsilon_{A''}) \right)\left(F_{1:n}(A'') +F_{1:n}(\epsilon_{A''})\right)^T \right] \textbf{v}_{n}^T
			\\
			&=
			\lim_{n\to\infty}\textbf{v}_{n}\E\left[ F_{1:n}(A'')F_{1:n}(A'')^T \right] \textbf{v}_{n}^T+\lim_{n\to\infty}\textbf{v}_{n}\E_{A''}\left[ F_{1:n}(\epsilon_{A''})F_{1:n}(\epsilon_{A''})^T \right] \textbf{v}_{n}^T
			\\
			&=\lim_{n\to\infty}\textbf{v}_{n}\E\left[ F_{1:n}(A'')F_{1:n}(A'')^T \right] \textbf{v}_{n}^T+\lim_{n\to\infty}\textbf{v}_{n}\E_{O}\left[ F_{1:n}(\epsilon_O)F_{1:n}(\epsilon_O)^T \right] \textbf{v}_{n}^T
		\end{align*}
		and simlarly we have
		\begin{align}\label{RAformula}
			R_{A}(\beta)
			=\lim_{n\to\infty}\textbf{v}_{n}\E\left[ F_{1:n}(A)F_{1:n}(A)^T \right] \textbf{v}_{n}^T+\lim_{n\to\infty}\textbf{v}_{n}\E_{O}\left[ F_{1:n}(\epsilon_O)F_{1:n}(\epsilon_O)^T \right] \textbf{v}_{n}^T.
		\end{align}
		Take a sequence $\{A_n\}_{n\in\N}\subset C^\gamma_{\mathcal{A}}(A)$ such that $\lim_{n\to\infty}R_{A_n}(\beta)=\sup_{A'\in C^\gamma_{\mathcal{A}}(A)}R_{A'}(\beta)$. Fix any $\Delta>0$. Let $\tilde{A}_\Delta\in C^\gamma_{\mathcal{A}}(A)$ be such that $\lVert \tilde{A}_\Delta-\sqrt{\gamma}A\rVert_{\mathcal{V}}<\eta$ where $\eta$ is chosen such that $\left|R_{\tilde{A}_\Delta}(\beta)-R_{\sqrt{\gamma}A}(\beta)\right|<\Delta$, which is possible due to Lemma \ref{PropEnvCont} and the fact that $\sqrt{\gamma}A\in\bar{\mathcal{A}}$. Define the sets
		$$C_m=\{\tilde{A}_\Delta\}\cup\left(\bigcup_{k=1}^m\{A_m\}\right), m\in\N.$$
		Fix $m\in\N$. Since there are only finitely many elements in $C_m$, we have that $\lim_{n\to\infty}\textbf{v}_{n}\E\left[ F_{1:n}(A'')F_{1:n}(A'')^T \right]\textbf{v}_{n}^T$ uniformly over all $A''\in C_m$, so we may take $N\in\N$ such that 
		$$\left|\lim_{n\to\infty}\textbf{v}_{n}\E\left[ F_{1:n}(A'')F_{1:n}(A'')^T \right]\textbf{v}_{n}^T -\textbf{v}_{N}\E\left[ F_{1:N}(A'')F_{1:N}(A'')^T \right]\textbf{v}_{N}^T\right|<\Delta, \forall A''\in C_m$$
		and
		$$\left|\lim_{n\to\infty}\textbf{v}_{n}\E\left[ F_{1:n}(A)F_{1:n}(A)^T \right]\textbf{v}_{n}^T -\textbf{v}_{N}\E\left[ F_{1:N}(A)F_{1:N}(A)^T \right]\textbf{v}_{N}^T\right|<\Delta.$$
		Let $g_i(s)=\sum_{k=1}^N\textbf{v}_N((i-1)N+k)\phi_{i,k}(s)$ for $1\le i\le p+1$. Then clearly $g_i\in L^2([T_1,T_2])$. We will now utilize our results from step 2. Borrowing similar notation from step 2, note that
		\small
		\begin{align}\label{truncen}
			&\int_{[T_1,T_2]^2} \left( g_1(s),\ldots,g_{p+1}(s) \right) K^N_{A''}(s,t) \left(g_1(t),\ldots,g_{p+1}(t) \right)^Tdsdt\nonumber
			\\
			&=
			\sum_{i=1}^{p+1}\sum_{j=1}^{p+1}\int_{[T_1,T_2]^2}g_i(s)g_j(t)K^N_{(i,j)}(s,t)dsdt\nonumber
			\\
			&=
			\sum_{i=1}^{p+1}\sum_{j=1}^{p+1}\int_{[T_1,T_2]^2}g_i(s)g_j(t)K^N_{(j,i)}(s,t)dsdt\nonumber
			\\
			&=
			\sum_{i=1}^{p+1}\sum_{j=1}^{p+1}\sum_{m=1}^{N}\sum_{v=1}^{N}\int_{[T_1,T_2]^2}\textbf{v}_N((i-1)N+m)\phi_{i,m}(s)\E\left[\sum_{k=1}^{N}\sum_{l=1}^{N}F_{i,k}(A''(i))F_{j,l}(A''(j))\phi_{i,k}(s)\phi_{j,l}(t)\right]\textbf{v}_N((j-1)N+v)\phi_{j,v}(s)dsdt\nonumber
			\\
			&=
			\sum_{i=1}^{p+1}\sum_{j=1}^{p+1}\sum_{k=1}^{N}\sum_{l=1}^{N}\textbf{v}_N((i-1)N+k)\E\left[F_{i,k}(A''(i))F_{j,l}(A''(j))\right]\textbf{v}_N((j-1)N+l)\nonumber
			\\
			&=
			\sum_{k=1}^{N(p+1)}\sum_{l=1}^{N(p+1)}\textbf{v}_N(k)\E\left[ F_{1:N}(A'')F_{1:N}(A'')^T \right]_{k,l}\textbf{v}_N(l)
			=\textbf{v}_{N}\E\left[ F_{1:N}(A'')F_{1:N}(A'')^T \right]\textbf{v}_{N}^T,
		\end{align}
		\normalsize
		where we utilized the symmetry of $K^N_{A''}$ to swap $i$ and $j$ in the second equality, as well as the fact that 
		\begin{align*}
			\E\left[ F_{1:N}(A'')F_{1:N}(A'')^T \right]_{k,l}
			&=\E\left[ \left(F_{1:N}(A'')F_{1:N}(A'')^T\right)_{k,l} \right]
			\\
			&=\E\left[\left(F_{1:N}(A'')F_{1:N}(A'')^T\right)_{(i-1)N+k',(j-1)N+l'}\right]
			\\
			&=
			\E\left[F_{i,k'}(A''(i))F_{j,l'}(A''(j))\right],
		\end{align*}
		if $k=(i-1)N+k'$ and $l=(j-1)N+l'$, for $1\le i,j\le p+1$ and $1\le k',l'\le N$. By the definition of $C^\gamma_{\mathcal{A}}(A)$, \eqref{truncen} and the fact that $A''\in C_m\subset  C^\gamma_{\mathcal{A}}(A)$  we have,
\begin{align*}
			\textbf{v}_{N}\E\left[ F_{1:N}(A'')F_{1:N}(A'')^T \right]\textbf{v}_{N}^T
			&=
			\int_{[T_1,T_2]} \int_{[T_1,T_2]}\left( g_1(s),\ldots,g_{p+1}(s) \right) K^N_{A''}(s,t) \left(g_1(t),\ldots,g_{p+1}(t) \right)^Tdsdt
			\\
			&=
			\int_{[T_1,T_2]^2} \left( g_1(s),\ldots,g_{p+1}(s) \right) K_{A''}(s,t) \left(g_1(t),\ldots,g_{p+1}(t) \right)^Tdsdt
			\\
			&\le 
			\gamma\int_{[T_1,T_2]^2} \left( g_1(s),\ldots,g_{p+1}(s) \right) K_{A}(s,t) \left(g_1(t),\ldots,g_{p+1}(t) \right)^Tdsdt
			\\
			&=
			\gamma\int_{[T_1,T_2]^2} \left( g_1(s),\ldots,g_{p+1}(s) \right) K^N_{A}(s,t) \left(g_1(t),\ldots,g_{p+1}(t) \right)^Tdsdt
			\\
			&=
			\gamma\textbf{v}_{N}\E\left[ F_{1:N}(A)F_{1:N}(A)^T \right]\textbf{v}_{N}^T
			\\
			&\le 
			\gamma\lim_{n\to\infty}\textbf{v}_{n}\E\left[ F_{1:n}(A)F_{1:n}(A)^T \right]\textbf{v}_{n}^T+\Delta
		\end{align*}
		Since
		\begin{align*}
			\lim_{n\to\infty}\gamma \textbf{v}_{n}\E_{A}\left[ \left(F_{1:n}(A) +F_{1:n}(\epsilon_A) \right)\left(F_{1:n}(A) +F_{1:n}(\epsilon_A)\right)^T \right] \textbf{v}_{n}^T
			=\gamma R_{A}(\beta)
		\end{align*}
		and analogously 
		\begin{align*}
			\lim_{n\to\infty} \textbf{v}_{n}\E_O\left[ F(\epsilon_O)F(\epsilon_O)^T \right] \textbf{v}_{n}^T
			=R_{O}(\beta)
		\end{align*}
		it therefore follows that
		\begin{align*}
			R_{A''}(\beta)
			\le
			\gamma R_{A}(\beta) + (1-\gamma)R_O(\beta)+\Delta
			=\frac12 R_+(\beta)+\left(\gamma -\frac12\right) R_\Delta(\beta)+\Delta.
		\end{align*}
		Since $\tilde{A}_\Delta\in C_m$, for every $m\in\N$, we have
		\begin{align*}
			\max_{A''\in C_m}R_{A''}(\beta)&\ge  R_{\tilde{A}_\Delta}(\beta)
			\\
			&\ge R_{\sqrt{\gamma}A}(\beta)-\Delta
			=\frac12 R_+(\beta)+\left(\gamma -\frac12\right) R_\Delta(\beta)-\Delta.
		\end{align*}
		Hence 
		\begin{align*}
			\left|\max_{A''\in C_m}R_{A''}(\beta)-\left(\frac12 R_+(\beta)+\left(\gamma -\frac12\right) R_\Delta(\beta)\right)\right|<\Delta.
		\end{align*}
		Since $\lim_{n\to\infty}R_{A_n}(\beta)=\sup_{A'\in C^\gamma_{\mathcal{A}}(A)}R_{A'}(\beta)$, and $R_{A_n}(\beta)\le \sup_{A'\in C^\gamma_{\mathcal{A}}(A)}R_{A'}(\beta)$ (since $A_n\in C^\gamma_{\mathcal{A}}(A)$) it follows that $\lim_{m\to\infty}\max_{A''\in C_m}R_{A''}(\beta) =\sup_{A'\in C^\gamma_{\mathcal{A}}(A)}R_{A'}(\beta)$ and therefore there exists $M\in\N$ such that if $m\ge M$, $\left| \max_{A''\in C_m}R_{A''}(\beta) -\sup_{A'\in C^\gamma_{\mathcal{A}}(A)}R_{A'}(\beta)\right|<\Delta$.
		Therefore, for $m\ge M$
		\begin{align*}
			\left| \sup_{A'\in C^\gamma_{\mathcal{A}}(A)}R_{A'}(\beta)-\left(\frac12 R_+(\beta)+\left(\gamma -\frac12\right)R_\Delta(\beta)\right)\right|
			&\le 
			\left| \sup_{A'\in  C^\gamma_{\mathcal{A}}(A)}R_{A'}(\beta)-\max_{A''\in C_m}R_{A''}(\beta)\right|
			\\
			&+
			\left|\max_{A''\in C_m}R_{A''}(\beta)-\left(\frac12 R_+(\beta)+\left(\gamma -\frac12\right) R_\Delta(\beta)\right)\right|< 2\Delta
		\end{align*}
		and by letting $\Delta\to 0$ we get
		$$ \sup_{A'\in C^\gamma_{\mathcal{A}}(A)} R_{A'}(\beta)=\frac12 R_+(\beta)+\left(\gamma -\frac12\right) R_\Delta(\beta),$$
		as was to be shown.
	\end{proof}
	
	\subsection{Proof of Theorem \ref{MinimizerThm}}
	\begin{proof}
In the proof of Theorem \ref{WR1} we saw that, $\sup_{A'\in C^\gamma_{\mathcal{A}}(A)} R_{A'}(\beta)=R_{\sqrt{\gamma}A}(\beta)$. By the Cauchy-Schwarz inequality
\begin{align*}
\int_{[T_1,T_2]^2}K_{X^{\sqrt{\gamma}A}(i)X^{\sqrt{\gamma}A}(j)}(s,t)^2dsdt
&=
\int_{[T_1,T_2]^2}\E_{\sqrt{\gamma}A}\left[X^{\sqrt{\gamma}A}_s(i)X^{\sqrt{\gamma}A}_t(j)\right]^2dsdt
\\
&\le
\int_{[T_1,T_2]^2}\E_{\sqrt{\gamma}A}\left[\left(X_s^{\sqrt{\gamma}A}(i)\right)^2\right]\E_{\sqrt{\gamma}A}\left[\left(X_t^{\sqrt{\gamma}A}(j)\right)^2\right]dsdt
\\
&=
\int_{[T_1,T_2]}\E_{\sqrt{\gamma}A}\left[\left(X_t^{\sqrt{\gamma}A}(i)\right)^2\right]dt
\int_{[T_1,T_2]}\E_{\sqrt{\gamma}A}\left[\left(X_t^{\sqrt{\gamma}A}(j)\right)^2\right]ds,
\end{align*}
which is finite by the assumption on $A$ and $\epsilon$. Let $K^i$ denote the i:th row of $K_{X^{\sqrt{\gamma}A}}$, $K^i_t=K^i(s,t)$ and let $\{e_n\}_{n\in\N}$ be an ON-basis for $L^2([T_1,T_2])^p$. Since 
$$K_{X^{\sqrt{\gamma}A}(i)X^{\sqrt{\gamma}A}(j)}(s,t)^2\le \E_{\sqrt{\gamma}A}\left[\left(X_s^{\sqrt{\gamma}A}(i)\right)^2\right]\E_{\sqrt{\gamma}A}\left[\left(X_t^{\sqrt{\gamma}A}(j)\right)^2\right]$$
and $\E_{\sqrt{\gamma}A}\left[\left(X_t^{\sqrt{\gamma}A}(j)\right)^2\right]<\infty$ a.e. $t$ (since $\int_{[T_1,T_2]}\E_{\sqrt{\gamma}A}\left[\left(X_t^{\sqrt{\gamma}A}(j)\right)^2\right]dt<\infty$),
 every component of $K^i_t$ is a well-defined element of $L^2([T_1,T_2])$ for a.e. $t$. Therefore $K^i_t=\sum_{n=1}^\infty \langle K_t^i,e_n\rangle_{L^2([T_1,T_2])^p}e_n$ and $\lVert K^i_t \rVert_{L^2([T_1,T_2])^p}^2=\sum_{n=1}^\infty \langle K_t^i,e_n\rangle_{L^2([T_1,T_2])^p}^2$. Next,
\begin{align*}
(\mathcal{K}e_n)(t)=\int_{[T_1,T_2]} K^i(s,t)e_n(s)ds=\left(\langle K_t,e_n\rangle_{L^2([T_1,T_2])^p},\ldots,\langle K^p_t,e_n\rangle_{L^2([T_1,T_2])^p}\right).
\end{align*}
By monotone convergence,
\begin{align*}
\sum_{n=1}^\infty \lVert \mathcal{K}e_n \rVert_{L^2([T_1,T_2])^p}^2 =\sum_{n=1}^\infty\sum_{i=1}^p\int_{[T_1,T_2]} \langle K_t^i,e_n\rangle_{L^2([T_1,T_2])^p}^2dt
=
\sum_{i=1}^p\int_{[T_1,T_2]} \sum_{n=1}^\infty\langle K_t^i,e_n\rangle_{L^2([T_1,T_2])^p}^2dt.
\end{align*}
This implies
\begin{align*}
\sum_{n=1}^\infty \lVert \mathcal{K}e_n \rVert_{L^2([T_1,T_2])^p}^2 
&=
\sum_{i=1}^p\int_{[T_1,T_2]} \lVert K^i_t \rVert_{L^2([T_1,T_2])^p}^2dt
\\
&=
\sum_{i=1}^p\sum_{j=1}^p\int_{[T_1,T_2]^2}K_{X^{\sqrt{\gamma}A}(i)X^{\sqrt{\gamma}A}(j)}(s,t)^2dsdt<\infty.
\end{align*}
It follows that $\mathcal{K}$ is a Hilbert Schmidt operator and therefore compact on $L^2([T_1,T_2])^p$. Since $K_{X^{\sqrt{\gamma}A}}(s,t)=K_{X^{\sqrt{\gamma}A}}(t,s)$, this operator is also self-adjoint operator. By the Hilbert-Schmidt theorem, $\mathcal{K}$ has an eigendecomposition, $\mathcal{K}=\sum_{k=1}^\infty\alpha_k\langle \psi_k,.\rangle\psi_k$, where its eigenfunctions, $\{\psi_k\}_k$, are orthonormal in $L^2([T_1,T_2])^p$. We then have that
\begin{align*}
K_{X^{\sqrt{\gamma}A}}(s,t)
&=
\E_{\sqrt{\gamma}A}\left[\left(X^{\sqrt{\gamma}A}_s\right)^TX^{\sqrt{\gamma}A}_t\right]\\
&=
\E_{\sqrt{\gamma}A}\left[\mathcal{S}_{2:p}\left(\sqrt{\gamma}A+\epsilon_{\sqrt{\gamma}A}\right)_s^T \mathcal{S}_{2:p}\left(\sqrt{\gamma}A+\epsilon_{\sqrt{\gamma}A}\right)_t\right]\\
&=
\gamma\E\left[\mathcal{S}_{2:p}\left(A\right)_s^T \mathcal{S}_{2:p}\left(A\right)_t\right]
+
\sqrt{\gamma}\left(\E_{\sqrt{\gamma}A}\left[\mathcal{S}_{2:p}\left(A\right)_s^T \mathcal{S}_{2:p}\left(\epsilon_{\sqrt{\gamma} A}\right)_t\right]
+
\E_{\sqrt{\gamma}A}\left[\mathcal{S}_{2:p}\left(\epsilon_{\sqrt{\gamma} A}\right)_s^T \mathcal{S}_{2:p}\left(A\right)_t\right] \right)
\\
&+
\E_{\sqrt{\gamma}A}\left[\mathcal{S}_{2:p}\left(\epsilon_{\sqrt{\gamma} A}\right)_s^T \mathcal{S}_{2:p}\left(\epsilon_{\sqrt{\gamma} A}\right)_t\right]
\end{align*}
Take some arbitrary basis of $L^2([T_1,T_2]^2)$, say the basis provided in the theorem statement, $\{\phi_n\}_{n\in\N}$, and let $V_n=\mathsf{span}\left(\{\phi_{m_1}\otimes\phi_{m_2}\}_{1\le m_1,m_2\le n}\right)$ and $f\in V_n$ so that $f=\sum_{k=1}^n\sum_{l=1}^n a_{k,l}\phi_k\otimes \phi_l$ for some $\{a_{k,l}\}_{1\le k,l\le n}$. We have for $2\le k,l\le p+1$ by Claim \ref{claimepsA},
\begin{align*}
&\int_{[T_1,T_2]^2}\E_{\sqrt{\gamma}A}\left[\mathcal{S}_{k}\left(\sqrt{\gamma}A\right)_s \mathcal{S}_{l}\left(\epsilon_{\sqrt{\gamma}A}\right)_t\right]f(s,t)
\\
=
&\sum_{k=1}^n\sum_{l=1}^n a_{k,l}\E_{\sqrt{\gamma}A}\left[\int_{[T_1,T_2]}\mathcal{S}_k\left(\sqrt{\gamma}A\right)_s\phi_k(s)ds \int_{[T_1,T_2]}\mathcal{S}_{l}\left(\epsilon_{\sqrt{\gamma}A}\right)_t\phi_l(t)dt\right] 
=0.
\end{align*}
Letting $P_n$ denote projection on the space $V_n$, so that $\lVert P_nf-f\rVert_{L^2([T_1,T_2]^2)}\to 0$ it then follows that
\begin{align*}
&\left|\int_{[T_1,T_2]^2}\E_{\sqrt{\gamma}A}\left[\mathcal{S}_{k}\left(\sqrt{\gamma}A\right)_s \mathcal{S}_{l}\left(\epsilon_{\sqrt{\gamma}A}\right)_t\right]f(s,t) -\int_{[T_1,T_2]^2}\E_{\sqrt{\gamma}A}\left[\mathcal{S}_{k}\left(\sqrt{\gamma}A\right)_s \mathcal{S}_{l}\left(\epsilon_{\sqrt{\gamma}A}\right)_t\right](P_nf)(s,t)\right|
\\
&\le
\left(\int_{[T_1,T_2]^2}\E_{\sqrt{\gamma}A}\left[\mathcal{S}_{k}\left(\sqrt{\gamma}A\right)_s \mathcal{S}_{l}\left(\epsilon_{\sqrt{\gamma}A}\right)_t\right]^2dsdt\right)^{\frac12}\lVert P_nf-f\rVert_{L^2([T_1,T_2]^2)}
\\
&\le
\left(\int_{[T_1,T_2]^2}\E_{\sqrt{\gamma}A}\left[\mathcal{S}_{k}\left(\sqrt{\gamma}A\right)_s^2\right]\E_{\sqrt{\gamma}A}\left[\mathcal{S}_{l}\left(\epsilon_{\sqrt{\gamma}A}\right)_t^2\right]dsdt\right)^{\frac12}\lVert P_nf-f\rVert_{L^2([T_1,T_2]^2)}
\\
&\le
\gamma\lVert \mathcal{S}\left(A\right)\rVert_{\mathcal{V}} \lVert \mathcal{S}\left(\epsilon\right)\rVert_{\mathcal{V}} \lVert P_nf-f\rVert_{L^2([T_1,T_2]^2)},
\end{align*}
which converges to zero. Since $g(s,t)=\E_{\sqrt{\gamma}A}\left[\mathcal{S}_{k}\left(\sqrt{\gamma}A\right)_s \mathcal{S}_{l}\left(\epsilon_{\sqrt{\gamma}A}\right)_t\right]\in L^2([T_1,T_2]^2)$ and $\lVert gf\rVert_{L^2([T_1,T_2]^2)}=0$, $\forall f\in L^2([T_1,T_2]^2)$ this implies $
g=0$ a.e. in $[T_1,T_2]^2$, implying $\E_{\sqrt{\gamma}A}\left[\mathcal{S}_{2:p}\left(\sqrt{\gamma}A\right)_s^T \mathcal{S}_{2:p}\left(\epsilon_{\sqrt{\gamma}A}\right)_t\right]=0$ a.e. in $[T_1,T_2]^2$. Analogous arguments shows that 
$$\E_{\sqrt{\gamma}A}\left[\mathcal{S}_{2:p}\left(\epsilon_{\sqrt{\gamma}A}\right)_s^T \mathcal{S}_{2:p}\left(\sqrt{\gamma}A\right)_t\right]=0,$$ 
and that a.e. in $[T_1,T_2]^2$,  
$$\E_{\sqrt{\gamma}A}\left[\mathcal{S}_{2:p}\left(\epsilon_{\sqrt{\gamma} A}\right)_s^T \mathcal{S}_{2:p}\left(\epsilon_{\sqrt{\gamma} A}\right)_t\right]
=
\E_{O}\left[\mathcal{S}_2\left(\epsilon_{O}\right)_s \mathcal{S}_2\left(\epsilon_{O}\right)_t\right].
$$  
\\
We now prove that the formula \eqref{argminformula} is valid given \eqref{summationkrit}. If $\mathcal{K}$ is injective then $W=\{0\}$, which will imply the uniqueness. Let $\mathcal{L}_1=\overline{\mathsf{span}}\left(\{\psi_k\}_k\right)$ and $\mathcal{L}_2:=\mathcal{L}_1^{\perp}$. Then $\mathcal{L}_2$ is a closed subspace of $L^2([T_1,T_2])^p$ and is therefore separable, which implies it has a countable ON-basis, $\{\eta_k\}_k$ and moreover $L^2([T_1,T_2])^p=\mathcal{L}_1\oplus \mathcal{L}_2$ as well as $L^2([T_1,T_2])^p=\overline{\mathsf{span}}\left(\{\psi_k\}_k\cup \{\eta_k\}_k\right)$. Let $\{\tilde{\psi}_k\}_k$ be an enumeration of the ON-system $\{\psi_k\}_k\cup \{\eta_k\}_k$, which is a basis for $L^2([T_1,T_2])^p$. We will now show that $\{\tilde{\psi}_l\otimes\phi_k \}_{k,l\in\N}$ is an ON-basis for $L^2([T_1,T_2]^2)^p$. Denote the components of $\tilde{\psi}_{l}$ as $\tilde{\psi}_{l}=\left(\tilde{\psi}_{l,1},\ldots,\tilde{\psi}_{l,p} \right)$, where $\tilde{\psi}_{l,i}\in L^2([T_1,T_2])$. Then
\begin{align*}
\langle \tilde{\psi}_{l_1}\otimes \phi_{k_1}, \tilde{\psi}_{l_2}\otimes\phi_{k_2}\rangle_{L^2([T_1,T_2]^2)^p} 
&=
\sum_{i=1}^p\langle \tilde{\psi}_{l,i}\otimes \phi_{k_1}, \tilde{\psi}_{l_2,i}\otimes\phi_{k_2}\rangle_{L^2([T_1,T_2]^2)}
\\
&=
\sum_{i=1}^p\langle \tilde{\psi}_{l_1,i}, \tilde{\psi}_{l_2,i}\rangle_{L^2([T_1,T_2])}\langle \phi_{k_1}, \phi_{k_2}\rangle_{L^2([T_1,T_2])}
\\
&=
\langle \tilde{\psi}_{l_1}, \tilde{\psi}_{l_2}\rangle_{L^2([T_1,T_2])^p}\langle \phi_{k_1}, \phi_{k_2}\rangle_{L^2([T_1,T_2])}=\delta_{l_1,l_2}\delta_{k_1,k_2},
\end{align*}
which shows the orthogonality. For the completeness of this basis, let $f\in \left(\mathsf{span}\{\tilde{\psi}_l\otimes\phi_k \}_{k,l\in\N}\right)^\perp$ so that
$$ \langle f, \tilde{\psi}_{l}\otimes\phi_{k}\rangle_{L^2([T_1,T_2]^2)^p}=0, $$
for all $k,l\in\N$. Explicitly this means,
$$ \int_{[T_1,T_2]}\left(\int_{[T_1,T_2]}f(t,\tau) \tilde{\psi}_{l}(\tau)d\tau\right) \phi_k(t)dt=0.$$
If we let $g_l(t)=\int_{[T_1,T_2]}f(t,\tau) \tilde{\psi}_{l}(\tau)d\tau$ then, by the Cauchy-Schwarz inequality (applied component-wise), $g_l\in L^2([T_1,T_2])$. Moreover, $\langle g, \phi_{k}\rangle_{L^2([T_1,T_2])}=0$, $\forall k\in \N$ implying 
$$g_l=\sum_{k=1}^\infty \langle g_l, \phi_{k}\rangle_{L^2([T_1,T_2])}\phi_k=0,$$
in $L^2([T_1,T_2])$, further implying that $g_l=0$ a.e. in $[T_1,T_2]$. Let $F_l=\{t:g_l(t)\not=0\}$ and $F=\bigcup_{l\in\N}F_l$. On $F^c$,
$$ \int_{[T_1,T_2]}f(t,\tau) \tilde{\psi}_{l}(\tau)d\tau=0 , \forall l\in\N,$$
implying $f(t,\tau)=0$ outside a zero-set $F'\in [T_1,T_2]$. By the Fubini theorem,
$$ \int_{[T_1,T_2]}\int_{[T_1,T_2]}|f(t,\tau)| d\tau dt
=
\int_{[T_1,T_2]\setminus F}\left(\int_{[T_1,T_2]\setminus F'}|f(t,\tau)| d\tau\right) dt=0.$$
This implies that $f=0$ a.e. in $[T_1,T_2]^2$, which leads to the conclusion that $\left(\mathsf{span}\{\tilde{\psi}_l\otimes\phi_k \}_{k,l\in\N}\right)^\perp=\{0\}$, implying that $\{\tilde{\psi}_l\otimes\phi_k \}_{k,l\in\N}$ is indeed a basis. We therefore have the following representation,
\begin{align*}
L^2([T_1,T_2]^2)&=
\left\{ \sum_{k=1}^\infty \sum_{l=1}^\infty  \lambda_{k,l} \tilde{\psi}_k\otimes\phi_l: \sum_{k=1}^\infty \sum_{l=1}^\infty  \lambda_{k,l}^2<\infty\right\}\\
&=
\left\{ \sum_{k=1}^\infty \sum_{l=1}^\infty  \lambda_{k,l}^{(1)} \psi_k\otimes\phi_l + \sum_{k=1}^\infty \sum_{l=1}^\infty  \lambda_{k,l}^{(2)} \eta_k\otimes\phi_l: \sum_{k=1}^\infty \sum_{l=1}^\infty  (\lambda_{k,l}^{(1)})^2 + \sum_{k=1}^\infty \sum_{l=1}^\infty  (\lambda_{k,l}^{(2)})^2<\infty\right\}.
\end{align*}
Moreover $X^{\sqrt{\gamma}A}\in L^2([T_1,T_2])$ a.s. and hence if we let $S_n(t,\omega)=\sum_{k=1}^n\left(\chi^{(1)}_k(\omega)\psi_k(t)+\chi^{(2)}_k(\omega)\eta_k(t)\right)$ then $S_n\xrightarrow{L^2([T_1,T_2])}X^{\sqrt{\gamma}A}$ a.s.. Note however that
\begin{align*}
\E\left[\left(\chi^{(2)}_{l}\right)^2\right]
&=
\E\left[\int_{[T_1,T_2]} \int_{[T_1,T_2]} \eta_{l}(s)\left(X_s^{\sqrt{\gamma}A}\right)^TX_t^{\sqrt{\gamma}A}\eta_{l}(t)^Tdtds\right]
\\
&=
\int_{[T_1,T_2]} \int_{[T_1,T_2]} \eta_{l}(s)\E\left[\left(X_s^{\sqrt{\gamma}A}\right)^TX_t^{\sqrt{\gamma}A}\right]\eta_{l}(t)^Tdtds
\\
&=\int_{[T_1,T_2]} \int_{[T_1,T_2]} \eta_{l}(s)K_{X^{\sqrt{\gamma}A}}(t,s)\eta_{l}(t)^Tdtds
\\
&=\int_{[T_1,T_2]} (\mathcal{K}\eta_l)(t)\eta_{l}(t)^Tdtds
\\
&=\lim_{N\to\infty}\int_{[T_1,T_2]} \sum_{n=1}^N \alpha_n\langle\psi_n,\eta_l\rangle_{L^2([T_1,T_2])^p}\psi_n(t)\eta_{l}(t)^Tdt
\\
&=\lim_{N\to\infty} \sum_{n=1}^N \alpha_n\langle\psi_n,\eta_l\rangle_{L^2([T_1,T_2])^p} \int_{[T_1,T_2]}  \psi_n(t) \eta_{l}(t)dt
=0.
\end{align*}
This implies that in fact $S_n(t,\omega)=\sum_{k=1}^n\chi^{(1)}_k(\omega)\psi_k(t)\xrightarrow{L^2([T_1,T_2])}X^{\sqrt{\gamma}A}$ a.s.. We will therefore denote $\chi_l=\chi^{(1)}_l$ from now on. Since we expand $X^{\sqrt{\gamma}A}$ in the basis given by the eigenfunctions of $K_{X^{\sqrt{\gamma}A}}$ we also have that the sequence $\{\chi_l\}_l$ is orthogonal,
\begin{align*}
\E\left[\chi_{l_1}\chi_{l_2}\right]
&=
\E\left[\int_{[T_1,T_2]} \int_{[T_1,T_2]} \psi_{l_1}(s) \left( X_s^{\sqrt{\gamma}A}\right)^TX_t^{\sqrt{\gamma}A}\psi_{l_2}(t)^Tdtds\right]
\\
&=
\int_{[T_1,T_2]} \int_{[T_1,T_2]} \psi_{l_1}(s)\E\left[\left(X_s^{\sqrt{\gamma}A}\right)^TX_t^{\sqrt{\gamma}A}\right]\psi_{l_2}(t)^Tdtds
\\
&=\int_{[T_1,T_2]} \int_{[T_1,T_2]} \psi_{l_1}(s)K_{X^{\sqrt{\gamma}A}}(t,s)\psi_{l_2}(t)^Tdtds
\\
&=\lim_{N\to\infty} \int_{[T_1,T_2]} \int_{[T_1,T_2]} \sum_{n=1}^N \alpha_n\langle \psi_n,\psi_{l_1}\rangle_{L^2([T_1,T_2])^p}\psi_n(t) \psi_{l_2}(t)dtds
\\
&=\lim_{N\to\infty} \sum_{n=1}^N  \alpha_n\langle \psi_n,\psi_{l_1}\rangle_{L^2([T_1,T_2])^p}\langle \psi_n,\psi_{l_2}\rangle_{L^2([T_1,T_2])^p}
\\
&=\lim_{N\to\infty} \sum_{n=1}^N  \alpha_n\delta_{n,l_1}\delta_{n,l_2}
=\alpha_{l_1}\delta_{l_1,l_2}.
\end{align*}
Let
$$ V=\left\{\int \beta(t,\tau)X^{\sqrt{\gamma}A}_\tau d\tau:  \beta\in L^2([T_1,T_2]^2)^p \right\}.$$
Setting $\delta=\inf_{\beta\in L^2([T_1,T_2]^2)^p}\int_{[T_1,T_2]} \E\left[\left(Y_t-\int_{[T_1,T_2]} \beta(t,\tau)X^{\sqrt{\gamma}A}_\tau d\tau\right)^2\right]dt$, it follows that there exists a sequence $\{\beta_n\}_n\in L^2([T_1,T_2]^2)^p$ such that $\delta=\lim_{n\to\infty}\int_{[T_1,T_2]} \E\left[\left(Y_t-\int_{[T_1,T_2]} \beta_n(t,\tau)X^{\sqrt{\gamma}A}_\tau d\tau\right)^2\right]dt$. If we let $W_n(t)=\int_{[T_1,T_2]} \beta_n(t,\tau)X^{\sqrt{\gamma}A}_\tau d\tau$ then $\{W_n\}_n\subset V$ and $\delta=\lim_{n\to\infty}\int_{[T_1,T_2]} \E\left[\left(Y_t-W_n(t)\right)^2\right]dt$ which implies that $\inf_{W\in V}\int_{[T_1,T_2]} \E\left[\left(Y_t-W(t)\right)^2\right]dt\le \delta$. Therefore
\begin{align}\label{argmin}
\inf_{\beta\in L^2([T_1,T_2]^2)^p}\int \E\left[\left(Y_t-\int \beta(t,\tau)X^{\sqrt{\gamma}A}_\tau d\tau\right)^2\right]dt
&\ge
\inf_{W\in V}\int \E\left[\left(Y_t-W(t)\right)^2\right]dt\nonumber
\\
&\ge
\inf_{W\in \bar{V}}\int \E\left[\left(Y_t-W(t)\right)^2\right]dt.
\end{align}
This implies that if $\arg\min_{W\in \bar{V}}\int \E\left[\left(Y_t-W(t)\right)^2\right]dt\in V$ then there exists $\tilde{\beta}\in L^2([T_1,T_2]^2)^p$ such that \\$\arg\min_{W\in \bar{V}}\int \E\left[\left(Y_t-W(t)\right)^2\right]dt=\int \tilde{\beta}(t,\tau)X^{\sqrt{\gamma}A}_\tau d\tau$ and
$$
\arg\min_{\beta\in L^2([T_1,T_2]^2)^p}\int_{[T_1,T_2]} \E\left[\left(Y_t-\int_{[T_1,T_2]} \beta(t,\tau)X^{\sqrt{\gamma}A}(\tau)d\tau\right)^2\right]dt
=
\tilde{\beta}.
$$
Consider an arbitrary element in $V$,
\begin{align*}
h(t)
=
\int_{[T_1,T_2]} \beta(t,\tau)X^{\sqrt{\gamma}A}_\tau d\tau.
\end{align*}
Let 
$$S_n^{\beta}(t,\tau)=\sum_{k=1}^n\sum_{l=1}^n \lambda_{(1),k,l}^{\beta}\psi_l(\tau)\phi_k(t)+\sum_{k=1}^n\sum_{l=1}^n \lambda_{(2),k,l}^{\beta}\eta_l(\tau)\phi_k(t),$$
then $S_n^{\beta}\xrightarrow{L^2([T_1,T_2]^2)^p}\beta$ and  $\lVert S_n^{\beta} \rVert_{L^2([T_1,T_2]^2)^p}= \sum_{k=1}^n\sum_{l=1}^n\left(\left(\lambda_{(1),k,l}^{\beta}\right)^2+\left(\lambda_{(2),k,l}^{\beta}\right)^2\right)$.
Similarly to the proof of Theorem \ref{WR1},
\begin{align*}
\int_{[T_1,T_2]} \left( \int_{[T_1,T_2]}(\beta(t,\tau))X^{\sqrt{\gamma} A}_\tau d\tau \right)^2dt
&=
\int_{[T_1,T_2]} \left( \sum_{i=1}^p\int_{[T_1,T_2]}(\beta(t,\tau))(i)X^{\sqrt{\gamma} A}_\tau(i)d\tau \right)^2dt
\\
&\le
2^p\sum_{i=1}^p\int_{[T_1,T_2]}\left|X^{\sqrt{\gamma} A}_\tau(i)\right|^2d\tau 
\int_{[T_1,T_2]}  \int_{[T_1,T_2]}\left|(\beta(t,\tau))(i)\right|^2d\tau dt,
\end{align*}
Therefore if we let $Q(t)=\int_{[T_1,T_2]}(\beta(t,\tau))X^{\sqrt{\gamma}A}_\tau d\tau$ and 
$$S_N^{\int}(t)=\sum_{n=1}^N \left\langle \int_{[T_1,T_2]}\beta(.,\tau)X^{\sqrt{\gamma} A}_\tau d\tau,\phi_k\right\rangle_{L^2([T_1,T_2])} \phi_k(t)$$ 
Since
\begin{align*}
\left\langle \int_{[T_1,T_2]}\beta(.,\tau)X^{\sqrt{\gamma}A}_\tau d\tau,\phi_k\right\rangle_{L^2([T_1,T_2])}
&=
\lim_{n\to\infty}\sum_{k'=1}^n \sum_{l=1}^n \sum_{m=1}^n \lambda_{(1)k,l}^{\beta}\chi_m^{\sqrt{\gamma}A} \int_{[T_1,T_2]}\int_{[T_1,T_2]} \psi_l(\tau)^T\psi_m(\tau)\phi_{k'}(t)\phi_k(t)d\tau dt
\\
&+\lim_{n\to\infty}\sum_{k'=1}^n \sum_{l=1}^n \sum_{m=1}^n \lambda_{(2)k,l}^{\beta}\chi_m^{\sqrt{\gamma}A} \int_{[T_1,T_2]}\int_{[T_1,T_2]} \eta_l(\tau)^T\psi_m(\tau)\phi_{k'}(t)\phi_k(t)d\tau dt
\\
&=
\lim_{n\to\infty}\sum_{k'=1}^n \sum_{l=1}^n \sum_{m=1}^n \lambda_{(1),k,l}^{\beta}\chi_m^{\sqrt{\gamma}A}\delta_{l,m}\delta_{k',k}
= \sum_{l=1}^\infty \lambda_{(1)k,l}^{\beta} \chi_l^{\sqrt{\gamma}A}
\end{align*}
we get,
\begin{align*}
S_n^{\int}(t)
&=
\sum_{k=1}^n\sum_{l=1}^\infty \lambda_{(1),k,l}^{\beta} \chi_l^{\sqrt{\gamma}A} \phi_k(t).
\end{align*}
Arguing as in the proof of Theorem \ref{WR1} we get $\lim_{n\to\infty}\E_{\sqrt{\gamma}A}\left[\int_{[T_1,T_2]}\left(S_n^{\int}(t)-Q(t)\right)^2dt \right]
=0$ and then
\begin{align}\label{series2}
\lim_{n\to\infty}\E_{\sqrt{\gamma}A}\left[\int_{[T_1,T_2]}\left(\sum_{k=1}^n\sum_{l=1}^n \lambda^{\beta}_{(1),k,l}\chi^{\sqrt{\gamma}A}_l\phi_k(t)- \int_{[T_1,T_2]} \beta(t,\tau)X^{\sqrt{\gamma}A}_\tau d\tau \right)^2dt \right]
=0.
\end{align}
This implies that
\begin{align*}
\int_{[T_1,T_2]} \beta(t,\tau)X^{\sqrt{\gamma}A}_\tau d\tau 
=
\sum_{k=1}^\infty\sum_{l=1}^\infty \lambda^{(1)}_{k,l}\chi^{\sqrt{\gamma}A}_l\phi_k(t),
\end{align*}
where the limit exists in $L^2(dt\times\P_{\sqrt{\gamma}A})$. This implies that we may rewrite $V$ as, 
\begin{align}\label{V}
V= \left\{\sum_{k=1}^\infty \sum_{l=1}^\infty  \lambda_{k,l}\phi_k(t)
\chi^{\sqrt{\gamma}A}_l: \sum_{k=1}^\infty \sum_{l=1}^\infty  \lambda_{k,l}^2<\infty\right\},
\end{align}
where the series $\sum_{k=1}^\infty \sum_{l=1}^\infty  \lambda_{k,l}\phi_k(t)
\chi^{\sqrt{\gamma}A}_l$ converges in $L^2(dt\times\P_{\sqrt{\gamma}A})$. Let us now show that $\bar{V}=\overline{\mathsf{span}}\left\{\phi_k\frac{\chi^{\sqrt{\gamma}A}_l}{\n\chi^{\sqrt{\gamma}A}_l\n_{L^2(\P)}}\right\}_{k,l\in\N}$. First note that for an arbitrary $h\in V$,
\begin{align*}
h=\sum_{k=1}^\infty \sum_{l=1}^\infty  \lambda_{k,l}\phi_k(t)
\chi^{\sqrt{\gamma}A}_l(\omega)
=
\sum_{k=1}^\infty \sum_{l=1}^\infty  \lambda_{k,l}\lVert\chi_l\rVert_{L^2(\P)}
\frac{\chi^{\sqrt{\gamma}A}_l}{\n\chi^{\sqrt{\gamma}A}_l\n_{L^2(\P)}}\phi_k(t),
\end{align*} 
which means that if $\sum_{k=1}^\infty \sum_{l=1}^\infty  \lambda_{k,l}^2\n\chi^{\sqrt{\gamma}A}_l\n_{L^2(\P)}^2<\infty$, then $h\in \overline{\mathsf{span}}\left\{\phi_k(t)\frac{\chi^{\sqrt{\gamma}A}_l}{\n\chi^{\sqrt{\gamma}A}_l\n_{L^2(\P)}}\right\}_{k,l\in\N}$. Define $\beta_l=\sum_{k=1}^\infty  \lambda_{k,l}^2$ and note that $\{\beta_l\}_{l\in\N}\in \mathit{l}^1\subset \mathit{l}^2$. Since $\overline{\mathsf{span}}\left\{\phi_k(t)\frac{\chi^{\sqrt{\gamma}A}_l}{\n\chi^{\sqrt{\gamma}A}_l\n_{L^2(\P)}}\right\}_{k,l\in\N}$ is ON,
$$\overline{\mathsf{span}}\left\{\phi_k(t)\frac{\chi^{\sqrt{\gamma}A}_l}{\n\chi^{\sqrt{\gamma}A}_l\n_{L^2(\P)}}\right\}_{k,l\in\N}= \left\{\sum_{k=1}^\infty \sum_{l=1}^\infty  \lambda_{k,l}\phi_k(t)
\frac{\chi^{\sqrt{\gamma}A}_l}{\n\chi^{\sqrt{\gamma}A}_l\n_{L^2(\P)}}: \sum_{k=1}^\infty \sum_{l=1}^\infty  \lambda_{k,l}^2<\infty\right\}$$
and by the Cauchy-Schwartz inequality
$$\sum_{k=1}^\infty \sum_{l=1}^\infty  \lambda_{k,l}^2\n\chi^{\sqrt{\gamma}A}_l\n_{L^2(\P)}^2
= 
\sum_{l=1}^\infty  \beta_l\n\chi^{\sqrt{\gamma}A}_l\n_{L^2(\P)}^2
\le
\sqrt{\sum_{l=1}^\infty  \beta_l^2}
\sqrt{\sum_{l=1}^\infty  \n\chi^{\sqrt{\gamma}A}_l\n_{L^2(\P)}^4}
<\infty.$$
Hence $V\subset \overline{\mathsf{span}}\left\{\phi_k(t)\frac{\chi^{\sqrt{\gamma}A}_l}{\n\chi^{\sqrt{\gamma}A}_l\n_{L^2(\P)}}\right\}_{k,l\in\N} $, taking closure yields $\overline{V}\subset \overline{\mathsf{span}}\left\{\phi_k(t)\frac{\chi^{\sqrt{\gamma}A}_l}{\n\chi^{\sqrt{\gamma}A}_l\n_{L^2(\P)}}\right\}_{k,l\in\N} $. Take 
$$g\in \mathsf{span}\left\{\phi_k(t)\frac{\chi^{\sqrt{\gamma}A}_l}{\n\chi^{\sqrt{\gamma}A}_l\n_{L^2(\P)}}\right\}_{k,l\in\N} 
=
\left\{\sum_{k=1}^K \sum_{l=1}^L  \lambda_{k,l}\phi_k(t)
\chi^{\sqrt{\gamma}A}_l(\omega):K,L\in\N, \lambda_{k,l}\in\R\right\}.
$$
Clearly $g\in V$, i.e. $\mathsf{span}\left\{\phi_k(t)\frac{\chi^{\sqrt{\gamma}A}_l}{\n\chi^{\sqrt{\gamma}A}_l\n_{L^2(\P)}}\right\}_{k,l\in\N} \subset V$ and taking closure on both sides shows $\overline{\mathsf{span}}\left\{\phi_k(t)\frac{\chi^{\sqrt{\gamma}A}_l}{\n\chi^{\sqrt{\gamma}A}_l\n_{L^2(\P)}}\right\}_{k,l\in\N}\subset \bar{V}$, thus $\overline{\mathsf{span}}\left\{\phi_k(t)\frac{\chi^{\sqrt{\gamma}A}_l}{\n\chi^{\sqrt{\gamma}A}_l\n_{L^2(\P)}}\right\}_{k,l\in\N}=\bar{V}$. Recall that $Y^{\sqrt{\gamma}A}_t=\sum_{k=1}^\infty Z^{\sqrt{\gamma}A}_k\phi_k(t)$. Since $\bar{V}$ is a closed subspace of $L^2(\P_{\sqrt{\gamma}A}\times dt)$ which is a Hilbert space, we have by the Hilbert space projection theorem that 
$$\arg\min_{W\in \bar{V}}\int_{[T_1,T_2]} \E_{\sqrt{\gamma}A}\left[\left(Y^{\sqrt{\gamma}A}_t-W_t\right)^2\right]dt=Proj_{\bar{V}}(Y^{\sqrt{\gamma}A}).$$ 
As $\bar{V}=\overline{\mathsf{span}}\left\{\phi_k(t)\frac{\chi^{\sqrt{\gamma}A}_l}{\n\chi^{\sqrt{\gamma}A}_l\n_{L^2(\P)}}\right\}_{k,l\in\N}$, it follows that $\left\{\phi_k(t)\frac{\chi^{\sqrt{\gamma}A}_l}{\n\chi^{\sqrt{\gamma}A}_l\n_{L^2(\P)}}\right\}_{k,l\in\N}$ is an ON-basis for this space and therefore 
\begin{align}\label{Proj1}
Proj_{\bar{V}}(Y^{\sqrt{\gamma}A})
&=
\sum_{k=1}^\infty \sum_{l=1}^\infty \langle Y,\phi_k\frac{\chi^{\sqrt{\gamma}A}_l}{\n \chi^{\sqrt{\gamma}A}_l\n_{L^2(\P)}} \rangle_{H} \phi_k\frac{\chi^{\sqrt{\gamma}A}_l}{\n \chi^{\sqrt{\gamma}A}_l\n_{L^2(\P)}}\nonumber
\\
&=
\sum_{k=1}^\infty \sum_{l=1}^\infty \phi_k(t)\frac{\chi^{\sqrt{\gamma}A}_l(\omega)}{\n \chi^{\sqrt{\gamma}A}_l\n_{L^2(\P)}^2}\E\left[\int Y_\tau\chi^{\sqrt{\gamma}A}_l\phi_k(\tau) d\tau \right]\nonumber
\\
&=\sum_{k=1}^\infty \sum_{l=1}^\infty \phi_k(t)\frac{\chi^{\sqrt{\gamma}A}_l(\omega)}{\n \chi^{\sqrt{\gamma}A}_l\n_{L^2(\P)}^2}\int \E\left[Y_\tau\chi^{\sqrt{\gamma}A}_l\right]\phi_k(\tau) d\tau 
\end{align}
Since $S_n^Y\xrightarrow{L^2(\P_{\sqrt{\gamma}A}\times dt)}Y^{\sqrt{\gamma}A}$
\begin{align*}
\n Y^{\sqrt{\gamma}A}\phi_k\chi^{\sqrt{\gamma}A}_l -S_n^{Y^{\sqrt{\gamma}A}}\phi_k\chi^{\sqrt{\gamma}A}_l\n_{L^2(\P_{\sqrt{\gamma}A}\times dt)}
&\le
\n Y^{\sqrt{\gamma}A} -S_n^{Y^{\sqrt{\gamma}A}}\n_{L^2(\P_{\sqrt{\gamma}A}\times dt)}\n \phi_k\chi^{\sqrt{\gamma}A}_l \n_{L^2(\P_{\sqrt{\gamma}A}\times dt)}
\\
&=\n Y^{\sqrt{\gamma}A} -S_n^{Y^{\sqrt{\gamma}A}}\n_{L^2(\P_{\sqrt{\gamma}A}\times dt)}\n \chi^{\sqrt{\gamma}A}_l \n_{L^2(\P_{\sqrt{\gamma}A}\times dt)},
\end{align*}
which converges to zero for every $l$. Therefore
\begin{align*}
\int_{[T_1,T_2]} \E_{\sqrt{\gamma}A}\left[Y^{\sqrt{\gamma}A}_\tau\chi^{\sqrt{\gamma}A}_l\right]\phi_k(\tau) d\tau 
&= 
\sum_{m=1}^\infty \E_{\sqrt{\gamma}A}\left[Z^{\sqrt{\gamma}A}_m\chi^{\sqrt{\gamma}A}_l\right]\int_{[T_1,T_2]} \phi_m(\tau)\phi_k(\tau) d\tau
\\
&=\E_{\sqrt{\gamma}A}\left[Z^{\sqrt{\gamma}A}_k\chi^{\sqrt{\gamma}A}_l\right],
\end{align*}
which we then plug back into \eqref{Proj1} to conclude that 
\begin{align}\label{Proj2}
Proj_{\bar{V}}(Y)
=
\sum_{k=1}^\infty \sum_{l=1}^\infty   \phi_k(t)\chi^{\sqrt{\gamma}A}_l  \frac{\E_{\sqrt{\gamma}A}\left[ Z_k\chi^{\sqrt{\gamma}A}_l\right]}{\E_{\sqrt{\gamma}A}\left[ \left(\chi^{\sqrt{\gamma}A}_l\right)^2\right]}.
\end{align}
If we let $\lambda_{k,l}=\frac{\E_{\sqrt{\gamma}A}\left[ Z_k\chi^{\sqrt{\gamma}A}_l\right]}{\E_{\sqrt{\gamma}A}\left[ \left(\chi^{\sqrt{\gamma}A}_l\right)^2\right]}$ then by assumption
$ \sum_{k=1}^\infty \sum_{l=1}^\infty \lambda_{k,l}^2<\infty $.
This implies that $Proj_{\bar{V}}(Y)\in V$, so the last inequality in \eqref{argmin} is in fact an equality attained at this projection. By representing $\beta\in L^2([T_1,T_2]^2)$ as $\beta(t,\tau)=\sum_{k=1}^\infty \sum_{l=1}^\infty \lambda_{k,l}\phi_k(t)\psi_k(\tau)$ we get 
$$\int_{[T_1,T_2]} B(t,\tau)X(\tau)d\tau= \sum_{k=1}^\infty \sum_{l=1}^\infty  \lambda_{k,l}\phi_k(t)
\chi^{\sqrt{\gamma}A}_l(\omega).$$ 
Comparing with \eqref{Proj2}, we see that a solution for the $\arg\min$ is to let $\lambda_{k,l}=\frac{\E\left[ Z_k\chi^{\sqrt{\gamma}A}_l\right]}{\E\left[ \left(\chi^{\sqrt{\gamma}A}_l\right)^2\right]}$, which yields $\beta(t,\tau)=\sum_{k=1}^\infty \sum_{l=1}^\infty \phi_k(t)\psi_l(\tau) \frac{\E\left[ Z_k\chi^{\sqrt{\gamma}A}_l\right]}{\E\left[ \left(\chi^{\sqrt{\gamma}A}_l\right)^2\right]}$. As $\{\phi_k\otimes\psi_l\}_{k,l\in\N}$ (since $W=\{0\}$) is an ON basis for $L^2([T_1,T_2]^2)$, this element is unique. 
\\
On the other hand, suppose a unique $\arg\min$ solution exists, then there must exist a unique $B\in L^2([T_1,T_2]^2)$ implying that $\int_{[T_1,T_2]} B(t,\tau)X(\tau)d\tau\in V$. We may represent $B=\sum_{k,l\in\N}\lambda_{k,l}\phi_k\otimes\psi_l$. Suppose $\exists k',l'\in \N$ such that $\lambda_{k',l'}\not=\frac{\E_{\sqrt{\gamma}A}\left[Z_k^{\sqrt{\gamma}A}\chi_l^{\sqrt{\gamma}A}\right]}{\E_{\sqrt{\gamma}A}\left[\left(\chi_l^{\sqrt{\gamma}A}\right)^2\right]}$. Suppose first that $\E_{\sqrt{\gamma}A}\left[\left(\chi_{l'}^{\sqrt{\gamma}A}\right)^2\right]=0$, implying that $\chi_{l'}^{\sqrt{\gamma}A}=0$, $\P_{\sqrt{\gamma}A}$ a.s.. Then for any real number $\lambda\not=\lambda_{k,l}$, we have that if we set $\tilde{B}=\sum_{k,l\in\N}\tilde{\lambda}_{k,l}\phi_k\otimes\psi_l$, where $\tilde{\lambda}_{k,l}=\lambda_{k,l}$ if $(k,l)\not=(k',l')$ and $\tilde{\lambda}_{k',l'}=\lambda$ then $\int_{[T_1,T_2]} \tilde{B}(t,\tau)X(\tau)d\tau = \int_{[T_1,T_2]} B(t,\tau)X(\tau)d\tau$, implying that also $\tilde{B}$ is a minimizer, contradicting the uniqueness of $B$. Suppose instead that $\E_{\sqrt{\gamma}A}\left[\left(\chi_{l'}^{\sqrt{\gamma}A}\right)^2\right]\not=0$ and let 
\begin{align*}
f_n(\lambda)&= \sum_{k=1}^{k'-1}\E_{\sqrt{\gamma} A}\left[ \left(Z_k^{\sqrt{\gamma} A}- \sum_{l=1}^n \lambda_{k,l}\chi_l^{\sqrt{\gamma} A}\right)^2\right] 
+
\E_{\sqrt{\gamma} A}\left[ \left(Z_{k'}^{\sqrt{\gamma} A}-\lambda \chi_{l'}^{\sqrt{\gamma} A}\right)^2\right]
\\
&+
\E_{\sqrt{\gamma} A}\left[ \left(Z_{k'}^{\sqrt{\gamma} A}- \sum_{1\le l\le n, l\not=l'} \lambda_{k',l}\chi_l^{\sqrt{\gamma} A}\right)^2\right]
+
\sum_{k=k'+1}^{n}\E_{\sqrt{\gamma} A}\left[ \left(Z_k^{\sqrt{\gamma} A}- \sum_{l=1}^n \lambda_{k,l}\chi_l^{\sqrt{\gamma} A}\right)^2\right] 
\end{align*}
then for any $n\ge k'\vee l'$, $\arg\min_\lambda f_n(\lambda)=\frac{\E_{\sqrt{\gamma}A}\left[Z_k^{\sqrt{\gamma}A}\chi_l^{\sqrt{\gamma}A}\right]}{\E_{\sqrt{\gamma}A}\left[\left(\chi_l^{\sqrt{\gamma}A}\right)^2\right]}$. If we now set $\tilde{\lambda}_{k,l}=\lambda_{k,l}$ if $(k,l)\not=(k',l')$ and $\tilde{\lambda}_{k',l'}=\frac{\E_{\sqrt{\gamma}A}\left[Z_k^{\sqrt{\gamma}A}\chi_l^{\sqrt{\gamma}A}\right]}{\E_{\sqrt{\gamma}A}\left[\left(\chi_l^{\sqrt{\gamma}A}\right)^2\right]}$ implying that for every $n\ge k'\vee l'$
\begin{align}\label{Rineq}
\sum_{k=1}^{n}\E_{\sqrt{\gamma} A}\left[ \left(Z_k^{\sqrt{\gamma} A}- \sum_{l=1}^n \tilde{\lambda}_{k,l}\chi_l^{\sqrt{\gamma} A}\right)^2\right]
\le 
\sum_{k=1}^{n}\E_{\sqrt{\gamma} A}\left[ \left(Z_k^{\sqrt{\gamma} A}- \sum_{l=1}^n \lambda_{k,l}\chi_l^{\sqrt{\gamma} A}\right)^2\right].
\end{align}
Define $\tilde{B}=\sum_{k,l\in\N}\tilde{\lambda}_{k,l}\phi_k\otimes\psi_l$. An argument analogous used to establish \eqref{RAlim}, shows that 
$$ R_{\sqrt{\gamma}A}(B)= \lim_{n\to\infty}\sum_{k=1}^{n}\E_{\sqrt{\gamma} A}\left[ \left(Z_k^{\sqrt{\gamma} A}- \sum_{l=1}^n \lambda_{k,l}\chi_l^{\sqrt{\gamma} A}\right)^2\right]$$
and
$$ R_{\sqrt{\gamma}A}(\tilde{B})= \lim_{n\to\infty}\sum_{k=1}^{n}\E_{\sqrt{\gamma} A}\left[ \left(Z_k^{\sqrt{\gamma} A}- \sum_{l=1}^n \tilde{\lambda}_{k,l}\chi_l^{\sqrt{\gamma} A}\right)^2\right].$$
By letting $n\to\infty $ in \eqref{Rineq} we then obtain
$$R_{\sqrt{\gamma} A}\left(\tilde{B}\right)\le R_{\sqrt{\gamma} A}\left(B\right), $$
which directly contradicts the uniqueness of the minimizer $B$. We also need to establish that in this case $\mathcal{K}$ is injective. Suppose it is not, then $W\not=0$ and in particular $\beta'=\phi\otimes\eta\in W$. Noting that
\begin{align*}
\int_{[T_1,T_2]}\beta'(t,\tau)X_\tau d\tau
=
\sum_{l=1}^\infty\phi(t) \chi_l\int_{[T_1,T_2]}\eta_1(\tau)^T\psi_l(\tau) d\tau=0.
\end{align*}
Hence, if $\beta$ is a solution to \eqref{argmin} then so is $\beta+\beta'$, contradicting the uniqueness.
\\
We now note that $Y^{\sqrt{\gamma}A}=\mathcal{S}_1\left(\sqrt{\gamma}A+\epsilon_{\sqrt{\gamma}A} \right)$ and $X^{\sqrt{\gamma}A}=\mathcal{S}_{2:p}\left(\sqrt{\gamma}A+\epsilon_{\sqrt{\gamma}A} \right)$. Therefore, if we let $F_k(U)=\langle U,\psi_k \rangle_{L^2([T_1,T_2])^p}$, for $U\in L^2([T_1,T_2])^p$, $\overline{F}_k(u)=\langle u,\phi_k \rangle_{L^2([T_1,T_2])}$, for $u\in L^2([T_1,T_2])$, and $S_{2:p}(.)=\left(S_2(.),\ldots,S_p(.)\right)$ then
\begin{align}\label{numerat}
\E_{\sqrt{\gamma}A}\left[ Z_k^{\sqrt{\gamma}A}\chi_l^{\sqrt{\gamma}A}\right]
=
\E_A\left[ \overline{F}_k\left(\mathcal{S}_1\left(\sqrt{\gamma}A+\epsilon_{A} \right) \right)F_l\left(\mathcal{S}_{2:p}\left(\sqrt{\gamma}A+\epsilon_{A} \right) \right)\right]
\end{align}
and
\begin{align}\label{divis}
\E_{\sqrt{\gamma}A}\left[ \left(\chi_l^{\sqrt{\gamma}A}\right)^2\right]
=
\E\left[ F_l\left(\mathcal{S}_{2:p}\left(\sqrt{\gamma}A+\epsilon_{\sqrt{\gamma}A} \right) \right)^2\right].
\end{align}
Expanding \eqref{numerat} and utilizing Claim \ref{claimepsA} yields
\begin{align}\label{numerat1}
\E_{\sqrt{\gamma}A}\left[ Z_k^{\sqrt{\gamma}A}\chi_l^{\sqrt{\gamma}A}\right]
&=
\gamma\E_A\left[ \overline{F}_k\left(\mathcal{S}_1\left(A \right) \right)F_l\left(\mathcal{S}_{2:p}\left(A\right) \right)\right]
+
\E_A\left[ \overline{F}_k\left(\mathcal{S}_1\left(\epsilon_{A} \right) \right)F_l\left(\mathcal{S}_{2:p}\left(\epsilon_{A} \right) \right)\right]\nonumber
\\
&=
\gamma\E_A\left[ \overline{F}_k\left(\mathcal{S}_1\left(A+\epsilon_{A} \right) \right)F_l\left(\mathcal{S}_{2:p}\left(A+\epsilon_{A}\right) \right)\right]
+
\left(1-\gamma\right)\E_A\left[ \overline{F}_k\left(\mathcal{S}_1\left(\epsilon_{A} \right) \right)F_l\left(\mathcal{S}_{2:p}\left(\epsilon_{A} \right) \right)\right]\nonumber
\\
&=
\gamma\E_{A}\left[ Z_k^{A}\chi_l^{A}\right]
+
(1-\gamma)\E_{O}\left[ Z_k^{O}\chi_l^{O}\right].
\end{align}
While expanding \eqref{divis} and utilizing Claim \ref{claimeps} yields
\begin{align}\label{divis1}
\E_{\sqrt{\gamma}A}\left[ \left(\chi_l^{\sqrt{\gamma}A}\right)^2\right]
&=
\gamma\E\left[ F_l\left(\mathcal{S}_{2:p}\left(A \right) \right)^2\right]  +\E\left[ F_l\left(\mathcal{S}_{2:p}\left(\epsilon \right) \right)^2\right]\nonumber
\\
&=\gamma\E\left[ F_l\left(\mathcal{S}_{2:p}\left(A +\epsilon_A\right) \right)^2\right]  +(1-\gamma)\E\left[ F_l\left(\mathcal{S}_{2:p}\left(\epsilon \right) \right)^2\right]\nonumber
\\
&=
\gamma\E_{A}\left[ \left(\chi_l^{A}\right)^2\right]+(1-\gamma)\E_{O}\left[ \left(\chi_l^{O}\right)^2\right].
\end{align}
Therefore
$$\beta(t,\tau)=\sum_{k=1}^\infty \sum_{l=1}^\infty \frac{\gamma\E_{A}\left[ Z_k^{A}\chi_l^{A}\right]
+
(1-\gamma)\E_{O}\left[ Z_k^{O}\chi_l^{O}\right]}{\gamma\E_{A}\left[ \left(\chi_l^{A}\right)^2\right]+(1-\gamma)\E_{O}\left[ \left(\chi_l^{O}\right)^2\right]}\phi_k(t)\psi_l(\tau) $$
	\end{proof}
	
\subsection{Proof of Theorem \ref{Minimizerpg1}}
\begin{proof}
By \eqref{numerat1},
$$\E_{\sqrt{\gamma}A}\left[Z_k^{\sqrt{\gamma}A}F_{1:n}\left(X^{\sqrt{\gamma}A}\right)\right]
=
\gamma\E_{A}\left[Z_k^AF_{1:n}\left(X^{A}\right)\right]
+
(1-\gamma)\E_{O}\left[Z_k^{O}F_{1:n}\left(X^{O}\right)\right]  $$
and similar arguments imply
$$\E_{\sqrt{\gamma}A}\left[F_{1:n}\left(X^{\sqrt{\gamma}A}\right)^TF_{1:n}\left(X^{\sqrt{\gamma}A}\right)\right]=\sqrt{\gamma}\E_{A}\left[F_{1:n}\left(X^{A}\right)^TF_{1:n}\left(X^{A}\right)\right]+(1-\sqrt{\gamma})\E_{O}\left[F_{1:n}\left(X^{O}\right)^TF_{1:n}\left(X^{O}\right)\right].$$
Let $V_n=span\left(\mathcal{H}^{p+1}\left(\{\phi_{i,k}\}_{k\le n, 1\le i\le p+1}\right)\right)$. By assumption there exists an $N\in\N$ such that if $n\ge N$ then $\det\left(G_n\right)\not=0$. For $\{\lambda_{i,k,l}\}_{1\le i\le p, 1\le k,l\le n}$, let
\begin{align*}
h_{n}\left(\lambda_{1,1,1}',\ldots,\lambda_{p,n,n}'\right)
&=
R_{\sqrt{\gamma}A}\left( \left(\sum_{k=1}^n\sum_{l=1}^n\lambda_{1,k,l}\phi_k\otimes\phi_l,\ldots,\sum_{k=1}^n\sum_{l=1}^n\lambda_{p,k,l}\phi_k\otimes\phi_l \right) \right)
\\
&=
\sum_{k=1}^n\E_{\sqrt{\gamma}A}\left[ \left(Z_k^{\sqrt{\gamma}A}- \sum_{i=1}^p\sum_{l=1}^n \lambda_{i,k,l}'\chi_l^{\sqrt{\gamma}A}(i)\right)^2\right]
+
\sum_{k=n+1}^\infty\E_{\sqrt{\gamma}A}\left[ \left(Z_k^{\sqrt{\gamma}A}\right)^2\right].
\end{align*}
Any minimum of $h_{n}$ must fulfil $\nabla h_{n}=0$. Solving $\nabla h_{n}=0$ gives,
\begin{align*}
\E_{\sqrt{\gamma}A}\left[F_{1:n}\left(X^{\sqrt{\gamma}A}\right)^TF_{1:n}\left(X^{\sqrt{\gamma}A}\right)\right]\left(\lambda_{1,k,1}',\ldots,\lambda_{1,k,n}',\ldots,\lambda_{p,k,n}' \right)=\E_{\sqrt{\gamma}A}\left[Z_kF_{1:n}\left(X^{\sqrt{\gamma}A}\right)\right],
\end{align*}
for $1\le k\le n$. If $n\ge N$ we therefore have the unique solution
\begin{align*}
\left(\lambda_{1,k,1}(n),\ldots,\lambda_{1,k,n}(n),\ldots,\lambda_{p,k,n}(n) \right)=G_n^{-1}\E_{\sqrt{\gamma}A}\left[Z_kF_{1:n}\left(X^{\sqrt{\gamma}A}\right)\right].
\end{align*}
Therefore $\beta_n$ is the unique minimizer on $V_n$, implying that 
\begin{align}\label{miniVn}
R_{\sqrt{\gamma}A}(\beta_n)\le R_{\sqrt{\gamma}A}(P_{V_n}\beta)
\end{align} for any $\beta\in L^2([T_1,T_2]^2)^p$
\begin{claim}\label{betacont}
For $\{\tilde{\beta}_n\}_{n\in\N}\subset L^2([T_1,T_2]^2)^p$, if $\lVert \tilde{\beta}_n-\tilde{\beta} \rVert_{L^2([T_1,T_2]^2)^p}\to 0$ for some $\tilde{\beta}\in L^2([T_1,T_2]^2)^p$ then $R_{A'}\left( \tilde{\beta}_n\right)\to R_{A'}\left( \tilde{\beta}\right)$ for any $A'\in\mathcal{V}$.
\end{claim}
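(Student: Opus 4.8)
The plan is to exploit the fact that, in contrast to Lemma~\ref{PropEnvCont}, here the environment $A'$ (and hence the joint law of $(Y^{A'},X^{A'})$) is held fixed, while only the kernel varies, and moreover $\beta$ enters the risk through a map that is \emph{linear} in $\beta$. Writing $P_\beta(t)=\int_{[T_1,T_2]}\sum_{i=1}^p(\beta(i))(t,\tau)X^{A'}_\tau(i)\,d\tau$, so that $R_{A'}(\beta)=\E_{A'}\left[\int_{[T_1,T_2]}\left(Y^{A'}_t-P_\beta(t)\right)^2dt\right]$ and $\beta\mapsto P_\beta$ is linear, I would first use the identity $u^2-v^2=(u-v)(u+v)$ with $u=Y^{A'}-P_{\tilde\beta_n}$ and $v=Y^{A'}-P_{\tilde\beta}$, together with $P_{\tilde\beta}-P_{\tilde\beta_n}=P_{\tilde\beta-\tilde\beta_n}$, to obtain
\begin{align*}
R_{A'}(\tilde\beta_n)-R_{A'}(\tilde\beta)=\E_{A'}\left[\int_{[T_1,T_2]}P_{\tilde\beta-\tilde\beta_n}(t)\left(2Y^{A'}_t-P_{\tilde\beta_n}(t)-P_{\tilde\beta}(t)\right)dt\right].
\end{align*}

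Next I would apply the Cauchy--Schwarz inequality, first in $L^2([T_1,T_2])$ and then in $L^2(\P_{A'})$, to bound
\begin{align*}
\left|R_{A'}(\tilde\beta_n)-R_{A'}(\tilde\beta)\right|\le \E_{A'}\left[\lVert P_{\tilde\beta-\tilde\beta_n}\rVert_{L^2([T_1,T_2])}^2\right]^{\frac12}\E_{A'}\left[\lVert 2Y^{A'}-P_{\tilde\beta_n}-P_{\tilde\beta}\rVert_{L^2([T_1,T_2])}^2\right]^{\frac12}.
\end{align*}
The crucial estimate for the first factor is exactly the quadratic bound \eqref{Qbound} from the proof of Theorem~\ref{WR1}: applied coordinate-wise to $\delta=\tilde\beta-\tilde\beta_n$ and combined with $(\sum_{i=1}^p a_i)^2\le p\sum_{i=1}^p a_i^2$, it yields
\begin{align*}
\E_{A'}\left[\lVert P_{\delta}\rVert_{L^2([T_1,T_2])}^2\right]\le p\sum_{i=1}^p\E_{A'}\left[\lVert X^{A'}(i)\rVert_{L^2([T_1,T_2])}^2\right]\lVert \delta(i)\rVert_{L^2([T_1,T_2]^2)}^2.
\end{align*}
Since $\E_{A'}\left[\lVert X^{A'}(i)\rVert_{L^2([T_1,T_2])}^2\right]<\infty$ for each $i$ (established in Step~1 of the proof of Theorem~\ref{WR1}) and $\lVert \tilde\beta_n-\tilde\beta\rVert_{L^2([T_1,T_2]^2)^p}\to 0$ by hypothesis, this first factor tends to zero.

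Finally, I would show that the second factor stays bounded in $n$: by the same bound it suffices that $\E_{A'}\left[\lVert Y^{A'}\rVert_{L^2([T_1,T_2])}^2\right]<\infty$ (again Step~1 of Theorem~\ref{WR1}) and that $\E_{A'}\left[\lVert P_{\tilde\beta_n}\rVert_{L^2([T_1,T_2])}^2\right]$ is uniformly bounded, which follows because the convergent sequence $\{\tilde\beta_n\}$ is bounded in $L^2([T_1,T_2]^2)^p$. Multiplying the bounded second factor by the vanishing first factor then gives $R_{A'}(\tilde\beta_n)\to R_{A'}(\tilde\beta)$, as claimed. I do not anticipate a genuine obstacle here: the linearity of $\beta\mapsto P_\beta$ means the elaborate simple-function approximation of $A'$ and $\epsilon^{A'}$ used in Lemma~\ref{PropEnvCont} is unnecessary, and everything reduces to the single inequality \eqref{Qbound} together with the finite second-moment bounds already in hand; the only point requiring mild care is verifying the uniform boundedness of the cross-term factor.
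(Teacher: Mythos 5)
Your proof is correct and follows essentially the same route as the paper's: both rest on the difference-of-squares factorization $u^2-v^2=(u-v)(u+v)$, Cauchy--Schwarz applied first in $t$ and then over the expectation, the quadratic bound \eqref{Qbound}, the finite second moments of $X^{A'}$ and $Y^{A'}$ established in Step 1 of the proof of Theorem \ref{WR1}, and the boundedness of the convergent sequence $\{\tilde{\beta}_n\}$. If anything, your vectorized formulation via $P_\beta$ is slightly cleaner, since it automatically accounts for the cross terms between coordinates $i\neq j$ in the quadratic part, which the paper's coordinate-wise display elides.
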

\begin{proof}
By the Cacuhy-Schwarz inequality
\begin{align*}
\left| R_{A'}\left( \tilde{\beta}_n\right)-R_{A'}\left( \tilde{\beta}\right) \right|
&\le
\sum_{i=1}^p 2\E_{A'}\left[\int_{[T_1,T_2]^2}\left|Y^{A'}_tX^{A'}_\tau(i)\left((\tilde{\beta}_n(i))(t,\tau)-(\tilde{\beta}(i))(t,\tau)\right)\right|dtd\tau\right] 
\\
&+
\sum_{i=1}^p\E_{A'}\left[\int_{[T_1,T_2]}\left|\left(\int_{[T_1,T_2]} \left(\tilde{\beta}_n(i))(t,\tau)-(\tilde{\beta}(i))(t,\tau)\right)X^{A'}_\tau(i) d\tau\right)\right.\right.
\\
&\left.\left.\left(\int_{[T_1,T_2]}\left(\tilde{\beta}_n(i))(t,\tau)+(\tilde{\beta}(i))(t,\tau)\right)X^{A'}_\tau(i) d\tau\right)\right|dt\right]
\\
&\le
\sum_{i=1}^p 2\lVert \tilde{\beta}_n(i)-\tilde{\beta}(i)\rVert_{L^2([T_1,T_2]^2)}\E_{A'}\left[\left(\int_{[T_1,T_2]^2}\left(Y^{A'}_t\right)^2\left(X^{A'}_\tau(i)\right)^2dtd\tau\right)^{\frac12}\right]
\\
&+
\sum_{i=1}^p\E_{A'}\left[\left(\int_{[T_1,T_2]}\left(\int_{[T_1,T_2]} \left(\tilde{\beta}_n(i))(t,\tau)-(\tilde{\beta}(i))(t,\tau)\right)X^{A'}_\tau(i) d\tau \right)^2dt\right)^{\frac12}\right.
\\
&\left.\left(\int_{[T_1,T_2]}\left(\int_{[T_1,T_2]}\left(\tilde{\beta}_n(i))(t,\tau)+(\tilde{\beta}(i))(t,\tau)\right)X^{A'}_\tau(i) d\tau\right)^2dt\right)^{\frac12}\right] 
\\
&\le
\sum_{i=1}^p 2\lVert \tilde{\beta}_n(i)-\tilde{\beta}(i)\rVert_{L^2([T_1,T_2]^2)}T\E_{A'}\left[ \lVert Y^{A'} \rVert_{L^2([T_1,T_2]^2)} \lVert X^{A'}_\tau(i) \rVert_{L^2([T_1,T_2]^2)}\right]
\\
&+
\sum_{i=1}^p\E_{A'}\left[\left(\int_{[T_1,T_2]}\left(\int_{[T_1,T_2]} \left(\tilde{\beta}_n(i))(t,\tau)-(\tilde{\beta}(i))(t,\tau)\right)^2 d\tau\right)^{\frac12} \left(\int_{[T_1,T_2]}\left(X^{A'}_\tau(i) \right)^2d\tau \right)^{\frac12}dt\right)^{\frac12}\right.
\\
&\left.\left(\int_{[T_1,T_2]}\left(\int_{[T_1,T_2]} \left(\tilde{\beta}_n(i))(t,\tau)+(\tilde{\beta}(i))(t,\tau)\right)^2 d\tau\right)^{\frac12} \left(\int_{[T_1,T_2]}\left(X^{A'}_\tau(i) \right)^2d\tau \right)^{\frac12}dt\right)^{\frac12}\right] 
\\
&\le
2T\lVert \tilde{\beta}_n-\tilde{\beta}\rVert_{L^2([T_1,T_2]^2)^p}\E_{A'}\left[ \lVert Y^{A'} \rVert_{L^2([T_1,T_2]^2)}^2\right]^{\frac12}\E_{A'}\left[ \lVert X^{A'} \rVert_{L^2([T_1,T_2]^2)^p}^2\right]^{\frac12}
\\
&+\sqrt{T}\E_{A'}\left[ \lVert X^{A'} \rVert_{L^2([T_1,T_2]^2)^p}\right]^{\frac12}
\sqrt{T}\lVert \tilde{\beta}_n-\tilde{\beta} \rVert_{L^2([T_1,T_2]^2)^p}^{\frac12}\sqrt{T}\sqrt{\lVert \tilde{\beta}_n\rVert_{L^2([T_1,T_2]^2)^p}+\lVert \tilde{\beta} \rVert_{L^2([T_1,T_2]^2)^p}},
\end{align*}
which converges to zero (note that $\{\lVert \tilde{\beta}_n\rVert_{L^2([T_1,T_2]^2)^p}\}_{n\in\N}$ is bounded since $\lVert \tilde{\beta}_n-\tilde{\beta} \rVert_{L^2([T_1,T_2]^2)^p}\to 0$), proving the claim.
\end{proof}
Combining Claim \ref{betacont} with \eqref{miniVn} implies that 
\begin{align}\label{limsupbeta}
\limsup_{n\to\infty} R_{\sqrt{\gamma}A}(\beta_n)\le R_{\sqrt{\gamma}A}(\beta),
\end{align}
for any $\beta\in L^2([T_1,T_2]^2)^p$. Given that $\{ \lambda(n) \}_{n\in\N}$ contains a subsequence, $\{ \lambda(n_k) \}_{k\in\N}$ that converges in $\mathit{l}^2$ with a corresponding limit $\lambda\in \mathit{l}^2$ then also $\beta_{n_k}\xrightarrow{L^2([T_1,T_2]^2)^p}\beta'$ for some $\beta'\in L^2([T_1,T_2]^2)^p$. By \eqref{limsupbeta}, it follows that $R_{\sqrt{\gamma}A}(\beta')\le R_{\sqrt{\gamma}A}(\beta)$ for all $\beta\in L^2([T_1,T_2]^2)^p$, i.e. $\beta'\in S\not=\emptyset$. This proves the first claim of the theorem. For the second claim suppose $dist(\beta_n,S)\not\to 0$ then there is a subsequence $\{\beta_{n_k}\}_k$ such that  $dist(\beta_{n_k},S)>\delta$  for some $\delta>0$ and every $k\in\N$. By assumption we may then extract a further subsequence $\{\beta_{n_{k_l}}\}_{l\in\N}$ such that $\beta_{n_{k_l}}\xrightarrow{L^2([T_1,T_2]^2)^p}\beta''$ for some $\beta''\in L^2([T_1,T_2]^2)^p$. But since $\beta''\not\in S$, $R_{\sqrt{\gamma}A}(\beta'')>R_{\sqrt{\gamma}A}(\beta)$ for every $\beta\in S$, but due to \eqref{limsupbeta} and Claim \ref{betacont},
$$R_{\sqrt{\gamma}A}(\beta'')=\lim_{l\to\infty} R_{\sqrt{\gamma}A}\left(\beta_{n_{k_l}}\right)\le R_{\sqrt{\gamma}A}(\beta), $$
for any $\beta\in L^2([T_1,T_2]^2)^p$. This contradiction proves the second claim.
\end{proof}
	
	\subsection{Proof of Theorem \ref{ConsisThm}}
We provide the proof for the case $p=1$. The case $p>1$ is analogous, just more notationally cumbersome.

	\begin{proof}
		\textbf{Step 1: Use the whole sample curves.}\\
		We first fix $E\in\N$ and let 
		\begin{itemize}
			\item[] $C_{l}^{A,m}=\langle X^{A,m},\psi_l \rangle_{L^2([T_1,T_2])}$, $C_{l}^{O,m}=\langle X^{O,m},\psi_l \rangle_{L^2([T_1,T_2])}$,
			\item[] $D_k^{A,m}=\langle Y^{A,m},\phi_k \rangle_{L^2([T_1,T_2])}$, $D_k^{O,m}=\langle Y^{O,m},\phi_k \rangle_{L^2([T_1,T_2])}$,
			\item[] $\beta^{(m),E}=\sum_{k=1}^E \sum_{l=1}^E \phi_k\otimes\psi_l \frac{\gamma C_l^{A,m}D_k^{A,m}+(1-\gamma) C_l^{O,m}D_k^{O,m}}{\E\left[\gamma(\chi_l^A)^2+(1-\gamma)(\chi_l^O)^2\right]}$
			\item[] $\beta_E=\sum_{k=1}^E \sum_{l=1}^E \phi_k\otimes\psi_l \frac{\E\left[\gamma\chi_l^{A}Z_k^{A}+(1-\gamma)\chi_l^{O}Z_k^{O}\right]}{\E\left[\gamma(\chi_l^{A})^2+(1-\gamma)(\chi_l^{O})^2\right]}$
			\item[] $\hat{\beta}_{n,E,(1)}=\frac{1}{n}\sum_{m=1}^n \beta^{(m),E}$
			\item[] $Q_l=\gamma(\chi_l^A)^2+(1-\gamma)(\chi_l^O)^2$ 
			\item[] $W_l^m=\gamma (C_l'^{A,m})^2+ (1-\gamma)(C_l'^{O,m})^2$ and
			\item[] $U_{l,k}^m=\gamma C_l^{A,m}D_k^{A,m}+(1-\gamma)C_l^{O,m}D_k^{O,m}$
		\end{itemize}
		Note that by orthogonality of the basis functions and the Cauchy-Schwarz inequality,
		\begin{align}\label{BNorm}
			\E\left[\n \beta^{(m),E}\n_{L^2([T_1,T_2]^2)}\right]
			&=
			\E\left[\left(\int_{[T_1,T_2]^2}\left(\sum_{l=1}^E\sum_{k=1}^E \frac{U_{l,k}^m}{\E\left[Q_l\right]} \phi_k(t)\psi_l(\tau)\right)^2dtd\tau\right)^{\frac 12}\right]\nonumber
			\\
			&=
			\E\left[\left(\sum_{l=1}^E\sum_{k=1}^E\sum_{q=1}^E\sum_{r=1}^E  \frac{ U_{l,k}^mU_{q,r}^m  \delta_{l,q}\delta_{k,r}}{\E\left[Q_l\right]\E\left[Q_q\right]} \right)^{\frac 12}\right]\nonumber
			\\
			&=\E\left[\left(\sum_{l=1}^E \sum_{k=1}^E   \frac{\left(U_{l,k}^m\right)^2}{\E\left[Q_l\right]^2}  \right)^{\frac{1}{2}}\right]\nonumber
			\\
			&\le 
			\sum_{l=1}^E \sum_{k=1}^E   \frac{\E\left[\left|\gamma \chi_l^{A}Z_k^{A}+\chi_l^{O}Z_k^{O}\right|\right]}{\E\left[Q_l\right]}
			\nonumber
			\\
			&\le \sum_{l=1}^E \sum_{k=1}^E   \frac{\gamma\E\left[(\chi_l^A)^2 \right]^{\frac12}\E\left[(Z_k^A)^2 \right]^{\frac12}+\E\left[(\chi_l^O)^2 \right]^{\frac12}\E\left[(Z_k^O)^2 \right]^{\frac12}}{\E\left[Q_l\right]}
			<\infty.
		\end{align}
		By the Banach space version of the law of large numbers (see for instance \cite{bosq}, Theorem 2.4) it follows that for every $E\in\N$, $ \n \hat{\beta}_{n,E,(1)}-\beta_E  \n_{L^2([T_1,T_2]^2)}\xrightarrow{a.s.}0$. Let $f_n(\omega)=\n \frac{1}{n}\sum_{m=1}^n\beta^{(m),E}-\beta_E \n$ and $g_n(\omega)=\frac{1}{n}\sum_{m=1}^n\n \beta^{(m),E}\n +\n \beta_E \n$. Then $0\le f_n\le g_n$, $f_n\xrightarrow{a.s.}0$, (by the scalar law of large numbers) $g_n\xrightarrow{a.s.} G=\E\left[\n \beta^{(1),E}\n \right]+\n \beta_E \n$ and $\E\left[g_n\right]= \E\left[G\right]$. It now follows from Pratt's lemma that $\E\left[  \n \hat{\beta}_{n,E,(1)}-\beta_E  \n_{L^2([T_1,T_2]^2)}\ \right]\to 0$, for every $E\in\N$. If we now let $\{e(n)\}_{n\in\N}$ be a sequence in $\N$ tending to infinity, such that $e(n)\le E_1(n)$ where $E_1(1)=1$, and for $n>1$, 
		\begin{align*}
			&E_1(n)=(E_1(n-1)+1)1_{\E\left[\n \hat{\beta}_{n,E_1(n-1)+1,(1)}-\beta_{E_1(n-1)+1}  \n_{L^2([T_1,T_2]^2)}\right]<\alpha_{E_1(n-1)+1} }
			\\
			+&E_1(n-1)1_{\E\left[\n \hat{\beta}_{n,E_1(n-1)+1,(1)}-\beta_{E_1(n-1)+1}   \n_{L^2([T_1,T_2]^2)}\right]\ge \alpha_{E_1(n-1)+1} },
		\end{align*}
		for some non-increasing sequence of positive numbers $\{\alpha_n\}_{n\in\N}$ that tend to zero. With this construction it follows that if we let $\hat{B}_{n,(1)}=\frac{1}{n}\sum_{m=1}^n B^{(m),e(n)}$ and assume $e(n)\le E_1(n)$ then 
		\begin{align}\label{B1}
			\E\left[\n \hat{\beta}_{n,(1)}-\beta  \n_{L^2([T_1,T_2]^2)}^2\right]^{\frac12}
			&\le \sqrt{2}\E\left[\n \hat{\beta}_{n,(1)}-\beta_{e(n)}  \n_{L^2([T_1,T_2])}^2\right]^{\frac12} + \sqrt{2}\n \beta-\beta_{e(n)}  \n_{L^2([T_1,T_2])}\nonumber
			\\
			&\le \sqrt{2}\alpha_{e(n)} + \sqrt{2\sum_{k=e(n)+1}^\infty\sum_{l=e(n)+1}^\infty \frac{\E\left[U_{l,k}^m\right]^2}{\E\left[Q_l\right]^2}},
		\end{align}
		which converges to zero. By the Markov inequality $\n \hat{\beta}_{n,(1)}-\beta  \n_{L^2([T_1,T_2]^2)}\xrightarrow{\P} 0$.
		\\
		\textbf{Step 2: Replace the population denominators.}\\
		Let 
		$$\beta^{(m),2}=\sum_{k=1}^{e(n)} \sum_{l=1}^{e(n)} \frac{U_{l,k}^m}{\frac{1}{n}\sum_{m=1}^nW_l^m} \phi_k\otimes\phi_l ,$$ 
		$\hat{\beta}_{n,(2)}=\frac{1}{n}\sum_{m=1}^n \beta^{(m),2}$ and 
		$$A_{n,1}=\left\{ \frac{\E\left[Q_l\right]^2}{2} \le \left(\frac{1}{n}\sum_{m=1}^nW_l^m\right)^2\le 2\E\left[Q_l\right]^2, l=1,..,e(n)\right\}. $$
		Let $E_2(n)$ be a sequence tending to infinity such that if $e(n)\le E_1(n)\vee E_2(n)$, $\lim_{n\to\infty}\P\left(A_{n,1}\right)=1$.
		
		\begin{align}\label{LLNMoment}
			&\E\left[\n \hat{\beta}_{n,(2)}-\hat{\beta}_{n,(1)} \n_{L^2([T_1,T_2]^2)}1_{A_{n,1}}\right]\nonumber
			\\
			&\le
			\E\left[\frac{1}{n}\sum_{m=1}^n \n B^{(m),1}-\beta^{(m),2} \n_{L^2([T_1,T_2]^2)}1_{A_{n,1}}\right]
			\nonumber
			\\
			&=
			\E\left[\left(  \sum_{l=1}^{e(n)} \sum_{k=1}^{e(n)} (U_{l,k}^m)^2\left|\frac{1}{\E\left[Q_l\right]^2}-\frac{1}{\left(\frac{1}{n}\sum_{m=1}^n W_l^m\right)^2} \right| \right)^{\frac{1}{2}}1_{A_{n,1}}\right]
			\nonumber
			\\
			&=
			\E\left[\left(  \sum_{l=1}^{e(n)} \sum_{k=1}^{e(n)} (U_{l,k}m)^2 \frac{1}{\E\left[Q_l\right]^2\left(\frac{1}{n}\sum_{m=1}^n W_l^m\right)^2}\left|\E\left[Q_l\right]^2-\left(\frac{1}{n}\sum_{m=1}^n W_l^m\right)^2 \right| \right)^{\frac{1}{2}}1_{A_{n,1}}\right]
		\end{align}
		
		due to independence we have 
		\begin{align*}
			&\E\left[\left(  \sum_{l=1}^{e(n)} \sum_{k=1}^{e(n)} (U_{l,k}^m)^2 \frac{1}{\E\left[Q_l\right]^2\frac{1}{n}\sum_{m=1}^n W_l^m}\left|\E\left[Q_l\right]^2-\left(\frac{1}{n}\sum_{m=1}^n W_l^m \right)^2\right| \right)^{\frac{1}{2}}1_{A_{n,1}}\right]
			\\
			&\le
			\sum_{l=1}^{e(n)} \sum_{k=1}^{e(n)}\frac{\E\left[\left|U_{l,k}^m\right|\right]}{\E\left[Q_l\right]}\E\left[  \left|\frac{\E\left[Q_l\right]^2-\left(\frac{1}{n}\sum_{m=1}^n W_l^m\right)^2}{\frac{1}{n}\sum_{m=1}^n W_l^m}  \right|^{\frac{1}{2}}1_{A_{n,1}} \right]
			\\
			&\le
			\sum_{l=1}^{e(n)} \sum_{k=1}^{e(n)}\frac{\E\left[\left|U_{l,k}^m\right|\right]}{\E\left[Q_l\right]}\E\left[ \left| \frac{\E\left[Q_l\right]^2-\left(\frac{1}{n}\sum_{m=1}^n W_l^m\right)^2}{\frac{1}{n}\sum_{m=1}^n W_l^m}\right| 1_{A_{n,1}}\right]^{\frac{1}{2}},
		\end{align*}
		Noting that
		\begin{align*}
			\E\left[  \left| \frac{\E\left[Q_l\right]^2-\frac{1}{n}\sum_{m=1}^n W_l^m}{\frac{1}{n}\sum_{m=1}^n W_l^m}  \right|1_{A_{n,1}}\right]^{\frac 12}
			&\le
			\sqrt{2}\frac{\E\left[  \left| \E\left[Q_l\right]^2-\left(\frac{1}{n}\sum_{m=1}^n W_l^m\right)^2 \right|1_{A_{n,1}} \right]^{\frac 12}}{\E\left[Q_l\right]}
			:=a_l(n),
		\end{align*}
		where, by dominated convergence, $\lim_{n\to\infty}a_l(n)=0$ for every $l$.
		This implies
		\begin{align*}
			\E\left[\n \hat{\beta}_{n,(2)}-\hat{\beta}_{n,(1)} \n_{L^2([T_1,T_2]^2)}1_{A_{n,1}}\right]
			&\le 
			\sum_{l=1}^{e(n)} \sum_{k=1}^{e(n)}\frac{\E\left[\left|\gamma\chi^A_l Z^A_k+(1-\gamma)\chi^O_l Z^O_k\right|\right]}{\E\left[Q_l\right]}\sqrt{2}a_l(n).
		\end{align*}
		We now let $E_3(1)=1$, and for $n>1$, 
		\begin{align*}
			E_3(n)&=(E_3(n-1)+1)1_{\sum_{l=1}^{E_3(n-1)+1} \sum_{k=1}^{E_3(n-1)+1}\frac{\E\left[\left|\gamma\chi^A_l Z^A_k+(1-\gamma)\chi^O_l Z^O_k\right|\right]}{\E\left[Q_l\right]}\sqrt{2}a_l(n)<\alpha_n }
			\\
			&+E_3(n-1)1_{\sum_{l=1}^{E_3(n-1)+1} \sum_{k=1}^{E_3(n-1)+1}\frac{\E\left[\left|\gamma\chi^A_l Z^A_k+(1-\gamma)\chi^O_l Z^O_k\right|\right]}{\E\left[Q_l\right]}\sqrt{2}a_l(n)\ge\alpha_n }.
		\end{align*}
It follows from Markov's inequality that if $e(n)\le E_1(n)\vee E_2(n)\vee E_3(n)$
		\begin{align*}
			\P\left( \n \hat{\beta}_{n,(2)}-\hat{\beta}_{n,(1)} \n_{L^2([T_1,T_2]^2)}\ge \epsilon\right)
			&\le
			\frac{1}{\epsilon} \sum_{l=1}^{e(n)} \sum_{k=1}^{e(n)}\frac{\E\left[\left|\gamma\chi^A_l Z^A_k+(1-\gamma)\chi^O_l Z^O_k\right|\right]}{\E\left[Q_l\right]}\sqrt{2}a_l(n) + \P\left( A_{n,1}^c \right)
			\\
			&\le \alpha_n+\P\left( A_{n,1}^c \right),
		\end{align*}
		which converges to zero for every $\epsilon>0$.
		\\
		\textbf{Step 3: Discretize the sample curves for the numerators.}\\
		Recall (from Section \ref{sec:estimation}) the definitions of $C_l^{A,m,n}$ and $D_k^{A,m,d,n}$. Let 
		\begin{itemize}
			\item[] $U_{l,k}^{m,n}=\gamma C_l^{A,m,n}D_k^{A,m,n}+(1-\gamma)C_l^{O,m,n}D_k^{O,m,n}$,
			\item[] $\beta^{(m),3}=\sum_{k=1}^{e(n)} \sum_{l=1}^{e(n)} \frac{U_{l,k}^{m,n}}{\frac{1}{n}\sum_{m=1}^n W_l^m}\phi_k\otimes\psi_l $ and
			\item[] $\hat{\beta}_{n,(3)}=\frac{1}{n}\sum_{m=1}^n \beta^{(m),3}$.
		\end{itemize}
		On the set $A_{n,1}$
		\begin{align*}
			&\n \hat{\beta}_{n,(3)}-\hat{\beta}_{n,(2)} \n_{L^2([T_1,T_2]^2)} 
			\\
			&\le  
			\frac{1}{n}\sum_{m=1}^n  \n \beta^{(m),3}-\beta^{(m),2} \n_{L^2([T_1,T_2]^2)}
			\\
			&=
			\frac{1}{n}\sum_{m=1}^n\left(\sum_{l=1}^{e(n)}\sum_{k=1}^{e(n)}\sum_{q=1}^{e(n)}\sum_{r=1}^{e(n)}  \frac{\left(U_{l,k}^m-U_{l,k}^{m,n}\right)\left(U_{r,q}-U_{r,q}^n\right) }{\frac{1}{n}\sum_{m=1}^n W_l^m\frac{1}{n}\sum_{m=1}^n W_r^m} \int_{[T_1,T_2]} \phi_k(t)\phi_r(t)dt\int_{[T_1,T_2]} \psi_l(\tau)\psi_q(\tau)d\tau\right)^{\frac 12}
			\\
			&=
			\frac{1}{n}\sum_{m=1}^n\left(\sum_{k=1}^{e(n)} \sum_{l=1}^{e(n)}  \frac{\left(U_{l,k}^m-U_{l,k}^{m,n}\right)^2}{\left(\frac{1}{n}\sum_{m=1}^n W_l^m\right)^2}\right)^{\frac 12}
			\\
			&=
			\frac{1}{n}\sum_{m=1}^n\left(\sum_{k=1}^{e(n)} \sum_{l=1}^{e(n)}\frac{1}{\frac{1}{n}\sum_{m=1}^n W_l^m}  \left(  \gamma C_l^{A,m,n}(D_k^{A,m,n}-D_k^{A,m})+ \gamma D_k^{A,m}(C_l^{A,m,n}-C_l^{A,m})
			\right.\right.
			\\
			&\left.\left.+
			(1-\gamma)C_l^{O,m,n}(D_k^{O,m,n}-D_k^{O,m})+ (1-\gamma)D_k^{O,m}(C_l^{O,m,n}-C_l^{O,m})\right)^2\right)^{\frac 12}
			\\
			&\le
			\frac{1}{n}\sum_{m=1}^n\left(\sum_{k=1}^{e(n)} \sum_{l=1}^{e(n)} \frac{1}{\E\left[Q_l\right]}\left(\left(  8\left(\gamma C_l^{A,m,n}(D_k^{A,m,n}-D_k^{A,m})\right)^2
			\right.\right.\right.
			\\
			&\left.\left.\left.+ 8\left(\gamma D_k^{A,m}(C_l^{A,m,n}-C_l^{A,m})\right)^2
			+
			8\left((1-\gamma)C_l^{O,m,n}(D_k^{O,m,n}-D_k^{O,m})\right)^2
			\right.\right.\right.
			\\
			&\left.\left.\left.+
			8\left((1-\gamma)D_k^{O,m}(C_l^{O,m,n}-C_l^{O,m})\right)^2\right)\right)\right)^{\frac 12}
			.
		\end{align*}
		
		Note that by Cauchy-Schwarz inequality
		\begin{align*}
			(C_l^{A,m,n}-C_l^{A,m})^2&=\left(\int_{[T_1,T_2]}\left(P_{\Pi_n}(X^{A,m},t)-X^A_t\right)\psi_l(t)dt\right)^2
			\\
			&\le
			\left(\int_{[T_1,T_2]}\left|P_{\Pi_n}(X^{A,m},t)-X^A_t\right|\left|\psi_l(t)\right|dt\right)^2
			\\
			&\le
			\int_{[T_1,T_2]}\left|P_{\Pi_n}(X^{A,m},t)-X^A_t\right|^2dt\int_{[T_1,T_2]}\left|\psi_l(t)\right|^2dt
			\le Td_n^2
		\end{align*}
		and similarly
		\begin{align*}
			(D_k^{A,m,n}-D_k^{A,m})^2
			&=
			\left(\int_{[T_1,T_2]}\left(P_{\Pi_n}(Y^{A,m},s)-Y^A_s\right)\psi_l(s)ds\right)^2
			\\
			&\le
			\int_{[T_1,T_2]}\left|P_{\Pi_n}(Y^{A,m},s)-Y^A_s\right|^2ds\int_{[T_1,T_2]}\left|\psi_l(s)\right|^2ds 
			\le Td_n^2.
		\end{align*}
		Furthermore
		\begin{align*}
			(C_l^{A,m,n})^2
			&=
			\left(\int_{[T_1,T_2]} P_{\Pi_n}(X^{A,m},t)\psi_l(t)dt\right)^2
			\\
			&\le
			\left(\int_{[T_1,T_2]}\left|P_{\Pi_n}(X^{A,m},t)\right|\left|\psi_l(t)\right|dt\right)^2
			\\
			&\le
			\int_{[T_1,T_2]}\left|P_{\Pi_n}(X^{A,m},t)\right|^2dt
			\\
			&\le
			2\int_{[T_1,T_2]}\left|P_{\Pi_n}(X^{A,m},t)-X_t^{A,m}\right|^2dt
			+
			2\int_{[T_1,T_2]}\left|X_t^{A,m}\right|^2dt
			\le
			2Td_n^2
			+
			2\int_{[T_1,T_2]}\left|X_t^{A,m}\right|^2dt.
		\end{align*}
		Letting 
		$$M_n=\max_{1\le m \le e(n)} \int_{[T_1,T_2]} (Y_s^{A,m})^2ds+\int_{[T_1,T_2]} (Y_s^{O,m})^2ds+\int_{[T_1,T_2]} (X_s^{A,m})^2ds+\int_{[T_1,T_2]} (X_s^{O,m})^2ds,$$
		we get on the set $A_{n,1}$,
		\begin{align*}
			\n \hat{\beta}_{n,(3)}-\hat{\beta}_{n,(2)} \n_{L^2([T_1,T_2]^2)} 
			&\le 
			\frac{1}{n}\sum_{m=1}^n \left(2\frac{e(n)^2(8(1-\gamma)^2+8\gamma^2)Td_n^2(2Td_n^2+2M_n)}{\min_{1\le l\le e(n)}\E\left[Q_l\right]} \right)^{\frac 12}
			\\
			&\le
			2^{\frac 52}\sqrt{2+3\gamma^2} Te(n)d_n\frac{\left(\sqrt{M_n}+\sqrt{T}d_n\right)}{\sqrt{ \min_{1\le l\le e(n)}\E\left[Q_l\right]}}.
		\end{align*}
		Therefore
		\begin{align*}
			&\P\left(\right\{\n \hat{\beta}_{n,(3)}-\hat{\beta}_{n,(2)} \n_{L^2([T_1,T_2]^2)} \ge \epsilon \left\}\cap A_{n,1}\right)
			\le
			\P\left(M_n \ge \left(\frac{\epsilon\sqrt{\min_{1\le l\le e(n)}\E\left[Q_l\right]}}{2^{\frac 52}\sqrt{2+3\gamma^2} Te(n)d_n}-\sqrt{T}d_n\right)^2 \right)
			\\
			&\le
			1- F_{\n X^{A} \n_{L^2([T_1,T_2]}^2+\n X^{O} \n_{L^2([T_1,T_2]}^2+\n Y^{A} \n_{L^2([T_1,T_2]}^2+\n Y^{O} \n_{L^2([T_1,T_2]}^2} \left(\left(\frac{\epsilon\sqrt{\min_{1\le l\le e(n)}\E\left[Q_l\right]}}{2^{\frac 52}\sqrt{2+3\gamma^2} Te(n)d_n}-\sqrt{T}d_n\right)^2\right)^{e(n)}
			\\
			&\le
			1- \left(1-\P\left(\n X^{A} \n_{L^2([T_1,T_2]}^2+\n X^{O} \n_{L^2([T_1,T_2]}^2+\n Y^{A} \n_{L^2([T_1,T_2]}^2+\n Y^{O} \n_{L^2([T_1,T_2]}^2
			\right.\right.
			\\
			&\left.\left.\ge \left(\frac{\epsilon\sqrt{\min_{1\le l\le e(n)}\E\left[Q_l\right]}}{2^{\frac 52}\sqrt{2+3\gamma^2} Te(n)d_n}-\sqrt{T}d_n\right)^2\right)\right)^{e(n)}
			\\
			&\le
			1- \left(1-\frac{\E\left[ \n X^{A} \n_{L^2([T_1,T_2]}^2 +\n X^{O} \n_{L^2([T_1,T_2]}^2+\n Y^{A} \n_{L^2([T_1,T_2]}^2+\n Y^{O} \n_{L^2([T_1,T_2]}^2  \right]}{\left(\frac{\epsilon\sqrt{\min_{1\le l\le e(n)}\E\left[Q_l\right]}}{2^{\frac 52}\sqrt{2+3\gamma^2} Te(n)d_n}-\sqrt{T}d_n\right)^2}\right)^{e(n)}
			\\
			&\le
			\frac{e(n)\E\left[ \n X^{A} \n_{L^2([T_1,T_2]}^2 +\n X^{O} \n_{L^2([T_1,T_2]}^2+\n Y^{A} \n_{L^2([T_1,T_2]}^2+\n Y^{O} \n_{L^2([T_1,T_2]}^2 \right]}{\left(\frac{\epsilon\sqrt{\min_{1\le l\le e(n)}\E\left[Q_l\right]}}{2^{\frac 52}\sqrt{2+3\gamma^2} Te(n)d_n}-\sqrt{T}d_n\right)^2}
			\\
			&\le
			\frac{e(n)^3d_n^2\E\left[ \n X^{A} \n_{L^2([T_1,T_2]}^2 +\n X^{O} \n_{L^2([T_1,T_2]}^2+\n Y^{A} \n_{L^2([T_1,T_2]}^2+\n Y^{O} \n_{L^2([T_1,T_2]}^2 \right]}{\frac{\epsilon^2\min_{1\le l\le e(n)}\E\left[Q_l\right]}{2^{5}T^2(2+3\gamma^2)}-e(n)^2d_n^2 },
		\end{align*}
		so if $e(n)\le  d_n^{\eta}$, where $\eta\in\left(-\frac 23,0\right)$ then the above expression will converge to zero. Set $E_4(n)=d_n^\eta$ with $\eta$ as prescribed.
		\\
		\textbf{Step 4: Discretize the sample curves for the denominators.}\\
		Let  
		\begin{itemize}
		\item[] $W_l^{m,n}=\gamma (C_l'^{A,m,n})^2+ (1-\gamma)(C_l'^{O,m,n})^2$
			\item[] $\beta^{(m),4}=\sum_{k=1}^{e(n)} \sum_{l=1}^{e(n)} \frac{U_{l,k}^{m,n}}{\frac{1}{n}\sum_{m=1}^n W_l^{m,n}}\phi_k\otimes\psi_l $ and
			\item[] $\hat{\beta}_{n,(4)}=\frac{1}{n}\sum_{m=1}^n \beta^{(m),4}=\hat{\beta}_n$.
		\end{itemize}
		Noting that \\$\left| C_l'^{A,m,n}-C_l'^{A,m} \right|\le d_n$ implies 
		\begin{align*}
			\left| \frac{1}{n}\sum_{m=1}^n \left(C_l'^{A,m,n}\right)^2-\frac{1}{n}\sum_{m=1}^n \left(C_l'^{A,m}\right)^2 \right|
			&\le
			\frac{1}{n}\sum_{m=1}^n\left|  C_l'^{A,m,n} -C_l'^{A,m} \right|\left|  C_l'^{A,m,n} +C_l'^{A,m} \right|
			\\
			&\le
			d_n(1+d_n)\frac{1}{n}\sum_{m=1}^n\left|C_l'^{A,m} \right|
			\le
			d_n(1+d_n)\frac{1}{n}\left(\sum_{m=1}^n\left(C_l'^{A,m} \right)^2\right)^{\frac 12}
		\end{align*}
with an analogous inequality being valid in the observational setting. This leads to
		\begin{align*}
			&\left| \frac{1}{n}\sum_{l=m}^n \left(\gamma \left(C_l'^{A,m,n}\right)^2+(1-\gamma)\left(C_l'^{O,m,n}\right)^2 \right)-\frac{1}{n}\sum_{m=1}^n\left( \gamma\left(C_l'^{A,m}\right)^2+(1-\gamma)\left(C_l'^{O,m}\right)^2\right) \right|
			\\
			&\le
			\gamma d_n(1+d_n)\frac{1}{n}\left(\sum_{m=1}^n\left(C_l'^{A,m} \right)^2\right)^{\frac 12}+(1-\gamma)d_n(1+d_n)\frac{1}{n}\left(\sum_{m=1}^n\left(C_l'^{O,m} \right)^2\right)^{\frac 12}.
		\end{align*}
Moreover, on $A_{n,1}$
		\begin{align*}
			\left(C_l'^{A,m,n}\right)^2
			&\ge
			\left(\left|C_l'^{A,m,n}\right|-d_n\right)^2
			\\
			&\ge
			\left(C_l'^{A,m,n}\right)^2+d_n^2-4d_n\E\left[(\chi_l^A)^2\right]^{\frac 12}
			\ge \frac{1}{2}\E\left[(\chi_l^A)^2\right]+d_n^2-4d_n\E\left[(\chi_l^A)^2\right]^{\frac 12}
		\end{align*}
		and
		\begin{align*}
			\left(C_l'^{O,m,n}\right)^2
			\ge \frac{1}{2}\E\left[(\chi_l^O)^2\right]+d_n^2-4d_n\E\left[(\chi_l^O)^2\right]^{\frac 12}.
		\end{align*}
		Therefore, letting 
		$$R_{l,n}=\E\left[Q_l\right]\left(\gamma\left(\frac{1}{2}\E\left[(\chi_l^A)^2\right]+d_n^2-4d_n\E\left[(\chi_l^A)^2\right]^{\frac 12}\right)+(1-\gamma)\left(\frac{1}{2}\E\left[(\chi_l^O)^2\right]+d_n^2-4d_n\E\left[(\chi_l^O)^2\right]^{\frac 12}\right)\right)$$
		we have $\frac{1}{n}\sum_{m=1}^n W_l^m\frac{1}{n}\sum_{m=1}^n W_l^{m,n}\ge R_{l,n}$. On $A_{n,1}$,
		\begin{align*}
			\n \hat{\beta}_n-\hat{\beta}_{n,(3)} \n_{L^2([T_1,T_2]^2)} 
			&\le  
			\frac{1}{n}\sum_{m=1}^n\left(\sum_{k=1}^{e(n)} \sum_{l=1}^{e(n)}  (U_{l,k}^{m,n})^2 \left( \frac{1}{\frac{1}{n}\sum_{m=1}^n W_l^m} -  \frac{1}{\frac{1}{n}\sum_{m=1}^n W_l^{m,n}} \right)^2\right)^{\frac 12}
			\\
			&=
			\frac{1}{n}\sum_{m=1}^n\left(\sum_{k=1}^{e(n)} \sum_{l=1}^{e(n)} \frac{(U_{l,k}^{m,n})^2}{\frac{1}{n}\sum_{m=1}^n W_l^m\frac{1}{n}\sum_{m=1}^n W_l^{m,n}} \left( \frac{1}{n}\sum_{m=1}^n W_l^m -  \frac{1}{n}\sum_{m=1}^n W_l^{m,n} \right)^2\right)^{\frac 12}
			\\
			&\le
			\frac{1}{n}\sum_{m=1}^n\left(\sum_{k=1}^{e(n)} \sum_{l=1}^{e(n)} \frac{2(U_{l,k}^{m,n})^2}{R_{l,n}}  d_n^2(1+d_n)^2\frac{1}{n^2}\sum_{m=1}^nW_l^m\right)^{\frac 12}.
		\end{align*}
		This leads to,
		\begin{align*}
			\E\left[\n \hat{\beta}_{n}-\hat{\beta}_{n,(3)} \n_{L^2([T_1,T_2]^2)} 1_{A_{n,1}}\right]
			&\le
			\sum_{k=1}^{e(n)} \sum_{l=1}^{e(n)} \frac{1}{R_{l,n}}2\E\left[\left|U_{l,k}^{m,n} \right|  \right]\E\left[ d_n(1+d_n)\frac{1}{n}\left(\sum_{m=1}^nW_l^m\right)^{\frac 12}\right]
			\\
			&\le
			\sum_{k=1}^{e(n)} \sum_{l=1}^{e(n)} \frac{1}{R_{l,n}}\left( 
			2\E\left[\gamma\left( \left|C_l^{A,m}\right|+Td_n \right)\left(\left|D_k^{A,m}\right|+Td_n \right)
			\right.\right.
			\\
			&\left.\left.+(1-\gamma)\left( \left|C_l^{O,m}\right|+Td_n \right)\left(\left|D_k^{O,m}\right|+Td_n \right)  \right]\right)d_n(1+d_n)\E\left[ Q_l\right]^{\frac12}
			\\
			&\le
			d_n(1+d_n)\sum_{k=1}^{e(n)} \sum_{l=1}^{e(n)} \frac{1}{R_{l,n}}\left(
			2\left(T^2d_n^2+T\gamma\E\left[\left|\chi_l^A\right|+\left|Z_k^A\right|  \right]
			\right.\right.
			\\
			&\left.\left.+\gamma\E\left[(\chi_l^A)^2  \right]^{\frac12}\E\left[(Z_k^A)^2  \right]^{\frac12}
+
T(1-\gamma)\E\left[\left|\chi_l^O\right|+\left|Z_k^O\right|  \right]
\right.\right.
\\
&\left.\left.+
(1-\gamma)\E\left[(\chi_l^O)^2  \right]^{\frac12}\E\left[(D_k^O)^2  \right]^{\frac12} \right)\right)\E\left[ Q_l\right]^{\frac12}
		\end{align*}
		letting $g_4$ be a strictly increasing function such that 
		\begin{align*}
			\lim_{E\to\infty}\frac{1}{g_4(E)}\sum_{k=1}^{E} \sum_{l=1}^{E} &\frac{1}{R_{l,n}}\left(
			2\left(T^2d_n^2+T\gamma\E\left[\left|\chi_l^A\right|+\left|Z_k^A\right|  \right]
			+\gamma\E\left[(\chi_l^A)^2  \right]^{\frac12}\E\left[(Z_k^A)^2  \right]^{\frac12}
+
T(1-\gamma)\E\left[\left|\chi_l^O\right|+\left|Z_k^O\right|  \right]
\right.\right.
\\
&\left.\left.+
(1-\gamma)\E\left[(\chi_l^O)^2  \right]^{\frac12}\E\left[(D_k^O)^2  \right]^{\frac12} \right)\right)\E\left[ Q_l\right]^{\frac12}=0,
		\end{align*}
		letting $$e(n)\le E_5(n)= g_4^{-1}\left(\frac{1}{d_n(1+d_n)}\right)$$ is sufficient to ensure that $\n \hat{B}_{n,(4)}-\hat{B}_{n,(3)} \n_{L^2([T_1,T_2]^2)}\xrightarrow{\P}0$. 
\\
\textbf{Step 5: Replace the population eigenfunctions with the truncated estimators.}\\
Let $B^{(m),5}=\sum_{k=1}^{e(n)} \sum_{l=1}^{e(n)}  \frac{U_{l,k}^{m,n}}{\frac{1}{n}\sum_{m=1}^n W_l^{m,n}}\phi_k\otimes T_M\left(\hat{\psi}_{l,n}\right) $ and $\hat{B}_{n,(5)}=\frac{1}{n}\sum_{m=1}^n B^{(m),5}(t,\tau)$. We then have the following bound

\begin{align}\label{B2B}
\n B^{(m),5}-B^{(m),4} \n_{L^2([T_1,T_2]^2)}
&\le
\sum_{k=1}^{e(n)} \sum_{l=1}^{e(n)}   \frac{\left|U_{l,k}^{m,n} \right|}{\frac{1}{n}\sum_{m=1}^n W_l^{m,n}} \n   \phi_k\otimes \left(T_M\left(\hat{\psi}_{l,n}\right)-\psi_l\right)\n_{L^2([T_1,T_2]^2)}\nonumber
\\
&\le
\gamma\sum_{k=1}^{e(n)} \left|D_k^{A,m,n} \right|\n \phi_k\n_{L^2([T_1,T_2]^2)}\sum_{l=1}^{e(n)}  \frac{\left|C_l^{A,m,n}\right|}{\frac{1}{n}\sum_{m=1}^n W_l^{m,n}}\n T_M\left(\hat{\psi}_{l,n}\right)-\psi_l\n_{L^2([T_1,T_2]^2)}\nonumber
\\
&+
|1-\gamma|\sum_{k=1}^{e(n)} \left|D_k^{O,m,n} \right|\n \phi_k\n_{L^2([T_1,T_2]^2)}\sum_{l=1}^{e(n)}  \frac{\left|C_l^{O,m,n}\right|}{\frac{1}{n}\sum_{m=1}^n W_l^{m,n}}\n T_M\left(\hat{\psi}_{l,n}\right)-\psi_l\n_{L^2([T_1,T_2]^2)}\nonumber
\\
&=(1+\gamma)
\sum_{k=1}^{e(n)} \left(\left|D_k^{A,m,n} \right|+\left|D_k^{O,m,n} \right|\right)\sum_{l=1}^{e(n)} \frac{\left|C_l^{A,m,n}\right|+\left|C_l^{O,m,n}\right|}{\frac{1}{n}\sum_{m=1}^n W_l^{m,n}}\n T_M\left(\hat{\psi}_{l,n}\right)-\psi_l\n_{L^2([T_1,T_2]^2)}.
\end{align}
By the Cauchy-Schwarz inequality
\begin{align}\label{Bn4}
&\E\left[\n \hat{B}_{n,(5)}-\hat{B}_{n,(4)}\n 1_{A_{n,1}}\right]\nonumber
\\
&\le
\frac{1}{n}\sum_{m=1}^n\E\left[\n B^{(m),5}-B^{(m),4} \n_{L^2([T_1,T_2]^2)}1_{A_{n,1}}\right]\nonumber
\\
&\le
\frac{1}{n}\sum_{m=1}^n\E\left[ (1+\gamma)
\sum_{k=1}^{e(n)} \left(\left|D_k^{A,m,n} \right|+\left|D_k^{O,m,n} \right|\right)\sum_{l=1}^{e(n)} \frac{\left|C_l^{A,m,n}\right|+\left|C_l^{O,m,n}\right|}{\frac{1}{n}\sum_{m=1}^n W_l^{m,n}}\n T_M\left(\hat{\psi}_{l,n}\right)-\psi_l\n_{L^2([T_1,T_2]^2)} 1_{A_{n,1}}\right]
\nonumber
\\
&\le
(1+\gamma)\sum_{l=1}^{e(n)}\sum_{k=1}^{e(n)}\E\left[ 2\left|D_k^{A,m,n} \right|^2+2\left|D_k^{O,m,n} \right|^2\right]^{\frac 12}
\E\left[\frac{2\left|C_l^{A,m,n}\right|^2+2\left|C_l^{O,m,n}\right|^2}{\frac{R_{l,n}}{\E\left[Q_l\right]}}\n T_M\left(\hat{\psi}_{l,n}\right)-\psi_l\n_{L^2([T_1,T_2])}^21_{A_{n,1}}\right]^{\frac 12}.
\end{align}
Letting $*$ be either $A$ or $O$, note that 
\begin{align*}
\left|C_l^{*,m,n}\right|
&\le
\int \left|P_{\Pi_n}(X^{*,m},t)\psi_l(t)\right|dt
\\
&\le
\n P_{\Pi_n}(X^{*,m},.)\n_{L^2([T_1,T_2])} \n \psi_l\n_{L^2([T_1,T_2])}
=\n P_{\Pi_n}(X^{*,m},.)\n_{L^2([T_1,T_2])}
\end{align*}
It is straight-forward to show that
$\n P_{\Pi_n}(X^{*,m},.)-X^{*,m}\n_{L^2([T_1,T_2])}^2
\le
Td_n^2$
and therefore
\begin{align*}
\n P_{\Pi_n}(X^{*,m},.)\n_{L^2([T_1,T_2])}
&\le
\n P_{\Pi_n}(X^{*,m},.)-X\n_{L^2([T_1,T_2])} + \n X^{*,m}\n_{L^2([T_1,T_2])}
\\
&\le
\sqrt{T}d_n+\n X^{*,m}\n_{L^2([T_1,T_2])}.
\end{align*}
Similarly
\begin{align*}
\n P_{\Pi_n}(Y^{*,m},.)\n_{L^2([T_1,T_2])}
\le
\sqrt{T}d_n+\n Y^{*,m}\n_{L^2([T_1,T_2])}.
\end{align*}
This leads to
\begin{align*}
\left|C_l^{*,m,n}\right|^2
&=
\n P_{\Pi_n}(X^{*,m},.)\n_{L^2([T_1,T_2])}^2
\\
&\le
2\left(Td_n^2+\n X^{*,m}\n_{L^2([T_1,T_2])}^2\right)
\end{align*}
and similarly
\begin{align*}
\left|D_k^{*,m,n}\right|^2
\le
2\left(Td_n^2+\n Y^{*,m}\n_{L^2([T_1,T_2])}^2\right).
\end{align*}
Utilizing the independence of the $\{\psi_{l,n}\}_l$ from $\mathcal{F}_n$, we plug these findings back into \eqref{Bn4} and get
\begin{align}\label{Bn4_2}
&\E\left[\n \hat{B}_{n,(5)}-\hat{B}_{n,(4)}\n1_{A_{n,1}}\right]
\nonumber
\\
&\le
(1+\gamma)\sum_{l=1}^{e(n)}\sum_{k=1}^{e(n)}\sqrt{2}\left(2Td_n^2+\E\left[\n Y^{A,m}\n_{L^2([T_1,T_2])}^2+\n Y^{O,m}\n_{L^2([T_1,T_2])}^2\right]\right)^{\frac 12}\nonumber
\\
&\times
\E\left[ \frac{2\left(Td_n^2+\n X^{A,m}\n_{L^2([T_1,T_2])}^2+\n X^{O,m}\n_{L^2([T_1,T_2])}^2\right)\n T_M\left(\hat{\psi}_{l,n}\right)-\psi_l\n_{L^2([T_1,T_2])}^2}{\frac{R_{l,n}}{\E\left[Q_l\right]}}  \right]^{\frac 12}\nonumber
\\
&\le
(1+\gamma)e(n)\sum_{l=1}^{e(n)}\sqrt{2}\left(2Td_n^2+\E\left[\n Y^{A}\n_{L^2([T_1,T_2])}^2+\n Y^{O}\n_{L^2([T_1,T_2])}^2\right]\right)^{\frac 12}\sqrt{\frac{\E\left[Q_l\right]}{R_{l,n}}}
\nonumber
\\
&\times\E\left[2\left(Td_n^2+\n X^{A}\n_{L^2([T_1,T_2])}^2+\n X^{O}\n_{L^2([T_1,T_2])}^2\right)\sum_{l=1}^{e(n)}\n T_M\left(\hat{\psi}_{l,n}\right)-\psi_l\n_{L^2([T_1,T_2])}^2\right]^{\frac 12}\nonumber
\\
&=
(1+\gamma)2\left(Td_n^2+\E\left[\n Y^{A}\n_{L^2([T_1,T_2])}^2+\n Y^{O}\n_{L^2([T_1,T_2])}^2\right]\right)^{\frac 12}\E\left[Td_n^2+\n X^{A}\n_{L^2([T_1,T_2])}^2+\n X^{O}\n_{L^2([T_1,T_2])}^2\right]^{\frac 12}
\nonumber
\\
&\times
\sum_{l=1}^{e(n)}\sqrt{\frac{\E\left[Q_l\right]}{R_{l,n}}}e(n)\E\left[\sum_{l=1}^{e(n)}\n T_M\left(\hat{\psi}_{l,n}\right)-\psi_l\n_{L^2([T_1,T_2])}^2\right]^{\frac 12}\nonumber
\\
&\le
2(1+\gamma)\left(Td_n^2+\E\left[\n Y^{A}\n_{L^2([T_1,T_2])}^2+\n Y^{O}\n_{L^2([T_1,T_2])}^2\right]\right)^{\frac 12}\E\left[Td_n^2+\n X^{A}\n_{L^2([T_1,T_2])}^2+\n X^{O}\n_{L^2([T_1,T_2])}^2\right]^{\frac 12}
\nonumber
\\
&\times
\sum_{l=1}^{e(n)}\sqrt{\frac{\E\left[Q_l\right]}{R_{l,n}}}e(n)\sum_{l=1}^{e(n)}\E\left[\n T_M\left(\hat{\psi}_{l,n}\right)-\psi_l\n_{L^2([T_1,T_2])}^2\right]^{\frac 12}.
\end{align}
Since $e(n)\sum_{l=1}^{e(n)}\E\left[\n T_M\left(\hat{\psi}_{l,n}\right)-\psi_l\n_{L^2([T_1,T_2])}^2\right]^{\frac 12}$ converges to zero by assumption we may define $E_6(1)=1$ and for $n>1$
\begin{align*}
			E_6(n)&=(E_6(n-1)+1)1_{(E_6(n-1)+1)\sum_{l=1}^{E_6(n-1)+1} \sqrt{\frac{\E\left[Q_l\right]}{R_{l,n}}}
\E\left[\n T_M\left(\hat{\psi}_{l,n}\right)-\psi_l\n_{L^2([T_1,T_2])}^2\right]^{\frac 12}<\alpha_n }
			\\
			&+E_6(n-1)1_{
(E_6(n-1)+1)\sum_{l=1}^{E_6(n-1)+1}\sqrt{\frac{\E\left[Q_l\right]}{R_{l,n}}}\E\left[\n T_M\left(\hat{\psi}_{l,n}\right)-\psi_l\n_{L^2([T_1,T_2])}^2\right]^{\frac 12}\ge\alpha_n }.
		\end{align*}
It follows by Markov's inequality that $\n \hat{B}_{n,(5)}-\hat{B}_{n,(4)}\n\xrightarrow{\P}0$. 
\\
\textbf{Step 6: replace the numerators using the estimated eigenfunctions.}\\
Let 
\begin{itemize}
\item[] $\tilde{C}_l^{A,m,n}=\int P_{\Pi_n}(X^{A,m},t)\hat{\psi}_l(t)dt,$
\item[] $\tilde{C}_l^{O,m,n}=\int P_{\Pi_n}(X^{O,m},t)\hat{\psi}_l(t)dt,$
\item[]  $B^{(m),6}=\sum_{k=1}^{e(n)} \sum_{l=1}^{e(n)} \frac{\gamma\tilde{C}_l^{A,m,n}D_k^{A,m,n}+(1-\gamma)\tilde{C}_l^{O,m,n}D_k^{O,m,n}}{\frac{1}{n}\sum_{m=1}^n W_l^{m,n}} \phi_k\otimes T_M\left(\hat{\psi}_{l,n}\right) $
\end{itemize}
and $\hat{B}_{n,(6)}=\frac{1}{n}\sum_{m=1}^n B^{(m),6}(t,\tau)$. We then have the following bound
\begin{align}\label{B2B3}
&\n B^{(m),6}-B^{(m),5} \n_{L^2([T_1,T_2]^2)}\nonumber
\\
&\le
\sum_{k=1}^{e(n)} \sum_{l=1}^{e(n)} (1+\gamma)\frac{\left|C_l^{A,m,n}-\tilde{C}_l^{A,m,n}\right|+\left|C_l^{O,m,n}-\tilde{C}_l^{O,m,n}\right|}{\frac{1}{n}\sum_{p=1}^n W_l^{m,n}} \left(\left|D_k^{A,m,n} \right|+\left|D_k^{O,m,n} \right|\right)\n   \phi_k\otimes T_M\left(\hat{\psi}_{l,n}\right)\n_{L^2([T_1,T_2]^2)}\nonumber
\\
&\le
(1+\gamma)\sum_{k=1}^{e(n)} \left(\left|D_k^{A,m,n} \right|+\left|D_k^{O,m,n} \right|\right)\n \phi_k\n_{L^2([T_1,T_2]^2)}\sum_{l=1}^{e(n)} \frac{\left|C_l^{A,m,n}-\tilde{C}_l^{A,m,n}\right|+\left|C_l^{O,m,n}-\tilde{C}_l^{O,m,n}\right|}{\frac{1}{n}\sum_{p=1}^n W_l^{m,n}}  M\nonumber
\\
&=
M(1+\gamma)\sum_{k=1}^{e(n)} \left(\left|D_k^{A,m,n} \right|+\left|D_k^{O,m,n} \right|\right)\sum_{l=1}^{e(n)} \frac{ \left|C_l^{A,m,n}-\tilde{C}_l^{A,m,n}\right|+\left|C_l^{O,m,n}-\tilde{C}_l^{O,m,n}\right| }{\frac{1}{n}\sum_{p=1}^n W_l^{m,n}} .
\end{align}
As \begin{align*}
\n P_{\Pi_n}(X^{*,m},.)\n_{L^2([T_1,T_2])}
\le
\sqrt{T}d_n+\n X^{*,m}\n_{L^2([T_1,T_2])}.
\end{align*}We now note that
\begin{align}\label{tildediff}
\left|C_l^{*,m,n}-\tilde{C}_l^{*,m,n}\right|
&=
\left|\int P_{\Pi_n}(X^{*,m},t)\left(T_M\left(\hat{\psi}_{l,n}(t)\right)-\psi_l(t)\right)dt\right|\nonumber
\\
&\le
\n P_{\Pi_n}(X^{*,m},.)\n_{L^2([T_1,T_2])}
\n T_M\left(\hat{\psi}_{l,n}\right)-\psi_l\n_{L^2([T_1,T_2])}\nonumber
\\
&\le
\left(\sqrt{T}d_n+\n X^{*,m}\n_{L^2([T_1,T_2])}\right)\n T_M\left(\hat{\psi}_{l,n}\right)-\psi_l\n_{L^2([T_1,T_2])}
\end{align}
Due to independence of the eigenfunction estimators from $\mathcal{F}_n$
\begin{align*}
&\E\left[\n \hat{B}_{n,(6)} - \hat{B}_{n,(5)} \n 1_{A_{n,1}}\right]
\\
&\le
\frac{1}{n}\sum_{m=1}^n  \E\left[\ \n B^{(m),6}-B^{(m),5} \n_{L^2([T_1,T_2]^2)} 1_{A_{n,1}}\right]
\\
&\le
(1+\gamma)\frac{M}{n}\sum_{m=1}^n  \sum_{k=1}^{e(n)}\sum_{l=1}^{e(n)}\sqrt{\frac{\E\left[Q_l\right]}{R_{l,n}}}
\\
&\times\E\left[  \left(\left|D_k^{A,m,n}\right|+\left|D_k^{O,m,n} \right|\right)  \left(\sqrt{T}d_n+\n X^{A,m}\n_{L^2([T_1,T_2])}+\n X^{O,m}\n_{L^2([T_1,T_2])}\right)\n T_M\left(\hat{\psi}_{l,n}\right)-\psi_l\n_{L^2([T_1,T_2])}1_{A_{n,1}} \right]
\\
&\le 
2(1+\gamma)\frac{M}{n}\sum_{m=1}^n  \sum_{k=1}^{e(n)}\sum_{l=1}^{e(n)}\sqrt{\frac{\E\left[Q_l\right]}{R_{l,n}}}\E\left[ \left|D_k^{A,m,n} \right|^2+\left|D_k^{O,m,n} \right|^2\right]^{\frac 12}
\\
&\times\E\left[ \left(\sqrt{T}d_n+\n X^{A}\n_{L^2([T_1,T_2])}+\n X^{O}\n_{L^2([T_1,T_2])}\right)^2 \right]^{\frac12}\E\left[\n T_M\left(\hat{\psi}_{l,n}\right)-\psi_l\n_{L^2([T_1,T_2])}^2\right]^{\frac 12}
\\
&\le 
2(1+\gamma)M \left(Td_n^2+\E\left[\n Y^A\n_{L^2([T_1,T_2])}^2+\n Y^O\n_{L^2([T_1,T_2])}^2\right]\right)^{\frac 12}\left(2^2\left(T^2d_n^2+T\E\left[\n X^{A}\n_{L^2([T_1,T_2])}^2+\n X^{O}\n_{L^2([T_1,T_2])}^2\right]\right)\right)^{\frac 12}
\\
&\times e(n)\sum_{l=1}^{e(n)}\sqrt{\frac{\E\left[Q_l\right]}{R_{l,n}}}\E\left[\n T_M\left(\hat{\psi}_{l,n}\right)-\psi_l\n_{L^2([T_1,T_2])}^2\right]^{\frac 12}
\end{align*}
which converges to zero if $e(n)\le E_6(n)$. Therefore, again by the Markov inequality $\n \hat{B}_{n,(4)} - \hat{B}_{n,(5)}  \n_{L^2([T_1,T_2]^2)}\xrightarrow{\P}0$. 
\\	
\textbf{Step 7: Replace the denominators using the estimated eigenfunctions.}\\
Let 
\begin{itemize}
\item[] $\tilde{C}_l'^{A,m,n}=\langle P_{\Pi_n'}(X'^{A,m},.)\hat{\psi}_l \rangle_{L^2([T_1,T_2])^p},$
\item[] $\tilde{C}_l'^{O,m,n}=\langle P_{\Pi_n'}(X'^{O,m},.)\hat{\psi}_l \rangle_{L^2([T_1,T_2])^p},$
\item[] $\tilde{W}_l^{m,n}=\gamma (\tilde{C}_l'^{A,m,n})^2+ (1-\gamma)(\tilde{C}_l'^{O,m,n})^2$
\item[]  $B^{(m),7}=\sum_{k=1}^{e(n)} \sum_{l=1}^{e(n)} \frac{\gamma\tilde{C}_l^{A,m,n}D_ k^{A,m,n}+(1-\gamma)\tilde{C}_l^{O,m,n}D_ k^{O,m,n}\tilde{C}_l^{m,n}D_ k^{m,n}}{\frac{1}{n}\sum_{m=1}^n \tilde{W}_l^{m,n}} \phi_k\otimes T_M\left(\hat{\psi}_{l,n}\right) $ 
\end{itemize}

and $\hat{B}_{n,(7)}=\frac{1}{n}\sum_{m=1}^n B^{(m),7}(t,\tau)$. Similarly to \eqref{tildediff}, 
$$ |\tilde{C}'^{*,m,n}_l| \ge |C'^{*,m,n}_l|-\left(\sqrt{T}d_n+\n X^{*,m}\n_{L^2([0,T])}\right)\n T_M\left(\hat{\psi}_{l,n}\right)-\psi_l\n_{L^2([0,T])},$$
which is positive for all $l$, for sufficiently large $n$. Using that $a^2\ge b^2-2bc$ if $a\ge b-c \ge 0$, $a,b,c\ge 0$ and that the second term on the right-hand side above tends to zero, we may therefore define $E_7(n)$ such that for $1\le l\le E_7(n)$,
$ \E\left[|\tilde{C}'^{m,n}_l|^2 \right]\ge \E\left[\frac{|C'^{m,n}_l|^2}{2}\right]$ (note that this inequality is valid across all $m$ because of identical distribution). Next we note that,
\begin{align*}
\left|W_l^{m,n}-\tilde{W}_l^{m,n}\right|
\le
(1+\gamma)\left(\left|C_l'^{A,m,n}-\tilde{C}_l'^{A,m,n}\right|+\left|C_l'^{O,m,n}-\tilde{C}_l'^{O,m,n}\right|\right)\left(\left|\tilde{C}_l'^{A,m,n}\right|+\left|C_l'^{A,m,n}\right|+\left|\tilde{C}_l'^{O,m,n}\right|+\left|C_l'^{O,m,n}\right|\right)
\end{align*}
implying that
\begin{align*}
\E\left[\left| \frac{1}{n}\sum_{v=1}^n \tilde{W}_l^{v,n}-\frac{1}{n}\sum_{v=1}^nW_l^{v,n}\right|\right]
&\le
2^6(1+\gamma)\E\left[\left(\frac{1}{n}\sum_{v=1}^n\left|C_l'^{A,v,n}-\tilde{C}_l'^{A,v,n}\right|^2+\left|C_l'^{O,v,n}-\tilde{C}_l'^{O,v,n}\right|^2\right)^\frac12\right.
\\
&\left.\times
\left(\frac{1}{n}\sum_{v=1}^n\left|\tilde{C}_l'^{A,v,n}\right|^2+\left|C_l'^{A,v,n}\right|^2+\left|\tilde{C}_l'^{O,v,n}\right|^2+\left|C_l'^{O,v,n}\right|^2\right)^\frac12 \right]
\\
&\le
2^6(1+\gamma)\E\left[\frac{1}{n}\sum_{v=1}^n\left|C_l'^{A,v,n}-\tilde{C}_l'^{A,v,n}\right|^2+\left|C_l'^{O,v,n}-\tilde{C}_l'^{O,v,n}\right|^2\right]^\frac12
\\
&\times
\E\left[\frac{1}{n}\sum_{v=1}^n\left|\tilde{C}_l'^{A,v,n}\right|^2+\left|C_l'^{A,v,n}\right|^2+\left|\tilde{C}_l'^{O,v,n}\right|^2+\left|C_l'^{O,v,n}\right|^2 \right]^\frac12
\\
&=
2^6(1+\gamma)\E\left[\left|C_l'^{A,1,n}-\tilde{C}_l'^{A,1,n}\right|^2+\left|C_l'^{O,1,n}-\tilde{C}_l'^{O,1,n}\right|^2\right]^\frac12
\\
&\times
\E\left[\left|\tilde{C}_l'^{A,1,n}\right|^2+\left|C_l'^{A,1,n}\right|^2+\left|\tilde{C}_l'^{O,1,n}\right|^2+\left|C_l'^{O,1,n}\right|^2 \right]^\frac12.
\end{align*}
Similarly to \eqref{tildediff} and due to the independence of the eigenfunctions estimators,
\begin{align}\label{tildediff1}
\E\left[\left|C_l'^{*,m,n}-\tilde{C}_l'^{*,m,n}\right|^2\right]
\le
2\left(Td_n^2+ \E\left[\n X^{*}\n_{L^2([T_1,T_2])}^2\right]\right)\E\left[\n T_M\left(\hat{\psi}_{l,n}\right)-\psi_l\n_{L^2([T_1,T_2])}^2\right].
\end{align}
Therefore
\begin{align*}
\E\left[\left| \frac{1}{n}\sum_{v=1}^n \tilde{W}_l^{v,n}-\frac{1}{n}\sum_{v=1}^nW_l^{v,n}\right|\right]
&\le 4\left(Td_n^2+ \E\left[\n X^{A}\n_{L^2([T_1,T_2])}^2+\n X^{O}\n_{L^2([T_1,T_2])}^2\right]\right)\E\left[\n T_M\left(\hat{\psi}_{l,n}\right)-\psi_l\n_{L^2([T_1,T_2])}^2\right]
\\
&\times
\E\left[\left|\tilde{C}_l'^{A,1,n}\right|^2+\left|C_l'^{A,1,n}\right|^2+\left|\tilde{C}_l'^{O,1,n}\right|^2+\left|C_l'^{O,1,n}\right|^2 \right]^\frac12,
\end{align*}
which converges to zero for every $l\in\N$. By the triangle inequality it the follows that 
\begin{align*}
\E\left[\left| \frac{1}{n}\sum_{v=1}^n \tilde{W}_l^{v,n}-\frac{1}{n}\sum_{v=1}^nW_l^{v}\right|\right]
\end{align*}
converges to zero as well.
Let
$$A_{n,2}=\left\{ \frac{\E\left[Q_l\right]^2}{2} \le \left(\frac{1}{n}\sum_{v=1}^n \tilde{W}_l^{v,n}\right)^2\le 2\E\left[Q_l\right]^2, l=1,..,e(n)\right\}\cap A_{n,1}. $$
We may now define $E_7(n)$ such that if $e(n)\le E_7(n)$ then $\lim_{n\to\infty}\P\left(A_{n,2}^c\right)=0$.
We have the following bound
\begin{align}\label{B2B2}
&\E\left[\n B^{(m),7}-B^{(m),6} \n_{L^2([0,T]^2)}1_{A_{n,2}}\right]\nonumber
\\
&\le
(1+\gamma)\E\left[\sum_{k=1}^{e(n)} \sum_{l=1}^{e(n)} \left(\left|\tilde{C}_l^{A,m,n}\right|+\left|\tilde{C}_l^{O,m,n}\right|\right)\left|\frac{1}{\frac{1}{n}\sum_{p=1}^n \tilde{W}_l^{m,n}}-\frac{1}{\frac{1}{n}\sum_{p=1}^n W_l^{m,n}} \right|\right.
\nonumber
\\
&\left.\times \left(\left|D_k^{A,m,n} \right|+\left|D_k^{O,m,n} \right| \right)\left(\left|\tilde{C}_l^{A,m,n}\right|+\left|\tilde{C}_l^{O,m,n}\right|\right)\n   \phi_k\otimes T_M\left(\hat{\psi}_{l,n}\right)\n_{L^2([0,T]^2)}1_{A_{n,2}}\right]\nonumber
\\
&\le
(1+\gamma)M\E\left[\sum_{k=1}^{e(n)} \sum_{l=1}^{e(n)}\left(\left|D_k^{A,m,n} \right|+\left|D_k^{O,m,n} \right| \right)\left(\left|\tilde{C}_l^{A,m,n}\right|+\left|\tilde{C}_l^{O,m,n}\right|\right)\n \phi_k\n_{L^2([0,T]^2)} \right]\nonumber
\\
&\times\E\left[\frac{\left| \frac{1}{n}\sum_{p=1}^n \tilde{W}_l^{m,n}-\frac{1}{n}\sum_{p=1}^nW_l^{m,n}\right|}{\frac{1}{n}\sum_{p=1}^n W_l^{m,n} \frac{1}{n}\sum_{p=1}^n \tilde{W}_l^{m,n}}  M1_{A_{n,2}}\right]\nonumber
\\
&\le
2(1+\gamma)M4\left(Td_n^2+ \E\left[\n X^{A}\n_{L^2([T_1,T_2])}^2+\n X^{O}\n_{L^2([T_1,T_2])}^2\right]\right)\sum_{k=1}^{e(n)} \E\left[\left|D_k^{A,m,n} \right|^2+\left|D_k^{O,m,n} \right|^2 \right]^{\frac12}\nonumber
\\
&\times\sum_{l=1}^{e(n)}\E\left[\left|\tilde{C}_l^{A,m,n}\right|^2+\left|\tilde{C}_l^{O,m,n}\right|^2\right]^{\frac12}\frac{\E\left[\n T_M\left(\hat{\psi}_{l,n}\right)-\psi_l\n_{L^2([T_1,T_2])}^2\right]}{\frac 12\E\left[Q_l\right]^2 } .
\end{align}
We can therefore define $E_8(n)$ such that if $e(n)\le E_8(n)$ then $\E\left[\n B^{(m),7}-B^{(m),6} \n_{L^2([0,T]^2)}1_{A_{n,2}}\right]\to 0$.
To summarize, using the triangle inequality together with steps 1-7 yields the final result.
%		\begin{align*}
%			&\P\left( \n \hat{\beta}_{n}(i)-\beta(i)  \n_{L^2([T_1,T_2]^2)}\ge \epsilon \right)
%			\\
%			&\le
%			\P\left( \n \hat{\beta}_{n}(i)-\hat{\beta}_{n,(1)}(i)  \n_{L^2([T_1,T_2]^2)}\ge \epsilon/4 \right)
%			+
%			\P\left( \n \hat{\beta}_{n,(1)}(i)-\hat{\beta}_{n,(2)}(i)  \n_{L^2([T_1,T_2]^2)}\ge \epsilon/4 \right)
%			\\
%			&+
%			\P\left( \n \hat{\beta}_{n,(2)}(i)-\hat{\beta}_{n,(3)}(i)  \n_{L^2([T_1,T_2]^2)}\ge \epsilon/4 \right)
%			+
%			\P\left( \n \hat{\beta}_{n,(4)}(i)-\hat{\beta}_{n}(i)  \n_{L^2([T_1,T_2]^2)}\ge \epsilon/4 \right),
%		\end{align*}
%		which gives the final result.
	\end{proof}
	\subsection{Proof of Theorem \ref{Minimizerpg1}}
	\begin{proof}
	\textbf{Step 1: Use the entire sample curves}
	\\
By assumption there exists $N'\in\N$ such that if $n\ge N'$ then $G_n$ is full rank. 
	Define
$$\hat{G}_{n,1}(M)=\sqrt{\gamma}\frac{1}{M}\sum_{m=1}^MF_{1:n}\left(X^{A,m}\right)^TF_{1:n}\left(X^{A,m}\right)
+
(1-\sqrt{\gamma})\frac{1}{M}\sum_{m=1}^MF_{1:n}\left(X^{O,m}\right)^TF_{1:n}\left(X^{O,m}\right) $$
and
\begin{align*}
\left(\hat{\lambda}_{1,k,1}^{(1)}(n,M),\ldots,\hat{\lambda}_{1,k,n}^{(1)}(n,M),\ldots,\hat{\lambda}_{p,k,n}^{(1)}(n,M) \right)
&=
\hat{G}_{n,1}(M)^{-1}\left( \gamma \frac{1}{M}\sum_{m=1}^M Z_k^{A,m}F_{1:n}\left(X^{A,m}\right)
\right.
\\
&\left.+
(1-\gamma)\frac{1}{M}\sum_{m=1}^MZ_k^{O,m}F_{1:n}\left(X^{O,m}\right)
\right)1_{\det\left(\hat{G}_{n,1}(M) \right)\not=0}
,
\end{align*}
for $1\le k\le n$ and
\begin{align*}
\hat{\beta}_{n,1,M}=\left( \sum_{k=1}^n\sum_{l=1}^n\hat{\lambda}_{1,k,1}^{(1)}(n,M)\phi_k\otimes \phi_l,\ldots, \sum_{k=1}^n\sum_{l=1}^n\hat{\lambda}_{p,k,n}^{(1)}(n,M)\phi_k\otimes \phi_l\right).
\end{align*}
For notational convenience we also let
$$\lambda(n)= \left(\lambda_{1,1,1}(n),\ldots,\lambda_{p,n,n}(n) \right).$$
and
$$\hat{\lambda}(n,M)= \left(\hat{\lambda}_{1,1,1}(n,M),\ldots,\hat{\lambda}_{p,n,n}(n) \right).$$
By the law of large numbers and the continuity of the determinant (in terms of its entries) it follows that for sufficiently large $M$, $\det\left(\hat{G}_{n,1}(M) \right)\not=0$, for $n\ge N'$. By continuity and the law of large numbers it then follows that $\lim_{M\to\infty}\hat{\lambda}(n,M)=\lambda(n)$, a.s. for every $n\in\N$. From this we may then readily conclude that $\lim_{M\to\infty}  \lVert  \hat{\beta}_{n,1,M} - \beta_n\rVert_{L^2([T_1,T_2]^2)^p}=0$, a.s. for every $n\ge N$.
\\
\textbf{Step 2: Discretize the "numerators".}
\begin{itemize}
		\item[] $C_l^{A,m,n}(i)=\int P_{\Pi_n}(X^{A,m}(i),t)\phi_l(t)dt,$
		\item[] $C_l^{O,m,n}(i)=\int P_{\Pi_n}(X^{O,m}(i),t)\phi_l(t)dt,$
		\item[] $C_l'^{A,m,n}(i)=\int P_{\Pi_n}(X'^{A,m}(i),t)\phi_l(t)dt,$
		\item[] $C_l'^{O,m,n}(i)=\int P_{\Pi_n}(X'^{O,m}(i),t)\phi_l(t)dt,$
		\item[] $D_k^{A,m,n}=\int P_{\Pi_n}(Y^{A,m},t)\phi_k(t)dt$ and
		\item[] $D_k^{O,m,n}=\int P_{\Pi_n}(Y^{O,m},t)\phi_k(t)dt$.
\end{itemize}
Let
\begin{align*}
\hat{G}_{n,2}(M)
&=\sqrt{\gamma}\frac{1}{M}\sum_{m=1}^M\left(C_1^{A,m,M}(1),\ldots,C_n^{A,m,M}(p)\right)^T\left(C_1^{A,m,M}(1),\ldots,C_n^{A,m,M}(p)\right)
\\
&+(1-\sqrt{\gamma})\frac{1}{M}\sum_{m=1}^M\left(C_1^{O,m,M}(1),\ldots,C_n^{O,m,M}(p)\right)^T\left(C_1^{O,m,M}(1),\ldots,C_n^{O,m,M}(p)\right), 
\end{align*}
\begin{align*}
\left(\hat{\lambda}_{1,k,1}^{(2)}(n,M),\ldots,\hat{\lambda}_{1,k,n}^{(2)}(n,M),\ldots,\hat{\lambda}_{p,k,n}^{(2)}(n,M) \right)
&=
\hat{G}_{n,2}(M)^{-1}\left( \gamma \frac{1}{M}\sum_{m=1}^M D_k^{A,m,M}\left(C_1^{A,m,M}(1),\ldots,C_n^{A,m,M}(p)\right)
+
\right.
\\
&\left.
(1-\gamma)\frac{1}{M}\sum_{m=1}^MD_k^{O,m,M}\left(C_1^{O,m,M}(1),\ldots,C_n^{O,m,M}(p)\right)
\right)1_{\det\left(\hat{G}_{n,2}(M) \right)\not=0}
,
\end{align*}
for $1\le k\le n$ and
\begin{align*}
\hat{\beta}_{n,2,M}=\left( \sum_{k=1}^n\sum_{l=1}^n\hat{\lambda}_{1,k,1}^{(2)}(n,M)\phi_k\otimes \phi_l,\ldots, \sum_{k=1}^n\sum_{l=1}^n\hat{\lambda}_{p,k,n}^{(2)}(n,M)\phi_k\otimes \phi_l\right).
\end{align*}
 Due to the definitions of $C_l^{A,m,n}(i), C_l^{O,m,n}(i), D_k^{A,m,n}$ and $D_k^{O,m,n}$, for every $n\in\N$ we get that $\hat{G}_{n,2}(M) - \hat{G}_{n,1}(M)$ converges to zero as $M\to\infty$, for $n\ge N$. In the final step of the proof we choose $\{e(n)\}_{n\in\N}\subseteq\N$ such that $e(n)\to\infty$, $\lim_{n\to\infty}\lVert  \hat{\beta}_{e(n),1,n} - \beta_{e(n)}\rVert_{L^2([T_1,T_2]^2)^p}=0$ and $\lim_{n\to\infty}\lVert  \hat{\beta}_{e(n),2,n} - \hat{\beta}_{e(n),1,n}\rVert_{L^2([T_1,T_2]^2)^p}=0$. Since $\mathsf{dist}\left(\beta_n,S\right)=\inf_{s\in S}\lVert \beta_n -s\rVert_{L^2([T_1,T_2]^2)^p}\to 0$ it follows that $\mathsf{dist}\left(\hat{\beta}_{e(n),2,n} ,S\right)\to 0$.  
\end{proof}

\end{document}